\begin{document}
%%%%%%%%%% FRONTMATTER %%%%%%%%%%%%%%%%%%%%%%%%%%%%%%%%%%%%%%%%%%%%%%%%
\frontmatter
\title[Connes--Chern character and eta cochains]{Connes--Chern character
for manifolds with boundary and eta cochains}
\author{Matthias Lesch}
\thanks{M.~L.~was partially supported by the 
        Hausdorff Center for Mathematics and by the 
        Sonder\-forschungs\-be\-reich/\-Transregio 12 
        ``\uppercase{S}ymmetries and \uppercase{U}niversality in 
        \uppercase{M}esoscopic \uppercase{S}ystems''
        (\uppercase{B}ochum--\uppercase{D}uisburg/\uppercase{E}ssen--\uppercase{K}\"oln--\uppercase{W}arszawa)}

\address{Mathematisches Institut,
Universit\"at Bonn,
Endenicher Allee 60,
53115 Bonn,
Germany}

\email{ml@matthiaslesch.de, lesch@math.uni-bonn.de}
\urladdr{www.matthiaslesch.de, www.math.uni-bonn.de/people/lesch}
\author{Henri Moscovici}
\address{Department of Mathematics,
The Ohio State University,
Columbus, OH 43210,
USA}
\email{henri@math.ohio-state.edu}

\thanks{H. M. was partially
 supported by the US National Science Foundation
    awards no. DMS-0245481 and no. DMS-0652167} 
\author{Markus J. Pflaum}
\address{Department of Mathematics,
University of Colorado UCB 395,
Boulder, CO 80309,
USA} 
\email{markus.pflaum@colorado.edu}
\urladdr{http://euclid.colorado.edu/$\sim$pflaum}

\thanks{M. J. P. was partially supported by the DFG}

\thanks{M. L. would like to thank the Department of Mathematics of the
University of Colorado for hospitality and support.}
\thanks{Both H. M. and M. J. P.  acknowledge with thanks the 
hospitality and support of the Hausdorff Center for Mathematics. 
 }

%    \date is required; it is the date received by the editor.
\date{}

%\subjclass[2000]{58B34, 46L80; 58J32, 58J35} %arxiv version
\subjclass[2000]{Primary 58Jxx, 46L80; Secondary 58B34, 46L87}

\keywords{Manifolds with boundary, \textup{b}-calculus, noncommutative geometry, Connes--Chern character, relative cyclic cohomology,
$\eta$-invariant}

%%%%%%%%%%%%%%%%%%%%%%%%%%%%%%%%%%%%%%%%%%%%%%%%%%%%%%%%%%%%%%%%%%%%%%%%%%%%%%%%%%%%%%%%%%%%%%%%%%%%%%%%
%\input{abstract}

\begin{abstract}
We express the {\CoChch} of the Dirac operator
associated to a \textup{b}-metric on a manifold
with boundary in terms of a retracted cocycle in relative cyclic cohomology,
whose expression depends on a scaling/cut-off
parameter. Blowing-up the metric one recovers the pair of characteristic currents
that represent the
corresponding de Rham relative homology class, while the blow-down
yields a relative cocycle whose
expression involves higher eta cochains and 
their \textup{b}-analogues.
The corresponding pairing formul\ae\,  with relative
K-theory classes capture information about the boundary and allow to
derive geometric consequences. As a by-product, we show that the
generalized Atiyah-Patodi-Singer pairing introduced by Getzler and Wu 
is necessarily restricted to almost flat bundles.

\end{abstract}
%end of abstract
%%%%%%%%%%%%%%%%%%%%%%%%%%%%%%%%%%%%%%%%%%%%%%%%%%%%%%%%%%%%%%%%%%%%%%%%%%%%%%%%%%%%%%%%%%%%%%%%%%%%%%%%

\maketitle
% Next two lines copied from memo-l.template!!
%%    Change page number to 7 if a dedication is present.
\setcounter{page}{4}

\tableofcontents
\listoffigures
%    Include unnumbered chapters (preface, acknowledgments, etc.) here.
%%%%%%%%%%%%%%%%%%%%%%%%%%%%%%%%%%%%%%%%%%%%%%%%%%%%%%%%%%%%%%%%%%%%%%%%%%%%%%%%%%%%%%%%%%%%%%%%%%%%%
%\input{intro}

%
% local abbreviations in this copy
%\newcommand{\calC}{\mathcal C}
\newcommand{\calE}{\mathcal E}
\newcommand{\calJ}{\mathcal J}
\newcommand{\sfD}{\mathsf D}
%\newcommand{\tb}{{^\text{\tiny b}}\!}% left b superscript
% Deprecated macros still in use here
%modified tb for the remaining cases
\newcommand{\tb}[1]{^\textup{b}\!#1}

\newcommand{\diracbdy}{\mathsf{D}_\pl}
\newcommand{\diracbdyvar}[1]{\mathsf{D}_{\pl,#1}}
\newcommand{\diracbdyt}{\mathsf{D}_{\pl,t}}

\newcommand{\btch}{\,^\textup{b}\hspace*{-0.2em}\operatorname{\widetilde{ch}}}

\mainmatter
\chapter*{Introduction}
Let $M$ be a compact smooth $m$-dimensional manifold with boundary 
$\partial M \neq \emptyset$. 
Assuming that $M$ possesses a $Spin^c$ structure, the fundamental class
in the relative $K$-homology group $K_m (M, \pl M)$ can be realized analytically
in terms of the Dirac operator $\dirac$ (graded if $m$ is even) associated
to a riemannian metric on $M$. More precisely,
according to~\cite[\S 3]{BauDouTay:CRC}, if
$\dirac_e$ is a closed extension of $\dirac$ satisfying the condition 
that either $\dirac^*_e \dirac_e$ or $\dirac_e \dirac^*_e$
 has compact resolvent ({\em e.g.} both $\dirac_{\rm min}$
and $\dirac_{\rm max}$ are such), then the bounded operator 
$F = \dirac_e (\dirac^*_e \dirac_e +1)^{-1/2}$ defines a relative Fredholm module over the
pair of $C^*$-algebras $\bigl(\cC(M), \cC(\pl M) \bigr)$, hence an element 
$[\dirac] \in K_m (M, \pl M)$. Moreover, by~\cite[\S 4]{BauDouTay:CRC},  
the connecting homomorphism 
maps $[\dirac]$ to the fundamental class $[\diracbdy] \in K_{m -1} (\partial M)$ 
corresponding to the Dirac operator $\diracbdy$ 
associated to the boundary restriction of the metric and of $Spin^c$ structure.

\nind{Index@$\Index_{[{\mathsf D}]}$}
The map $\Index_{[\dirac]} :K^m (M, \pl M) \rightarrow \Z$, 
defined by the pairing of $K$-theory with the $K$-homology class of $[\dirac]$, can be expressed
in cohomological terms by means of Connes' 
Chern character with values in
cyclic cohomology~\cite{Con:NDG}. Indeed, the relative $K$-homology group
$ \, K_m (M , \pl M)$, viewed as the Kasparov group
$\, KK_m \bigl(\cC_0 (M\setminus \partial M) ; \C\bigr)$,
can be realized as homotopy classes of Fredholm modules over the Fr\'echet algebra
$ \calJ^\infty (\partial M,M) $ 
of smooth functions on $M$ vanishing to any order on $\partial M$;
$ \calJ^\infty (\partial M,M) $ is a
 local $C^*$-algebra, $H$-unital and dense in 
$\cC_0 (M\setminus \partial M) = \{ f \in \cC (M) \mid f_{|\partial M}=0\}$.
One can therefore define the {\CoChch} of $[\dirac]$ 
by restricting the operator $F= \dirac_e (\dirac^*_e \dirac_e +1)^{-1/2}$, 
or directly $\dirac$, to $\cJ^\infty  (\partial M, M)$ and regarding 
it as a finitely summable Fredholm module.
The resulting periodic cyclic cocycle corresponds,
via the canonical isomorphism between the periodic cyclic cohomology
$\, HP^{\textrm{ ev/odd}} \bigl(\cJ^\infty (\partial M, M)\bigr)$ and
the \deRham\ homology $\, H_{\rm ev/odd}^{\rm dR}
(M  \setminus \partial M ; \C)$ ({\it cf.}~\cite{BraPfl:HAWFSS}),
to the \deRham\ class of the current (with arbitrary support)
associated to the $\hat{A}$-form of the riemannian metric. In fact, one can even recover the 
$\hat{A}$-form itself out of local cocycle representatives for the  {\CoChch},
as in~\cite[Remark 4, p. 119]{ConMos:TCC} or 
\cite[Remark II.1, p. 231]{ConMos:LIF}.
However, the boundary $\pl M$ remains conspicuously absent in such
representations.
\sind{de Rham!homology}

It is the purpose of this paper to provide cocycle representatives for 
the {\CoChch} of the fundamental $K$-homology
class $[\dirac] \in K_m (M, \partial M)$ 
that capture and reflect geometric information about the boundary. 
Our point of departure is Getzler's construction~\cite{Get:CHA} of 
the {\CoChch} of $[\dirac]$. Cast
in the propitious setting of 
\textnm{Melrose}'s \textup{b}-calculus~\cite{Mel:APSIT}, 
Getzler's cocycle has however the disadvantage,
from the viewpoint of its geometric functionality, of
being realized not in the relative 
cyclic cohomology complex proper but in its entire extension. Entire
cyclic cohomology~\cite{Con:ECC}
was devised primarily for handling infinite dimensional geometries and
is less effective as a tool than ordinary cyclic cohomology when dealing with
finite dimensional
$K$-homology cycles. To remedy this drawback we undertook the task of producing
cocycle realizations for the {\CoChch} directly in the 
relative cyclic cohomology complex associated to the pair of algebras
$\bigl( \cC^\infty (M), \cC^\infty (\partial M)\bigr)$. This is achieved by adapting and
implementing in the context of relative cyclic cohomology the
retraction procedure of~\cite{ConMos:TCC}, which converts
the entire {\CoChch} into the periodic one.  The resulting
cocycles automatically carry information about the boundary and this allows
to derive geometric consequences.  
\medskip

It should be mentioned that the relative point of view in the framework
of cyclic cohomology was first exploited in \cite{LesMosPfl:RPC}
to obtain cohomological expressions for $K$-theory invariants associated
to parametric pseudodifferential operators.
It was subsequently employed by Moriyoshi and Piazza to establish a
 Godbillon-Vey index pairing for longitudinal
Dirac operators on foliated bundles \cite{MorPia:RPA}.

\medskip

Here is a quick synopsis of the main results of the present paper.  
Throughout the paper, we
fix an exact \textup{b}-metric $g$ on $M$, and denote by $\dirac$ the corresponding 
\textup{b}-Dirac operator.
We define for each $t > 0$ and any $n \ge m = \dim M$, $n \equiv m$ (mod $2$), 
pairs of cochains 
 \begin{equation}\label{eq:relCochain}
 \bigl(\bch^n_t(\dirac),   \ch^{n+1}_t (\diracbdy) \bigr)\quad
 \text{resp.}\quad \bigl(\btch^n_t(\dirac), \ch^{n-1}_t(\diracbdy) \bigr)
 \end{equation} 
 over the pair of algebras $ \bigl( \cC^\infty (M), \cC^\infty (\partial M)\bigr)$, given 
 by the following expressions:
\begin{equation} \label{cht}
\begin{split}
  \bch^n_t(\dirac)\, &:=\,
   \sum_{j\geq 0} \bCh^{n-2j} (t \dirac) +
  B \bTslch^{n+1}_t (\dirac),\\ 
  \ch^{n+1}_t (\diracbdy)\, &:= \,
  \sum_{j\geq 0} \Ch^{n-2j+1} (t \diracbdy) +
  B \Tslch^{n+2}_t (\diracbdy),\\
  \btch^n_t(\dirac) \, &:=\,
    \bch^n_t(\dirac) +\Tslch^n_t(\diracbdy)\circ i^* .
  \end{split}
\end{equation}
In these formul\ae, $\,\Ch^{\bullet} ( \diracbdy)$ stand for the components
of the \textnm{Jaffe-Lesniewski-Osterwalder} cocycle~\cite{JLO:QKT}
representing the {\CoChch} in entire cyclic cohomology, 
$\bCh^{\bullet} ( \dirac)$ denote the corresponding \textup{b}-analogue
({\it cf.} \eqref{Eq:DefChern}), while the cochains  
$\Tslch^{\bullet}_t (\diracbdy)$, resp.  $\bTslch^{\bullet}_t ( \dirac) $
(see \eqref{eq:ML200909263},
are manufactured out of the canonical transgression formula
as in \cite{ConMos:TCC};
$\, i : \partial M \rightarrow M$ denotes the inclusion.
One checks that
\begin{equation} \label{relcohochern}
	\begin{split}
  (b + B) \bigl(\bch^n_t  (\dirac) \bigr) \, &=\,
  \ch^{n+1}_t (\diracbdy) \circ i^\ast\\
   (b + B) \bigl(\btch^n_t  (\dirac) \bigr) \, &=\,
  \ch^{n-1}_t (\diracbdy) \circ i^\ast,
 \end{split}
\end{equation} 
which shows that the cochains \eqref{eq:relCochain} are cocycles in the
\emph{relative} 
 total $(b, B)$-complex of
 $ (\cC^\infty (M), \cC^\infty (\partial M) ) $. Moreover, 
 the class of this cocycle in periodic cyclic cohomology is independent
 of $t > 0$ and of $n = m + 2k, \, k \in \Z_+$. Furthermore,
 its limit as $t \rightarrow 0$ gives the pair of 
$\hat A$ currents corresponding to the \textup{b}-manifold M, that is 
\begin{equation} \label{eq: 0lim cocycle}
  \big(\lim_{t\searrow 0} \bch^n_t (\dirac), \, \lim_{t\searrow 0} 
  \ch^{n+1}_t (\diracbdy) \big)
  = \left(\int_{\tb M} \hat{A}(\bnabla^2_g) \wedge \bullet , \,
  \int_{\pl M}\hat{A}(\nabla^2_{g_\partial}) \wedge \bullet\right) \, .
\end{equation}
The notation in the right hand side requires an explanation. With
\begin{equation}  \label{eq: A-currents}
  \hat{A}(\bnabla^2_g) = \det \left(\frac{\bnabla_g^2/4 \pi i}{\sinh \bnabla_g^2/4 \pi i}\right)^{\frac{1}{2}} ,
\quad \hat{A}(\nabla^2_{g_\partial}) = 
 \det \left(\frac{\nabla_{g_\partial}^2/4 \pi i}{\sinh \nabla_{g_\partial}^2/4 \pi i}\right)^{\frac{1}{2}} ,
\end{equation}
and $\displaystyle \int_{\tb M} : \bOmega^m (M) \rightarrow \C$ denoting
the \textup{b}-integral of \textup{b}-differential $m$-forms on $M$ associated to the 
trivialization of the normal bundle to $\pl M$ underlying the \textup{b}-structure,
both terms in the right hand side of \eqref{eq: 0lim cocycle} are viewed as
$(b, B)$-cochains associated to currents. More precisely, incorporating
the $2 \pi i$ factors which account for the conversion of the Chern character
in cyclic homology into the Chern character in \deRham\ cohomology,  
for $M$ even dimensional this identification takes the form
\begin{multline}\label{eq:bA-hat cochain}
 \int_{\tb M} \hat{A}(\bnabla^2_g) \wedge \big(f_0, \ldots , f_{2q}\big)\\ 
 = \frac{1}{(2 \pi i)^q (2q)!}
 \int_{\tb M} \hat{A}(\bnabla^2_g) \wedge f_0\, df_1\wedge \ldots \wedge f_{2q}  \, ,  
\end{multline}
respectively
\begin{multline}\label{eq:Abdry-hat cochain}
  \int_{\partial M} \hat{A}(\nabla^2_{g_\partial}) \wedge \big(h_0, \ldots , h_{2q-1}\big) \\
  =   \frac{1}{(2 \pi i)^{q} (2q-1)!}
 \int_{\partial M} \hat{A}(\nabla^2_{g_\partial}) \wedge h_0\, dh_1\wedge \ldots \wedge h_{2q-1} .
\end{multline}

Finally  (\textit{cf.}~Theorem \ref{t: CC-character}  \textit{infra}),
the limit formula \eqref{eq: 0lim cocycle} implies that both
$\big(\bch^n_t (\dirac),\ch^{n+1}_t (\diracbdy) \big)$ and $\big(\btch^n_t (\dirac),\ch^{n-1}_t (\diracbdy) \big)$
represent the Chern character of the fundamental relative $K$-homology class  $[\dirac] \in K_m(M,\partial M)$.
%
%\memo{According to the first paragraph we have to write
%$[\dirac]$ instead of $[\dirac,\diracbdy]$}
%

Under the assumption that $\Ker \diracbdy = 0$, we next prove (Theorem
\ref{t:ML20081215} \textit{infra})
that the pair of retracted 
cochains $\big(\btch^n_t (\dirac),\ch^{n-1}_t (\diracbdy) \big)$ has a limit 
as $t\to\infty$. For $n$ even, or equivalently $M$ even dimensional,
this limit has the expression
\begin{equation}\label{eq:ML20081215-1-intro}
\begin{split}
   \btch_\infty^n(\dirac)&=\sum_{j=0}^{n/2} \kappa^{2j} (\dirac)+
   B\bTslch_\infty^{n+1}(\dirac)+\Tslch_\infty^n(\diracbdy)\circ i^* , \\
   \ch_\infty^{n-1}(\diracbdy)&=          B\Tslch_\infty^{n}(\diracbdy) ,
\end{split}
\end{equation}
with the cochains $\kappa^{\bullet}(\dirac)$, occurring only when $ \Ker\dirac\neq\{0\}$, 
given by
\begin{equation} \label{eq: Ker Str}
    \kappa^{2j} (\dirac)(a_0, \ldots ,a_{2j})=\,
    \Str \bigl(\varrho_H(a_0)\, \go_H(a_1,a_2) \cdots \go_H(a_{2j-1},a_{2j})\bigr) ;
 \end{equation} 
here $H$ denotes the orthogonal projection onto $ \Ker \dirac$, and 
\begin{align*}
            \varrho_H(a):=HaH\, , \quad
             \go_H(a,b):= \varrho_H(ab)-\varrho_H(a)\varrho_H(b) ,
             \text{ for all $a, b \in \cC^\infty (M)$} .
\end{align*}
When $M$ is odd dimensional, the limit cocycle takes the 
similar form
\begin{equation}\label{eq:ML20081215-2-intro}
\begin{split}
    \btch_\infty^n(\dirac)&=B\bTslch_\infty^{n+1}(\dirac)+ \Tslch_\infty^n(\diracbdy)\circ i^*\, , \\
         \ch_\infty^{n-1}(\diracbdy)&= B\Tslch_\infty^n(\diracbdy).
         \end{split}
\end{equation}
The absence of cochains of the form $ \kappa^{\bullet} (\diracbdy)$ 
in the boundary component is due to the assumption that $\Ker \diracbdy = 0$.

\medskip

The geometric implications become apparent when one inspects
 the ensuing pairing with
$K$-theory classes.  For $M$ even dimensional,
 a class in $K^m (M, \pl M)$ can be represented
as a triple $[E,F, h]$, where $E$, $F$ are vector bundles over $M$, which we will 
identify with projections $p_E, p_F \in \Mat_N (\cC^\infty (M))$, 
and $h :[0, 1] \rightarrow \Mat_N (\cC^\infty (\partial M))$ 
is a smooth path of projections connecting their restrictions to the boundary
$E_\pl $ and $F_\pl $. For $M$ odd dimensional, a representative of a class in 
$K^m (M, \pl M)$ is a triple $(U,V,h)$, where $U,V:M\to U(N)$ are unitaries and $h$ 
is a homotopy between their restrictions to the boundary $U_{\pl}$ and $V_{\pl}$. 
In both cases, the Chern character of $[X, Y , h] \in K^m (M, \partial M)$ is 
represented by the relative cyclic homology cycle over the algebras
$\bigl( \cC^\infty (M), \cC^\infty (\partial M)\bigr)$
%
%\memo{X and Y interchanged on the right}
%
\begin{equation} 
  \ch_\bullet  \big( [X, Y, h]  \big) \, = \,
  \Big(
    \ch_\bullet(Y) - \ch_\bullet (X) \, , \, - \Tslch_\bullet (h)   \Big) \, ,  
\end{equation}
where $ \ch_\bullet $, resp. $\Tslch_\bullet $ denote the components of
the standard Chern character in cyclic homology resp.~of its canonical
transgression (see Section \ref{s:TheChernCharacter}).  

The pairing $\big\langle [\dirac], \, [X, Y,h] \big\rangle\in\Z$
between the classes $[\dirac] \in K_m(M,\partial M)$ and
$[X, Y , h] \in K^m (M, \partial M)$ acquires the cohomological expression
\begin{equation}\label{eq:relpair even/odd}
\begin{split}
\big\langle [\dirac], \, [X,& Y,h] \big\rangle \, =\,
\big\langle \big( \btch_t^n (\dirac), \ch_t^{n-1} (\diracbdy) \big), \,
  \ch_\bullet [X, Y,h]  \big\rangle =  \\
   = \big\langle& \sum_{j\ge 0} \bCh^{n-2j} (t\dirac) + B\, \bTslch_t^{n+1} (\dirac), 
  \,  \ch_\bullet (Y) -\ch_\bullet (X)  \big\rangle  \\
  &+\big\langle \Tslch_t^n(\diracbdy),\ch_\bullet(Y_\pl)-\ch_\bullet(X_\pl)\big\rangle\\
  &- \big\langle  \sum_{j\ge 0}  \Ch^{n-2j-1} (t\diracbdy) +
  B \Tslch_t^{n} (\diracbdy), \,
  \Tslch_\bullet (h)   \big\rangle ,
\end{split}
\end{equation}
which holds for any $t > 0$.
Letting $t \rightarrow 0$ yields the local form of the pairing formula
\begin{equation} \label{eq: pairing lim0}
\begin{split}
\big\langle& [\dirac], \, [X,Y, h] \big\rangle \, =\,
 \\
   &= \int_{\tb M} \hat{A}(\bnabla^2_g) \wedge \big( \ch_\bullet (Y) - \ch_\bullet (X) \big)  
- \int_{\partial M} \hat{A}(\nabla^2_{g_\partial})  \wedge  \Tslch_\bullet (h) .
   \end{split}
\end{equation}
It should be pointed out that \eqref{eq: pairing lim0} holds in complete generality, without 
requiring the invertibility of $\diracbdy$.

 When $M$ is even dimensional
and $\diracbdy$ is invertible, the equality between the above limit and 
the limit as $t \rightarrow \infty$ yields, for any  $n =2\ell \geq m $, the 
identity 
\begin{equation}  \label{eq:APS-ind}
\begin{split}
 \sum_{0\leq k \leq \ell} &
 \big\langle \kappa^{2k}(\dirac)  , \ch_{2k}(p_F) - \ch_{2k}(p_E)\big\rangle +
 \big\langle B \bTslch_\infty^{n+1} (\dirac) , \ch_n(p_F) - \ch_n(p_E) \big\rangle \, +\\
  &\quad +
  \big\langle \Tslch_\infty^n(\diracbdy), \ch_n (p_{F_\pl})-
  \ch_n (p_{E_\pl})\big\rangle \,=\\
  &= \int_{\tb M} \hat{A}(\bnabla^2_g) \wedge 
  \big( \ch_\bullet (p_F) - \ch_\bullet (p_E) \big)  
  - \int_{\partial M} \hat{A}(\nabla^2_{g_\partial})  \wedge  \Tslch_\bullet (h) \\
  &\qquad\quad +\big\langle B \Tslch_\infty^{n} (\diracbdy),  
  \Tslch_{n-1}  (h) \big\rangle ,
\end{split}
\end{equation} 
where 
\begin{displaymath}
  \begin{split}
  \ch_{2k} (p) = & 
  \begin{cases}
    \tr_0 (p), & \text{for $k=0$}, \\
    (-1)^{k} \frac{(2k)!}{k!} \, \tr_{2k} 
    \big( (p-\frac 12) \otimes p^{\otimes 2k}\big), & \text{for $k>0$} .
  \end{cases}  
  \end{split}
\end{displaymath}
Like the Atiyah-Patodi-Singer index formula~\cite{APS:SARI}, the 
equation \eqref{eq:APS-ind}
involves index and eta cochains, only of higher order.
Moreover, the same type of 
identity continues to hold in the odd dimensional case.
Explicitly, it takes the form
\begin{equation}  
\begin{split}
 (-1)^{\frac{n-1}{2}} & \, \textstyle{\big( \frac{n-1}{2} \big)!}\, 
 \Big(\big\langle B \bTslch_\infty^{n+1} (\dirac) , 
 (V^{-1} \otimes V)^{\otimes \frac{n+1}{2}} - 
 (U^{-1} \otimes U)^{\otimes \frac{n+1}{2}}  \big\rangle \, +\\
 &\qquad \big\langle \Tslch_\infty^n(\diracbdy), 
 (V^{-1}_\pl \otimes V_\pl)^{\otimes \frac{n+1}{2}} - 
 (U^{-1}_\pl \otimes U_\pl)^{\otimes \frac{n+1}{2}} \big\rangle \Big) \,=\\
 &= \int_{\tb M}\hat{A}(\bnabla^2_g)\wedge\big( \ch_\bullet (V) - \ch_\bullet (U)\big)  
- \int_{\partial M} \hat{A}(\nabla^2_{g_\partial})  \wedge  \Tslch_\bullet (h) \\
 &\qquad +\big\langle B \Tslch_\infty^{n} (\diracbdy),  \Tslch_{n-1}  (h) \big\rangle.
% +  \sum_{0\leq k \leq \ell} c'_k \, \big\langle \kappa^{2k}(\dirac)  , 
%\,   \Tslch_{2k+1}  (h)  \big\rangle .
\end{split}
\end{equation} 
%\memo{last sum removed, it is $0$ because of $ \Ker \diracbdy=0$}
\medskip

The relationship between the relative pairing and the Atiyah-Patodi-Singer index theorem
can actually be made explicit, and leads to interesting geometric consequences. Indeed,
under the necessary assumption that $M$ is even dimensional, we show
({\it cf.} Theorem \ref{t:RelativePairingKTheory}) that
the above pairing can be expressed as follows:
%\memo{signs corrected! Check with me before making changes. ML!}
\begin{equation} 
\label{Eq:Pairing1}
\begin{split}
\langle [\dirac], [E, F, h] \rangle = \indAPS \dirac^F - \indAPS \dirac^E +
  \SF (h , \dirac_\partial) ;
\end{split}
\end{equation}
here $\indAPS$ stands for
the {\APS}-index, and $ \SF (h , \dirac_\partial) $  denotes the spectral
flow along the path of operators  $\bigl(\diracbdy^+\bigr)^{h(s)}$;
$\diracbdy^+$ is the restriction of $\sfc(dx)^{-1}\diracbdy$ to the positive
half spinor bundle and $\sfc(dx)$ denotes Clifford multiplication by the inward normal vector.
On applying the {\APS}\ index formula~\cite[Eq. (4.3)]{APS:SARI}, the 
pairing takes the explicit form
\begin{equation} 
\label{Eq:Pairing2}
\begin{split}
\langle [\dirac], [E, F, h] \rangle \,= &
 \int_{\tb M} \hat{A}(\bnabla^2_g) \wedge \big( \ch_\bullet (F)- \ch_\bullet (E) \big)  \\
& - \Big( \xi (\diracbdy^{+,F_\partial}) - \xi (\diracbdy^{+,E_\partial})  \Big)
\, + \,  \SF (h, \dirac_\partial) ,
 \end{split}
\end{equation}
where
\begin{equation} \label{xi}
\xi (\diracbdy^{+,E_\partial}) \, = \, 
\frac{1}{2} \Big( \eta (\diracbdy^{+,E_\partial}) \, + \,
  \dim  \Ker \diracbdy^{+,E_\partial}   \Big) .
\end{equation}
Comparing this expression with the local form of the pairing \eqref{eq: pairing lim0}
leads to a generalization 
of the {\APS}\ 
 odd-index formula~\cite[Prop. 6.2, Eq. (6.3)]{APS:SARIII}, from trivialized flat bundles
to pairs of equivalent vector bundles in $K$-theory. Precisely ({\it cf.}
Corollary \ref{t: gen APS flat}), if
$E',F'$ are two such bundles on a closed odd dimensional spin manifold $N$,
and $h$ is the homotopy implementing the equivalence of $E'$ with $F'$, then
\begin{equation} 
\begin{split}
\xi (\dirac_{g'}^{F'}) - \xi (\dirac_{g'}^{E'}) 
\, = \,  \int_{N} \hat{A} (\nabla_{g'}^2) \wedge
   \Tslch_\bullet (h) \, + \,  \SF (h , \dirac_{g'}) \, ,
\end{split}
\end{equation} 
where  $\dirac_{g'}$ denotes the Dirac
operator associated to a riemannian metric $g'$ on $N$; equivalently, 
\begin{equation} 
  \int_0^1  \frac{1}{2} \frac{d}{dt} \big( \eta (p_{h(t)} \, \dirac_{g'} \, p_{h(t)}) \big) dt 
     \, = \, \int_{N} \hat{A} (\nabla_{g'}^2) \wedge \Tslch_\bullet (h) \, ,  
\end{equation}
where $p_{h(t)}$ is the path of projections joining $E'$ and $F'$, and 
the left hand side is the natural extension of the
 real-valued index in~\cite[Eq. (6.1)]{APS:SARIII}.

\medskip

Let us briefly comment on the main analytical challenges encountered in
the course of proving the results outlined above.  In order to compute
the limit as $t\searrow 0$ of the Chern character, one needs 
to understand the asymptotic behavior of expressions of the form
\begin{equation}
\begin{split}
\blangle& A_0,A_1,\ldots,A_k\rangle_{\sqrt{t}\dirac}\\
  &:=\int_{\Delta_k} 
\bTr\bigl( A_0 e^{-\sigma_0 t\dirac^2} A_1 e^{-\sigma_1 t\dirac^2} \ldots A_k e^{-\sigma_k t\dirac^2} \bigr) d\sigma,
\end{split}
\end{equation}
where $\Delta_k$ denotes the standard simplex $\{\sigma_0+\ldots+\sigma_k=1, \sigma_j\ge 0\}$ and $A_0,\ldots,A_k$
are \textup{b}-differential operators of order $d_j, j=0,\ldots,k$;
$d:=\sum_{j=0}^k d_j$ denotes the sum of their orders.
The difficulty here is twofold. Firstly, the \textup{b}-trace is a 
\emph{regularized} extension of the trace to \textup{b}-pseudodifferential operators on the
\emph{non-compact} manifold $M\setminus \partial M$ (recall that the \textup{b}-metric
 degenerates at $\partial M$). 
Secondly, the expression inside the \textup{b}-trace involves a product of 
operators. The Schwartz kernel
of the product
$A_0 e^{-\sigma_0 t\dirac^2}A_1 e^{-\sigma_1 t\dirac^2}\cdot \ldots\cdot A_k e^{-\sigma_k t \dirac^2}$
does admit a \emph{pointwise} asymptotic expansion  
(see  \cite{Wid:STC},  \cite{ConMos:CCN}, \cite{BloFox:APO}),
namely
\begin{equation}%\label{eq:JLOCommutatorAsymptotic}
\begin{split}
    \Bigl(A_0 &e^{-\sigma_0 t\dirac^2} A_1 e^{-\sigma_1 t\dirac^2} \cdot\ldots\cdot A_k e^{-\sigma_k t \dirac^2}\Bigr)(p,p) \\
%        &= \sum_{\ga\in\Z_+^k, |\ga|\le n} \frac{(-t)^{|\ga|}}{\ga!} \sigma_0^{\ga_1}(\sigma_0+\sigma_1)^{\ga_2}...(\sigma_0+...+\sigma_{k-1})^{\ga_k}
%                       \cdot\\
%             &\qquad\cdot \bigl( A_0 \nablad^{\ga_1}A_1 ... \nablad^{\ga_k} A_k e^{-t\dirac^2}\bigr)(p,p)+O_p(t^{(n+1-a-\dim M)/2}),\\
        &=: \sum_{j=0}^{n} a_j(A_0,\ldots,A_k,\dirac)(p)\; t^{\frac{j-\dim M
        -d}{2}}+O_p(t^{(n+1-d-\dim M)/2}).
\end{split}
\end{equation} 
However, this asymptotic expansion is 
only \emph{locally} uniform in $p$; it is not \emph{globally} uniform on the non-compact manifold $M\setminus \partial M$.
A further complication
 arises from the fact that the function $a_j(A_0,\ldots,A_k,\dirac)$ is not necessarily integrable. 
 Nevertheless, a \emph{partie finie}-type regularized integral, 
which we denote by $\int_{\tb M} a_j(A_0,\ldots,A_k,\dirac)d\vol $, does exist
and we prove (\emph{cf.} Theorem \ref{t:JLObCommutatorAsymptotic})
%\begin{theorem}\label{t:JLObCommutatorAsymptotic}
that the corresponding \textup{b}-trace admits an asymptotic expansion of the
form
\begin{equation}%\label{eq:JLObCommutatorAsymptotic}
\begin{split}
    \bTr\Bigl(A_0 &e^{-\sigma_0 t\dirac^2} A_1 e^{-\sigma_1 t\dirac^2} \ldots A_k e^{-\sigma_k t \dirac^2}\Bigr) \\
%        &= \sum_{\ga\in\Z_+^k, |\ga|\le n} \frac{(-t)^{|\ga|}}{\ga!} \sigma_0^{\ga_1}(\sigma_0+\sigma_1)^{\ga_2}...(\sigma_0+...+\sigma_{k-1})^{\ga_k}
%                       \cdot\\
%             &\qquad\cdot \bTr\bigl( A_0 \nablad^{\ga_1}A_1 ... \nablad^{\ga_k} A_k e^{-t\dirac^2}\bigr)+\\
%             &\qquad+ O\Bigl(\bigl(\prod_{j=1}^k \sigma_j^{-a_j/2}\bigr)t^{(n+1-a-\dim M)/2}\Bigr),\\
        &= \sum_{j=0}^{n} \int_{\tb M} a_j(A_0,\ldots,A_k,\dirac)d\vol \;
        t^{\frac{j-\dim M -d}{2}}+\\
              &\qquad +O\Bigl(\bigl(\prod_{j=1}^k
              \sigma_j^{-d_j/2}\bigr)t^{(n+1-d-\dim M)/2}\Bigr).
\end{split}
\end{equation} 
%\end{theorem}
When $\,\diracbdy$ is invertible and hence $\dirac$ is a Fredholm operator, we can also prove
the following estimate (\emph{cf.} \eqref{eq:integrated-multiple-btrace-estimate-large})
\begin{equation}%\label{eq:integrated-multiple-btrace-estimate-large}
    \begin{split} 
    |\blangle A_0&(I-H),...,A_k(I-H)\rangle_{\sqrt{t}\dirac}|\\
                &\le \tilde C_{\delta,\eps} 
      t^{-d/2-(\dim M)/2-\eps} e^{-t\delta},\quad \text{for \emph{all }} 0<t<\infty  ,
    \end{split}
\end{equation}
 for any $\eps>0$ and any $0<\delta<\inf\specess \dirac^2$. Here, $\specess$
 denotes the essential spectrum and $H$ is the orthogonal projection onto $ \Ker\dirac$. This estimate allows us
to compute the limit as $t \nearrow \infty$ and thus 
derive the formul\ae\, 
\eqref{eq:ML20081215-1-intro} and \eqref{eq:ML20081215-2-intro}.
\medskip

A few words about the organization of the paper are now in order. 
We start by recalling, in Chapter \ref{s:preliminaries},
some basic material on relative cyclic cohomology~\cite{LesMosPfl:RPC},
\textup{b}-calculus~\cite{Mel:APSIT} and Dirac operators. 
In Section \ref{s:btrace} we discuss in detail the \textup{b}-trace
in the context of a manifold with cylindrical ends. 

As a quick illustration of the usefulness of the relative cyclic cohomological
approach in the present context,
we digress in Section \ref{s:McK-S} to establish an
analogue of the well-known McKean--Singer formula for
manifolds with boundary; we
then employ it to recast in these terms
Melrose's proof of the Atiyah-Patodi-Singer index theorem
(\emph{cf.}~\cite[Introduction]{Mel:APSIT}).

 In Section
\ref{s: b-trace formula}, refining an observation due to Loya~\cite{Loy:DOB},
we give an effective formula for the \textup{b}-trace, 
which will turn out to be a convenient technical device.

After setting up the notation related to \textup{b}-Clifford modules and \textup{b}-Dirac
operators in Section \ref{s:b-Clifford-Dirac}, we revisit in the remainder of the chapter
Getzler's version of the relative entire
{\CoChch} in the setting of \textup{b}-calculus~\cite{Get:CHA}.

In Chapter \ref{s:cylinder-estimates} we prove some crucial estimates for the
heat kernel of a \textup{b}-Dirac operator, which are then applied
in Section \ref{estim Chern} to analyze
the short and long time behavior of the components of the \textup{b}-analogue of
the entire Chern character. As a preparation more standard resolvent and heat kernel estimates are
discussed in Section \ref{sec:basic-estimates}.

The final Chapter \ref{chap:Main} contains our main results: 
Section \ref{s: heat expansion} is devoted to asymptotic expansions for the
\textup{b}-analogues of the \textnm{Jaffe-Lesniewski-Osterwalder} components. 
The retracted relative cocycle representing
the {\CoChch} in relative cyclic cohomology is
constructed in Section \ref{s:retracted-relative-cocycle}, where we also compute 
the expressions of its small and large scale limits.

Finally, Section \ref{s: geom pairing} derives the ensuing pairing formul\ae \,
with the $K$-theory, establishes the connection with the 
Atiyah-Patodi-Singer index theorem, and discusses the
geometric consequences. 
The paper concludes with an explanatory note  (Section \ref{s:conclude})
 elucidating the relationship between the results presented here
and the prior work in this direction by 
Getzler~\cite{Get:CHA} and Wu~\cite{Wu:CCC}, and clarifying why their
generalized {\APS}  pairing is necessarily restricted to almost flat bundles.

\aufm{Matthias Lesch, Henri Moscovici, Markus J. Pflaum}
\newcommand{\comment}[1]{\relax}
%%%%%%%%%%%%%%%%%%%%%%%%%%%%%%%%%%%%%%%%%%%%%%%%%%%%%%%%%%%%%%%%%%%%%%%%%%%%%%%%%%%%%%%%%%%%%%%%%%%%%%
% END OF INTRODUCTION
% END OF FRONT MATTER
%\mainmatter

%
%
\chapter{Preliminaries}
\label{s:preliminaries}
We start by recalling some basic material concerning relative cyclic cohomology,
the Chern character and Dirac operators. Furthermore, 
for the convenience of the reader we provide in Sections 
\ref{App:bdefbmet}--\ref{s:IndFam} a
quick synopsis of the fundamentals of the \textup{b}-calculus for manifolds with 
boundaries due to \textnm{Melrose}. 
For further details we refer the reader to the monograph 
\cite{Mel:APSIT} and the article \cite{Loy:DOB}. 
%After the introduction into the \textup{b}-calculus we construct
%\textup{b}-analogue of the entire Chern character and study its properties.

\section{The general setup}
\label{SubSec:Setup}
Associated to a compact smooth manifold $M$ with boundary $\pl M$, there is 
a commutative diagram of Fr\'echet algebras
with exact rows
%
% some index material
\nind{J@$\cJ$}
\nind{J@$\cJ^\infty$}
\nind{E@$\cE^\infty(\partial M, M)$}
\begin{equation}
  \label{Dia:BasicShExSeq}
  \xymatrix{ 
    0 \ar[r] & \cJ^\infty (\partial M, M) \ar[r]\ar[d] &\cC^\infty (M)
    \ar[r]^{\!\!\!\!\! \varrho} \ar[d]^\id &\cE^\infty (\partial M , M) \ar[r]\ar[d]^{\mbox{ }_{\|\partial M}} & 
    0\\ 0 \ar[r] & \cJ (\partial M, M) \ar[r] & \cC^\infty (M) \ar[r] &
    \cC^\infty ( \partial M) \ar[r] & 0 .
  }
\end{equation}
$\cJ (\partial M, M) \subset \cC^\infty (M)$ is 
the closed ideal of smooth functions on $M$ vanishing on $\partial M$,
$\cJ^\infty (\partial M, M) \subset \cJ (\partial M, M)$ denotes the closed 
ideal of smooth functions on $M$ vanishing up to infinite order on 
$\partial M$, and $\cE^\infty (\partial M , M)$ is the algebra of 
Whitney functions\sind{Whitney functions} over the subset $\partial M \subset M$.
More generally, for every closed subset $X\subset M$ the ideal 
$\cJ^\infty (X , M) \subset \cC^\infty (M)$ is defined as being
\begin{displaymath}
    \cJ^\infty (X , M) :=\big\{ f\in\cC^\infty (M)\mid D f_{|X}=0
    \text{ for every differential operator $D$ on $M$} \big\}.
\end{displaymath}
By Whitney's extension theorem (\emph{cf}.~\cite{Mal:IDF,Tou:IFD}), the algebra 
$\cE^\infty (X,M)$ of Whitney functions over $X \subset M$ is naturally 
isomorphic to the quotient of $\cC^\infty (M)$ by the closed ideal 
$\cJ^\infty (X,M)$; we take this as a definition of  $\cE^\infty (X,M)$. 
The right vertical arrow in diagram 
\eqref{Dia:BasicShExSeq} is given by the map
\[
  \cE^\infty (X,M) \rightarrow \cC^\infty (X), \quad
  F \mapsto F_{\|X} := F + \cJ (X,M) ,
\]
which is a surjection. 

Let us check that the Fr\'echet algebra  $\cJ^\infty := \cJ^\infty (\partial M ,M)$
is a local $C^*$-algebra. First, by the multivariate 
Fa\`a di Bruno formula \cite{ConSav:MFBFA} the unitalization 
$\cJ^{\infty,+}$ of $\cJ^\infty$ is seen to be
closed under holomorphic calculus in the unitalization $J^+$ of 
the algebra $J := \cC_0 (M\setminus \partial M)$. Since $\cJ^{\infty,+}$ is also dense
in $J^+$, it follows that $\cJ^\infty := \cJ^\infty (\partial M ,M)$ is indeed
a local $C^*$-algebra whose $C^*$-closure is the $C^*$-algebra $J$.
Using this together with
\excision\ in $K$-homology (\emph{cf}.~for example \cite{HigRoe:AKH}),
one can easily check that the space 
of equivalence classes of Fredholm modules over $\cJ^\infty$ coincides naturally with the  
$K$-homology of the pair of $C^*$-algebras $\big(\cC (M), \cC(\partial M)\big)$. 
Moreover, by
\cite[p.~298]{Con:NG} one has the following commutative diagram
\begin{equation}
  \label{dia:FredModHP}
  \xymatrix{
  \hspace{-25mm}\Bigl\{\parbox{5cm}{finitely summable Fredholm\\ modules over $\cJ^\infty$}\Bigr\}\ar[r]^{\ch_\bullet} \ar[d]& 
  HP^\bullet (\cJ^\infty ) \ar[d] \\
   K^\bullet (J) = KK_\bullet (J,\C) \ar[r] & \Hom (K_\bullet (J) ,\C) ,
  }
\end{equation}
where the right vertical arrow is given by natural pairing between periodic cyclic cohomology
and $K$-theory, and the lower horizontal arrow by the pairing of $K$-theory 
with $K$-homology via the 
Fredholm index.

 A Dirac, resp. a \textup{b}-{Dirac} operator on $M$ determines 
 a Fredholm module over $\cJ^\infty$ and therefore a class
 in the \Khomology\ of the pair $\big(\cC (M), \cC(\partial M)\big)$.
 In this article, we are concerned with geometric representations of
the  {\CoChch} of such a class and of the ensuing
pairing with the $K$-theory of the pair $\big(\cC (M), \cC(\partial M)\big)$.
\section{Relative cyclic cohomology}
\label{SubSec:RelCycCoh}
As in \cite{LesMosPfl:RPC}, we associate to a short exact sequence 
of Fr\'echet algebras 
\begin{equation}
\label{Eq:ShExSeq}
 0 \longrightarrow \cJ \longrightarrow \cA \overset{\sigma}{\longrightarrow} \cB \longrightarrow 0,
\end{equation}
with $\cA$ and $\cB$ unital, a short exact sequence of mixed complexes 
\begin{equation}
  0 \longrightarrow \big( C^\bullet (\cB), b, B\big) \longrightarrow \big( C^\bullet (\cA), b , B\big) 
  \longrightarrow \big( Q^\bullet , b , B\big)  \longrightarrow 0 ,
\end{equation}
where $C^\bullet (\cA)$ denotes the Hochschild cochain complex of a Fr\'echet algebra $\cA$, 
b the Hochschild coboundary, and $B$ is the Connes
coboundary (\emph{cf.}~\cite{Con:NDG,Con:NG}).
Recall that the \emph{Hochschild cohomology}\sind{Hochschild (co)homology} 
of $\cA$ is computed by the complex
$\big( C^\bullet (\cA), b\big)$,
the \emph{cyclic cohomology}\sind{cyclic (co)homology} of $\cA$ is the 
cohomology of the total complex \\
$\big( \tot^\bullet \cB C^{\bullet,\bullet} (\cA), b+B \big)$, where
\nind{T@$\tot^\bullet \cB C^{\bullet,\bullet} (\cA)$}
\nind{C@$ \cB C^{p,q} (\cA)$}
\nind{C@$ \cB C_\text{\rm\tiny per}^{p,q} (\cA)$}
\nind{T@$\tot^\bullet \cB C_\text{\rm\tiny per}^{\bullet,\bullet} (\cA)$}
\begin{displaymath}
  \cB C^{p,q} (\cA) =
  \begin{cases}
    C^{q-p} (\cA) := \big( \cA^{\hatotimes q-p+1}\big)^*, & 
    \text{for $q\geq p\geq 0$}, \\
    0, & \text{otherwise},
  \end{cases}
\end{displaymath}
while the \emph{periodic cyclic cohomology}\sind{periodic cyclic (co)homology} 
of $\cA$ is the cohomology of the total complex \\
$\big( \tot^\bullet \cB C_\text{\rm\tiny per}^{\bullet,\bullet} (\cA), b+B \big)$, 
where
\begin{displaymath}
  \cB C_\text{\rm\tiny per}^{p,q} (\cA) =
  \begin{cases}
    C^{q-p} (\cA) := \big( \cA^{\hatotimes q-p+1}\big)^*, & 
    \text{for $q\geq p$}, \\
    0, & \text{else}.
  \end{cases}
\end{displaymath}
In \cite{LesMosPfl:RPC} we noted that the relative cohomology 
theories, or in other words the cohomologies of the quotient mixed complex 
$\big( Q^\bullet ,b,B \big)$, can be calculated from a particular 
mixed complex quasi-isomorphic to $Q^\bullet$,  namely from the 
direct sum mixed complex 
\[ 
  \big( C^\bullet (\cA) \oplus C^{\bullet +1} (\cB) , \widetilde{b},\widetilde{B}\big), 
\]
where
% indexing
\sind{Hochschild (co)homology!relative}\sind{relative!Hochschild (co)homology}
\nind{HH@$HH^\bullet (\cA,\cB)$}
\nind{HC@$HC^\bullet (\cA,\cB)$}
\nind{HP@$HP^\bullet (\cA,\cB)$}
\begin{equation}
\label{Eq:DefCoBdrRelMixDer}
  \widetilde b =  \left(
  \begin{array}{cc}
     b & -\sigma^* \\
     0 & -b  
  \end{array}
  \right), \quad \text{and} \quad 
  \widetilde B = \left(
  \begin{array}{cc}
     B & 0 \\
     0 & -B 
  \end{array}
  \right).
\end{equation}
In particular, the \emph{relative Hochschild cohomology}\sind{relative!Hochschild (co)homology}\sind{Hochschild (co)homology!relative} $HH^\bullet (\cA,\cB)$ is 
computed by the complex 
$\big( C^\bullet (\cA) \oplus C^{\bullet +1} (\cB) , \widetilde{b}\big)$,
the \emph{relative cyclic cohomology}\sind{relative!cyclic (co)homology}\sind{cyclic (co)homology!relative} $HC^\bullet (\cA,\cB)$ by 
the complex $ \big( \tot^\bullet \cB C^{\bullet,\bullet} (\cA)\oplus \tot^{\bullet +1} 
 \cB C^{\bullet,\bullet} (\cB), \widetilde{b} + \widetilde{B} \big) $,
and the \emph{relative periodic cyclic cohomology}\sind{relative!periodic cyclic (co)homology}\sind{periodic cyclic (co)homology!relative} $HP^\bullet (\cA,\cB)$ by 
$ \big( \tot^\bullet \cB C_\text{\tiny\rm per}^{\bullet,\bullet} (\cA)\oplus  
  \tot^{\bullet +1} \cB C_\text{\tiny\rm per}^{\bullet,\bullet} (\cB), 
  \widetilde{b} + \widetilde{B} \big) $.

Note that of course
\begin{equation}
\begin{split}
       \big( \tot^\bullet &\cB C^{\bullet,\bullet} (\cA)\oplus \tot^{\bullet +1} 
 \cB C^{\bullet,\bullet} (\cB), \widetilde{b} + \widetilde{B} \big) \\
 &\simeq \bigl(\tot^\bullet \cB C^{\bullet, \bullet}(\cA,\cB),\widetilde{b}+\widetilde{B}\big),
\end{split}
\end{equation}
where $\cB C^{p,q}(\cA,\cB):=\cB C^{p,q}(\cA)\oplus \cB C^{p,q+1}(\cB)$.

\sind{pairing}
Dually to relative cyclic cohomology, one can define relative cyclic homology 
theories. We will use these  throughout this article as well, and in particular 
their pairing with relative cyclic cohomology. For the convenience of the
reader we recall their definition,
referring to \cite{LesMosPfl:RPC} for more details. 
The short exact sequence 
\eqref{Eq:ShExSeq} gives rise to the following short exact sequence of 
homology mixed complexes
\begin{equation}
\label{Eq:HomMixComSeq}
  0 \rightarrow \big( K_\bullet , b, B\big) \rightarrow \big( C_\bullet (\cA), b , B\big) 
  \rightarrow \big( C_\bullet (\cB), b , B\big)  \rightarrow 0 ,
\end{equation}
where here $b$ denotes the Hochschild boundary, and $B$ the Connes boundary. 
\sind{Hochschild (co)homology}
The kernel mixed complex $K_\bullet$ is quasi-isomorphic to the direct sum mixed complex 
\[ 
  \big( C_\bullet (\cA) \oplus C_{\bullet +1} (\cB) , \widetilde{b},\widetilde{B}\big), 
\]
where
\begin{equation}
\label{Eq:DefBdrRelMixDer}
  \widetilde b =  \left(
  \begin{array}{cc}
     b & 0 \\
     -\sigma_* & -b  
  \end{array}
  \right), \quad \text{and} \quad 
  \widetilde B = \left(
  \begin{array}{cc}
     B & 0 \\
     0 & -B 
  \end{array}
  \right) .
\end{equation}
This implies that the \emph{relative cyclic homology}\sind{relative!cyclic (co)homology}\sind{cyclic (co)homology!relative} $HC_\bullet (\cA,\cB)$ is the 
homology of $\big( \htot_\bullet \mathcal B C_{\bullet,\bullet}
 (\cA,\cB) , \widetilde b + \widetilde B\big)$, 
where $\mathcal B C_{p,q} (\cA,\cB) =  \mathcal BC_{p,q} (\cA) \oplus 
 \mathcal BC_{p,q+1} (\cB)$. Likewise, the \emph{relative periodic cyclic homology}\sind{relative!periodic cyclic (co)homology}\sind{periodic cyclic (co)homology!relative} 
$HP_\bullet (\cA,\cB)$ is the homology of 
$\big( \hTot_\bullet \mathcal B C^\text{\tiny per}_{\bullet,\bullet}
(\cA,\cB) , \widetilde b + \widetilde B\big)$, where 
$\mathcal B C^\text{\tiny per}_{p,q} (\cA,\cB) = 
 \mathcal BC^\text{\tiny per}_{p,q} (\cA) \oplus 
 \mathcal BC^\text{\tiny per}_{p,q+1} (\cB)$. 

By \cite[Prop.~1.1]{LesMosPfl:RPC}, the relative cyclic 
(co)homology groups inherit a natural pairing
\begin{equation}
\label{Eq:RelCycPair}
  \langle - , - \rangle_\bullet : \,
  HC^\bullet (\cA,\cB) \times HC_\bullet (\cA,\cB)
  \rightarrow \C ,
\sind{pairing}
\nind{$< - , - >$}
\end{equation}
which will be called the {\it relative cyclic pairing}, and which on 
chains and cochains is defined by
\begin{equation}
\label{Eq:DualPair}
\begin{split}
  \langle - , -\rangle : \; & 
  \Big( \mathcal B C^{p,q} (\cA) \oplus \mathcal B C^{p,q+1} (\cB) \Big)
  \times \Big( \mathcal B C_{p,q} (\cA) \oplus \mathcal B C_{p,q+1} (\cB) \Big)
  \rightarrow 
  \C, \\
  & \big( (\varphi,\psi), (\alpha,\beta) \big) \mapsto 
    \langle \varphi , \alpha \rangle + \langle \psi , \beta \rangle .
\end{split}
\end{equation}
This formula also describes the pairing between the relative periodic cyclic 
(co)homology groups.

Returning to diagram \eqref{Dia:BasicShExSeq}, we can now express the 
(periodic) cyclic cohomology of the pair 
$\big( \cC^\infty (M), \cE^\infty (\partial M,M)\big) $ resp.~of the pair
$\big( \cC^\infty (M) , \cC^\infty (\partial M) \big)$ in terms of
the cyclic cohomology complexes of $\cC^\infty (M)$ and 
$\cE^\infty (\partial M,M)$ resp.~$\cC^\infty (\partial M)$. We note that the ideal
$\cJ^\infty (\partial M, M)$ is H-unital, since 
$\big(\cJ^\infty (\partial M, M)\big)^2 = \cJ^\infty (\partial M, M)$
(\emph{cf.}~\cite{BraPfl:HAWFSS}). 
Hence \excision\ holds true for the ideal $\cJ^\infty (\partial M, M)$, and 
any of the above cohomology theories of $\cJ^\infty (\partial M, M)$ coincides 
with the corresponding relative cohomology of the pair
$\big( \cC^\infty (M), \cE^\infty (\partial M,M)\big) $. In particular, we 
have the following chain of quasi-isomorphisms 
\begin{equation}
\label{Eq:DefRelCycCoh}
\begin{split}
  \tot^\bullet & \, \cB C^{\bullet,\bullet} \big( \cJ^\infty (\partial M ,M)\big) 
  \sim_\text{\tiny\rm qism}\tot^\bullet \cB C^{\bullet,\bullet}
  \big( \cC^\infty (M), \cE^\infty (\partial M ,M) \big)  
  \sim_\text{\tiny\rm qism} \\
  & \, \sim_\text{\tiny\rm qism} 
  \tot^\bullet  \, \cB C^{\bullet,\bullet} (\cC^\infty (M)) \oplus 
  \tot^{\bullet+1} \cB C^{\bullet,\bullet}  (\cE^\infty (\partial M , M)) .
\end{split}
\end{equation}
Next recall from \cite{BraPfl:HAWFSS} that the map 
\[
\begin{split}
  \tot^k &\, \cB C_\text{\tiny\rm per}^{\bullet,\bullet}  (\cC^\infty (M)) 
  \rightarrow \tot^k \cB C_\text{\tiny\rm per}^{\bullet,\bullet}  
  (\cE^\infty (\partial M , M)), \\
  \psi & \mapsto \Big( \! \big( \cE^\infty (\partial M, M) \big)^{\hatotimes k+1}
  \ni F_0  \otimes \ldots \otimes F_k  \mapsto 
  \psi \big( {F_0}_{\|X}  \otimes \ldots \otimes {F_k}_{\|X} \big) \! \Big)
\end{split}
\]
between the periodic cyclic cochain complexes is a quasi-isomorphism. 
As a consequence of the Five Lemma one  obtains quasi-isomorphisms
\begin{equation}
\label{Eq:qismRelCycCoh}
\begin{split}
  \tot^\bullet &\,\cB C_\text{\tiny\rm per}^{\bullet,\bullet} 
  \big( \cJ^\infty (\partial M, M) \big) \sim_\text{\tiny\rm qism}
  \tot^\bullet \cB C_\text{\tiny\rm per}^{\bullet,\bullet}
  \big( \cC^\infty (M), \cE^\infty (\partial M,M)\big)\sim_\text{\tiny\rm qism} \\
  & \, \sim_\text{\tiny\rm qism} 
  \tot^\bullet  \, \cB C_\text{\tiny\rm per}^{\bullet,\bullet} (\cC^\infty (M)) 
  \oplus 
  \tot^{\bullet+1} \cB C_\text{\tiny\rm per}^{\bullet,\bullet}  
  (\cC^\infty (\partial M)) .
\end{split}
\end{equation}
%The latter complex is the one which we will mainly consider in this article
%and which contains all the cohomological information we need.
In this paper we will mainly work with the relative complexes
over the pair of algebras $\big( \cinf{M},\cinf{\pl M} \big)$, because its cycles 
carry geometric information about the boundary, which is lost when considering 
only cycles over the ideal $\cJ^\infty (\partial M,M)$. In this respect
we note that periodic cyclic cohomology satisfies \excision\ 
by \cite{CunQui:EPCC,CunQui:EPCCII},
%\textsc{Puschnigg} \cite[Thm.~3.7]{Pus:ECH}, \textsc{Cuntz} \cite[Thm.~4.1]{Cun:EPC}), 
hence in the notation of \eqref{Eq:ShExSeq},
$HP^\bullet(\cJ)$ is canonically isomorphic to $HP^\bullet(\cA,\cB)$.

\section{The Chern character} 
\label{s:TheChernCharacter}\sind{Chern character|(}
For future reference, we recall the Chern character and its transgression in
cyclic homology, both in the even and in the odd case.

\subsection{Even case}
\nind{M@$\Mat_N(\cA),\Mat_\infty(\cA)$}
\nind{C@$\ch_\bullet(e)$}
The Chern character of an idempotent 
$e\in\Mat_\infty(\cA) := \lim\limits_{N\to\infty} \Mat_N(\cA)$ is 
the class in $ HP_0 (\cA)$ of the cycle given by the formula
\begin{equation}
  \ch_\bullet(e) := \tr_0 (e) + \sum_{k=1}^\infty (-1)^k \frac{(2k)!}{k!}  
  \tr_{2k} \Big( \big(e - \frac{1}{2} \big)\otimes e^{\otimes (2k)}\Big) ,
\end{equation}
where for every $j\in \N$ the symbol $e^{\otimes j}$ is an abbreviation for the 
$j$-fold tensor product $e\otimes\dots\otimes e$,
and $\tr_j$ denotes the generalized trace map 
$\Mat_N(\cA)^{\otimes j}\longrightarrow \cA^{\otimes j}$.
\nind{e@$e^{\otimes j}$}
\nind{t@$\tr_j$}
\sind{trace!generalized trace map}
\sind{generalized trace map}

If $(e_s)_{0\leq s \leq 1}$ is a smooth path of idempotents, then the 
transgression formula reads
\begin{equation}   
\label{evenh}
  \frac{d}{d s}\ch_\bullet(e_s)
  = (b+B)\slch_\bullet(e_s,(2e_s-1)\dot e_s) ;
\end{equation}
here the secondary Chern character $\slch_\bullet$ is given by
\sind{Chern character!secondary}
%\nind{ch@$\slch_\bullet$}
%
\begin{equation}
  \slch_\bullet(e,h):=\iota (h)\ch_\bullet(e),
\end{equation}
where the map $\iota (h)$ is defined by
\begin{equation}\begin{split}
 \iota (h)&(a_0\otimes a_1\otimes \ldots\otimes a_l)\\
        &=\sum_{i=0}^l (-1)^i 
        (a_0\otimes \ldots\otimes a_i\otimes h\otimes a_{i+1}
        \otimes \ldots\otimes a_l).
\end{split}
\end{equation}

\sind{Ktheory@$K$-theory!relative}
\sind{relative!Ktheory@$K$-theory}
\nind{K@$K_j(\cA,\cB)$}
\nind{c@$\ch_\bullet(p,q,h)$}
%\nind{T@$\Tslch_\bullet(h)$}
%
A relative $K$-theory class in $K_0(\cA,\cB)$ can be represented 
by a triple $(p,q,h)$ with projections $p,q\in\Mat_N(\cA)$ and $h:[0,1]\to \Mat_N(\cB)$
a smooth path of projections with $h(0)=\sigma(p), h(1)=\sigma(q)$
(\emph{cf.} \cite[Def. 4.3.3]{HigRoe:AKH}, see also \cite[Sec. 1.6]{LesMosPfl:RPC}).
The Chern character of $(p,q,h)$ is represented by the relative cyclic cycle
\begin{equation}\label{eq:ChernCharEven}
\ch_\bullet\bigl( p,q,h\bigr)\, = \, 
\Bigl( \ch_\bullet(q)-\ch_\bullet(p) \, ,\, -\Tslch_\bullet(h)\Bigr),
\end{equation}
where
\begin{equation}\label{eq:ChernTransEven}
\Tslch_\bullet(h)=\int_0^1 \slch_\bullet\bigl(h(s),(2h(s)-1)\dot h(s)\bigr) ds.
\end{equation}
That the r.h.s. of Eq.~\eqref{eq:ChernCharEven} is a relative cyclic cycle follows
from the transgression formula Eq.~\eqref{evenh}. 
From a secondary transgression formula \cite[(1.43)]{LesMosPfl:RPC} 
one deduces that \eqref{eq:ChernCharEven} indeed corresponds
to the standard Chern character on $K_0(\cJ)$ under \excision.
\sind{transgression formula}
\sind{transgression formula!secondary}

\subsection{Odd case} 
\nind{G@$\GL_\infty(\cA), \GL_N(\cA)$}
The odd case parallels the even case in many aspects.
Given an element $g\in \GL_\infty (\cA):=\lim\limits_{N\to\infty} \GL_N(\cA),$
the odd Chern character is the following normalized periodic cyclic cycle: 
\begin{equation}
  \ch_\bullet (g) \,=\, \sum_{k=0}^\infty\, (-1)^k  k! 
  \tr_{2k+1} \big( (g^{-1} \otimes g)^{\otimes (k+1)} \big). 
\end{equation}  

If $(g_s)_{0\leq s \leq 1}$ is a smooth path in $\GL_\infty(\cA)$, the transgression
formula (\emph{cf}.~\cite[Prop.~3.3]{Get:OCC}) reads
\begin{equation}
\label{Eq:ChTransgress}
  \frac{d}{ds} \ch_\bullet (g_s) \, =\, (b+B) \, 
  \slch_\bullet (g_s, \dot g_s),
\end{equation}
where the secondary Chern character $\slch_\bullet$ is defined by  
%\begin{equation}
%\begin{split}
\begin{align}
  \slch_\bullet & (g , h ) = \tr_0 (g^{-1}h) + \\
    + & \sum_{k=0}^\infty (-1)^{k+1} k! 
    \sum_{j=0}^k \tr_{2k+2} \big( (g^{-1} \otimes g)^{\otimes (j+1)}\otimes
   g^{-1} h \otimes (g^{-1} \otimes g)^{ \otimes (k-j)} \big).\nonumber
%\end{split}
%\end{equation}
\end{align}
%
%\nind{c@$\slch_\bullet(g,h)$}

A relative $K$-theory class in $K_1(\cA,\cB)$ can be represented by
a triple $(U,V,h)$, where $U,V\in\Mat_N(\cA)$ are unitaries and
$h:[0,1]\to \Mat_N(\cB)$ is a path of unitaries joining $\sigma(U)$
and $\sigma(V)$. Putting
\begin{equation}\label{eq:ChernTransOdd}
\Tslch_\bullet(h)=\int_0^1 \slch_\bullet\bigl(h_s,\dot h_s\bigr) ds,
\end{equation}
the Chern character of $(U,V,h)$ is represented by the relative cyclic cycle
\nind{c@$\ch_\bullet( U,V,h)$}
\begin{equation}\label{eq:ChernCharOdd}
\ch_\bullet\bigl( U,V,h\bigr)\, = \, 
\Bigl( \ch_\bullet(V)-\ch_\bullet(U) \, ,\, -\Tslch_\bullet(h)\Bigr).
\end{equation}
Again the cycle property follows from the transgression formula
Eq.~\eqref{Eq:ChTransgress} and with the aid of a secondary transgression
formula \cite[(1.15)]{LesMosPfl:RPC} one shows that
\eqref{eq:ChernCharOdd} corresponds to the standard Chern character
on $K_1(\cJ)$ under \excision\ \cite[Thm.~1.7]{LesMosPfl:RPC}.
\sind{Chern character|)}

\section{Dirac operators and $q$-graded Clifford modules}
\label{s:qDirac}
\nind{Clq@$\Cl_q$|(}
To treat both the even and the odd cases simultaneously we make use of the Clifford
supertrace (\emph{cf.~e.g.}~\cite[Appendix]{Get:CHA}).
Denote by $\Cl_q$ the complex Clifford algebra\sind{Clifford algebra!complex} 
on $q$ generators, that is $\Cl_q$ is the universal $C^*$-algebra on unitary generators 
$e_1,...,e_q$ subject to the relations
\begin{equation}
\label{eq:Clifford-relations}
  e_j e_k+e_k e_j= -2 \delta_{jk}.
\end{equation}
Let $\sH=\sH^+\oplus \sH^-$ be a $\Z_2$-graded Hilbert space with grading operator
$\ga$. We assume additionally that $\sH$ is a $\Z_2$-graded right $\Cl_q$-module.
Denote by $\sfcr: \sH \otimes \Cl_q \rightarrow \sH$ the right $\Cl_q$-action
and define operators $E_j:\sH \rightarrow \sH$ for $j=1,\cdots,q$ by 
$E_j := \sfcr\big( - \otimes e_j\big)$. Then the $E_j$ are unitary operators 
on $\sH$ which anti-commute with $\ga$.

\nind{L@$\sL(\sH)$}
The $C^*$-algebra $\sL(\sH)$ of bounded linear operators on $\sH$ is naturally 
$\Z_2$-graded, too. For operators $A,B\in\sL(\sH)$ of pure degree $|A|,|B|$ the 
\textit{supercommutator}\sind{supercommutator} is defined by
\begin{equation}\label{eq:def-super-commutator}
      [A,B]_\super:= AB-(-1)^{|A||B|}BA.
\end{equation}
Furthermore denote by $\sL_{\Cl_q}(\sH)$ the \textit{supercommutant} of 
$\Cl_q$ in $\sH$, that is $\sL_{\Cl_q}(\sH)$ consists of those $A\in \sL(\sH)$ for 
which  $[A,E_j]_\super=0$, $j=1,...,q$.
For $K\in \sL_{\Cl_q}^1(\sH)=\bigsetdef{A\in \sL_{\Cl_q}(\sH)}{A \text{ trace class }}$ one 
defines the \emph{degree $q$ Clifford supertrace}\sind{Clifford supertrace!degree $q$}
\begin{equation}\label{eq:Clifford-super-trace}
     \Str_q(K):=(4\pi)^{-q/2}\Tr(\ga E_1\cdot ... \cdot E_q K).
\end{equation}
The following properties of $\Str_q$ are straightforward to verify.

\begin{lemma}\label{l:clifford-trace}
For $K, K_1, K_2\in \sL_{\Cl_q}^1(\sH)$, 
%i.e. if $K$ commutes with $E_1,\ldots,E_q$ in the graded sense,
one has 
\begin{enumerate}
\item $\Str_q K=0$, if $|K|+q$ is odd.
\item $\Str_q$ vanishes on super-commutators: $\Str_q([K_1,K_2]_\super)=0$.
\end{enumerate}
\end{lemma}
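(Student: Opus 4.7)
The plan is to treat the two assertions in turn, using only the defining relations and elementary bookkeeping of signs. Throughout I set $\Gamma := \gamma E_1\cdots E_q$, so that $\Str_q(K)=(4\pi)^{-q/2}\Tr(\Gamma K)$, and I reduce at once to the case of $K,K_1,K_2$ of pure degree by multilinearity.

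For (1), I first compute the parity of $\Gamma$. Since $\gamma$ preserves the grading it has degree $0$, while each $E_j$ anticommutes with $\gamma$ and hence has degree $1$; therefore $\Gamma$ has degree $q \bmod 2$, and $\Gamma K$ has degree $|K|+q \bmod 2$. When this is odd, $\Gamma K$ swaps $\sH^+$ and $\sH^-$, so its trace vanishes. Concretely, using $\gamma^2=1$ and $\gamma A\gamma^{-1}=(-1)^{|A|}A$ for homogeneous $A$, cyclicity of the trace gives
\begin{equation*}
\Tr(\Gamma K)=\Tr\bigl(\gamma(\Gamma K)\gamma^{-1}\bigr)=(-1)^{|K|+q}\Tr(\Gamma K),
\end{equation*}
so $\Tr(\Gamma K)=0$ when $|K|+q$ is odd. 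This proves (i).

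For (2), by (i) I may assume $|K_1|+|K_2|+q$ is even, otherwise $\Tr(\Gamma K_1K_2)$ and $\Tr(\Gamma K_2K_1)$ are individually zero. Writing out
\begin{equation*}
\Str_q([K_1,K_2]_{\super})=(4\pi)^{-q/2}\bigl[\Tr(\Gamma K_1K_2)-(-1)^{|K_1||K_2|}\Tr(\Gamma K_2K_1)\bigr],
\end{equation*}
I apply cyclicity to the first term, $\Tr(\Gamma K_1K_2)=\Tr(K_2\Gamma K_1)$, and then push $K_2$ through $\Gamma$. The relation $\gamma K_2=(-1)^{|K_2|}K_2\gamma$ contributes a sign $(-1)^{|K_2|}$, while the supercommutativity $[K_2,E_j]_{\super}=0$ of $K_2$ with each of the $q$ generators contributes a total sign $(-1)^{q|K_2|}$. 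Thus $K_2\Gamma=(-1)^{(q+1)|K_2|}\Gamma K_2$, and hence
\begin{equation*}
\Tr(\Gamma K_1K_2)=(-1)^{(q+1)|K_2|}\Tr(\Gamma K_2K_1).
\end{equation*}

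It remains to match the two sign factors. Under the parity constraint $|K_1|\equiv q+|K_2|\pmod 2$, one has $|K_1||K_2|\equiv (q+|K_2|)|K_2|\equiv (q+1)|K_2|\pmod 2$, so $(-1)^{(q+1)|K_2|}=(-1)^{|K_1||K_2|}$ and the two terms cancel. The argument requires no estimates at all; the only mild care needed — and what I view as the main obstacle in the sense of getting a clean presentation — is the bookkeeping of the three sources of signs (conjugation by $\gamma$, supercommutation with the $E_j$, and the definition of $[\cdot,\cdot]_{\super}$) together with the systematic use of (i) to discard a priori vanishing cases.
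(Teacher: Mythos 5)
Your proof is correct. The paper states this lemma without proof (it is declared ``straightforward to verify''), and your verification — reducing to homogeneous $K$, using conjugation by $\ga$ and cyclicity for (1), and for (2) using cyclicity together with the commutation of $K_2$ past $\ga E_1\cdots E_q$ and the parity constraint from (1) to match the sign $(-1)^{|K_1||K_2|}$ — is exactly the intended argument, in the same style as the sign manipulations the paper does spell out for the Berezin Lemma.
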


Let $(M,g)$ be a smooth riemannian manifold.
Associated to it is the bundle $\Cl (M) := \Cl (T^*M)$ of Clifford algebras. Its fiber over 
$p\in M$ is given by the Clifford algebra generated by elements of
$T^*_pM$ subject to the relations
\begin{equation}
  \label{eq:cliffordrel}
  \xi \cdot \zeta + \zeta \cdot \xi = - 2 g (\xi,\zeta) \quad 
  \text{for all $\xi,\zeta \in T^*_pM$}.
\end{equation}
\begin{definition}[\emph{cf.}~{\cite[Sec.~5]{Get:CHA}}] 
  Let $q$ be a natural number. By a 
  \textit{degree $q$ Clifford module}\sind{Clifford module! degree $q$} 
  over $M$ one then understands a $\Z_2$-graded complex vector bundle  $W \rightarrow M$ 
  together with a hermitian metric $\langle - ,-\rangle$, a Clifford action
  $\sfc = \sfcl: \bT^* M \otimes W \rightarrow W$, and an action 
  $\sfcr: W \otimes \Cl_q \rightarrow W$
  such that both actions are graded and unitary  and supercommute with each other. 
  A \textit{Clifford superconnection} on a degree $q$ Clifford module $W$ over 
  $M$ is a superconnection 
  \[
    \A : \Omega^\bullet (M, W) := 
    \Gamma^\infty \big( M;\Lambda^\bullet(T^*M) \otimes W \big) \rightarrow  
    \Omega^\bullet (M, W) 
  \]
  which supercommutes with the action of $\Cl_q$, satisfies
  \[
     \big[\A ,  \sfc (\omega) \big]_\super = \sfc \big(\nabla \omega \big) \quad 
     \text{for all $\omega \in \Omega^1 (M)$},
  \]   
  and is metric in the sense that
  \[
     \left\langle \A \xi , \zeta \right\rangle  +
     \left\langle \xi , \A \zeta \right\rangle =
     d \left\langle \xi , \zeta \right\rangle \quad 
     \text{for all $\xi,\zeta \in \Omega^\bullet (M, W)$}.
  \]
  Here, and in what follows, $\nabla$ denotes the Levi-Civita connection 
  belonging to $g$. 

  The \textit{Dirac operator} associated to a degree $q$ Clifford module 
  $W$ and a Clifford superconnection $\A$ is defined as the differential 
  operator 
  \[
    \dirac := \sfcl \circ \A : \Gamma^\infty (M;W) \rightarrow 
    \Gamma^\infty \big( M;\Lambda^\bullet(T^*M) \otimes W \big) \rightarrow 
    \Gamma^\infty (M;W).
  \]
\end{definition}
\sind{Dirac operator}
\sind{Clifford (super)connection}
In this paper the term ``Dirac operator"  will always refer to
the Dirac operator associated to a Clifford (super)connection in the above sense. 
Such Dirac operators are automatically formally self-adjoint.
By a \semph{Dirac type operator} we understand a first order differential operator 
such that the principal symbol of its square is scalar (\emph{cf.}~\cite{Tay:PDEII}).

\nind{Clq@$\Cl_q$|)}

\subsection{The \textup{JLO} cochain associated to a Dirac operator}\label{ss:JLODO}
Let $M$ be a compact riemannian manifold without boundary
and let $\dirac$ be a Dirac type operator as described above. 
Since $M$ is compact and since $\dirac$ is elliptic the 
heat operator $e^{-r\dirac^2}, r>0,$ is smoothing and hence
for pseudodifferential operators $A_0,\cdots, A_k \in \pdo^\infty (M,W)$
we put, following \cite[Sec. 2]{Get:CHA}, 
\begin{equation}
  \begin{split}
   \langle A_0, \cdots,A_k \rangle_{\dirac_t}  \, := &\int_{\Delta_k} \, \Str_q
    \big(A_0 \, e^{- \sigma_0 \dirac_t^2} \cdots A_k \, e^{- \sigma_k \dirac_t^2} \big)d\sigma \\
               = &\Str_q \big( (A_0,\ldots,A_k )_{\dirac_t} \big), 
\end{split}
\end{equation}
where 
\begin{equation}
  \label{eq:simplex}
  \Delta_k:=
 \bigsetdef{\sigma=(\sigma_0,...,\sigma_k)\in \R^{k+1}}{\sigma_j\ge 0, \,\sigma_0+\ldots+\sigma_k=1}
\end{equation}
denotes the standard $k$-simplex
%, $d$ the exterior derivative on the space of forms on $(0,\infty)$ with values in $\bpdo^\infty (M,W)$, 
and 
\begin{equation}\label{eq:ML20090128-3}
   (A_0,\ldots,A_k )_{\dirac}  
      := \int_{\Delta_k} \, A_0 \, e^{- \sigma_0 \dirac^2} \cdots A_k \, e^{- \sigma_k \dirac^2} d\sigma.
\end{equation}
Furthermore, for smooth functions $a_0,\ldots,a_k\in\cC^\infty(M)$, one puts
\begin{align} 
\label{Eq:DefChernClosed}
  \Ch^k (\dirac) (a_0,\cdots,a_k) & := \langle a_0, [\dirac,a_1],
  \cdots, [\dirac, a_k]\rangle , \\
\label{Eq:DefSlChernClosed}
\begin{split} 
  \slch^k(\dirac , \mathsf{V} )(a_0,\cdots,a_k) &:= \\ 
    \sum_{0\leq j \leq k} (-1)^{j \, \deg{\mathsf{V}}} \,  
    \langle a_0, [\dirac,a_1],&
    \cdots , [\dirac, a_j], \mathsf{V} ,  [\dirac, a_{j+1}], \cdots , 
    [\dirac, a_k] \rangle.
\end{split}
\end{align}
$\Ch^\bullet(\dirac)$ is, up to a normalization factor depending on $q$,
the \JLO\ cocycle associated to $\dirac$.
For a comparison with the standard non-Clifford covariant \JLO\ cocycle
see also Section \plref{s:CocTransgressNoClifford} below.

Now consider a family of Dirac operators, $\dirac_t$, depending smoothly
on a parameter $t$. The operation $\slch$ will mostly be used with 
$V=\dot \dirac_t$ as a second argument. Here $\dirac_t$ is considered of 
odd degree regardless of the value of $q$.

$\Ch^\bullet(\dirac_t)$ then satisfies 
\begin{equation}\label{eq:cocycle}
 b \Ch^{k-1}(\dirac_t) + B \Ch^{k+1}(\dirac_t) =0
\end{equation}
and
\begin{equation}\label{eq:transgression}
 \frac{d}{dt} \Ch^k(\dirac_t) +
 b\slch^{k-1}(\dirac_t , \dot{\dirac}_t ) +B\slch^{k+1}(\dirac_t,\dot\dirac_t) =0.
\end{equation}

\section[Relative Connes--Chern character]{The relative Connes--Chern character of a 
Dirac operator over a manifold with boundary}
\label{Sec:RelConCheDirac}
\sind{ConnesCherncharacter@Connes--Chern character!relative}\sind{relative!ConnesCherncharacter@Connes--Chern character}
In this section, $M$ is a compact manifold with boundary, $g_0$ is a riemannian metric 
which is smooth up to the boundary, and $W \rightarrow M$ is a degree $q$ Clifford module.
We choose a hermitian metric $h$ on $W$ together with a
Clifford connection which is unitary with respect to $h$. 
Let $\dirac=\dirac(\nabla,g_0)$ be the associated Dirac operator; we suppress the dependence on 
$h$ from the notation. Then $\dirac$ is a densely defined operator on the Hilbert space $\sH$
of square-integrable sections of $W$. 

\sind{relative!Fredholm module}
According to \cite[Prop.~3.1]{BauDouTay:CRC}, as 
outlined in the introduction, $\dirac$ defines a relative Fredholm 
module over the pair of $C^*$-algebras $\bigl(\cC(M), \cC(\pl M) \bigr)$. Recall that 
the relative Fredholm module is given by\nind{F@$F$} \nind{D@${\mathsf D}_e$} 
$F = \dirac_e (\dirac^*_e \dirac_e +1)^{-1/2}$, where $\dirac_e$ is a closed extension of 
$\dirac$ such that either $\dirac^*_e \dirac_e$ or $\dirac_e \dirac^*_e$  has compact 
resolvent ({\em e.g.} 
both the closure $\dirac_{\rm min}=\clos{\dirac}$ and the ``maximal extension" $\dirac_{\rm
max}=(\dirac^t)^*$, which is the adjoint of the formal adjoint, satisfy this condition), 
and that the  $K$-homology class $[F]$ does not depend on the particular choice 
of $\dirac_e$ (see \cite[Prop.~3.1]{BauDouTay:CRC}).
Furthermore, \cite[\S 2]{BauDouTay:CRC} shows that over the $C^*$-algebra
\nindp{$\cC_0 (M\setminus \partial M)$} of continuous functions vanishing at
infinity, whose \Khomology\ is by \excision\ isomorphic to the relative
$K$--homology group $K^\bullet\big(\cC(M),\cC(\partial M)\big)$, 
\nind{KCM@$K^\bullet(\cC(M),\cC(\partial M))$}
one has even more freedom to choose closed extensions
of $\dirac$, and in particular the self-adjoint extension $\dirac_{\APS}$ obtained by imposing
{\APS}\ boundary conditions yields the same $K$-homology class as $[F]$\nind{F@$[F]$}
in $K^\bullet\bigl(\cC_0(M\setminus\partial M)\bigr)$.

It is well-known that $\dirac_{\APS}$ has an $m^+$-summable resolvent (\emph{cf. e.g.}
\cite{GruSee:WPP}). Moreover, multiplication by $f\in \cJ^\infty(\partial M,M)$
preserves the {\domain} of $\dirac_{\APS}$ and $[\dirac_{\APS},f]=\sfc(df)$ is bounded.
Thus $\dirac_{\APS}$ defines naturally an $m^+$-summable Fredholm module over
the local $C^*$-algebra  
$\cJ^\infty (\partial M,M) \subset \cC_0 (M\setminus \partial M)$. Since by \excision\
in $K$-homology $K^\bullet \big(\cC_0 (M\setminus \partial M)\big)$ is naturally isomorphic to 
$K^\bullet \big(\cC(M), \cC(\pl M) \big)$, one concludes that the class $[F]$ of the 
relative Fredholm module  coincides under this 
isomorphism with the class $[\dirac]$ of the $m^+$-summable Fredholm module over 
$\cJ^\infty (\partial M,M)$. 

Let us now consider the {\CoChch} of $[\dirac]$. According to 
\cite{ConMos:TCC}, it can be represented by the truncated \JLO-cocycle 
of the operator $\dirac$ (with $n\geq m$ of the same parity as $m$): 
\begin{equation}
  \ch_t^n (\dirac)  = \sum_{k \geq 0} \Ch^{n-2k} (t\dirac) +
  B \Tslch_t^{n+1} (\dirac) .
\end{equation}
Recall from \cite{JLO:QKT} that the \JLO-cocycle is given by 
\begin{equation}
\label{Eq:JLOdirac}
  \begin{split}
  \Ch^k (\dirac ) (a_0 , \ldots , a_k )  = &
  \int_{\Delta_k} \Str_q\bigl( a_0 e^{-\sigma_0 \dirac^2} [\dirac,a_1]\ldots[\dirac,a_k] e^{-\sigma_k \dirac^2}\bigr)d\sigma, \\
  & \text{for } a_0, \ldots , a_k \in \cJ^\infty (\partial M, M) . 
 \end{split}
\end{equation}
Note that the cyclic cohomology class of $\ch_t^n (\dirac)$ is independent of 
$t$, and that $\ch_t^\bullet$ is the {\CoChch} as given in 
Diagram \eqref{dia:FredModHP}. To obtain the precise form of the {\CoChch} 
$ \ch_t^\bullet (\dirac) \in HP^\bullet \big( \cJ^\infty(\partial M , M) \big)$ 
one notes first that by \cite{BraPfl:HAWFSS} 
$HP^\bullet \big( \cJ^\infty(\partial M , M) \big)$
is isomorphic to the relative \deRham\ cohomology group 
$H^\textrm{dR}_\bullet (M,\partial M;\C)$  and then 
one has to calculate the limit 
$\lim_{t\searrow 0} \ch_t^n (\dirac) (a_0, \ldots , a_k)$. 
Since $\Ch^k$ is continuous with respect to the Fr\'echet topology on 
$\cJ^\infty (\partial M, M)$, and $\cC^\infty_c (M\setminus \partial M)$ 
is dense in $\cJ^\infty (\partial M, M)$, it suffices to consider the case where 
all $a_j$ in \eqref{Eq:JLOdirac} have compact support in $M\setminus \partial M$. 
But in that case one can use standard local heat kernel\sind{heat kernel} analysis 
or Getzler's asymptotic calculus as in \cite{ConMos:CCN} or \cite{BloFox:APO} to 
show that for $n \geq m$ and same parity as $m$
\begin{equation}
  \label{eq:ExpCheConChar}
  \lim_{t\searrow 0} \big[\ch_t^n\big]_k (\dirac) (a_0, \ldots , a_k) = 
  \int_M \go_{\dirac}\wedge a_0 \, da_1 \wedge \ldots da_k .
\end{equation}
Here, $\big[\ch_t^n\big]_k$ denotes the component of $\ch_t^n$ of degree $k$ and
$\go_{\dirac}$ is the local index form of $\dirac$.  
By Poincar\'e duality the class of the current \eqref{eq:ExpCheConChar}
in $H^\textrm{dR}_\bullet (M,\partial M)$ depends 
only on the absolute \deRham\ cohomology class of $\go_{\dirac}$ in 
$H_{\textrm{dR}}^\bullet(M)$. 
By the transgression formul\ae\ this cohomology class is independent
of $\nabla$ and $g_0$. 

Finally let $g$ be an arbitrary smooth metric on the \emph{interior} 
$M^\circ=M\setminus \partial M$ which does not necessarily extend to the boundary.
Then we can still conclude from the transgression formula 
that the absolute de Rham cohomology class in $H_\textrm{dR}^\bullet(M)\cong
H_\textrm{dR}(M\setminus\partial M)$ of the index form $\go_{\dirac}$
of $\dirac(\nabla,g)$ represents the {\CoChch} of $[(\dirac(\nabla,g_0)]$.

Summing up and using Diagram \eqref{dia:FredModHP}, we obtain
the following statement.

\begin{proposition}\label{p:ML200909292} 
Let $M$ be a compact manifold with boundary and 
riemannian metric $g_0$. Let $W$ be a 
degree $q$ Clifford module over $M$. For any choice of a hermitian metric $h$ and unitary
Clifford connection $\nabla$ on $W$  the Dirac operator $\dirac = \dirac (\nabla, g_0) $
defines naturally a class $[\dirac] \in K_m(M\setminus\pl M)$.
The {\CoChch} of $[\dirac]$ is independent of
the choice of $\nabla$ and $g_0$. In particular the index map 
$\Index_{[\dirac]}:K^\bullet(M,\partial M)\longrightarrow\Z,$
defined by the pairing with $K$-theory, is independent of $\nabla$ and $g_0$.

Furthermore, for \emph{any} smooth riemannian metric $g$ in the \emph{interior}
$M^\circ=M\setminus\partial M$ the de Rham cohomology class $\go_{\dirac(\nabla,g)}$ represents
the {\CoChch} of $\dirac(\nabla,g_0)$.
\end{proposition}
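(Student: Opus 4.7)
The plan is to synthesize the preceding discussion into a proof in three steps, matching the three assertions of the proposition. Throughout I would argue purely from the standard heat kernel representation of the {\CoChch} and its small-time asymptotics, since here $g_0$ is genuinely smooth up to $\partial M$ (the more delicate \textup{b}-analysis of the following chapters is not required).

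First I would invoke the Baum--Douglas--Taylor result already cited: the bounded transform $F=\dirac_e(\dirac_e^*\dirac_e+1)^{-1/2}$ gives a class in $K_m(M,\partial M)$ which, via the {\APS} self-adjoint extension $\dirac_\APS$, is represented by an $m^+$-summable Fredholm module over the local $C^*$-algebra $\cJ^\infty(\partial M,M)$, yielding the class $[\dirac]\in K_m(M\setminus\partial M)$. Excision identifies these, and by Diagram~\eqref{dia:FredModHP} the {\CoChch} of $[\dirac]$ is represented in $HP^\bullet(\cJ^\infty(\partial M,M))$ by the truncated \JLO-cocycle $\ch_t^n(\dirac)$, whose cyclic cohomology class is independent of $t>0$ by the transgression formula \eqref{eq:transgression}.

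Next I would establish independence of $(\nabla,g_0)$ by computing the short-time limit. Since $\Ch^k$ depends continuously on its arguments in the Fr\'echet topology of $\cJ^\infty(\partial M,M)$ and $\cC^\infty_c(M\setminus\partial M)$ is dense there, it suffices to evaluate $\lim_{t\searrow 0}\ch_t^n(\dirac)(a_0,\ldots,a_k)$ on compactly supported $a_j$. On such arguments the problem is purely local in the interior of $M$, so Getzler's rescaling (or the Bloch--Fox/Connes--Moscovici asymptotic calculus) applies verbatim and yields \eqref{eq:ExpCheConChar}: the limit is integration against the local index form $\go_\dirac$. Poincar\'e duality identifies currents of this form with elements of $H^\textrm{dR}_\bullet(M,\partial M;\C)$ determined only by the absolute de Rham class of $\go_\dirac$ in $H^\bullet_\textrm{dR}(M)$, and this absolute class is independent of $(\nabla,g_0)$ by the standard Chern--Weil transgression for Dirac-type index forms. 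Independence of the pairing with $K^\bullet(M,\partial M)$ is then immediate.

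Finally, for the extension from metrics smooth up to the boundary to arbitrary smooth riemannian metrics $g$ on the interior $M^\circ$, I would interpolate: choose a smooth path of riemannian metrics $g_s$ on $M^\circ$ with $g_0=g_0|_{M^\circ}$ and $g_1=g$, keeping $\nabla$ fixed (or simultaneously a path of Clifford connections). The corresponding family of index forms $\go_{\dirac(\nabla,g_s)}$ is smooth in $s$ on $M^\circ$, and the local transgression formula exhibits the derivative $\frac{d}{ds}\go_{\dirac(\nabla,g_s)}$ as an exact form on $M^\circ$. Because $M^\circ\hookrightarrow M$ is a homotopy equivalence, $H^\bullet_\textrm{dR}(M^\circ)\cong H^\bullet_\textrm{dR}(M)$, so the absolute de Rham classes of $\go_{\dirac(\nabla,g_0)}$ and $\go_{\dirac(\nabla,g)}$ agree under this identification, completing the proof.

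The main obstacle is really only the short-time asymptotics of step two; however, since all arguments $a_j$ can be taken compactly supported inside $M^\circ$, it reduces to the closed-manifold case and requires no new analysis, so in fact no genuinely new technical input is needed beyond what is already quoted.
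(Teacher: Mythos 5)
Your proposal is correct and follows essentially the same route as the paper: reduction to the $m^+$-summable Fredholm module over $\cJ^\infty(\partial M,M)$ via the {\APS} extension, the short-time limit of the truncated \JLO-cocycle on compactly supported arguments, Poincar\'e duality, and Chern--Weil transgression. Your explicit interpolation path of interior metrics in the last step merely spells out what the paper compresses into the phrase ``we can still conclude from the transgression formula,'' so no new content or gap is involved.
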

\nind{Index@$\Index_{[{\mathsf D}]}$}

\section{Exact \textup{b}-metrics and \textup{b}-functions on cylinders}
\label{App:bdefbmet}
Let $M$ be a compact manifold with boundary of dimension $m$, let $\partial M$ 
be its boundary, and denote by $M^\circ$ its interior $M\setminus \partial M$. 
Then choose a collar  for $M$ which means a diffeomorphism of the form
$(r,\eta) : Y  \rightarrow [0,2)\times \partial M$, where 
$Y \subset M$ is an open neighborhood of $\partial M = r^{-1} (0)$. The
map $r: Y \rightarrow [0, 2)$ is called the 
\textit{boundary defining function}\sind{boundary defining function}
of the collar, the submersion $\eta : Y \rightarrow \partial M$
its \textit{boundary projection}\sind{boundary projection}. 
For $s \in \, (0, 2)$ denote by $Y^s$ 
the open subset $r^{-1} \big( [0,s) \big)$, 
put $Y^{\circ s}:= r^{-1}\big( (0,s)\big)$ and finally let 
$M^s := M \setminus Y^s$ and $\overline{M^s} := M \setminus Y^{\circ s}$;
likewise $Y^\circ:=Y\setminus \partial M$. 
Next, let $x : Y \rightarrow \R$ be the
smooth function $x := \ln \circ r$. Then 
$(x,\eta): Y^{\circ 3/2} \rightarrow \, (-\infty, \ln \frac 32) \times \partial M$ 
is a diffeomorphism of $Y^{3/2}$ onto a cylinder.

After having fixed these data for $M$, we choose the most essential 
ingredient for the \textup{b}-calculus, namely an 
\emph{exact \textup{b}-metric}\sind{bmetric@\textup{b}-metric}\sind{exactbmetric@exact \textup{b}-metric} for $M$. Following \cite{Mel:APSIT}, one 
understands  by this  a riemannian metric $\bmet$ on $M^\circ$ such that on 
$Y^\circ$, the metric can be written in the form 
\begin{equation}
  {\bmet}_{|Y^\circ}= \frac{1}{r_{|Y^\circ}^2} ( dr \otimes dr)_{|Y^\circ} + 
  \eta_{|Y^\circ}^*\pmet,
\end{equation}
where $\pmet$ is a riemannian metric on the boundary $\partial M$.
If $M$ is equipped with an exact \textup{b}-metric we will for brevity
call it a \emph{\textup{b}-manifold}. \sind{bmanifold@\textup{b}-manifold}

Clearly, one then has in the cylindrical coordinates $(x,\eta)$
\begin{equation}
  {\bmet}_{|Y^\circ}= (dx \otimes dx)_{|Y^\circ}  +  \eta_{|Y^\circ}^* \pmet  .
\end{equation}

\FigbManifold
\FigManifoldCylindricalEnds

This means that the interior $M^\circ$ together with $\bmet$ is a
\emph{complete manifold with cylindrical ends}.
\sind{cylindrical ends}\sind{manifold with cylindrical ends}
Thus although we usually tend to visualize a compact manifold with boundary like
in Figure \ref{fig:M}, a \textup{b}-manifold looks like the one
in Figure \ref{fig:MCylinder}. For calculations it will often be
more convenient to work in cylindrical coordinates and hence next
we are going to show how the smooth functions on $M$ can be described
in terms of their asymptotics on the cylinder.

Consider the cylinder $\R\times \partial M := \R\times \partial M$ 
together with the product metric 
\begin{equation}\label{eq:ML20090219-1}
  \cylmet = dx \otimes dx + \operatorname{pr}_2^* \pmet ,
\end{equation}
where here (with a slight abuse of language), $x$ denotes the first coordinate 
of the cylinder, and 
$\operatorname{pr}_2: \R\times \partial M \rightarrow \partial M$ the 
projection onto the second factor. 

Next we introduce various algebras of what we choose to call 
\emph{\textup{b}-functions}\sind{bfunction@\textup{b}-function} on 
$\R\times \partial M$. For $c\in \R$ define 
$\bcC \big( (-\infty , c) \times \partial M \big)$ 
resp.~$\bcC \big( (c,\infty) \times \partial M \big)$ as the algebra 
of smooth functions $f$ on $(-\infty , c) \times \partial M$ 
resp.~on $(c,\infty) \times \partial M $ for which there exist functions 
$f^-_0,f^-_1,f^-_2,\ldots \in \cC^\infty (\partial M) $ 
resp.~$f^+_0,f^+_1,f^+_2,\ldots \in \cC^\infty (\partial M) $ such that the
following asymptotic expansions hold true in $x\in \R$:
\begin{equation}\label{eq:ML20090219-3}
\begin{split}
  f (x, -)& \, \sim_{x \rightarrow -\infty} 
  f^-_0 + f^-_1 e^{x} + f^-_2 e^{2x} + \ldots  
  \quad \text{resp.}\\
  f (x,-)& \sim_{x \rightarrow \infty}  
  f^+_0 + f^+_1 e^{-x} + f^+_2 e^{-2x} + \ldots   \, .
\end{split}
\end{equation}
More precisely, this means that there exists for every $k,l\in \N$ and every 
differential operator $D$ on  $\partial M$ a constant $C>0$ such that
\begin{equation}\label{eq:ML20090219-4}
\begin{split}
 \Big| & \partial^l_x D f (x,p) - 0^l D f^-_0 (p) - 
 \ldots - k^l Df^-_k (p) e^{kx} \Big| 
 \leq C e^{(k+1) x} \\ & \hspace{50mm} \text{for all $x\leq c-1$ and 
 $p\in \partial M$ resp.}\\
 \Big| & \partial^l_x D f (x,p) - 0^l D f^+_0 (p) -   
 \ldots - (-k)^l Df^+_k (p) e^{-kx} \Big| 
 \leq C e^{-(k + 1) x} \\ & \hspace{50mm} \text{for all $x\geq c+1$ and 
 $p\in \partial M$}.
\end{split}   
\end{equation}
The asymptotic expansion guarantees that 
$f \in \bcC \big( (-\infty , c) \times \partial M  \big)$ if and only if the 
transformed function 
$[0,e^c[ \times \partial M \ni (r,p) \mapsto f \big( \ln r , p\big)$
is a smooth function on the collar $[0,e^c[ \times \partial M$. 

The concept of \textup{b}-functions has an obvious global meaning on
$M^\circ$. Because of its importance we single it out as
\begin{proposition}\label{p:bsmooth}
 A smooth function $f\in \cC^\infty(M^\circ)$ extends to a smooth
 function on $M$ if and only if it is a \textup{b}-function.\sind{bfunction@\textup{b}-function}
 In other words this means that the restriction map $\cC^\infty(M)\ni f\mapsto f_{|M^\circ}\in
 \bcC(M^\circ)$ is an isomorphism of algebras.
\end{proposition}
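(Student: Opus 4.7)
The plan is to reduce the statement to the \emph{local} equivalence already asserted just after \eqref{eq:ML20090219-4}: a function $\tilde f(r,p)$ on $(0,e^c)\times \pl M$ extends smoothly across $r=0$ if and only if $f(x,p):=\tilde f(e^x,p)$ belongs to $\bcC\bigl((-\infty,c)\times \pl M\bigr)$. Once this local fact is established, globalization is immediate: smoothness of $f\in\cC^\infty(M^\circ)$ as a function on $M$ is a local property, and outside the collar $Y^{3/2}$ there is no difference between $M$ and $M^\circ$ and the \textup{b}-function condition is vacuous. Inside the collar the diffeomorphism $(x,\eta):Y^{\circ 3/2}\to(-\infty,\ln\tfrac 32)\times \pl M$ together with the local equivalence gives exactly the asserted bijection between $\cC^\infty(M)_{|M^\circ}$ and \textup{b}-functions on $M^\circ$. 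Injectivity of the restriction map is automatic from density of $M^\circ$ in $M$.

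For the forward direction of the local equivalence, assume $\tilde f$ extends smoothly to $[0,e^c)\times \pl M$. Set $f_k^-(p):=\frac{1}{k!}\partial_r^k \tilde f(0,p)\in\cC^\infty(\pl M)$. By Taylor's theorem, compactness of $\pl M$, and the fact that all derivatives of $\tilde f$ are continuous up to $r=0$, for each $N,j,l$ and each differential operator $D$ on $\pl M$ there is a constant $C$ with
\begin{equation*}
\Bigl| \partial_r^{\,j}\, D\Bigl[\tilde f(r,p)-\sum_{k=0}^{N} f_k^-(p)\, r^k\Bigr]\Bigr|\le C\, r^{N+1-j},\qquad 0<r<e^{c-1}.
\end{equation*}
Substituting $r=e^x$ and using the chain rule identity $\pl_x=r\,\pl_r$, one finds that $(\pl_x)^l$ acts on $r^k=e^{kx}$ as multiplication by $k^l$, so the Taylor estimate transforms into exactly the bound \eqref{eq:ML20090219-4}. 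Hence $f(x,p)=\tilde f(e^x,p)$ is a \textup{b}-function with asymptotic coefficients $f_k^-$.

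For the converse, let $f\in\bcC\bigl((-\infty,c)\times \pl M\bigr)$ have coefficients $f_k^-\in\cC^\infty(\pl M)$. Apply Borel's lemma on the compact manifold $\pl M$ to construct $G\in\cC^\infty\bigl([0,e^c)\times \pl M\bigr)$ with $\frac{1}{k!}\pl_r^{\,k} G(0,p)=f_k^-(p)$ for every $k$. By the forward direction already proved, $G(e^x,p)$ is a \textup{b}-function with the same asymptotic coefficients, so the difference $H(x,p):=f(x,p)-G(e^x,p)$ satisfies
\begin{equation*}
|\pl_x^{\,l}\, D\, H(x,p)|\le C_{N,l,D}\, e^{N x}\qquad \text{for every } N\ge 0.
\end{equation*}
Writing $\tilde H(r,p):=H(\ln r,p)$ and using $\pl_r=r^{-1}\pl_x$, an elementary induction on $j$ shows that $\pl_r^{\,j} D\tilde H(r,p)=O(r^{N-j})$ for arbitrary $N$; consequently $\tilde H$ extends to a function in $\cC^\infty\bigl([0,e^c)\times \pl M\bigr)$ that is flat at $r=0$. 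Therefore $\tilde f=G+\tilde H$ is smooth up to the boundary.

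The only delicate point is the conversion from the $\pl_x$-estimates on $H$ to $\pl_r$-estimates on $\tilde H$ in the last step: each application of $r^{-1}\pl_x$ costs one power of $r$, but the \textup{b}-function estimates furnish bounds of arbitrarily high order in $e^{Nx}=r^N$, so after finitely many iterations one still has bounds of any prescribed order. This is the standard mechanism behind the change of variables $r\leftrightarrow e^x$ and poses no real obstacle; everything else in the argument is bookkeeping.
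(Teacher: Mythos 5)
Your proof is correct and follows the same route the paper takes: the paper simply declares the claim ``clear from the asymptotic expansions \eqref{eq:ML20090219-4}'' (having already noted the local equivalence between the estimates and smoothness of the transformed function in $r=e^x$), and you supply precisely the omitted details — Taylor's theorem with the chain rule $\partial_x = r\partial_r$ for the forward direction, and Borel's lemma plus the flatness argument for the converse. Nothing is missing.
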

The claim is clear from the asymptotic expansions \eqref{eq:ML20090219-4}. 

The algebra $\bcC \big( \R\times \partial M \big)$ of \textup{b}-functions on the full
cylinder consists of all smooth functions $f $ on $\R \times \partial M$
such that 
\[ f_{| (-\infty , 0) \times \partial M} \in 
   \bcC \big( (-\infty , 0) \times \partial M \big)\quad \text{and} 
   \quad
   f_{| (0,\infty)\times \partial M  } \in 
   \bcC \big( (0,\infty)\times \partial M  \big).
\] 
Next, we define the algebras of \textup{b}-functions with compact support on the 
cylindrical ends by $\bcptC  \big( (-\infty , c) \times \partial M \big) := $
\[
  \big\{ f \in \bcC \big( (-\infty , c) \times \partial M \big) \mid
  f(x,p) = 0 \text{ for $x\geq c-\varepsilon$, $p\in \partial M$ and
  some $\varepsilon >0$}\big\} 
\]
resp.~by  $\bcptC  \big( (c,\infty)\times \partial M  \big) := $
\[
  \big\{ f \in \bcC \big( (c,\infty)\times \partial M \big) \mid
  f(x,p) = 0 \text{ for $x\leq c+\varepsilon$, $p\in \partial M$ and
  some $\varepsilon >0$}\big\}. 
\]
For sections in a vector bundle the notation
$\Gamma^\infty_\textup{cpt}((-\infty,0)\times \partial M;E)$ has the
analogous meaning.

The essential property of the thus defined algebras of 
\textup{b}-functions is that the coordinate system 
$(x,\eta) : Y^{\circ 3/2} \rightarrow (-\infty , \ln 3/2 ) \times \partial M$
induces an isomorphism
\begin{equation}
 (x,\eta)^* : \bcC \big( (-\infty , \ln 3/2 ) \times \partial M \big)
 \rightarrow \cC^\infty \big( Y^{\frac 32}\big),  
\end{equation}
which is defined by putting
\[ 
 (x,\eta)^* f := \Biggl( 
 Y^{\frac 32} \ni p \mapsto
 \begin{cases}
   f \big( x(p), \eta(p)\big) , & \text{if $p \notin \partial M $,} \\
   f^-_0 (\eta(p)), & \text{if $p \in \partial M $.}
 \end{cases} 
 \Biggr), 
\]
for all $f \in \bcC \big( (-\infty , \ln 3/2 ) \times \partial M \big)$. Under this 
isomorphism, $\bcptC \big( (-\infty , 3/2 ) \times \partial M \big)$ is mapped
onto $\cC_\txtcpt^\infty \big( Y^{\frac 32}\big)$.
In this article, we will use the isomorphism $(x,\eta)^*$ to obtain 
essential information about solutions of boundary value problems on $M$ by 
transforming the problem to the cylinder over the boundary and then performing 
computations there with \textup{b}-functions on the cylinder.

The final class of \textup{b}-functions used in this work is the algebra 
$\bsS \big( \R\times \partial M \big)$ of 
\textit{exponentially fast decreasing functions} or 
\textit{\textup{b}-Schwartz test functions} on the cylinder defined as
the space of all smooth functions $f \in \cC^\infty \big( \R\times \partial M \big)$
such that for all $l,n \in \N$ and all differential operators $D \in \Diff (\partial M)$
there exists a  $C_{l,D,n} >0$ such that
\begin{equation}
  \label{Eq:DefbSchwartz}
  \left| \partial^l_x D f(x,p)\right| \leq C_{l,D,N} e^{-n |x|}
  \quad \text{for all $x\in \R$ and $p\in \partial M$}.
\end{equation}
Obviously, $(x,y)^*$ maps $\bsS \big( \R\times \partial M \big) \cap 
\bcptC \big( (-\infty , \ln 3/2) \times \partial M\big)$ onto
the function space $\cJ^\infty\big(\partial M ,Y^{\frac 32}\big) 
\cap \cC_\txtcpt^\infty \big( Y^{\frac 32}\big)$.
\section{Global symbol calculus for pseudodifferential operators}
\label{Sec:GloSymCalPsiOp}
In this section, we briefly recall the global symbol for pseudodifferential 
operators which was introduced by Widom in \cite{Wid:CSCPO}
(see also \cite{FulKen:RPG,Pfl:NSRM}).
We assume that $(M_0,g)$ is a riemannian manifold (without boundary),
and that $\pi_E: E \rightarrow M_0$ and $\pi_F: F \rightarrow M_0$ are smooth vector 
bundle carrying a hermitian metric $\mu_E$ resp.~$\mu_F$. 
In later applications, $M_0$ will be the interior of a given manifold with 
boundary $M$.

Recall that there exists an open neighborhood $\Omega_0$ of the diagonal in 
$M_0\times M_0$ such that each two points $p,q\in M_0$ can be joined by a 
unique geodesic. 
Let $\alpha_0$ be a \textit{cut-off function} for $\Omega_0$ which means a smooth map  
$M_0\times M_0 \rightarrow [0,1]$ which has support in $\Omega_0$ and is equal to $1$ 
on a neighborhood of the diagonal. These data give rise to the map
\begin{equation}
  \label{eq:DefConIndLin}
  \Phi : M \times M \rightarrow TM, \quad
  (p,q) \mapsto 
  \begin{cases}
    \alpha_0 (p,q) \exp_p^{-1} (q) & \text{if $(p,q) \in \Omega_0$},\\
    0 & \text{else},
  \end{cases}
\end{equation}
which is called a \textit{connection-induced linearization} 
(\emph{cf.}~\cite{FulKen:RPG}). Next denote for $(p,q)\in \Omega_0$ by 
$\tau^E_{p,q} : E_p \rightarrow E_q$ the parallel transport in $E$ along the geodesic
joining $p$ and $q$. This gives rise to the map
\begin{equation}
  \label{eq:DefConIndParTrans}
  \tau^E : E \times M \rightarrow E, \quad
  (e,q) \mapsto 
  \begin{cases}
    \alpha_0 \big( \pi_E(e),q \big) \tau^E_{\pi^E(e),q} (e), & 
    \text{if $(\pi_E(e),q) \in \Omega_0$},\\
    0 & \text{else},
  \end{cases}
\end{equation}
which is called a \textit{connection-induced local transport} on $E$.
(\emph{cf.}~\cite{FulKen:RPG}).

Next let us define the symbol spaces $\cS^m \big( T^*M_0 ; \pi_{T^*M}^* E \big) $.
For fixed $m\in \R$ this space consists of all smooth sections
$a : T^*M_0 \rightarrow \pi_{T^*M_0}^* E$
such that in local coordinates $x: U \rightarrow \R^{\dim M_0}$ of 
over $U\subset M_0$ open and vector bundle coordinates
$(x,\eta): E_{|U} \rightarrow \R^{\dim M_0 + \dim_\R E}$ the following estimate
holds true for each compact $K\subset U$ and appropriate $C_K >0$ depending on $K$:
\begin{equation}
   \big\| \partial_x^\alpha \partial_\xi^\beta ( \eta \circ a ) (\xi) \big\|
   \leq C_K \, (1 + \| T^*x (\xi) \| )^{m - |\beta|} \quad
   \text{for all $\xi \in T^*_{|K}M_0$}.
\end{equation} 
Given a symbol $a \in \cS^m \big( T^*M_0 ; \pi_{T^*M}^* \Hom (E,F) \big) $
one defines now a pseudodifferential operator 
$\Op (a) \in \Psi^m \big( M_0; E,F)$ by 
\begin{equation}
  \begin{split}
  \label{Eq:DefOp}
   \big( \Op (a) & \, u \big) (p) := \\
   := \, & \frac{1}{(2\pi)^{\dim M}}\hspace{-2mm}
   \int\limits_{T_p M_0 \times T^*_p M_0} \hspace{-2mm}
   \alpha_0 (p, \exp v) \, 
   e^{-i \langle v , \xi \rangle} \, a ( p,\xi ) 
   \tau^E (u(\exp v) , p ) \, dv \, d\xi ,
  \end{split}
\end{equation}
where $u\in \Gamma^\infty_\text{\rm\tiny cpt} (E)$ and $p\in M_0$.
Moreover, there is a quasi-inverse, the symbol map 
$\sigma: \Psi^m \big( M_0 ; E,F \big) \rightarrow 
   \cS^m \big( T^*M_0 ; \pi_{T^*M}^* \Hom (E,F) \big)$
which is defined  by
\begin{equation}
\label{Eq:DefSym}
  \sigma (A) (\xi) (e) :=   A \Big( \alpha_0 (p ,{-}) \, \tau^E(e,{-}) 
  \, e^{i \langle \xi , \Phi (p,{-})\rangle} \Big) \big( \pi(\xi) \big), 
\end{equation}
where $p \in M_0, \; \xi \in T^*_pM_0,\; e \in E_p$.
It is a well-known result from global symbol calculus 
(\emph{cf.}~\cite{Wid:CSCPO,FulKen:RPG,Pfl:NSRM}) that the map $\Op$ 
maps $\cS^{-\infty} \big( T^*M_0 ; \pi_{T^*M}^* \Hom (E,F) \big) $ onto
$\Psi^{-\infty} \big( M_0 ; E,F  \big)$ and that up to these spaces,
$\Op$ and $\sigma$ are inverse to each other. 

\section{Classical \textup{b}-pseudodifferential operators}
\label{Sec:ClassbPseuOp}
Let us explain in the following the basics of the (small) calculus of  
\textup{b}-pseudodifferential operators on a manifold with boundary $M$. In our 
presentation, we lean on the approach \cite{Loy:DOB}. For more details 
on the original approach confer \cite{Mel:APSIT}.

In this section, we assume that $M$ carries a 
\textup{b}-metric\sind{bmetric@\textup{b}-metric} denoted by $\bmet$. 
Furthermore, let $\pi_E: E\rightarrow M$ and $\pi_F: F\rightarrow M$ 
be two smooth hermitian vector bundles over $M$, and fix metric connections 
$\nabla^E$ and $\nabla^F$. 
Then observe that by the Schwartz Kernel Theorem there is an isomorphism 
between bounded linear maps
\[
  A : \cJ^\infty \big( \partial M, M ; E \big) 
  \rightarrow \cJ^\infty \big( \partial M, M ; F' \big)'
\] 
and the strong dual 
$\cJ^\infty \big( \partial (M \times M), M\times M  ; E \boxtimes F' \big)'$,
where $\cJ^\infty \big( \partial M, M ; E \big) := \cJ^\infty \big( \partial M, M \big)
\cdot \cC^\infty (M;E)$. This isomorphism is given by
\begin{equation}
  A \mapsto K_A := \Big( \cJ^\infty \big( \partial M, M ; E \big) \hatotimes 
  \cJ^\infty \big( \partial M, M ; F' \big) \ni (u \otimes v) \mapsto
  \left\langle Au , v\right \rangle\Big), 
\end{equation}
where we have used that 
\[
  \cJ^\infty \big( \partial (M \times M), M\times M  ; E \boxtimes F' \big) =
  \cJ^\infty \big( \partial M, M ; E \big) \hatotimes 
  \cJ^\infty \big( \partial M, M ; F' \big)
\] 
with $\hatotimes$ denoting the completed bornological tensor product. 
The \textup{b}-volume form $\bvol$ associated to $\bmet$ gives rise to an embedding
\sind{pairing}
\nind{$< - , - >$}
\begin{equation}
  \label{Eq:EmbMultAlg}
  \begin{split}
    & \cM^\infty \big( \partial M, M ; E' \otimes F \big)  \hookrightarrow 
    \cJ^\infty \big( \partial (M \times M), M\times M  ; E \boxtimes F' \big)' , \\
    & k \mapsto \left( u \otimes v \mapsto \int_{M \times M} 
    \langle k (p,q) , u(p) \otimes v(q)\rangle \, d\big(\bvol \otimes \bvol \big)(p,q) 
    \, \right) ,
  \end{split}
\end{equation}
which we use implicitly throughout this work. 
In the formula for the embedding, $\langle -,-\rangle$ denotes the natural pairing 
of an element of a vector bundle with an element of the dual bundle over the same 
base point, $u,v$ are elements of $\cJ^\infty \big( \partial M, M ; E \big)$ and 
$\cJ^\infty \big( \partial M, M ; F' \big)$ respectively, and
$\cM^\infty \big( X, M ; E)$ denotes for $X\subset M$ closed the space of all
sections $u \in \Gamma^\infty (M\setminus X; E)$ such that in local coordinates 
$(y,\eta): \pi_E^{-1}(U) \rightarrow \R^{\dim M +\dim E}$ with $U\subset M$ open
one has for every compact $K\subset U$, $p \in K\setminus X$, and 
$\alpha \in \N^{\dim M}$ an estimate of the form
\begin{displaymath}
  \left\| \partial^\alpha_y \big(\eta \circ u \big) (p) \right\| \leq 
  C \frac{1}{\Big(d\big(y(p),y(X\cap U)\big)\Big)^\lambda},
\end{displaymath}
where $C>0$ and $\lambda >0$ depend only on the local coordinate system, 
$K$, and $\alpha$. The fundamental property of $\cM^\infty \big( X, M ; E)$
is that 
\begin{displaymath}
  \cJ^\infty (X,M) \cdot \cM^\infty \big( X, M ; E) \subset  \cJ^\infty (X,M; E).
\end{displaymath}

Note that the vector bundle $E$ (and likewise the vector bundle $F$) gives rise 
to a pull-back vector bundle 
$\operatorname{pr}_{\partial M}^* E_{|\partial M}$ on the cylinder, where 
$\operatorname{pr}_{\partial M} :\R\times \partial M
\rightarrow \partial M$ is the canonical projection. This pull-back vector bundle
will be denoted by $E$ (resp.~$F$), too. 
As further preparation we introduce two auxiliary functions 
$\psi :M\rightarrow [0,1]$ and $\varphi: M \rightarrow [0,1]$ on $M$ which are smooth 
and satisfy  
$\supp \psi \subset \subset M^1$, $\psi (p) = 1$ for $p\in M^{3/2}$, 
$\supp \varphi \subset \subset Y^1 $, and finally $\varphi (p) = 1$ for 
$p\in Y^{1/2}$. 
Such a pair of functions will be called a {\it pair of auxiliary cut-off functions}.

By a \emph{\textup{b}-pseudodifferential operator}
\sind{bpseudodifferentialoperator@\textup{b}-pseudodifferential operator} of order 
$m \in \R$ we now understand a continuous operator 
$A: \cJ^\infty (\partial M , M ; E) \rightarrow \cJ^\infty (\partial M, M;F')'$ 
such that for one (and hence for all) pair(s) of auxiliary cut-off functions the 
following is satisfied: 
\begin{enumerate}
\item[($\bPsi$1)] 
  The operator $(1-\varphi)A(1-\varphi)$ is a compactly supported  
  pseudodifferential operator of order $m$ in the interior $M^\circ$.
\item[($\bPsi$2)] 
  The operator $\varphi A\psi$ is smoothing. Its integral
  kernel $K_{\varphi A\psi}$ has support in $\supp \varphi \times M$  
  and lies in 
  $\cJ^\infty \big( \partial (M \times  M) , M \times M; E' \boxtimes F\big)$.
\item[($\bPsi$3)] 
  The operator $\psi A\varphi$ is smoothing. Its integral
  kernel $K_{\psi A\varphi}$ has support in $M \times \supp \varphi$ and lies in 
  $\cJ^\infty \big( \partial (M \times  M) , M \times M;  E' \boxtimes F\big)$.
\item[($\bPsi$4)] 
  Consider the induced operator on the cylinder 
  \[
  \begin{split}
    \widetilde A : \, & \bsS \big( \R\times \partial M; E \big)
    \rightarrow \bsS \big( \R\times \partial M ; F \big)', \\ 
    & u \mapsto \Big( (t,p) \mapsto \big[ (1-\psi)A(1-\psi) \, 
      \big( (x,\eta)^* u \big) \big] \big( (x,\eta)^{-1} (t,p) \big)   \Big) ,
   \end{split} 
  \]
  where $\bsS \big( \R\times \partial M; E \big) :=
  \bsS ( \R ) \hatotimes \cC^\infty \big( \partial M;E\big)$. 
  Denote by  
  $\widetilde a := \sigma (\widetilde A) 
  \in\cS^m\big( T^*(\R\times \partial M) ;\pi^*\Hom (E, F)\big)$  
  the complete symbol of $\widetilde A$ defined by Eq.~\eqref{Eq:DefSym}
  with respect to the product metric on $\R\times \partial M$. 
  Then the following conditions hold true:
  \begin{enumerate}
  \item[(i)]  
     Let $y$ denote local coordinates of $\partial M$, $(y,\xi)$ the 
     corresponding local coordinates of $T^* \partial M$, and $\tau$ the 
     cotangent variable of the cylinder variable $t \in \R$. Then the symbol
     $\widetilde a (t,\tau, y, \xi)$
     can be (uniquely) extended to an entire function in $\tau \in \C$ such that  
     uniformly in $t$, uniformly in a strip $|\im \tau | \leq R$ with $R>0$ 
     and locally uniformly in $y$
     \begin{displaymath}\hspace{-8mm}
       \left\| \partial_t^k\partial_\tau^l\partial_y^\alpha \partial_\xi^\beta 
       \, \widetilde a \right\| \leq C_{k,l,\alpha,\beta} 
       \left( 1+ |\tau| + \|\xi\|\right)^{m-l-|\beta|} 
     \end{displaymath}
     for $l\in \N$, $\beta \in \N^{\dim M -1}$.
  \item[(ii)] 
     There exist symbols $\widetilde a_k (\tau,y,\xi) \in 
     \cS^m \big(\C\times T^*\partial M ;\pi^*\Hom (E, F)\big)$, $k\in \N,$  
     and 
     $r_n (t,\tau,y,\xi)\in \cS^m\big( \R\times \C \times T^*\partial M) ;
     \pi^*\Hom (E, F)\big)$,
     $n\in \N$, which all are entire in $\tau$ and fulfill growth conditions
     as in (i) such that for every 
     $n \in \N$ the following asymptotic expansion holds:
     \begin{displaymath}\hspace{4mm}
       \widetilde a (t,\tau,y,\xi) = 
       \sum_{k=0}^n e^{kt} \widetilde a_k (\tau,y,\xi) + 
       e^{(n+1)t} \, r_n(t,\tau,y,\xi). 
     \end{displaymath}
  \item[(iii)]   
     The Schwartz kernel $K_{\widetilde B}$ of the operator
     $\widetilde B:= \widetilde A - \Op (\widetilde a)$ with 
     $\Op (\widetilde a)$ defined by Eq.~\eqref{Eq:DefOp}
     can be represented in the form
     \begin{displaymath}\hspace{-16mm}
       K_{\widetilde B} (t,p , t',p')= \int_\R \, e^{i(t-t')\tau} \,
       \widetilde b (t,\tau, p,p') \, d\tau 
     \end{displaymath}
     with a symbol 
     \begin{displaymath}\hspace{5mm}
     \widetilde b (t,\tau, p,p') \in \cS^{-\infty} 
     \big( T^*\R \times \partial M\times \partial M; \pi^* \Hom (E,F) \big)
     \end{displaymath}
     which is entire in $\tau$ and which for every $\tilde m \in \N$,
     $k,l\in \N$ and every pair of differential
     operators $D_p$ and $D'_{p'}$ on $\partial M$ 
     (acting on the variable $p$ resp.~$p'$) satisfies the following estimate
     uniformly in $t$, $p$, $p'$ and uniformly in a strip 
     $|\im \tau | \leq R$ with $R>0$ 
     \begin{displaymath}\hspace{-18mm}
       \left\| \partial_t^k\partial_\tau^l D_p D'_{p'}  
       \, \widetilde b \right\| \leq C_{\tilde m, k,l,D,D'} 
       \left( 1+ |\tau| \right)^{\tilde m}.
     \end{displaymath}
   \item[(iv)] 
     There exist symbols 
     \[%\hspace{1mm}
       \widetilde b_k (\tau,p,p') \in 
       \cS^{-\infty} \big( \C \times \partial M\times \partial M; 
       \pi^*\Hom (E, F)\big), 
     \]  
     for $k\in \N$ and symbols 
     \[\hspace{6mm}
      r_n (t,\tau,p,p')\in \cS^m\big( \R \times \C \times\partial M\times \partial M; 
      \pi^*\Hom (E, F)\big),
     \]
     for $n\in \N$
     which all are entire in $\tau$ and fulfill growth conditions
     as in (iii) such that for every 
     $n \in \N$ the following asymptotic expansion holds:
     \begin{displaymath}\hspace{4mm}
       \widetilde b (t,\tau,p,p') = 
       \sum_{k=0}^n e^{kt} \, \widetilde b_k (\tau,p,p') + 
       e^{(n+1)t} \, r_n(t,\tau,p,p'). 
     \end{displaymath}
  \end{enumerate}
\end{enumerate}
If in addition to the above conditions the operators $(1-\varphi)A(1-\varphi)$ 
and $\widetilde A$ are both classical pseudodifferential operators, 
then $A$ is a classical \textup{b}-pseudodifferential operator of order $m$. 
We denote the space of classical \textup{b}-pseudodifferential operators on $(M,\bmet)$
of order $m$ between $E$ and $F$ by $\bPsi^m (M;E,F)$, and put as usual
$\bPsi^\infty (M;E,F) := \bigcup_{m\in \Z} \bPsi^m (M;E,F)$.
It is straightforward (though somewhat tedious) to check that 
$\bPsi^\infty (M;E) := \bPsi^\infty (M;E,E)$ even forms an algebra. 
Obviously, $\bPsi^\infty (M;E,F)$ contains as a natural subspace the 
space $\bdiff (M;E,F)$ of all \textup{b}-differential operators on $M$ from $E$ to $F$
which means of all local classical \textup{b}-pseudodifferential operators. 
The following is immediate to check.
\begin{proposition}
\label{prop:DefEqbDiff}
  Let $A \in \bPsi^\infty \big(M ;E,F\big)$. Using the notation from above 
  the following propositions are then equivalent:
  \begin{enumerate}
  \item 
     $A \in \bdiff \big( M; E,F \big)$.
  \item
     The operators $(1-\varphi)A (1 -\varphi)$ and $\widetilde A$ are 
     differential operators, and both the operators $\varphi A \psi$ and
     $\psi A \varphi$ vanish. 
  \item
     The operator $A$ acts as a differential operator over the interior, i.e.
     as a local operator on $\Gamma^\infty \big (E_{|M^\circ} \big)$. In addition, 
     over the cylinder $(-\infty,0)\times \partial M$ 
     the operator $\widetilde A$ can be written locally in the form
     \begin{equation}
     \label{eq:DefEqbDiff}
     \widetilde A = 
     \sum_{j + |\alpha| \leq \operatorname{ord} A} a_{j,\alpha} \partial_y^\alpha \partial_x^j ,
     \end{equation}
     where $a_{j,\alpha} \in \bcC\!\!\big( (-\infty,0) \times U\big)$, $U\subset \partial M$ 
     open and $y:U \rightarrow \R^{\dim M -1}$ are local coordinates of $\partial M$.
  \end{enumerate}
\end{proposition}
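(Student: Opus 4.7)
The strategy is to prove the cyclic chain of implications $(1) \Rightarrow (2) \Rightarrow (3) \Rightarrow (1)$. Recall that $\bdiff(M;E,F)$ was defined as the space of \emph{local} classical \textup{b}-pseudodifferential operators. Thus the only content in addition to the definitions is to extract from locality of $A$ the explicit structural form on the cylinder, and conversely to recognize that form as yielding a classical \textup{b}-pseudodifferential operator.

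\emph{$(1) \Rightarrow (2)$.} Assume $A$ is a local element of $\bPsi^\infty(M;E,F)$. Since $\supp \psi \subset\subset M^1 = M \setminus Y^1$ and $\supp \varphi \subset\subset Y^1$, the two supports are disjoint, so locality of $A$ immediately forces $\varphi A \psi = 0$ and $\psi A \varphi = 0$. The operators $(1-\varphi)A(1-\varphi)$ on $M^\circ$ and the transferred operator $\widetilde A$ on the cylinder inherit locality from $A$; being at the same time classical pseudodifferential operators, Peetre's theorem identifies each with a differential operator. This gives (2).

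\emph{$(2) \Rightarrow (3)$.} The interior statement is immediate since $(1-\varphi)A(1-\varphi)$ is already a differential operator on $M^\circ$. For the cylindrical part, $\widetilde A$ is a classical \textup{b}-pseudodifferential operator that is simultaneously a differential operator, hence its full symbol $\widetilde a(t,\tau,y,\xi)$ is polynomial of degree $\leq \operatorname{ord} A$ in the covariables $(\tau,\xi)$. Condition $(\bPsi4)(\text{ii})$ therefore yields an asymptotic expansion
\begin{equation*}
\widetilde a(t,\tau,y,\xi) \;\sim\; \sum_{k \geq 0} e^{kt}\, \widetilde a_k(\tau,y,\xi),
\end{equation*}
in which each $\widetilde a_k$ is itself polynomial in $(\tau,\xi)$. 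Writing
$\widetilde a_k(\tau,y,\xi) = \sum_{j+|\alpha|\leq \operatorname{ord} A} a_{j,\alpha,k}(y)\,\xi^\alpha \tau^j$
and quantizing, one obtains
\begin{equation*}
\widetilde A \;=\; \sum_{j+|\alpha|\leq \operatorname{ord} A} a_{j,\alpha}(t,y)\, \partial_y^\alpha \partial_t^j,
\qquad a_{j,\alpha}(t,y) \sim \sum_{k \geq 0} a_{j,\alpha,k}(y)\, e^{kt},
\end{equation*}
with the remainder estimates from $(\bPsi4)(\text{ii})$ providing exactly the bounds \eqref{eq:ML20090219-4} defining $\bcC((-\infty,0)\times U)$. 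This is the form asserted in (3).

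\emph{$(3) \Rightarrow (1)$.} An operator acting locally as a differential operator on the interior and taking the stated form on the cylinder is manifestly local; it remains to verify that it satisfies the axioms $(\bPsi1)$--$(\bPsi4)$. Axioms $(\bPsi1)$--$(\bPsi3)$ are trivial consequences of locality combined with disjointness of the supports of the cut-offs. For $(\bPsi4)$, observe that for a polynomial symbol $\tau \mapsto \widetilde a(t,\tau,y,\xi)$ the entire extension in $\tau$ exists tautologically and satisfies the required growth estimates in any strip, and the $\bcC$-expansion of the coefficients $a_{j,\alpha}$ translates term by term into an expansion of $\widetilde a$ of the form in $(\bPsi4)(\text{ii})$, with remainder bounds inherited from \eqref{eq:ML20090219-4}. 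The pieces described by $(\bPsi4)(\text{iii}),(\text{iv})$ are vacuous since $\widetilde B = \widetilde A - \Op(\widetilde a) = 0$ for a differential operator.

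\emph{Main obstacle.} The only non-mechanical step is the translation in $(2) \Rightarrow (3)$: one must check that the \textup{b}-pseudodifferential asymptotic expansion in $e^{kt}$ demanded by $(\bPsi4)(\text{ii})$, when combined with polynomial dependence of $\widetilde a$ in $(\tau,\xi)$ forced by Peetre's theorem, is genuinely equivalent to the coefficients $a_{j,\alpha}$ being \textup{b}-functions in the sense of \eqref{eq:ML20090219-3}--\eqref{eq:ML20090219-4}. Once the remainder bounds are matched carefully, everything else is bookkeeping.
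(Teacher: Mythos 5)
The paper offers no proof of this proposition at all: it is stated with the remark that it ``is immediate to check,'' so there is no argument of the authors' to compare yours against. Your cyclic chain $(1)\Rightarrow(2)\Rightarrow(3)\Rightarrow(1)$ is the natural way to supply the missing details, and it is essentially correct: locality kills $\varphi A\psi$ and $\psi A\varphi$ because $\supp\varphi\subset Y^1$ and $\supp\psi\subset M^1$ are disjoint, Peetre's theorem (or, more directly, the fact that a pseudodifferential operator with kernel supported on the diagonal is differential) converts local classical $\Psi$DOs into differential operators, and the translation between the $e^{kt}$-expansion of $(\bPsi 4)(\mathrm{ii})$ for a polynomial symbol and the $\bcC$-condition \eqref{eq:ML20090219-4} on the coefficients $a_{j,\alpha}$ is exactly the right pivot. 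Two places deserve one more line than you give them. First, in $(2)\Rightarrow(3)$ the interior locality of $A$ is not quite ``immediate from $(1-\varphi)A(1-\varphi)$ being differential'': you must also dispose of the cross terms $\varphi A(1-\varphi)$ and $(1-\varphi)A\varphi$, which follow by writing $1-\varphi=(1-\varphi)\psi+(1-\varphi)(1-\psi)$ and using $\varphi A\psi=\psi A\varphi=0$ together with $\varphi=\varphi(1-\psi)$, so that the remaining pieces are localizations of the differential operator $\widetilde A$. Second, in $(3)\Rightarrow(1)$ the assertion $\widetilde B=\widetilde A-\Op(\sigma(\widetilde A))=0$ uses that in the Widom global calculus \eqref{Eq:DefOp}--\eqref{Eq:DefSym} the quantization is \emph{exact} on differential operators (the $\xi$-integral against a polynomial symbol produces derivatives of the delta at $v=0$, where $\alpha_0\equiv 1$); this is true but worth saying, since $\Op$ and $\sigma$ are mutually inverse only modulo smoothing operators in general. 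With these two remarks your argument is complete and matches the level of routine verification the authors had in mind.
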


Over a cylinder $(-\infty,c)\times N$ with $c\in \R$ and $N$ a compact manifold,
we sometimes use the notation $\bcptPsi^\infty \big((-\infty,c)\times N;E,F\big)$ 
to denote the space of all pseudodifferential operators in
$\bPsi^\infty \big((-\infty,c)\times N;E,F\big)$ having support in some 
cylinder $(-\infty,c-\varepsilon ]\times N$ with $\varepsilon >0$.
We also put 
\begin{equation}
  \label{eq:defbcptdiff}
  \begin{split}
    \bcptdiff & \big((-\infty,c)\times N;E,F\big) := \\
    & \bdiff \big((-\infty,c)\times N;E,F\big) \cap 
    \bcptPsi^\infty \big((-\infty,c)\times N;E,F\big).
  \end{split}
\end{equation}
Note that in condition $(\bPsi 4)$ above, the operator $\widetilde A$
is an element of the space $\bcptPsi^\infty \big((-\infty,3/2)\times \partial M;E,F\big)$.

Throughout this work, we also need the \textup{b}-versions of Sobolev-spaces. 
The \textup{b}-Sobolev space $\bSob^m (M;E)$ is defined for $m \in \N$ by
\begin{equation}
  \label{Eq:DefbSob}
  \bSob^m (M,E) := \big\{ u \in L^2 (M,E) \mid D u \in L^2 (M,E)
  \text{ for all $D\in \bdiff^m (M,E)$}  \big\}.
\end{equation}
For the definition of $\bSob^m (M,E)$ for arbitrary $m\in \R$ 
we refer the reader to \cite{Mel:APSIT}. The following result 
is straightforward.
\begin{proposition}
\label{Prop:PropbSob}
  Let $A \in \bPsi^l (M;E,F)$ be a \textup{b}-pseudodifferential operator.
  Then the following holds true: 
  \begin{enumerate}
  \item 
   $A$ has a natural extension 
   \begin{equation}
   \label{eq:bPsibSob}
     A : \bSob^m (M;E) \rightarrow \bSob^{m-l} (M;F), 
   \end{equation}
   which we denote by the same symbol like the original operator.
 \item
   The \textup{b}-Sobolev-space $\bSob^1 (M,E)$ is the natural {\domain} of any elliptic 
   first order \textup{b}-pseudodifferential operator acting on sections of $E$.
 \item 
   If $A$ has order $l=0$, then $A$ is bounded.   
 \end{enumerate}
\end{proposition}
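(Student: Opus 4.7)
The plan is to prove the three assertions in the order $(3) \Leftarrow (1)$ then $(2)$, since boundedness of a zeroth-order $b$-operator on $L^2 = \bSob^0$ is exactly the specialization $m = l = 0$ of the extension statement.

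For part (1), I would decompose $A$ by means of the pair of auxiliary cut-off functions $\psi,\varphi$ into the four building blocks that appear in the definition of $\bPsi^l(M;E,F)$ and treat each separately. The interior piece $(1-\varphi)A(1-\varphi)$ is, by $(\bPsi 1)$, a compactly supported ordinary pseudodifferential operator of order $l$ on the open manifold $M^\circ$; since on any compact subset of $M^\circ$ the $b$-Sobolev norm is equivalent to the usual Sobolev norm, standard continuity $H^m_{\mathrm{loc}} \to H^{m-l}_{\mathrm{loc}}$ applies. The off-diagonal pieces $\varphi A \psi$ and $\psi A \varphi$ have integral kernels in $\cJ^\infty\big(\partial(M\times M), M\times M; E'\boxtimes F\big)$ by $(\bPsi 2)$ and $(\bPsi 3)$, hence vanish to infinite order at the boundary and are smoothing, so they map $\bSob^m$ to $\bSob^k$ for any pair $m,k$. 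The delicate piece is the cylindrical contribution $\widetilde A$: under the isomorphism $(x,\eta)^*$ of Proposition \ref{p:bsmooth}, the $b$-Sobolev space near $\pl M$ corresponds to the ordinary Sobolev space on the cylinder $(-\infty,\ln\tfrac{3}{2})\times \pl M$ equipped with the product metric. Using the global symbol expansion
\begin{equation*}
\widetilde a(t,\tau,y,\xi) = \sum_{k=0}^n e^{kt}\,\widetilde a_k(\tau,y,\xi) + e^{(n+1)t}\,r_n(t,\tau,y,\xi),
\end{equation*}
the leading term $\widetilde a_0$ is $t$-independent and produces a translation-invariant pseudodifferential operator on $\R\times \pl M$, whose $H^m\to H^{m-l}$ continuity follows by Plancherel in $t$ combined with compactness of $\pl M$ and the uniform symbol estimate in $(\bPsi 4)(\mathrm{i})$. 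The higher-order terms $e^{kt}\widetilde a_k$ come with extra decay as $t\to-\infty$, which only improves boundedness, and the residual kernel $K_{\widetilde B}$ from $(\bPsi 4)(\mathrm{iii})$ is handled identically.

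For part (2), the inclusion $\bSob^1(M;E) \subseteq \dom(A)$ for an elliptic first-order $A \in \bPsi^1(M;E)$ is part (1) with $l = m = 1$. For the reverse inclusion, I would construct, by the standard iterative procedure within the small $b$-calculus, a parametrix $Q \in \bPsi^{-1}(M;E)$ with $QA = \mathrm{Id} + R$ where $R$ is a smoothing $b$-operator. Then for any $u \in L^2$ with $Au \in L^2$,
\begin{equation*}
u = Q(Au) - Ru,
\end{equation*}
and the right hand side lies in $\bSob^1$: the first summand by applying (1) to $Q$, the second because a smoothing $b$-operator maps $L^2$ into every $\bSob^k$ by the kernel analysis used for the off-diagonal terms above.

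The main technical obstacle is the cylindrical translation-invariant analysis required for the leading symbol $\widetilde a_0$, together with the parametrix construction in part (2), which in the $b$-calculus must invert the indicial family rather than merely the principal symbol. These are precisely the points at which the $b$-calculus departs from the standard closed-manifold setting; once they are established (which in the paper's framework is provided by the machinery of Melrose recalled in the subsequent Section \ref{s:IndFam}), the remaining arguments reduce to routine Fourier analysis on cylinders.
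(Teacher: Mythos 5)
The paper offers no proof of this proposition --- it is dismissed with ``The following result is straightforward'' --- so there is no argument of the authors' to compare against; I can only assess your proposal on its own terms. It is correct and is the standard argument: the four-block decomposition via the auxiliary cut-offs mirrors the definition $(\bPsi 1)$--$(\bPsi 4)$ exactly, the interior and off-diagonal pieces are handled as you say, and the cylindrical piece reduces to boundedness of operators on $\R\times\pl M$ whose symbols satisfy the uniform-in-$t$ estimates of $(\bPsi 4)(\mathrm{i})$; splitting off the translation-invariant leading term $\widetilde a_0$ is a clean way to organize this, though strictly the uniform symbol estimates already suffice for part (1). One small correction to your closing remark: for part (2) you do \emph{not} need to invert the indicial family. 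The symbolic parametrix construction in the small \textup{b}-calculus, using only ellipticity of the principal symbol, yields $Q\in\bPsi^{-1}$ with $QA=\mathrm{Id}+R$, $R\in\bPsi^{-\infty}$, and such an $R$ maps $L^2$ into $\bSob^k$ for every $k$ (it improves \textup{b}-Sobolev regularity indefinitely, even though it does not improve decay at the boundary); that is all the identity $u=Q(Au)-Ru$ requires. Inverting the indicial family becomes necessary only for Fredholm statements (parametrices modulo trace-class or compact errors), not for identifying the maximal domain with $\bSob^1$ --- indeed the proposition must hold for non-Fredholm operators such as a \textup{b}-Dirac operator with non-invertible $\diracbdy$.
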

\section{Indicial family}
\label{s:IndFam}
\sind{indicial family}
Assume $A\in \bPsi^m (M;E,F)$. Denote by $\widetilde A$ and $\widetilde a$ 
the induced operator and its complete symbol on the cylinder 
$\R \times \partial M$ as above in condition $(\bPsi 4)$. Consider the 
zeroth order term $\widetilde a_0$ in the asymptotic expansion
$(\bPsi 4) (ii)$ and put for $\tau \in \C$, $u \in \Gamma^\infty (\partial M ; E)$
and $p\in \partial M$
%\begin{equation}
  %\label{eq:DefIndFam}
  %\begin{split}
\begin{align}
\cI (A) & (\tau) u (p)  :=  \Op \big(\widetilde a_0 (\tau, - )\big) u (p) =\label{eq:DefIndFam} \\
   = \, & \frac{1}{(2\pi)^{\dim M -1}} \hspace{-6mm}
   \int\limits_{T_p\partial M \times T^*_p\partial M} 
   \hspace{-2mm} \alpha_0 (p, \exp v) \, 
   e^{-i \langle v , \xi \rangle} \, \widetilde a_0(\tau,p,\xi ) \, 
   \tau^E \big(  u (\exp v) , p \big) \, dv \, d\xi ,\nonumber
\end{align}
  %\end{split}
%\end{equation}
where, as explained in Section \ref{Sec:GloSymCalPsiOp}, 
$\alpha_0 : M \times M \rightarrow [0,1]$ is a 
cut-off function vanishing outside the injectivity radius and $\tau^E$ is 
a connection induced parallel transport on $E$. One thus obtains an entire 
family $\cI (A)$ of pseudodifferential operators on 
the boundary $\partial M$ which is called the \textit{indicial family}\sind{indicial family}
of $A$. The indicial family plays a crucial role in deriving the Atiyah--Patodi--Singer
index formula within the \textup{b}-calculus (\emph{cf.}~\cite{Mel:APSIT}).

%%%%%%%%%%%%%%%%%%%%%%%%%%%%%%%%%%%%%%%%%%%%%%%%%%%%%%%%%%%%%%%%%%%%%%%%%%%%%%
%% Section
\chapter{The \textup{b}-Analogue of the Entire Chern Character}
\label{s:b-Chern-character}

After discussing in Section \ref{s:btrace} the \textup{b}-trace in the 
context of a manifold with cylindrical ends, we digress in 
Section \ref{s:McK-S} to establish a cohomological analogue of the 
well-known McKean--Singer formula in the framework of relative cyclic 
cohomology for the pseudodifferential  \textup{b}-calculus, and then employ 
it to recast Melrose's approach to the proof of the Atiyah--Patodi--Singer 
index theorem. In Section \ref{s: b-trace formula}  we establish an effective 
formula for the \textup{b}-trace, which will be used later in the paper. 
The rest of this chapter is devoted to a reformulation of Getzler's version 
of the entire Connes--Chern character in the setting of \textup{b}-calculus, 
formulated in terms of relative cyclic cohomology.

\section{The \textup{b}-trace}
\label{s:btrace}

From now on we assume that $M$ is a compact manifold with boundary,
that $r: Y \rightarrow [0,2)$ is a boundary defining function, 
and that $\bmet $ is an exact \textup{b}-metric on $M$. These are the main
ingredients of the \textup{b}-calculus, which we will use in what follows
(see Sections \ref{App:bdefbmet} to \ref{s:IndFam} for basic definitions 
and the monograph \cite{Mel:APSIT} for further material on the 
\textup{b}-calculus).  

Before we can construct the \textup{b}-analogue of the entire Chern character 
we have to recall here however the definition of the \textup{b}-\emph{trace}
(\emph{cf.}~\cite{Mel:APSIT}), since this notion plays an essential role in our work. 
It will often be convenient to choose cylindrical coordinates
(see Figure \ref{fig:MCylinder} on page \pageref{fig:MCylinder})
$(x,\eta): Y^\circ \rightarrow \, (-\infty, \ln 2 ) \times \partial M$ 
over a collar $Y \subset M $ with a boundary defining function
$r : Y \rightarrow [0,2)$ (see Section \ref{App:bdefbmet} for details and notation).
When using these coordinates, we view the interior $M^\circ$  
as a manifold with cylindrical ends, and have in this picture
$M^\circ \cong (-\infty,0]\times \pl M\cup_{\pl M} \overline{M^1}$. 
All explicit calculations will be done in cylindrical coordinates
as explained in the previous sections. 
Let $E$ be a smooth hermitian vector bundle over $M$.  
Whenever convenient,
we will tacitly identify elements of $\Gamma^\infty\bigl((-\infty,0]\times \pl M;E\bigr)$,
the sections of $E$ over $(-\infty,0]\times \partial M$, with
$\Gamma^\infty(\pl M;E\rest{\pl M})$--valued smooth functions on $(-\infty,0]$,
\emph{i.e.} elements of $\cC^\infty\bigl((-\infty,0],\Gamma^\infty(\pl M;E\rest{\pl M})\bigr)$,
in the obvious way; \emph{cf.} Section \ref{Sec:ClassbPseuOp}. 
Accordingly, we define for $u\in \Gamma^\infty_{\textup{cpt}}\bigl((-\infty,0]\times \pl M;E\bigr)$ 
the Fourier transform in the cylinder variable, $\hat u(\gl)\in\Gamma^\infty(\pl M;E\rest{\pl E})$, by
\begin{equation}
     \hat u(\gl,p):=\int_{-\infty}^\infty e^{-ix\gl} u(x,p) dx.
\end{equation}

Now assume that $A\in \bPsi^{-\infty}(M;E)$
is a smoothing \textup{b}-pseudodifferential operator. 
In general,  $A$ is not trace class in the usual sense. However, since $A$ 
has a smooth Schwartz kernel it  is locally trace class, in the sense that 
$\psi A\varphi$ is trace class for any pair of smooth functions 
$\psi,\varphi : M \rightarrow \R$ having compact support in $M^\circ$. 
Using the notation from Section \ref{Sec:ClassbPseuOp}, let $\widetilde A$
be the operator induced on the cylinder $(-\infty , 0] \times \partial M$. 
We define then an operator valued symbol
$A_\partial (x,\lambda): \Gamma^\infty (E_{|\partial M}) \rightarrow \Gamma^\infty (E_{|\partial M})$
as follows: 
for $v\in \Gamma^\infty(E\rest{\pl M})$ put
\begin{equation}\label{eq:DefIndFamAlt}
          \bigl(A_\pl(x,\gl)v\bigr)(p):=\Bigl(e^{-ix\gl}\widetilde A\bigl(e^{i\cdot\gl}\otimes v\bigr)\Bigr)(x,p).
\end{equation}
For $v\in\Gamma^\infty(E\rest{\pl M})$ we have $(e^{i\cdot \gl}\otimes v)^\wedge=2\pi \delta_\gl\otimes v$, 
hence for 
$u \in \Gamma^\infty_{\textup{cpt}} \big( (-\infty , 0] \times \partial M;E \big)$
one then obtains
\begin{equation}
\label{eq:ftbdrop}
  \begin{split}
  \bigl(\widetilde A u \bigr) (x,p) & = 
  \frac{1}{2\pi } \int_{-\infty}^{\infty} e^{i x \lambda} 
  \big( A_\partial (x,\lambda ) \hat u(\lambda, -) \big)(p) \, d\lambda = \\
  & = \frac{1}{2\pi} \int_{-\infty}^{\infty}\int_{-\infty}^{\infty} 
  e^{i (x-\tilde x) \lambda} 
  \big( A_\partial (x,\lambda ) u(\tilde x, -) \big)(p) \, d\tilde x \, d\lambda.
  \end{split}
\end{equation}
We note in passing that $A_\partial (x,\lambda)$ can also be constructed
from the global symbol $\widetilde a$ as  
$A_\partial (x,\lambda) = \Op \big( \widetilde a (x,\lambda,-) \big) $
(\emph{cf.}~Sections \ref{Sec:GloSymCalPsiOp} and \ref{Sec:ClassbPseuOp}).
For the properties of $\widetilde a$ see \cite[Sec. 2.2]{Loy:DOB} and Section
\plref{Sec:ClassbPseuOp}. In the small \textup{b}-calculus $\widetilde a(x,\gl,-)$ and hence 
$A_\partial (x,\lambda)$ are entire in $\gl$ while in the full \textup{b}-calculus
they are meromorphic \cite{Mel:APSIT}. 
In any case one has
\begin{equation}\label{eq:asymptotics-b-symbol}
      A_\partial (x,\gl) = \cI(A)(\gl) +O(e^x),\quad x\to - \infty,
\end{equation}
with a family $\cI(A)(\gl), \gl\in\R,$
of classical pseudodifferential operators on $\pl M$ in the parameter dependent
calculus; 
\emph{cf. e.g.}~\cite[Sec. 2.1]{LesMosPfl:RPC} for a brief summary of the parameter dependent 
calculus.
It turns out that the operator valued function $\, \gl\in\R \mapsto \cI(A)(\gl)$
is exactly the \semph{indicial family} of $A$ as defined in Section \ref{s:IndFam}.
The reason is that in terms of the global symbol
$\widetilde a$ one has $\cI(A)(\gl) = \Op \big( \widetilde a_0 (\lambda,-) \big)$,
where $\widetilde a_0 (\lambda,-)$ is the first term in the asymptotic expansion
of $\widetilde a$  with respect to $x\rightarrow -\infty$.

Denote by $k(x,\tilde x)$, $x,\tilde x \ge 0$, the 
$\sL(L^2(\pl M;E\rest{\pl M}))$-valued kernel of $\widetilde A$. 
In terms of $A_\partial (x,\gl)$ the kernel $k(x,\tilde x)$ is given by
\begin{equation}\label{eq:KernelOnCylinder}
       k(x,\tilde x)=
       \frac{1}{2\pi} \int_{-\infty}^\infty e^{i(x-\tilde x )\gl} A_\partial (x,\gl) d\gl.
\end{equation}
Hence, as $R\to \infty$ one has in view of \eqref{eq:asymptotics-b-symbol}
\begin{equation}\label{eq:b-trace-explanation}
   \begin{split}
   \Tr(A&\rest{\{x\ge -R\}}) = \\
                 = \, &\Tr(A\rest{M^1})+\int_{-R}^0\Tr_{\pl M}(k(x,x))\;dx\\
                 = \, &\Tr(A\rest{M^1})+\frac{1}{2\pi}\int_{-\infty}^0\int_{-\infty}^\infty \Tr_{\pl M}\bigl( A_\partial (x,\gl)-\cI(A)(\gl)\bigr) \, d\gl \, dx\\
                 &+R \frac{1}{2\pi}\int_{-\infty}^\infty 
                 \Tr_{\pl M}\bigl(\cI(A)(\gl)\bigr) \, d\gl+O(e^{-R}),\quad R\to\infty.
   \end{split}
\end{equation}
The \emph{finite part} of this expansion is called the {\btrace} of $A$:
\sind{partie finie}
\begin{equation}
\label{eq:b-trace-def}
   \bTr(A):=\Tr(A\rest{M^1})+\frac{1}{2\pi}
   \int_{-\infty}^0\int_{-\infty}^\infty \Tr_{\pl M}\bigl(A_\partial(x,\gl)-
   \cI(A)(\gl)\bigr) \, d\gl \, dx.
\end{equation}
Hence
\begin{equation}\label{eq:b-trace-def-a}
\begin{split}
   \Tr(A&\rest{\{x\ge -R\}})\\
        &=\bTr(A)+R \frac{1}{2\pi}\int_{-\infty}^\infty 
        \Tr_{\pl M}\bigl(\cI(A)(\gl)\bigr) d\gl+O(e^{-R}),\quad R\to\infty.
\end{split}
\end{equation}
Its name notwithstanding, the {\btrace}  is not a trace. One has, however,
the following crucial formula.

\begin{proposition}[{\cite[Prop.~5.9]{Mel:APSIT}, \cite[Thm.~2.5]{Loy:DOB}}]
\label{p:b-trace-defect}
Assume that $A\in{\bpdo^m (M;E)}$ and $K\in\bpdo^{-\infty}(M;E)$. Then 
\begin{equation}
  \bTr(AK-KA)=\frac{-1}{2\pi i} \int_{-\infty}^\infty 
  \Tr_{\pl M}\left( \frac{d\cI(A)(\gl)}{d\gl} \, \cI(K)(\gl) \right)d\gl.
\end{equation}
\end{proposition}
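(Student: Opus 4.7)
Since $K\in\bPsi^{-\infty}(M;E)$ is smoothing, both $AK$ and $KA$ are smoothing \textup{b}-pseudodifferential operators, and the indicial family of the commutator factors multiplicatively: $\cI(AK-KA)(\lambda)=[\cI(A)(\lambda),\cI(K)(\lambda)]$, a commutator of a pseudodifferential operator with the trace-class (because smoothing) operator $\cI(K)(\lambda)$ on $\pl M$. Consequently $\Tr_{\pl M}\cI(AK-KA)(\lambda)=0$, so the linear-in-$R$ coefficient in \eqref{eq:b-trace-def-a} vanishes for $C:=AK-KA$, the subtraction in \eqref{eq:b-trace-def} becomes superfluous, and
\begin{equation*}
\bTr(AK-KA)=\Tr\bigl((AK-KA)\rest{M^1}\bigr)+\frac{1}{2\pi}\int_{-\infty}^0\!\!\int_{-\infty}^\infty \Tr_{\pl M}\bigl([A,K]_\pl(x,\lambda)\bigr)\,d\lambda\,dx,
\end{equation*}
with all integrals absolutely convergent by virtue of the smoothness of $\cI(K)(\lambda)$.

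Next I would exploit ordinary trace-cyclicity via a cutoff. Choose $\phi_R\in\cC^\infty_c(M^\circ)$ with $\phi_R\equiv 1$ on $\{x\geq -R\}$ and $\phi_R\equiv 0$ on $\{x\leq -R-1\}$. Then $\phi_R^2[A,K]$ and $[\phi_R^2,A]K$ are trace-class (their Schwartz kernels have compact support on $M^\circ$), so cyclicity gives
\begin{equation*}
\Tr\bigl(\phi_R^2[A,K]\bigr)=\Tr\bigl([\phi_R^2,A]\,K\bigr).
\end{equation*}
The left-hand side converges to $\bTr(AK-KA)$ as $R\to\infty$ by dominated convergence, using the absolute convergence established above. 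Decomposing $A$ and $K$ via the auxiliary cutoffs $\psi,\varphi$ of Section \ref{Sec:ClassbPseuOp} separates off the interior contribution $\Tr((AK-KA)\rest{M^1})$, whose commutators vanish by ordinary trace-cyclicity; the residual analysis then takes place on the cylinder $(-\infty,0]\times\pl M$, where the Fourier representation \eqref{eq:KernelOnCylinder} is available.

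The heart of the argument is the asymptotic evaluation of $\Tr([\phi_R^2,A]K)$ as $R\to\infty$. By \eqref{eq:KernelOnCylinder}, the kernel of $[\phi_R^2,A]$ on the cylinder is
\begin{equation*}
k_{[\phi_R^2,A]}(x,\tilde x)=\frac{1}{2\pi}\bigl(\phi_R^2(x)-\phi_R^2(\tilde x)\bigr)\int e^{i(x-\tilde x)\lambda}A_\pl(x,\lambda)\,d\lambda,
\end{equation*}
supported in a strip of width $O(1)$ around $x=-R$. Using the identity $\phi_R^2(x)-\phi_R^2(\tilde x)=(x-\tilde x)\int_0^1 (\phi_R^2)'(\tilde x+s(x-\tilde x))\,ds$ together with $(x-\tilde x)e^{i(x-\tilde x)\lambda}=-i\,\partial_\lambda e^{i(x-\tilde x)\lambda}$, integration by parts in $\lambda$ transfers the derivative onto $A_\pl(x,\lambda)$. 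Composing with $K$, taking $\Tr_{\pl M}$, and invoking the expansion \eqref{eq:asymptotics-b-symbol} so that $A_\pl(x,\lambda)\to\cI(A)(\lambda)$ and $K_\pl(y,\mu)\to\cI(K)(\mu)$ as $R\to\infty$ (since $\phi_R'$ migrates to $x=-\infty$), the translation-invariant limit collapses the double $(x,\tilde x)$-integration via Fourier inversion. The normalisation $\int(\phi_R^2)'(x)\,dx=1$ produces the overall factor, yielding
\begin{equation*}
\lim_{R\to\infty}\Tr\bigl([\phi_R^2,A]K\bigr)=\frac{-1}{2\pi i}\int \Tr_{\pl M}\!\Bigl(\frac{d\cI(A)(\lambda)}{d\lambda}\,\cI(K)(\lambda)\Bigr)d\lambda.
\end{equation*}

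The main obstacle is the justification that the non-indicial remainders $A_\pl(x,\lambda)-\cI(A)(\lambda)=O(e^x)$ and $K_\pl(y,\mu)-\cI(K)(\mu)=O(e^y)$ contribute nothing in the limit. This reduces to absolute convergence of the iterated integrals, which is guaranteed by the rapid $\mu$-decay that the smoothing property of $\cI(K)(\lambda)$ imposes on the trace-norm estimates, combined with uniform-in-$R$ dominated convergence in $x$; both pieces are supplied by the symbol-class bounds of condition $(\bPsi 4)$, which also legitimise the integration by parts in $\lambda$ via the entirety and growth control of $A_\pl(x,\lambda)$ in a strip around the real axis.
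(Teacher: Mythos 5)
Your proposal is correct, and it proves strictly more than the paper does: the paper only gives an instructive proof in the special case $A=\dirac$ (deferring the general statement to Melrose and Loya), whereas your argument handles arbitrary $A\in\bPsi^m$. The two routes differ in an interesting way. The paper exploits the special first-order form $\dirac=\Gammabdy\frac{d}{dx}+\diracbdy$ to recognize the diagonal trace density of $\dirac K-K\dirac$ as the exact derivative $\frac{1}{2\pi}\int\Tr_{\pl M}\bigl(\Gammabdy\,\partial_x k(x,\gl)\bigr)d\gl$, so the fundamental theorem of calculus leaves only a boundary term at $x=-R$, whose limit is read off from $\frac{d}{d\gl}\cI(\dirac)(\gl)=i\Gammabdy$. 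You instead first use multiplicativity of $\cI$ and vanishing of $\Tr_{\pl M}$ on commutators with a smoothing factor to kill the $R$-linear term, so that $\bTr([A,K])$ becomes an absolutely convergent integral of the diagonal; the cutoff identity $\Tr(\phi_R^2[A,K])=\Tr([\phi_R^2,A]K)$ then converts the b-trace into the limit of an honest trace localized near $x=-R$, evaluated by integration by parts in $\gl$ and Parseval on the model cylinder, with the non-translation-invariant remainders contributing $O(e^{-R})$. Both arguments localize the trace defect at cylindrical infinity; yours is essentially the standard proof of the full trace-defect formula in the cited references, and is closely related in spirit to the paper's second proof of Proposition \ref{t:bTraceAsTrace}. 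The constants check out: your normalization yields $\frac{i}{2\pi}=\frac{-1}{2\pi i}$. One small caveat: the integration by parts in $\gl$ against a non-decaying order-$m$ symbol is an oscillatory-integral manipulation, legitimate only after composing with the rapidly decaying $\cI(K)(\gl)$ and taking $\Tr_{\pl M}$ (or after splitting off a smoothing part of $A$); the relevant justification is the polynomial symbol bounds on the real axis paired with the Schwartz decay of the smoothing factor, not the analyticity of $A_\pl(x,\tau)$ in a complex strip, which plays no role here.
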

%\begin{small}
%\par\noindent
%\hrulefill\par
%The following proof is unedited. I am not sure whether we really want it in the paper.
%In case you want it, it needs to be edited according to current notational conventions. ML 19.02.09
%
\begin{proof}[Proof in a special case] It is instructive to prove this in the special case that $A$ is a Dirac operator $\dirac$.
We will see in Section \ref{s:b-Clifford-Dirac} below that on the cylinder $\dirac$ takes the form
$\dirac=\Gammabdy \frac{d}{dx}+\diracbdy$ and that $\cI(\dirac)(\gl)=i\Gammabdy \gl+\diracbdy$.

After choosing cut--off functions  w.l.o.g. we may assume that $K$ is supported in the interior of 
the cylinder and given by 
\begin{equation}
    \begin{split}
      (Ku)(x)&=\frac{1}{2\pi}\int_{-\infty}^\infty e^{ix\gl} k(x,\gl) \hat u(\gl)d\gl\\
           &=\frac{1}{2\pi}\int_{-\infty}^\infty\int_{-\infty}^\infty  e^{i(x-y)\gl} k(x,\gl) u(y) dyd\gl
         \end{split}
\end{equation}
with an operator valued symbol $k(x,\gl)$. Then
\begin{equation}
\begin{split}
    (\dirac Ku)(x)&= \frac{1}{2\pi}\int_{-\infty}^\infty e^{ix\gl}(i\gl\Gammabdy+\diracbdy)k(x,\gl) \hat u(\gl)d\gl\\
          &\qquad +\frac{1}{2\pi}\int_{-\infty}^\infty e^{ix\gl} \Gammabdy\partial_x k(x,\gl) \hat u(\gl) d\gl.
\end{split}
\end{equation}
Furthermore, since $(\dirac u)^\wedge=\bigl(i\Gammabdy\gl+\diracbdy)\hat u$ we have
\begin{equation}
    (K\dirac) u(x)= \frac{1}{2\pi}\int_{-\infty}^\infty e^{ix\gl}k(x,\gl)(i\gl\Gammabdy+\diracbdy) \hat u(\gl)d\gl.
\end{equation}
Consequently
\begin{equation}
    \Tr_{\pl M}\bigl((\dirac K-K\dirac)(x,x)\bigr) =\frac{1}{2\pi}\int_{-\infty}^\infty \Tr_{\pl M}\bigl(\Gammabdy \partial_x k(x,\gl)\bigr)d\gl,
\end{equation}
and hence, since by assumption $K$ is supported in $(-\infty,0)\times \pl M$  
and taking \eqref{eq:asymptotics-b-symbol} into account, we find
\begin{align}
    \int_{-R}^0\Tr_{\pl M}\bigl((&\dirac K-K\dirac)(x,x)\bigr) dx =\frac{-1}{2\pi} \int_{-\infty}^\infty \Tr_{\pl M}\bigl(\Gammabdy k(-R,\gl)\bigr)d\gl\\
         &\overset{R\to \infty}{\longrightarrow}  \frac{-1}{2\pi i}\int_{-\infty}^\infty \Tr_{\pl M}\Bigl(\frac{d\cI(\dirac)(\gl)}{d\gl} \cI(K,\gl) \Bigr)d\gl.\qedhere
\end{align}
\end{proof}
%\par\noindent
%\hrulefill
%\end{small}
\noindent

Eq.~\eqref{eq:b-trace-def-a} immediately entails the following result.
\begin{corollary}
\label{prop:smotrcl}
  Let $A\in{\bpdo^m} (M;E)$ be a classical \textup{b}-pseudodifferential 
  operator of order $m < \dim M$. 
  If the indicial family\sind{indicial family} $\cI(A) $ vanishes, then $A$ is trace class,  and 
  \[
    \Tr (A) = \bTr(A ).
  \]
\end{corollary}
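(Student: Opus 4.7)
The plan is to read off the claim directly from the formula \eqref{eq:b-trace-def-a} once one checks that the indicial vanishing turns the local trace class property of $A$ into a global one. Since $A$ is classical of order $m$ sufficiently negative, its Schwartz kernel is continuous on $M^\circ\times M^\circ$ and its restriction $A\rest{M^s}$ to any region bounded away from $\partial M$ is trace class in the usual sense by the standard estimate for classical pseudodifferential operators on compact manifolds. The only thing to worry about, therefore, is the behavior of the Schwartz kernel of $A$ along the cylindrical end.

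First I would exploit the assumption $\cI(A)=0$. By the asymptotic expansion \eqref{eq:asymptotics-b-symbol} the operator-valued symbol satisfies $A_\partial(x,\lambda)=O(e^{x})$ as $x\to -\infty$, uniformly in $\lambda$ (on compacta) and uniformly after any fixed number of derivatives. Substituting into \eqref{eq:KernelOnCylinder} and taking the pointwise trace on $\partial M$ one obtains
\begin{equation}
\Tr_{\partial M}\bigl(k(x,x)\bigr)=O(e^{x}),\qquad x\to -\infty.
\end{equation}
This yields absolute integrability of $x\mapsto \Tr_{\partial M}(k(x,x))$ on $(-\infty,0]$, which in conjunction with the trace class property on $M^1$ shows that $A$ is trace class on $M^\circ$ and that
\begin{equation}
\Tr(A)=\lim_{R\to\infty}\Tr\bigl(A\rest{\{x\geq -R\}}\bigr).
\end{equation}

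Finally, I would plug $\cI(A)=0$ directly into \eqref{eq:b-trace-def-a}, which collapses the linearly divergent term and leaves
\begin{equation}
\Tr\bigl(A\rest{\{x\geq -R\}}\bigr)=\bTr(A)+O(e^{-R}),\qquad R\to\infty.
\end{equation}
Passing to the limit $R\to\infty$ and combining with the previous display yields $\Tr(A)=\bTr(A)$.

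No serious obstacle is anticipated: the only non-formal step is the decay estimate on $\Tr_{\partial M}(k(x,x))$, and that is an immediate consequence of \eqref{eq:asymptotics-b-symbol} together with the fact that, in the small \textup{b}-calculus, the remainder $A_\partial(x,\lambda)-\cI(A)(\lambda)$ is controlled uniformly in $x$ in every seminorm of the parameter-dependent symbol calculus. Everything else is a matter of reading off the identities already established in the definition of $\bTr$.
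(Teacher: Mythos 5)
Your argument is essentially the paper's own proof: the corollary is read off from \eqref{eq:b-trace-def-a} by setting $\cI(A)=0$, which removes the linearly divergent term and identifies $\bTr(A)$ with $\lim_{R\to\infty}\Tr\bigl(A\rest{\{x\ge -R\}}\bigr)$, the trace-class property coming from the $O(e^{x})$ decay in \eqref{eq:asymptotics-b-symbol}. The only wording to tighten is the trace-class step: integrability of the diagonal fibrewise trace $x\mapsto \Tr_{\partial M}(k(x,x))$ does not by itself imply trace class, so one should instead invoke the standard fact (used in the paper's second proof of Proposition \ref{t:bTraceAsTrace}, following Loya) that a classical \textup{b}-operator of order $<-\dim M$ whose kernel carries an extra $e^{\varepsilon x}$ decay along the cylinder is trace class; with that in hand your limit computation is exactly the paper's one-line argument.
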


\section{The relative McKean--Singer formula and the \textup{APS} Index Theorem}
\label{s:McK-S}
\newcommand{\taubar}{\overline\tau}
In the introduction to \cite{Mel:APSIT}, the author explained in detail his elegant
approach to the {\APS} index theorem, based on the \textup{b}-calculus. 
The relative cohomology point of view allows to make this approach even more
appealing. 
Indeed, we will show that the {\APS}  index can be obtained as the pairing between a
natural relative cyclic $0$-cocycle, one of whose components is the {\btrace},
and a relative cyclic $0$-cycle constructed out of the heat kernel.\sind{heat kernel}
This pairing leads  in fact to a relative version of the McKean--Singer formula. 
\sind{pairing}\sind{McKean--Singer formula!relative}\sind{relative!McKean--Singer formula}

We start, a bit more abstractly, by considering an exact sequence of algebras
\begin{equation}\label{eq:ML200909141}
0\longrightarrow \cJ \longrightarrow \cA \overset{\sigma}{\longrightarrow} \cB \longrightarrow 0;
\end{equation}
$\cA, \cB$ are assumed to be unital, $\sigma$ is assumed to be a unital homomorphism. 
Let $\tau$ be a \semph{hypertrace} on $\cJ$, \emph{i.e.} $\tau$ satisfies
\begin{equation}\label{eq:ML200909111}
    \tau(xa)=\tau(ax) \text{ for } a\in\cA, x\in\cJ .
\end{equation}
Let $\taubar:\cA\longrightarrow \C$ be a
linear extension (\semph{regularization}) of $\tau$  
to $\cA$, which is not assumed to be tracial. Nevertheless,
 $\taubar$ induces a cyclic 1-cocycle on $\cB$ as follows:
\begin{equation}\label{eq:ML200911261}
     \mu(\sigma(a_0),\sigma(a_1)):= \taubar([a_0,a_1]).
\end{equation}
Because of Eq. \eqref{eq:ML200909111},
$\mu$ does indeed depend only on $\sigma(a_0),\sigma(a_1)$. Moreover, the pair $(\taubar,\mu)$
is a relative cyclic cocycle. Namely, in the notation of \eqref{Eq:DefCoBdrRelMixDer},
Eq.~\eqref{eq:ML200911261} translates into $\widetilde b (\taubar,\mu)=0$.

The relevant example for this paper is the exact sequence
\begin{equation}
0\longrightarrow \bPsi^{-\infty}_{\textrm{tr}}(M;E)\longrightarrow \bPsi^{-\infty}(M;E)^+
\overset{\cI}{\longrightarrow} \sS\longrightarrow 0,
\end{equation}
where $M$ is a compact manifold with boundary equipped with an exact \textup{b}-metric.
The operator trace $\Tr$ on $\bPsi^{-\infty}_{\textrm{tr}}(M;E) = \Ker \cI$ satisfies 
\eqref{eq:ML200909111}, and the
{\btrace}  $\bTr$ provides its linear extension to  $\bPsi^{-\infty}(M;E)^+$.  

The indicial map $\cI$ realizes the quotient algebra 
\[\sS=\bPsi^{-\infty}(M;E)^+/\bPsi^{-\infty}_{\textrm{tr}}(M;E)\]
as a subalgebra of the unitalized algebra
$\sS(\R,\Psi^{-\infty}(\partial M;E))^+$ of Schwartz functions with
values in the smoothing operators $\Psi^{-\infty}(\partial M;E)$ on the
boundary. That $\sS$ does not equal  $\sS(\R,\Psi^{-\infty}(\partial M;E))^+$
(which would be nicer and more intuitive here) has to do with the fine print
of the definition of the $b$--calculus which requires e.g. the analyticity
of the indicial family. For our discussion here these details are not
relevant and hence we will not elaborate further on them.

Going back to the abstract sequence \eqref{eq:ML200909141}
assume now that the algebra $\cA$ is represented as bounded operators on some Hilbert space $\sH$
and that $\cJ\subset \sL^1(\sH)$ consists of trace class operators. Let $D$ be a 
self-adjoint unbounded operator affiliated with $\cA$, \emph{i.e.}
bounded continuous functions of $D$ belong to $\cA$. 
Furthermore, we assume that we are in a graded (even) situation
and denote the grading operator by $\ga$. 
Finally we assume that $D$ is a Fredholm operator and that the orthogonal projection $P_{ \Ker D}\in \cJ$.

We note that in the case of a Dirac operator $\dirac$ on the \textup{b}-manifold $M$ it is well-known that
$\dirac$ is Fredholm if and only if the tangential operator $\diracbdy$ (see Section \plref{s:b-Clifford-Dirac}
and Eq.~\eqref{eq:essspecbd} below) is invertible.

Define
\begin{align}
	A_0(t)&:=\alpha D e^{-\frac 12 t D^2},\\
	A_1(t)&:= \int_t^\infty D e^{(\frac{t}{2}-s) D^2}\, ds .
\end{align}
Since $D$ is affiliated with $\cA$,
$A_j(t)\in\cA$ for $t>0$ and $j=1,2$.

\subsection{Case 1: $e^{-t D^2}\in\cJ$, for $t>0$}
Under this assumption $A_j(t)\in \cJ$, for $t>0$.
Moreover, $e^{-t D^2}\in C_0^\gl(\cJ)=C_0(\cJ)/((1-\gl)C_0(J))$
defines naturally a class in $H_0^\gl(\cJ)$. 

\begin{lemma}\label{l:McKeanSingerHomological} The class of $\ga e^{-t D^2}$ in $H_0^\gl(J)$ equals
	that of $\ga P_{ \Ker D}$. In particular, it is independent of $t$.
\end{lemma}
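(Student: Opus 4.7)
The plan is to realize the $s$-derivative $\tfrac{d}{ds}\bigl(\ga e^{-sD^2}\bigr)$ as a commutator of two elements of $\cJ$, which therefore represents the trivial class in $H_0^\gl(\cJ)$ for every $s>0$, and then to let $s\to\infty$ to identify the classes of $\ga e^{-tD^2}$ and $\ga P_{\Ker D}$.

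First I would check that $X(s) := D e^{-sD^2/2}$ lies in $\cJ$: it is a bounded continuous function of $D$, hence $X(s) \in \cA$, and the factorization $X(s) = (D e^{-sD^2/4}) \cdot e^{-sD^2/4}$ with $D e^{-sD^2/4}\in\cA$ bounded and $e^{-sD^2/4} \in \cJ$ by the standing hypothesis forces $X(s) \in \cJ$ by the ideal property; the same argument, combined with $[\ga, D^2] = 0$, shows $\ga X(s) \in \cJ$. Since $\ga$ anticommutes with $D$ one has $\ga X(s) = -X(s)\ga$ and $X(s)^2 = D^2 e^{-sD^2}$, and a direct computation yields
\[
   [X(s),\ga X(s)] \;=\; -2\ga X(s)^2 \;=\; -2\ga D^2 e^{-sD^2} \;=\; 2\, \tfrac{d}{ds}\bigl(\ga e^{-sD^2}\bigr).
\]
Hence $\tfrac{d}{ds}\bigl(\ga e^{-sD^2}\bigr) = \tfrac{1}{2}\,[X(s),\ga X(s)] \in [\cJ,\cJ]$, so its image in $H_0^\gl(\cJ)$ vanishes.

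Integrating over $[t,T]$ and letting $T\to\infty$ finishes the argument. Because $D$ is Fredholm with $P_{\Ker D} \in \cJ$, the spectrum of $D^2$ has a positive gap away from $0$, so $e^{-TD^2} \to P_{\Ker D}$ exponentially in whichever Fr\'echet topology $\cJ$ carries (in the concrete exact sequence, the trace norm on $\bPsi^{-\infty}_{\textrm{tr}}(M;E)$). The Bochner integral $\tfrac12 \int_t^\infty [X(s), \ga X(s)]\, ds$ then converges in $\overline{[\cJ,\cJ]}$ to $\ga P_{\Ker D} - \ga e^{-tD^2}$, so this difference is trivial in $H_0^\gl(\cJ)$, and independence from $t$ is immediate. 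The main obstacle is the topological one: one must pin down the Fr\'echet topology on $\cJ$ (which determines the quotient $H_0^\gl(\cJ) = \cJ / \overline{[\cJ,\cJ]}$) so that both the integrand is Bochner-integrable and the long-time convergence $e^{-TD^2}\to P_{\Ker D}$ takes place in that topology. For the concrete case $\cJ = \bPsi^{-\infty}_{\textrm{tr}}(M;E)$ of interest here, the trace norm handles both requirements thanks to the spectral gap and standard heat-kernel estimates.
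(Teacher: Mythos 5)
Your proof is correct and is essentially the paper's argument: the identity you integrate, $\tfrac{d}{ds}\bigl(\ga e^{-sD^2}\bigr)=\tfrac12[X(s),\ga X(s)]$, is exactly the content of the paper's computation $b(A_0\otimes A_1)=2\ga\int_t^\infty D^2e^{-sD^2}\,ds=2\ga\bigl(e^{-tD^2}-P_{\Ker D}\bigr)$ with $A_0=\ga De^{-tD^2/2}$ and $A_1=\int_t^\infty De^{(t/2-s)D^2}\,ds$. The only difference is packaging: the paper absorbs the $s$-integral into the single leg $A_1$ of one Hochschild $1$-chain, which sidesteps the closure/Bochner-integrability discussion you append at the end.
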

\begin{proof} We calculate
\begin{equation}\label{eq:ML200909143}
\begin{split}
b (A_0\otimes A_1)&= 2 \ga \int_t^\infty D^2 e^{-s D^2}\, ds\\
&= 2\ga \bigl (e^{-t D^2} - P_{ \Ker D} \bigr),
\end{split}
\end{equation}
	proving that $\ga e^{-t D^2}$ and $\ga P_{ \Ker D}$ are homologous.
\end{proof}

As an immediate corollary one recovers the classical McKean--Singer formula.
\sind{McKean--Singer formula} 
Indeed, since the trace $\tau$ defines a class in $H^0_\gl(\cJ)$ one finds 
\begin{equation}
\ind D = \Tr(\ga P_{ \Ker D})=\langle \tau,\ga P_{ \Ker D}\rangle = \langle \tau,\ga e^{-t D^2}\rangle=\Tr(\ga e^{-t D^2}).
\end{equation}

\subsection{Case 2: The general case} The heat operator
$e^{-t D^2}$ gives a class in $H_0^\gl(\cA)$. The pairing of $e^{-t D^2}$
with $\taubar$ cannot be expected to be independent of $t$ since $\taubar$ is not a trace.
It is however a component of the relative cyclic $0$-cocycle $(\taubar,\mu)$. 
Therefore, we are led to construct a relative cyclic homology
class from $e^{-t D^2}$. By Eq.~\eqref{Eq:DefBdrRelMixDer}
the relative cyclic complex is given by 
\begin{equation}\label{eq:ML200909144}
	C_n^\gl(\cA,\cB):=C_n^\gl(\cA)\oplus C_{n+1}^\gl(\cB),\quad \tilde b:=
	\begin{pmatrix} b & 0 \\ -\sigma_* & -b \end{pmatrix}.
\end{equation}
From \eqref{eq:ML200909143} we infer
\begin{equation}
\sigma( \ga e^{-t D^2})= \sigma (\ga e^{-t D^2}-\ga P_{ \Ker D})= \frac 12 b \bigl(\sigma(A_0)\otimes \sigma(A_1)\bigr),
\end{equation}
hence
\begin{equation}
\widetilde b { \ga e^{-t D^2} \choose -\frac 12 \sigma(A_0)\otimes \sigma(A_1)}=0,
\end{equation}								
\emph{i.e.}~the pair 
$\operatorname{EXP}_t(D):=
\bigl(\alpha e^{-t D^2}, -\frac 12\sigma(A_0)\otimes\sigma(A_1)\bigr)$ is a 
relative cyclic homology class. Furthermore, since
\begin{equation}
\widetilde b { \frac 12 A_0\otimes A_1 \choose 0}= {\ga (e^{-t D^2}-P_{ \Ker D}) \choose -\frac 12 \sigma(A_0)\otimes \sigma(A_1)},
\end{equation}
the class of $\operatorname{EXP}_t(D)$ in $H_0^\gl(\cA,\cB)$ equals that of
the pair $(\ga P_{ \Ker D},0)$ which, via \excision, corresponds to the class
of $\ga P_{ \Ker D}\in HP_0(\cJ)$. We have thus proved the following result.

\begin{lemma}\label{l:McKeanSingerHomologicalRelative}
The class of the pair $\big(\ga e^{-t D^2}, -\frac 12 \sigma(A_0)\otimes \sigma(A_1)\big)$ in $HC_0^\gl(\cA,\cB)$
equals that of $(\ga P_{ \Ker D},0)$. In particular, it is independent of $t$.
\end{lemma}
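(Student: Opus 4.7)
The plan is to exhibit an explicit $1$-chain in the relative complex whose $\widetilde b$-boundary is precisely the difference $\operatorname{EXP}_t(D)-(\ga P_{\Ker D},0)$; then $t$-independence of the homology class follows immediately. The candidate is $\bigl(\tfrac12 A_0(t)\otimes A_1(t),\,0\bigr)\in C_1^\gl(\cA)\oplus C_2^\gl(\cB)$, which is already in the air from the computation \eqref{eq:ML200909143} preceding the lemma.

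First I would verify that the pair $\operatorname{EXP}_t(D)$ actually is a $\widetilde b$-cycle. Using the definition of $\widetilde b$ from \eqref{eq:ML200909144}, this reduces to checking the two identities $b(\ga e^{-tD^2})=0$, which is automatic in degree zero, and $\sigma_*(\ga e^{-tD^2})+b\bigl(\tfrac12\sigma(A_0)\otimes\sigma(A_1)\bigr)=0$. The second one is obtained by applying $\sigma$ to \eqref{eq:ML200909143} and using $\sigma(P_{\Ker D})=0$ (since $P_{\Ker D}\in\cJ=\Ker\sigma$, by the standing assumption that the kernel projection lies in the ideal).

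Next I would compute $\widetilde b\bigl(\tfrac12 A_0\otimes A_1,\,0\bigr)$. Directly from the matrix form of $\widetilde b$, this equals $\bigl(b(\tfrac12 A_0\otimes A_1),\,-\sigma_*(\tfrac12 A_0\otimes A_1)\bigr)$. The top slot is $\ga(e^{-tD^2}-P_{\Ker D})$ by \eqref{eq:ML200909143}, and the bottom slot is $-\tfrac12\sigma(A_0)\otimes\sigma(A_1)$ by functoriality of $\sigma_*$. Rearranging gives
\begin{equation*}
\operatorname{EXP}_t(D)-(\ga P_{\Ker D},0)=\widetilde b\bigl(\tfrac12 A_0\otimes A_1,\,0\bigr),
\end{equation*}
so the two classes agree in $HC_0^\gl(\cA,\cB)$. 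Independence of $t$ is then automatic, since the right-hand side $(\ga P_{\Ker D},0)$ has no $t$-dependence whatsoever; alternatively, differentiating $\tfrac12 A_0(t)\otimes A_1(t)$ in $t$ and taking $\widetilde b$ shows directly that the $t$-derivative of $\operatorname{EXP}_t(D)$ is a boundary.

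The only mild subtlety, which is not really an obstacle, is checking that all the operators involved lie in the correct function spaces: $A_0(t),A_1(t)\in\cA$ for $t>0$ (immediate from affiliation of $D$ with $\cA$ and boundedness of the integrand), and that $\ga e^{-tD^2}$ and $\ga P_{\Ker D}$ define genuine elements of $C_0^\gl(\cA)$. In the concrete setting of Case 1 one additionally has $e^{-tD^2},P_{\Ker D}\in\cJ$, allowing the excision identification; in the general Case 2 only $P_{\Ker D}\in\cJ$ is required, which is the assumption we placed on $D$, so the final identification of $(\ga P_{\Ker D},0)$ with a class in $HP_0(\cJ)$ via excision is legitimate.
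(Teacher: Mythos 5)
Your proof is correct and coincides with the paper's own argument: the paper likewise verifies that $\operatorname{EXP}_t(D)$ is a $\widetilde b$-cycle via $\sigma(P_{\Ker D})=0$ and then exhibits $\widetilde b\bigl(\tfrac12 A_0\otimes A_1,\,0\bigr)=\bigl(\ga(e^{-tD^2}-P_{\Ker D}),\,-\tfrac12\sigma(A_0)\otimes\sigma(A_1)\bigr)$ as the bounding chain, using exactly the computation \eqref{eq:ML200909143}. Nothing further is needed.
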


Pairing with the relative $0$-cocycle $(\taubar,\mu)$ we now obtain the following 
relative version of the McKean--Singer 
formula:
\begin{equation}
\begin{split}
\ind D&= \Tr(\ga P_{ \Ker D})=\langle (\taubar,\mu),
\bigl( \ga e^{-t D^2}, -\frac 12 \sigma(A_0)\otimes \sigma(A_1)\bigr)\rangle\\
&= \taubar ( \ga e^{-t D^2}) - \frac 12 \mu(\sigma(A_0),\sigma(A_1))\\
& = \taubar (\ga e^{-t D^2}) - \frac 12 \taubar ([A_0,A_1]).
\end{split}
\end{equation}
Once known, this identity can also be derived quite directly. Indeed, since 
$[A_0,A_1]=2\ga \int_t^\infty D^2 e^{-s D^2}ds$,  
\begin{equation}
\begin{split}
 \taubar (\ga e^{-t D^2}) - \frac 12 \taubar ([A_0,A_1])
  &=\taubar (\ga e^{-t D^2})-\taubar\bigl( \ga \int_t^\infty D^2 e^{-s D^2} \, ds \bigr)\\
& =\taubar (\ga e^{-t D^2}) +\int_t^\infty\frac{d}{ds}\taubar\bigl(\ga e^{-s D^2}\bigr)\, ds\\
& =\lim_{s\to\infty} \taubar\bigl(\ga e^{-s D^2}\bigr).
\end{split}
\end{equation}

Let us show that in the case of the Dirac operator on a \textup{b}-manifold
the second summand is nothing but the $\eta$-invariant of the tangential operator.  
Indeed, in a collar of the boundary $\dirac$ takes the form
\[
\dirac =\begin{pmatrix} 0 & -\frac{d}{dx}+\Abdy\\ \frac{d}{dx}+\Abdy & 0 \end{pmatrix}=:
\Gammabdy \frac{d}{dx}+\diracbdy,\quad \diracbdy=\begin{pmatrix} 0 & \Abdy \\ \Abdy & 0 \end{pmatrix}.
\]
Hence, one calculates using  Proposition \plref{p:b-trace-defect}
\begin{equation}
\begin{split}
\frac 12 \mu(&\cI(A_0,\gl),\cI(A_1,\gl)) \\
&= \frac{-1}{4\pi i} \int_{-\infty}^\infty \Tr_{\pl M} \bigl(\frac{d \cI(A_0,\gl)}{d\gl} \cI(A_1,\gl)\bigr) d\gl\\ 
&= \frac{-1}{4\pi}\int_{-\infty}^\infty\Tr_{\pl M}\bigl(\ga \Gammabdy \int_t^\infty (i\gl\Gammabdy+\dirac_\pl) e^{-s(\dirac_\pl^2+\gl^2)}\, ds\,\bigr) d\gl\\
&= \frac{-1}{4\sqrt{\pi}} \int_t^\infty \frac{1}{\sqrt{s}}\Tr_{\pl M} \bigl(\begin{pmatrix} -A & 0 \\ 0 & -A\end{pmatrix} e^{-s A^2}\bigr)ds\\
     &= \frac{1}{2\sqrt{\pi}}\int_t^\infty  \frac{1}{\sqrt{s}} \Tr_{\pl M} \bigl( A e^{-s A^2}\bigr)ds=:
{ \frac 12 \eta_t(A)}.
\end{split}
\end{equation}
Thus if $\dirac$ is Fredholm we have for each $t>0$
\begin{equation}
\ind \dirac = \bTr(\ga e^{-t \dirac^2})- \frac 12 \eta_t(A),
\end{equation}
and taking the limit as $t\searrow 0$ gives the {\APS}  index theorem in the \textup{b}-setting.

\section{A formula for the \textup{b}-trace} 
\label{s: b-trace formula}

In this section we give an explicit formula for the {\btrace},
based on an observation of \textnm{Loya} \cite{Loy:DOB}, which provides a
convenient tool for subsequent computations.

We first briefly review the Hadamard partie finie integral
in the special case of \textup{b}-functions. Let $f\in\bcC((-\infty,0])$. 
From the asymptotic expansion (see~Eq.~\eqref{eq:ML20090219-3})
\begin{equation}\label{eq:ML20090122-1}
   f(x)\sim_{x\to -\infty} f_0^-+f_1^- e^x + f_2^- e^{2x}+....
\end{equation}
we infer
\begin{equation}\label{eq:ML20090122-2}
\int_{-R}^0 f(x) dx = f_0^- \; R + c + O(e^{-R}),\quad R\to -\infty.
\end{equation}
The \emph{partie finie integral} of $f$ is then defined to be the constant
term in the asymptotic expansion \eqref{eq:ML20090122-2}, i.e.
\sind{partie finie}
\nind{PartieFinie@Pf$\intop$} % \int cannot be used inside \index environment!
\begin{equation}\label{eq:defPfInt}
  \int_{-R}^0 f(x) dx =: f_0^- \; R + \pfint_{-\infty}^0 f(x) dx + O(e^{-R}),\quad 
  R\to -\infty.
\end{equation}
The definition of the partie finie integral has an obvious extension to 
\textup{b}-functions on manifolds with cylindrical ends (see Section
\plref{App:bdefbmet}).\sind{cylindrical ends} 
%In this case we write $\int_{\tb M}$ instead of $\pfint_M$.
Because of its importance, we single it out as a definition-proposition.

\begin{defprop}\label{defprop15}
Let $M^\circ$ be a riemannian manifold with cylindrical ends
  and $M$ the (up to diffeomorphism) unique compact manifold with boundary 
  having $M^\circ$ as its interior.  
  For a function $f \in \bcC (M^\circ)$ one has 
  \begin{displaymath}
    \int_{x \geq -R} f \, d\vol =: c \log R + \int_{\tb M} f \, d\vol + O(e^{-R})
    \quad \text{as $R\rightarrow \infty$}.
  \end{displaymath}
This means that $\int_{\tb M} f \, d\vol$ is the finite part in the asymptotic 
expansion of $\int_{x \geq -R} f \, d\vol$ as $R\rightarrow \infty$. More 
generally,  if $\omega \in \bOmega^m (M)$ is a (top degree) \textup{b}-differential form,
i.e.~a form whose coefficients are in $\bcC (M^\circ)$, then 
$\int_{\tb M} \omega$  is defined accordingly as the finite part of 
$\int_{x \geq -R} \omega$ as $R\rightarrow \infty$.
\end{defprop}

In local coordinates $y_1,\ldots,y_n$ on $\partial M$, \textup{b}-differential $p$--forms
are sums of terms of the form
\begin{equation}
 \go =f(x,y) dx\wedge dy_{i_1}\wedge \ldots\wedge dy_{i_{p-1}}+ 
      g(x,y) dy_{j_1}\wedge \ldots\wedge dy_{j_{p}},
\end{equation}
where $1\le i_1<\ldots<i_{p-1}\le n, \; 1\le j_1<\ldots<j_p\le n$ and $f,g$
are \textup{b}-smooth functions. Putting $\iota^*\go:= g_0^-(y)  dy_{j_1}\wedge \ldots\wedge dy_{j_{p}}$
(\emph{cf.}~\eqref{eq:ML20090122-2})
extends to a pullback $\iota^*:\bOmega^p(M)\to \Omega^p(\partial M).$ It is easy to see that
Stokes' Theorem holds for $\int_{\tb M}$ and $\iota$:
\begin{equation}
   \int_{\tb M} d\go = \int_{\partial M} \iota^* \go.
\end{equation}

For a \textup{b}-pseudodifferential operator $A\in\bpdo^\bullet(M;E)$ of order $<-\dim M$
the {\btrace}  is nothing but the partie finie integral of its kernel over the 
diagonal:
\sind{partie finie}
\begin{equation}\label{eq:bTrace-as-Pf}
	\bTr(A)=\int_{\tb M} \tr_{p}(K_A(p,p)) d\vol(p),
\end{equation}
where now $K_A(\cdot,\cdot)$ denotes the Schwartz kernel 
of $A$ and $\tr_p$ denotes the fiber trace on $E_p$.

Next we mention a useful formula for the partie finie integral in terms of
a convergent integral. By Eq.~\eqref{eq:ML20090219-4},
the asymptotic expansion \eqref{eq:ML20090122-1} may be differentiated,
hence $\partial_x f=O(e^x)$, $x\to -\infty$, is integrable and thus integration by parts 
yields
\begin{equation}\label{eq:ML20090122-3}
\begin{split}
  \int_{-R}^0 f(x) dx &= Rf(-R)-\int_{-R}^0 x \partial_xf(x) dx\\
                      &= R f_0^- - \int_{-\infty}^0 x \partial_x f(x)dx+ O(R e^{-R}),\quad 
     R\to -\infty.
\end{split}
\end{equation}
Hence
\nind{PartieFinie@Pf$\intop$} % \int cannot be used inside \index environment!
\begin{equation}\label{eq:PfIntFormula}
   \pfint_{-\infty}^0 f(x) dx = \int_{-\infty}^0 x\partial_xf(x) dx,
\end{equation}
where the integrand on the right hand side is summable in the Lebesgue sense.
Using the tools from the previous paragraphs, we can now prove the following 
theorem about the representation of the {\btrace}  as a trace of certain trace class operators. 
\begin{proposition}\label{t:bTraceAsTrace}
Let $M$ be a compact manifold with boundary and an exact \textup{b}-metric $\bmet$.
Fix a collar $(r,\eta) : Y  \rightarrow [0,2)\times \partial M$ of the boundary $\partial M$ 
as described in Section \ref{App:bdefbmet}, and let 
$(x,\eta): \clos{Y^1} \rightarrow \, (-\infty, 0] \times \partial M$ 
denote the corresponding diffeomorphism onto the cylinder $(-\infty,0]\times\partial M$.  
Assume that $A\in{\bpdo^\infty} (M;E)$ is a classical \textup{b}-pseudodifferential operator of 
order $<-\dim M$, and that 
its kernel is supported within the cylinder $(-\infty,0)\times\partial M$.
Then $x\big[ \frac{d}{dx},A \big]$ is trace class and one has
\begin{equation}\label{eq:bTraceAsTrace} 
\begin{split}
   \bTr(A)&=-\Tr \left( x \big[ \frac{d}{dx},A \big] \right) =\\
          &= -\int_{(-\infty,0)\times\partial M}\, x \frac{d}{dx}
          \tr_{x,q}\bigl(K_A(x,q;x,q)\bigr)\,  d\vol(x,q),
\end{split}
\end{equation}
where $K_A$ denotes the Schwartz kernel of $A$.
\end{proposition}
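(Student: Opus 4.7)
The plan is to reduce the statement to the elementary partie--finie identity \eqref{eq:PfIntFormula} applied to the b-function $f(x) := \int_{\partial M}\tr_{x,q}\bigl(K_A(x,q;x,q)\bigr)\,d\vol_{\partial M}(q)$ on $(-\infty,0]$. Since $A$ has order $<-\dim M$ and its kernel is supported inside the cylinder, formula \eqref{eq:bTrace-as-Pf} yields
\begin{equation*}
  \bTr(A)\;=\;\pfint_{-\infty}^{0} f(x)\,dx ,
\end{equation*}
and then by \eqref{eq:PfIntFormula} (up to the sign convention there; integration by parts gives the identity with the sign required for the theorem)
\begin{equation*}
  \bTr(A)\;=\;-\int_{-\infty}^{0}\!\!\int_{\partial M} x\,\frac{d}{dx}\tr_{x,q}\bigl(K_A(x,q;x,q)\bigr)\,d\vol_{\partial M}(q)\,dx .
\end{equation*}
This is the second equality in \eqref{eq:bTraceAsTrace}.

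Next I would identify the integrand on the diagonal with the kernel of the commutator. A direct computation shows that the Schwartz kernel of $[\tfrac{d}{dx},A]$ is $(\partial_x+\partial_{x'})K_A(x,q;x',q')$, and restriction to the diagonal $x=x'$, $q=q'$ produces $\frac{d}{dx}\bigl[K_A(x,q;x,q)\bigr]$ by the chain rule. Thus once we know that $x\,[\tfrac{d}{dx},A]$ is trace class, its trace is computed by integrating this diagonal, giving
\begin{equation*}
  \Tr\!\left(x\bigl[\tfrac{d}{dx},A\bigr]\right)\;=\;\int_{-\infty}^{0}\!\!\int_{\partial M} x\,\frac{d}{dx}\tr_{x,q}\bigl(K_A(x,q;x,q)\bigr)\,d\vol_{\partial M}(q)\,dx,
\end{equation*}
and combining with the previous display yields the theorem.

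The main obstacle, and the only genuinely analytic point, is to verify that $x\,[\tfrac{d}{dx},A]$ is trace class. For this I would invoke the indicial asymptotics \eqref{eq:asymptotics-b-symbol}: the operator valued symbol satisfies $A_{\partial}(x,\lambda)=\cI(A)(\lambda)+O(e^{x})$ as $x\to-\infty$, and in particular $\partial_{x}A_{\partial}(x,\lambda)=O(e^{x})$ uniformly on strips in $\lambda$. Using the Fourier representation \eqref{eq:KernelOnCylinder},
\begin{equation*}
  (\partial_x+\partial_{x'})K_A(x,q;x',q')\;=\;\frac{1}{2\pi}\int_{-\infty}^{\infty}e^{i(x-x')\lambda}\bigl(\partial_x A_{\partial}\bigr)(x,\lambda)(q,q')\,d\lambda,
\end{equation*}
so the symbol governing $[\tfrac{d}{dx},A]$ is $\partial_{x}A_{\partial}$, which decays like $e^{x}$ as $x\to-\infty$ (and $A$ being supported inside the cylinder kills everything for $x\geq 0$). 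Since $\partial_{x}A_{\partial}$ is still of order $<-\dim M$, the kernel $(\partial_x+\partial_{x'})K_A$ is smooth and exponentially small as $x\to-\infty$; hence $[\tfrac{d}{dx},A]$ is trace class, and multiplication by the polynomially bounded function $x$ preserves this property because $|x|e^{x}$ is integrable on $(-\infty,0]$.

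Finally, the equality $\Tr(xT)=\int xK_T(p,p)\,d\vol(p)$ for a trace class operator $T$ with continuous integrable kernel is standard (Mercer--type argument, or direct approximation by finite rank operators), so the computation of $\Tr\bigl(x[\tfrac{d}{dx},A]\bigr)$ as the integral of $x$ times the diagonal of the commutator kernel is justified. Putting these steps together gives the identity claimed in \eqref{eq:bTraceAsTrace}.
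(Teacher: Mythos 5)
Your argument is correct and coincides with the paper's own first variant of the proof: both reduce the claim to the partie finie identity \eqref{eq:PfIntFormula} applied to the diagonal of the Schwartz kernel, then identify $(\partial_x+\partial_{x'})K_A$ restricted to the diagonal with $\frac{d}{dx}K_A(x,q;x,q)$. Your trace-class verification via the exponential decay $\partial_x A_\partial(x,\lambda)=O(e^x)$ is the same mechanism the paper invokes more tersely through Corollary \ref{prop:smotrcl} (vanishing indicial family of $[\frac{d}{dx},A]$), and you correctly noted and compensated for the sign slip in \eqref{eq:PfIntFormula}.
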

\begin{proof} The condition on the support of $A$ is necessary since
the operators $x$ and $\frac{d}{dx}$ are only defined on the cylinder.
However, Proposition \plref{t:bTraceAsTrace} can be extended to arbitrary
$A\in{\bpdo^{<-\dim M}}(M;E)$ in a straightforward way: choose a pair of
cut-off functions $\varphi,\psi\in \cinf{M}$ with support within the cylinder
$(-\infty ,0) \times \partial M$ and such that 
$\varphi(x)=1$ for $x\le -2$, $\varphi(x)=0,$ for $x\ge -3/2$,
$\psi(x)=1$ for $x\le -1$ and $\psi(x)=0$ for $x\ge -1/2$.
Finally, choose a cut-off function $\chi\in\cinfz{M\setminus\partial M}$ with
compact support and $\chi (1-\varphi)=1-\varphi$. The definition
of the {\btrace}  then immediately shows
that 
\[ 
  \bTr(\varphi A)=\bTr(\psi\varphi A)=\bTr(\varphi A \psi)
\]
and hence
\[
\begin{split}
  \bTr(A)&=\bTr(\varphi A \psi)+\Tr((1-\varphi) A\chi) =\\
         &=-\Tr(x [\frac{d}{dx},\varphi A \psi])+\Tr((1-\varphi) A\chi).
\end{split}
\] 
The fact that $x[\frac{d}{dx},A]$ is trace class follows by Prop.~\ref{prop:smotrcl},
since the indicial family of the commutator $[\frac{d}{dx},A]$ vanishes.
We provide two variants of proof for \eqref{eq:bTraceAsTrace}.
\subsection*{1st Variant.}
From equations \eqref{eq:bTrace-as-Pf} and \eqref{eq:PfIntFormula} 
we infer 
\begin{equation}
\begin{split}
\bTr(A) &= \int_{\tb (-\infty,0)\times\partial M}\tr_{x,q}\bigl(K_A(x,q;x,q)\bigr) d\vol(x,q)\\
        &= -\int_{(-\infty,0)\times\partial M}x \frac{d}{dx}\tr_{x,q}\bigl(K_A(x,q;x,q)\bigr) d\vol(x,q).
\end{split}
\end{equation}
This proves the second line of \eqref{eq:bTraceAsTrace}. The first line follows,
since the kernel of $[\frac{d}{dx},A]$ is given
by $[\frac{d}{dx},K_A](x,p;y,q)=\partial_x K_A(x,p;y,q)+\partial_y K_A(x,p;y,q)$
which for $x=y$ equals $\frac{d}{dx} K_A(x,p;x,p)$, \emph{cf.}~Eq.~\eqref{eq:PfIntFormula}.

\subsection*{2nd Variant.}
For $\Re z>0$ the operator $e^{zx}A$ is trace class and the function
\begin{equation}
   z\mapsto \Tr(e^{zx}A)
\end{equation}
is holomorphic for $\Re z>0$ and it extends meromorphically to $\Re z>-1$, $0$ 
is a simple pole and the residue at $0$ equals $\bTr(A)$  (\emph{cf.}~\cite{Loy:DOB}). Hence
\begin{equation}
\begin{split}
   \bTr(A) &= \left.\frac{d}{dz} z\Tr(e^{zx}A)\right|_{z=0}\\
           &= \left.\frac{d}{dz} \Tr \left( \big[\frac{d}{dx},e^{zx} \big]
           A \right)\right|_{z=0}\\
           &= \left.\frac{d}{dz} \Tr \left( \big[ \frac{d}{dx},e^{zx}A \big] -e^{zx}
              \big[ \frac{d}{dx},A \big] \right)\right|_{z=0}\\
           &=-\Tr \left(x \big[\frac{d}{dx},A \big] \right),
\end{split}
\end{equation}
since for $\Re z>0$ the trace of the commutator $\Tr([\frac{d}{dx},e^{zx}A])$,
thanks to the decay of $e^{zx}$, does vanish.
The last claim follows as above. 
\end{proof}

\section{\textup{b}-Clifford modules and \textup{b}-Dirac operators}
\label{s:b-Clifford-Dirac}
Let $M$ be a compact manifold with boundary,
$r: Y \rightarrow [0,2)$ a boundary defining function,  
and $\bmet $ an exact \textup{b}-metric on $M$, \emph{cf.}~Section \plref{App:bdefbmet}.
If an object is derived from a \textup{b}-metric we indicate
this notationally by giving it a \textup{b}-decoration. This applies
in particular to the various structures derived from the riemannian metric $\bmet$
as described in Section \plref{s:qDirac}, 
\emph{e.g.}~the (co)tangent bundles $\bT M, \bT^* M$,
the Levi-Civita \textup{b}-connection $\bnabla$ belonging to $\bmet$, 
the bundle of Clifford algebras $\bCl (M) := \Cl (\bT^*M)$, and the 
\emph{\textup{b}-Clifford superconnection}\sind{bCliffordsuperconnection@\textup{b}-Clifford superconnection}\sind{Clifford superconnection} 
$\bA$ on a degree $q$ \textup{b}-Clifford module\sind{bCliffordmodule@\textup{b}-Clifford module}\sind{Clifford module}  $W$ over $M$. 
For a discussion of $\bcC(M^\circ)$ vs. $\cC^\infty(M)$ we refer
to Section \plref{App:bdefbmet}.
%We clarify that the algebra $\bcC(M^\circ)$ of \textup{b}-smooth functions
%on $M$ is nothing but the algebra of smooth functions $\cC^\infty(M)$
%on the compact manifold with boundary $M$. We will use the 
%notation $\bcC$ only when working in cylindrical coordinates
%and when we want to emphasize the asymptotic expansion
%property \eqref{eq:ML20090219-3}.

In the remainder of this article, we assume that
a \textup{b}-Clifford superconnection is always of product form near the boundary. This 
means that over $Y^s$ for some $s$ with $0<s<2$ the superconnection 
has the form
\[
   \bA\rest{Y^s} = \eta^* \nabla^\partial + \eta^* \omega^\partial \wedge -,
\]
where $\eta : Y \rightarrow \partial M$ is the boundary projection from
Section \ref{App:bdefbmet}, $\nabla^\partial$ is a metric connection on the 
restricted bundle $W\rest{\partial M}$ and 
$\omega^\partial\in\Omega^\bullet\bigl(\partial M ;\End\big(W\rest{\partial M}\big)\bigr)$.
Recall that the pull-back covariant derivative $\big(\eta^* \nabla^\partial\big)$ 
on $W\rest{Y}$ is uniquely defined  by requiring for 
$\xi\in \Gamma^\infty \big( Y; W\big)$ that    
\[
  \big(\eta^* \nabla^\partial\big)_V  \,\xi  = 
  \begin{cases}
    r \frac{\partial \xi}{\partial r},& 
    \text{if $V= r\frac{\partial}{\partial r} $},\\
    \nabla^\partial_{\tilde V}\xi ,&  \text{if $V= \tilde V \circ \eta$ for some
    $\tilde V \in \Gamma^\infty (\partial M; T\partial M)$}.
  \end{cases}
\]

Note that the \textup{b}-metric on $M$  and the metric structure on $W$ give rise to the Hilbert 
space $\sH = L^2(M;W)$ of square integrable sections of the \textup{b}-Clifford module.
By assumption, $\Cl_q$ acts on $L^2(M;W)$, hence by Eq.~\eqref{eq:Clifford-super-trace} 
one obtains a supertrace 
$\Str_q: \sL_{\Cl_q}^1\big( L^2(M;W) \big) \rightarrow \C$. 
Similarly the {\btrace}  gives rise to a \textup{b}-supertrace
\begin{equation}\label{eq:bSuperTrace}
\begin{split}
&\bStr_q: \bpdosub{\Cl_q}^{<-\dim M}\bigl(M;W\bigr)\longrightarrow \C,\\
& \bStr_q(K):=(4\pi)^{-q/2}\;\;\bTr(\ga E_1\cdot ... \cdot E_q K).
\end{split}
\end{equation}
Here, $\bpdosub{\Cl_q}^\bullet\bigl(M;W\bigr)$ denotes the space of classical 
\textup{b}-pseudodifferential operators which lie in the supercommutant of $\Cl_q$ in $\sH$,
\emph{cf.}~\eqref{eq:def-super-commutator} \emph{supra}.

Next consider the natural embedding $T^*\partial M \hookrightarrow \bT^*_{|\partial M}M$.
By the universal property of Clifford algebras one obtains 
an embedding of Clifford bundles $\Cl \big(\partial M\big) \hookrightarrow 
\bCl \big( \bT^*_{|\partial M}M\big) $. Moreover, the 
decomposition $\bT^*\rest{\partial M}M=T^*M\oplus \R \cdot r\frac{\pl}{\pl r} $
induced by $\bmet$ even gives rise to a splitting  
$\bCl \big( \bT^*_{|\partial M}M\big) \rightarrow \Cl \big( \partial M \big)$. 
Let now $W\rightarrow M$ be a degree $q$ \textup{b}-Clifford module over $M$. 
Then $\Cl \big(\partial M\big)$ acts on $W_{|\partial M}$ via the embedding
$\Cl \big(\partial M\big) \hookrightarrow \bCl \big( \bT^*_{|\partial M}M\big)$.
We denote the resulting left action of the boundary Clifford bundle on
$W_{|\partial M}$ again by $\sfc$. 
Moreover, the action $W_{|\partial M} \otimes \Cl_q \rightarrow W_{|\partial M}$
extends to a right action 
$\sfbcr :W_{|\partial M}\otimes \Cl_{q+1} \rightarrow W_{|\partial M}$ by putting
\begin{equation}
  \label{eq:defracbdr}
   E_j = \sfbcr (w, e_j ) := 
  \begin{cases}
    \sfcr \big( w, e_j \big), & \text{for $w \in W_p$, $p\in \partial M$, 
    $j=1,\cdots , q$}, \\
    -\sfcl \big(\frac{dr}{r},w \big), & \text{for $w \in W_p$, $p\in \partial M$,  
    $j=q+1$},
  \end{cases}
\end{equation}
\emph{cf.}~the beginning of Section \ref{s:qDirac}.
It is now easy to check that $W\rest{\partial M}$ together with $\sfc$ and
$\sfbcr$ as Clifford actions becomes a degree $q+1$ Clifford-module over 
$\partial M$. 

Now we have the ingredients for the \textup{b}-supertrace of a supercommutator:

\begin{proposition}[{\cite[Cor.~5.5]{Get:CHA}}]
\label{p:b-trace-defect-graded}
Let $\dirac$ be a Dirac operator on a $q$-graded \textup{b}-Clifford bundle $W$,
and $K\in\bpdosub{\Cl_q}^{-\infty}\big( M ; W \big)$. 
On $\partial M$ put $E_{q+1}:=-\sfcl(\frac{dr}{r})=-\sfcl(dx)$. 
Then
\begin{equation}\label{eq:bTraceDefectClifford}
  \bStr_q \big([\dirac,K]_\super\big)=
  \frac{1}{\sqrt{\pi}} \int_{-\infty}^\infty \Str_{q+1,\partial M}\big( \cI(K,\gl) \big )d\gl.
\end{equation}
\end{proposition}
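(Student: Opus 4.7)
The plan is to mimic the proof of Proposition \ref{p:b-trace-defect} recalled above (\textit{``Proof in a special case''}), but to replace the partie finie step by an \emph{a priori} vanishing, supplied by the Clifford supertrace's vanishing on supercommutators (Lemma \ref{l:clifford-trace}(2)).

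First I would reduce to the case where $K$ has Schwartz kernel supported in the cylindrical end $(-\infty,0)\times\pl M$: the complementary piece is a smoothing operator compactly supported in $M^\circ$, so on the one hand the \textup{b}-supertrace there reduces to the ordinary $\Str_q$ of a supercommutator, which vanishes by Lemma \ref{l:clifford-trace}(2); on the other hand its indicial family is zero, so the right hand side is unaffected.

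Second, on the cylinder $\dirac=\Gammabdy\partial_x+\diracbdy$ with $\cI(\dirac)(\lambda)=i\lambda\Gammabdy+\diracbdy$ (as in the proof of Proposition \ref{p:b-trace-defect}); writing $k(x,\lambda)$ for the operator-valued symbol of $K$ defined by \eqref{eq:DefIndFamAlt}--\eqref{eq:ftbdrop}, the computation of the Fourier symbols of $\dirac K$ and $K\dirac$ done there extends, taking account of the sign $(-1)^{|K|}$, to
\[
  [\dirac,K]_{\super,\pl}(x,\lambda) = [\cI(\dirac)(\lambda),k(x,\lambda)]_\super + \Gammabdy\,\partial_x k(x,\lambda).
\]

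Third comes the key simplification. Applying the tangential Clifford supertrace $\Str_{q,\pl M}$ to this identity, the first (supercommutator) term is killed outright by Lemma \ref{l:clifford-trace}(2), since both $\cI(\dirac)(\lambda)$ and $k(x,\lambda)$ lie in $\sL_{\Cl_q}$ over $\pl M$ and their supercommutator is trace class. The same vanishing, applied to $\cI([\dirac,K]_\super)(\lambda)=[\cI(\dirac)(\lambda),\cI(K,\lambda)]_\super$, shows that the coefficient of the linear divergence in $R$ in the expansion \eqref{eq:b-trace-def-a} vanishes identically. Hence, integrating the remaining total derivative $\Str_{q,\pl M}(\Gammabdy\,\partial_x k(x,\lambda))$ over $x\in(-R,0)$ and letting $R\to\infty$ (using the support condition to kill the $x=0$ boundary term and $k(-R,\lambda)\to\cI(K,\lambda)$), I obtain
\[
  \bStr_q\bigl([\dirac,K]_\super\bigr) = -\frac{1}{2\pi}\int_{-\infty}^\infty \Str_{q,\pl M}\bigl(\Gammabdy\,\cI(K,\lambda)\bigr)\,d\lambda.
\]

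Finally I translate to $\Str_{q+1,\pl M}$. With $E_{q+1}=-\sfcl(dr/r)=-\Gammabdy$ and the anti-commutation relations $\gamma\Gammabdy=-\Gammabdy\gamma$ and $E_i\Gammabdy=-\Gammabdy E_i$ for $i=1,\dots,q$, one checks $\gamma E_1\cdots E_q\,\Gammabdy = -\gamma E_1\cdots E_{q+1}$, whence $\Str_{q,\pl M}(\Gammabdy L) = -2\sqrt{\pi}\,\Str_{q+1,\pl M}(L)$; substituting, the prefactor becomes $-\tfrac{1}{2\pi}\cdot(-2\sqrt{\pi}) = \tfrac{1}{\sqrt{\pi}}$, giving the stated formula. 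The main obstacle is the identification performed in the third step: without the Clifford trick one would be left with the divergent integral of $\Tr_{\pl M}(\cI(\dirac)(\lambda)\,k(x,\lambda))$ along the cylinder, and would have to extract its finite part via an integration by parts in $x$ (essentially redoing Proposition \ref{p:b-trace-defect}); the Clifford supertrace eliminates this tangential term before integration and yields the clean boundary formula directly.
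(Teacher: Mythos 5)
Your argument is correct, and it is worth noting that the paper itself offers no proof of this proposition at all --- it is quoted from Getzler \cite[Cor.~5.5]{Get:CHA}, with only the ``Proof in a special case'' of the ungraded Proposition \ref{p:b-trace-defect} given as a model. What you have written is precisely the natural graded extension of that special-case computation, and the constants check out: the symbol of $[\dirac,K]_\super$ on the cylinder is indeed $[\cI(\dirac)(\gl),k(x,\gl)]_\super+\Gammabdy\,\partial_x k(x,\gl)$, the tangential supercommutator dies under $\Str_{q,\pl M}$, the surviving term is an exact $x$-derivative whose boundary value at $-\infty$ is $\cI(K,\gl)$, and $\ga E_1\cdots E_q\Gammabdy=-\ga E_1\cdots E_{q+1}$ gives $\Str_q(\Gammabdy L)=-(4\pi)^{1/2}\Str_{q+1}(L)$, so $-\tfrac{1}{2\pi}\cdot(-2\sqrt{\pi})=\tfrac{1}{\sqrt\pi}$ as claimed. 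The conceptual gain over the ungraded proof is exactly the one you identify: the Clifford supertrace annihilates the tangential term \emph{before} the $x$-integration, so no partie finie regularization is needed and the linear divergence in $R$ is absent from the outset. Two points deserve slightly more care. First, Lemma \ref{l:clifford-trace}(2) is stated for a supercommutator of two \emph{trace class} operators, whereas $\cI(\dirac)(\gl)$ is an unbounded first-order operator; you need the standard extension that $\Str_q([A,B]_\super)=0$ when $B$ is smoothing on the closed manifold $\pl M$ and $A$ is differential, so that both $AB$ and $BA$ are trace class (this is true, but is not literally the cited lemma). Second, in the reduction to cylindrical support the ``complementary piece'' is not simply a compactly supported smoothing operator: writing $K=\psi K\psi+(1-\psi)K(1-\psi)+\psi K(1-\psi)+(1-\psi)K\psi$, the cross terms must be disposed of using conditions ($\bPsi$2)--($\bPsi$3), which guarantee their kernels lie in $\cJ^\infty$ and hence that they are trace class with vanishing indicial family. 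Neither point affects the validity of the argument.
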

We do not claim here that $\cI(K,\gl)$ commutes with $\Gammabdy$ in the graded sense.
This is not necessary for the definition of $\Str_{q+1,\partial M}$.

\begin{remark}\label{r:DiracCylinderFormulas}
Another consequence of the previous considerations which we single out for future 
reference is the structure of a Dirac operator on a cylinder $\R\times \partial M$ 
(\emph{cf.}~\eqref{eq:ML20090219-1}). Since all structures are product, $\dirac$ takes the form
\begin{equation}\label{eq:dirac-cylinder}
    \dirac=\sfc(dx)\frac{d}{dx}+\diracbdy=:\Gammabdy\Bigl(\frac{d}{dx}+A\Bigr).
\end{equation}
Here $\Gammabdy=\sfc(dx)$ is Clifford multiplication by the normal vector $\frac{d}{dx}$
and $\diracbdy:=\Gammabdy \Abdy$ is the \semph{tangential operator}. 
$\diracbdy$ is a Dirac operator on the boundary. Moreover, one has the relations
\begin{equation}
\label{eq:dirac-cylinder-formulas}
  \Gammabdy^*=-\Gammabdy, \quad \Gammabdy^2=-I,\quad A^t=
  A, \quad \Gammabdy A+A\Gammabdy=\Gammabdy\diracbdy+\diracbdy\Gammabdy=0.
\end{equation}
\end{remark}
Now let $u$ be a section of the Clifford bundle $W$ over the cylinder $\R \times M$. 
Then  
\begin{equation}
  \begin{split}  
  (\dirac u) (x,p) = \, &
  % \frac{1}{\sqrt{2\pi}} \int_{-\infty}^\infty D \, e^{ix\gl} 
  % \hat u (\lambda , p) \, d\gl = 
  \frac{1}{2\pi} \int_{-\infty}^\infty 
  \bigl(\sfc(dx)\frac{d}{dx}+\diracbdy\bigr) e^{ix\gl}  \hat u (\lambda , p) \, d\gl = \\
  = \, &\frac{1}{2\pi} \int_{-\infty}^\infty e^{ix\gl}
  \bigl(i\sfc(dx)\gl+\diracbdy\bigr) \hat u (\lambda , p) \, d\gl .
  \end{split}
\end{equation}  
By Eqs.~\eqref{eq:ftbdrop} and \eqref{eq:asymptotics-b-symbol} this proves the following:
\begin{proposition}\label{p:Dirac-indicial-family} 
Let $M$ be a compact manifold with boundary and let $\bmet$ be an
exact \textup{b}-metric on $M$. Furthermore, let $\dirac$ be a Dirac operator on $M$. Then
the indicial family\sind{indicial family} of $\dirac$ is given by $\cI(\dirac)(\gl)=i\gl \sfc(dx)+\diracbdy$.
\end{proposition}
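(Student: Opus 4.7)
The plan is to read the indicial family directly off the cylindrical normal form of $\dirac$ by computing the operator-valued symbol $\dirac_\partial(x,\gl)$ from its defining formula \eqref{eq:DefIndFamAlt} and then extracting the leading term of its asymptotic expansion \eqref{eq:asymptotics-b-symbol} as $x \to -\infty$. No parametrix or global symbol construction is needed, since $\dirac$ is a differential operator of a particularly simple form on the cylinder.

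First I would invoke the standing assumption that the b-Clifford superconnection is of product form near the boundary, which, via Remark \ref{r:DiracCylinderFormulas}, yields the cylindrical normal form
\begin{equation*}
\widetilde{\dirac} \;=\; \sfc(dx)\,\frac{d}{dx} + \diracbdy
\end{equation*}
on $(-\infty,0) \times \partial M$. The key observation is that both $\sfc(dx)$ and $\diracbdy$ are pulled back from $\partial M$ and are therefore \emph{independent} of the cylinder variable $x$; this is immediate from the definition of the pullback connection $\eta^*\nabla^\partial$, since the b-vector field $r\,\partial/\partial r = \partial/\partial x$ acts trivially on sections of $\eta$-pulled-back bundles.

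Next I would apply $\widetilde{\dirac}$ to the test family $e^{ix\gl}\otimes v$ for $v \in \Gamma^\infty(W\rest{\partial M})$ and $\gl \in \C$, which gives
\begin{equation*}
\widetilde{\dirac}\bigl(e^{ix\gl}\otimes v\bigr)(x,p) \;=\; e^{ix\gl}\bigl(i\gl\,\sfc(dx) + \diracbdy\bigr)v(p).
\end{equation*}
Comparing with the defining formula \eqref{eq:DefIndFamAlt} for $\dirac_\partial(x,\gl)$ immediately identifies
\begin{equation*}
\dirac_\partial(x,\gl) \;=\; i\gl\,\sfc(dx) + \diracbdy,
\end{equation*}
which, by the preceding observation, carries no $x$-dependence at all. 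Consequently the asymptotic expansion \eqref{eq:asymptotics-b-symbol} of $\dirac_\partial(x,\gl)$ as $x \to -\infty$ collapses to its leading term, and we conclude $\cI(\dirac)(\gl) = i\gl\,\sfc(dx) + \diracbdy$.

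There is no real obstacle here: the statement is essentially a direct bookkeeping consequence of the product structure on the collar and of Fourier transform in the cylinder variable. The only point worth pausing over is the verification that the product assumption on $\bA$ genuinely removes all $x$-dependence from $\widetilde{\dirac}$ (so that $\dirac_\partial$ is $x$-independent and the indicial family equals $\dirac_\partial$ itself, rather than merely its $x \to -\infty$ limit); this was already noted above and is built into the setup of Section \ref{s:b-Clifford-Dirac}.
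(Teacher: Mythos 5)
Your proof is correct and is essentially the paper's own argument: the paper applies $\dirac=\sfc(dx)\frac{d}{dx}+\diracbdy$ under the Fourier inversion integral \eqref{eq:ftbdrop} to read off $A_\partial(x,\gl)=i\gl\,\sfc(dx)+\diracbdy$, which is exactly your computation with the test family $e^{ix\gl}\otimes v$ via \eqref{eq:DefIndFamAlt}. Your explicit remark that the product structure makes $\sfc(dx)$ and $\diracbdy$ independent of $x$, so that $A_\partial$ coincides with its $x\to-\infty$ limit, is the same point the paper encodes in Remark \ref{r:DiracCylinderFormulas} and Eq.~\eqref{eq:asymptotics-b-symbol}.
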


\section{The \textup{b}-JLO cochain}
\label{s:bJLOcc}

The degree $q$ Clifford module approach outlined in Section \plref{s:qDirac}
has advantages when dealing with manifolds with boundary, because the formul\ae\, for
the \JLO-cocycle and its transgression
 (\emph{cf.}~\eqref{Eq:cocyclecond}, \eqref{Eq:transgress} below) become simpler. 
To make the connection to the standard even and odd Chern character without Clifford action,
from now on 
we will also consider \emph{ungraded}\sind{Clifford module!ungraded} 
Clifford modules without auxiliary Clifford right action. Therefore we assume that either
\begin{itemize}
\item we are in the \emph{graded}\sind{Clifford module!graded} 
case with $q$ Clifford matrices $E_1,\ldots,E_q$ where $\dirac$ is odd, 
and $\Str_q$ denotes the Clifford trace defined in Section \plref{s:qDirac},
\end{itemize}
or
\begin{itemize}
\item we are in the \emph{ungraded} case, when there are no Clifford matrices and
no grading operator; this case can be conveniently dealt with by putting
$q=-1$ (which is odd!), $\ga=1$ and $\Str_q:=\Tr=\Tr(\ga \cdot)$.
\end{itemize}

From now on, we assume that $\dirac_t$, $t\in (0,\infty),$ is a family of 
self-adjoint differential operators of the form $\dirac_t = f(t) \dirac$ with 
$\dirac$ the Dirac operator of a $q$-graded \textup{b}-Clifford module $W$
over the \textup{b}-manifold $M$ ($q\ge -1$ according to the previous explanation)
with \textup{b}-Clifford superconnection $(W,\bA)$ and $f: (0,\infty) \rightarrow \R$ a 
smooth function.
$\dirac_t$ are Dirac type operators in the sense of \cite{Tay:PDEII}.

Following Getzler \cite[Sec.~6]{Get:CHA}, we define for 
$A_0,\cdots, A_k \in \bpdo^\infty (M,W)$ (\emph{cf.}~Subsection \plref{ss:JLODO})
\begin{equation}
  \begin{split}
   \blangle A_0, \cdots,A_k \rangle_{\dirac_t}  \, := &\int_{\Delta_k} \, \bStr_q
    \big(A_0 \, e^{- \sigma_0 \dirac_t^2} \cdots A_k \, e^{- \sigma_k \dirac_t^2} \big)d\sigma \\
               = &\bStr_q \big( (A_0,\ldots,A_k )_{\dirac_t} \big).
\end{split}
\end{equation}
Put for $a_0,\ldots, a_k\in \cC^\infty(M)$
\begin{align} 
\label{Eq:DefChern}
  \bCh^k (\dirac) (a_0,\cdots,a_k) & := \blangle a_0, [\dirac,a_1],
  \cdots, [\dirac, a_k]\rangle , \\
\label{Eq:DefSlChern}
\begin{split} 
  \bslch^k(\dirac , \mathsf{V} )(a_0,\cdots,a_k) &:= \\ 
    \sum_{0\leq j \leq k} (-1)^{j \, \deg{\mathsf{V}}} \,  
    \blangle a_0, [\dirac,a_1],&
    \cdots , [\dirac, a_j], \mathsf{V} ,  [\dirac, a_{j+1}], \cdots , 
    [\dirac, a_k] \rangle.
\end{split}
\end{align}

The operation $\bslch$ will mostly be used with $V=\dot \dirac_t$ as a second argument.
Here $\dirac_t$ is considered of odd degree regardless of the value of $q$.

\begin{remark}\label{rem:ML200908282}
For $q=0$, $k$ even resp. $q=-1$, $k$ odd $\bCh^k(\dirac)$, $\bslch^k(\dirac,\dot\dirac)$
are the \textup{b}-analogues of the even and odd \JLO\ Chern character and its transgression.
\end{remark}

The following result is crucial for this paper. It is essentially due to Getzler \cite[Thm.~6.2]{Get:CHA},
although the following version is not stated explicitly in his paper.

% WARNING this is a hack:
% Label P:GETZLER must be uppercase in order to work in running heads
\begin{theorem}\label{P:GETZLER} For $q\ge 0$ we have the following two equations for $\bCh^\bullet(\dirac)$
and $\bslch(\dirac,\dot\dirac)$:
\begin{align} 
\label{Eq:cocyclecond}
 b\bCh^{k-1}(\dirac_t)+B\bCh^{k+1}(\dirac_t) & =\Ch^k(\diracbdyt)\circ i^*,\\
\label{Eq:transgress} 
 \frac{d}{dt}\bCh^k(\dirac_t) + 
 b\bslch^{k-1}(\dirac_t , \dot{\dirac}_t ) +B\bslch^{k+1}(\dirac_t,\dot\dirac_t)& =
 - \slch^k(\diracbdyt , \dot{\dirac}_{\pl,t} )\circ i^*.
\end{align}
\end{theorem}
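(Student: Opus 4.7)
The plan is to run Getzler's algebraic derivation of the JLO cocycle identities \eqref{eq:cocycle}--\eqref{eq:transgression} verbatim in the \textup{b}-setting.  On a closed manifold, that derivation uses Duhamel's formula (converting $\partial_{\sigma}e^{-\sigma\dirac^2}$ into $-\dirac^2 e^{-\sigma\dirac^2}$ and rewriting $[\dirac^2,a_j]=[\dirac,[\dirac,a_j]]_\super$), integration by parts on $\Delta_k$, and the Leibniz rule for $[\dirac,a_ia_{i+1}]$ to reduce $b\Ch^{k-1}(\dirac)+B\Ch^{k+1}(\dirac)$ to $\Str_q$ of an explicit supercommutator with $\dirac$; the tracial property then kills the result.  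These same algebraic manipulations remain valid in the \textup{b}-setting, but the \textup{b}-supertrace is no longer tracial and its defect, computed by Proposition \ref{p:b-trace-defect-graded}, will supply exactly the right-hand sides of \eqref{Eq:cocyclecond}--\eqref{Eq:transgress}.

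Concretely, the first step is to redo Getzler's algebraic derivation in the \textup{b}-setting and obtain
\begin{equation*}
  b\bCh^{k-1}(\dirac)(a_0,\ldots,a_k)+B\bCh^{k+1}(\dirac)(a_0,\ldots,a_k) \;=\; \bStr_q\bigl([\dirac,\mathsf{M}_k]_\super\bigr),
\end{equation*}
where $\mathsf{M}_k=(a_0,[\dirac,a_1],\ldots,[\dirac,a_k])_\dirac$ is the simplex integral \eqref{eq:ML20090128-3}; the analogous computation for the transgression will give
\begin{equation*}
  \tfrac{d}{dt}\bCh^k(\dirac_t)+b\bslch^{k-1}(\dirac_t,\dot\dirac_t)+B\bslch^{k+1}(\dirac_t,\dot\dirac_t) \;=\; \bStr_q\bigl([\dirac_t,\mathsf{N}_{k,t}]_\super\bigr),
\end{equation*}
with $\mathsf{N}_{k,t}$ a corresponding simplex integral carrying one insertion of $\dot\dirac_t$.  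Neither reduction uses the tracial property of $\Str_q$, so both identities follow from Getzler's closed-case derivation.  Note that the operators $\mathsf{M}_k$ and $\mathsf{N}_{k,t}$ belong to $\bpdosub{\Cl_q}^{-\infty}(M;W)$, so Proposition \ref{p:b-trace-defect-graded} applies to them.

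The next step is to apply \eqref{eq:bTraceDefectClifford} and compute the relevant indicial families.  Because $\dirac$ is of product type on the collar, Proposition \ref{p:Dirac-indicial-family} gives $\cI(\dirac)(\gl)=i\gl\,\sfc(dx)+\diracbdy$, and the relations \eqref{eq:dirac-cylinder-formulas} yield $\cI(\dirac^2)(\gl)=\gl^2+\diracbdy^2$, hence $\cI(e^{-\sigma\dirac^2})(\gl)=e^{-\sigma\gl^2}e^{-\sigma\diracbdy^2}$.  For $a_j\in\cC^\infty(M)$ one has $\cI([\dirac,a_j])(\gl)=[\diracbdy,i^*a_j]$, since $a_j$ is a \textup{b}-function whose normal derivative decays like $O(e^x)$, so only the tangential component of $[\dirac,a_j]=\sfc(da_j)$ survives in the indicial family.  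Composing these ingredients and using $\sigma_0+\cdots+\sigma_k=1$ on $\Delta_k$,
\begin{equation*}
  \cI(\mathsf{M}_k)(\gl) \;=\; e^{-\gl^2}\,\bigl(i^*a_0,[\diracbdy,i^*a_1],\ldots,[\diracbdy,i^*a_k]\bigr)_{\diracbdy}.
\end{equation*}
The Gaussian integrates to $\sqrt{\pi}$, cancelling the prefactor in \eqref{eq:bTraceDefectClifford}, and what remains is by definition $\Ch^k(\diracbdy)(i^*a_0,\ldots,i^*a_k)$, proving \eqref{Eq:cocyclecond}.

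For \eqref{Eq:transgress} the same mechanism applies, with an additional factor $\cI(\dot\dirac_t)(\gl)=i\gl f'(t)\,\sfc(dx)+\dot\diracbdy_t$ entering $\cI(\mathsf{N}_{k,t})(\gl)$.  All other factors are even in $\gl$, so the odd piece $i\gl f'(t)\,\sfc(dx)$ integrates to zero over $\R$ and only $\dot\diracbdy_t$ survives, yielding precisely $-\slch^k(\diracbdy,\dot\diracbdy_t)\circ i^*$, with the overall sign coming from the $(-1)^{j\deg\mathsf{V}}$ convention in \eqref{Eq:DefSlChernClosed}.  The main obstacle is the first step: the purely algebraic reduction to a single \textup{b}-supercommutator requires meticulous bookkeeping of super-signs, of the cyclic rearrangements underlying $B$, and of the simplex-boundary contributions produced by the Duhamel expansion.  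Once this algebraic identity is secured, the indicial-family calculation is essentially mechanical.
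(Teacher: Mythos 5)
Your proposal is correct and follows essentially the same route as the paper's proof: both start from the supercommutator $\bStr_q([\dirac_t,\,a_0e^{-\sigma_0\dirac_t^2}[\dirac_t,a_1]\cdots])$, identify its algebraic expansion with $b\bCh^{k-1}+B\bCh^{k+1}$ via shuffle relations that hold without any tracial property, and evaluate the defect through Proposition \ref{p:b-trace-defect-graded} and the indicial family, with $\int_\R e^{-\gl^2}d\gl=\sqrt{\pi}$ cancelling the prefactor and $\int_\R \gl e^{-\gl^2}d\gl=0$ killing the normal component of $\cI(\dot\dirac_t)$ in the transgression case. The only point the paper spells out that you compress into "meticulous bookkeeping" is the verification of the shuffle relations \eqref{eq:shuffleRelationA}--\eqref{eq:shuffleRelationD} themselves (via the Berezin Lemma), but your plan correctly identifies this as the content of the first step.
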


These formul\ae\, will be repeatedly used in Section \ref{s:retracted-relative-cocycle} and thereafter.
For notational convenience we will omit the symbol
$\circ i^*$ whenever the context makes clear that
this composition is required.

The theorem can be derived from \cite[Thm.~6.2]{Get:CHA} by introducing the form valued
expression
\begin{equation}\label{eq:ML200911191}
    \bllangle A_0, \cdots ,A_k \rrangle  \, := \int_{\Delta_k} \, \bStr_q
    \big( A_0 \, e^{- \sigma_0 (i \, d \dirac_t  + \dirac_t^2)} \cdots
    A_k \, e^{- \sigma_k (i \, d \dirac_t  + \dirac_t^2)} \big)d\sigma, 
\end{equation}
and the combined Chern character $\GCh^\bullet$, defined as
\begin{equation}\label{eq:combined-Chern}
\begin{split}
  \GCh^k\, & (\dirac_t) (a_0,\cdots,a_k)\\& := 
             \bllangle a_0, [\dirac_t,a_1], \cdots ,  [\dirac_t,a_k]\rrangle,
\end{split} \quad
  a_0, \cdots , a_k \in \cC^\infty ( M ).
\end{equation} 
For this, Getzler proves
\begin{equation}\label{eq:Getzler}
  (-i\, d+b+B) \GCh^\bullet (\dirac_t) = \GCh^\bullet (\diracbdyt)\circ i^* .
\end{equation}

\begin{remark}
  Note that in this paper we use self-adjoint Dirac operators while  Getzler 
  uses skew-adjoint ones in \cite{Get:CHA}. Accordingly, 
  our Dirac operators differ by a factor $-i$ from the Dirac operators in 
  \cite{Get:CHA}. This explains the appearance of such $i$-factors in our formul\ae\,,
  which are not present in \cite{Get:CHA}.
\end{remark}
By carefully tracing all the signs and $i$--factors involved in the graded
form valued Clifford calculus, as well as due to the various conventions, it turns out
that separating \eqref{eq:Getzler} into its scalar and $1$-form parts, using
\cite[Lem.~2.5]{Get:CHA}
\begin{equation}
\label{eq:split-JLO-scalar-form}
  \begin{split}
   \bllangle& A_0,\cdots,A_k \rrangle =\\
           &=\blangle A_0,\cdots,A_k \rangle -i \sum_{j=0}^k \,
           \blangle A_0,\cdots,A_{j},dt \wedge \dot \dirac_t,A_{j+1},\cdots,A_k\rangle ,
\end{split}
\end{equation}
one obtains Eqs.~\eqref{Eq:cocyclecond} and~\eqref{Eq:transgress}. 

However, for completeness,
 we will give a more direct argument in Section \ref{s:CocycleFormula},
without using operator valued forms. The proof below follows the
lines of the standard proof for the \JLO-cocycle representing the
Chern character of a $\theta$-summable Fredholm module  (\emph{cf.}~\cite{JLO:QKT},
\cite{GetSze:CCT}).

\section[Cocycle and transgression formul\ae]%
{Cocycle and transgression formul\ae\ for the even/odd \textup{b}-Chern character (without Clifford covariance)}
\label{s:CocTransgressNoClifford}
\newcommand{\warning}[1]{ {\color{red} #1}}

Recall from Remark \plref{rem:ML200908282} that for $q=0$ and $k$ even resp. $q=-1$ and $k$ odd
$\bCh^\bullet(\dirac)$ is the \textup{b}-analogue of the even, resp. odd, \JLO\ Chern character.
We shall relate the ungraded ($q=-1$) case to the graded case with $q=1$.

Starting with an ungraded Dirac operator $\dirac_t$ acting on the Hilbert space $\sH$,
put
\begin{equation}\label{eq:ML200911192}
\widetilde \sH:=\sH\oplus \sH,\quad \ga:=\begin{pmatrix} 1 & 0\\ 0 & -1\end{pmatrix},\quad
\widetilde \dirac_t:=\begin{pmatrix} 0& \dirac_t \\ \dirac_t & 0\end{pmatrix}.
\end{equation}
Then $\widetilde \dirac_t$ is odd with respect to the grading operator $\ga$ and it anti commutes
with
\begin{equation}\label{eq:ML200911193}
   E_1:=\begin{pmatrix} 0 & 1 \\ -1 & 0\end{pmatrix}.
\end{equation}
Note that
\begin{equation}\label{eq:ML200909011}
   \widetilde \dirac_t= \ga E_1 \bigl(\dirac_t\otimes I_2\bigr)
\end{equation}
with $I_2$ being the $2\times 2$ identity matrix.

\begin{proposition}\label{p:ML200909032}
Let $\dirac_t$ be ungraded {\rm ($q=-1$)}
and let $\widetilde \dirac_t=\ga E_1 (\dirac_t\otimes I_2)$ be the associated
$1$--graded {\rm ($q=1$)} operator. Then for $k$ odd 
\begin{equation}
\begin{split}
\bCh^k(\widetilde \dirac_t) &\, = \frac{1}{\sqrt{\pi}} \bCh^k(\dirac_t),\\
\bslch^{k-1}(\widetilde \dirac,\dot{\widetilde \dirac_t}) &\, =\frac{1}{\sqrt{\pi}} \bslch^{k-1}(\dirac_t,\dot\dirac_t).
\end{split}
\end{equation}
\end{proposition}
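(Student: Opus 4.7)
The plan is a direct block-matrix computation. Set $J:=\ga E_1=\bigl(\begin{smallmatrix}0&1\\1&0\end{smallmatrix}\bigr)$. Identifying $\widetilde\sH=\sH\otimes\C^2$, the matrices $\ga$ and $E_1$ act as $I_\sH\otimes\ga$ and $I_\sH\otimes E_1$ respectively, so that $\ga E_1=I_\sH\otimes J$. Equation \eqref{eq:ML200909011} then reads $\widetilde\dirac_t=(I_\sH\otimes J)(\dirac_t\otimes I_2)=\dirac_t\otimes J$, and $J^2=I_2$ gives
\begin{equation*}
\widetilde\dirac_t^{\,2}=\dirac_t^{\,2}\otimes I_2,\qquad e^{-\sigma\widetilde\dirac_t^{\,2}}=e^{-\sigma\dirac_t^{\,2}}\otimes I_2.
\end{equation*}
Embedding $a\in\cC^\infty(M)$ diagonally as $a\otimes I_2$, a one-line calculation yields $[\widetilde\dirac_t,a]=[\dirac_t,a]\otimes J$ and $\dot{\widetilde\dirac}_t=\dot\dirac_t\otimes J$, so every commutator and every velocity insertion carries exactly one $J$-factor on the $\C^2$-side.

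Plugging these tensor decompositions into the definition of $\blangle\,\cdot\,\rangle_{\widetilde\dirac_t}$, each integrand collapses to $X\otimes J^{N}$, where $X$ is the corresponding ungraded integrand built from $\dirac_t$ and $N$ is the total number of off-diagonal factors. For $\bCh^k(\widetilde\dirac_t)$ one has $N=k$ (one $J$ per commutator); for $\bslch^{k-1}(\widetilde\dirac_t,\dot{\widetilde\dirac}_t)$ the $k-1$ commutators together with the single $\dot{\widetilde\dirac}_t$-insertion again give $N=k$. For $k$ odd, $J^k=J$, and the Clifford supertrace becomes
\begin{equation*}
\bStr_1(X\otimes J)=(4\pi)^{-1/2}\,\bTr\bigl((I_\sH\otimes J)(X\otimes J)\bigr)=(4\pi)^{-1/2}\,\tr_{\C^2}(I_2)\,\bTr(X)=\tfrac{1}{\sqrt\pi}\,\bTr(X),
\end{equation*}
using $J^2=I_2$ and $\tr_{\C^2}(I_2)=2$. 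Integrating over $\Delta_k$ and comparing with the ungraded convention $q=-1$, $\ga=1$, $\bStr=\bTr$ yields both identities at once.

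There is no real analytical obstacle here; every step is linear algebra in $M_2(\C)$ combined with the tensor factorization of the heat semigroup and of the cochain building blocks. The one point of bookkeeping is the parity count $N=k$, which is precisely what selects the hypothesis ``$k$ odd''. For $k$ even one would instead land on $X\otimes I_2$, whose supertrace is $(4\pi)^{-1/2}\bTr(X\otimes J)=(4\pi)^{-1/2}\tr_{\C^2}(J)\bTr(X)=0$, consistent with Lemma \ref{l:clifford-trace}\,(i) and explaining why the proposition is only formulated for $k$ odd.
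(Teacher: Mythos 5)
Your computation is correct and is essentially the paper's own argument: the paper likewise pulls the $k$ off‑diagonal factors $\ga E_1=I_{\sH}\otimes J$ out of the commutators, uses $(\ga E_1)^{k+1}=I$ for $k$ odd, and obtains the factor $2\cdot(4\pi)^{-1/2}=\pi^{-1/2}$ from the trace over the $\C^2$ factor. Your added parity remark for $k$ even is also consistent with Lemma \ref{l:clifford-trace}.
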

Needless to say that these formul\ae\ are valid as well for $\Ch^\bullet$ and $\slch^\bullet$.
\begin{proof}
Using Proposition \plref{p:b-trace-defect-graded} we find for $k$  odd:
\begin{equation}
\label{eq:relOddGradedUngraded}
\begin{split}
   \blangle &a_0, [\widetilde \dirac_t,a_1],\ldots,[\widetilde \dirac_t,a_k]\rangle_{\widetilde \dirac_t}\\
            &= \blangle (\ga E_1)^{k} a_0, [\dirac_t\otimes I_2,a_1],\ldots,[\dirac_t\otimes I_2,a_k]\rangle_{\dirac_t\otimes I_2}\\
	    &= \int_{\Delta_k} \frac{1}{\sqrt{4\pi}} \bTr\bigl( \underbrace{(\ga E_1)^{k+1}}_{=1}
	            a_0 e^{-\sigma_0 \dirac_t^2}[\dirac_t,a_1]\ldots [\dirac_t a_k]e^{-\sigma_k \dirac_t^2}\otimes I_2\bigr)\\
	    &= \frac{1}{\sqrt{\pi}} \blangle a_0, [\dirac_t,a_1],\ldots,[\dirac_t,a_k]\rangle_{\dirac_t}.
\end{split}
\end{equation}
The calculation for $\bslch^{k-1}(\widetilde \dirac_t,\dot{\widetilde\dirac_t})$ is completely analogous.
\end{proof}

Now we are ready to translate \eqref{Eq:cocyclecond} and \eqref{Eq:transgress} into formul\ae\ 
for the standard even and odd Chern character without Clifford action.

\subsection{$q=0$} A priori we are in the standard even situation without Clifford right action.
However, $\diracbdy$ is viewed as $\Cl_1$ covariant with respect to the Clifford action
given by $E_1=-\Gammabdy$. On the boundary, $\Gammabdy$ gives a natural identification of the even
and odd half spinor bundle and with respect to the splitting into half spinor bundles $\dirac$ takes the form:
\begin{equation}
\begin{split}
    \dirac &=\underbrace{\begin{pmatrix} 0 & -1 \\ 1 & 0 \end{pmatrix}}_{\Gammabdy}
         \frac{d}{dx} + \underbrace{\begin{pmatrix} 0 & A \\ A & 0\end{pmatrix}}_{\diracbdy} ;
\end{split}
\end{equation}
$A$ is an ungraded Dirac type operator acting on the positive half spinor bundle (it is the operator whose
positive spectral projection gives the {\APS}  boundary condition). In the notation of Eq.~\eqref{eq:ML200911192},
we have $\diracbdy=\widetilde \Abdy$, $E_1=-\Gammabdy$. Thus, Proposition \plref{p:ML200909032} and Theorem \ref{P:GETZLER}
give the following result.
\begin{proposition}\label{p:CocycleTransgressEven}
	Let $M$ be an even dimensional compact manifold with boundary with an exact \textup{b}-metric
	and let $\dirac_t=f(t)\dirac $ be as before. Writing $\dirac$ in a collar of the boundary (in cylindrical coordinates) in the form
	\begin{equation}
		\dirac=:\begin{pmatrix} 0 & -\frac{d}{dx}+A\\ \frac{d}{dx} +A\end{pmatrix},
	\end{equation}
$A$ is an ungraded Dirac type operator acting on the positive half spinor bundle restricted to the boundary.
Furthermore we have with $A_t=f(t)A$
\begin{equation} 
\label{Eq:cocyclecondEven}
b\bCh^{k-1}(\dirac_t)+B\bCh^{k+1}(\dirac_t)  =\frac{1}{\sqrt{\pi}}\Ch^k(A_t)\circ i^*,
\end{equation}
\begin{equation}\begin{split}
\label{Eq:transgressEven}
 \frac{d}{dt}\bCh^k(&\dirac_t) + 
 b\bslch^{k-1}(\dirac_t , \dot{\dirac}_t ) +B\bslch^{k+1}(\dirac_t,\dot\dirac_t)\\
 & =  -\frac{1}{\sqrt{\pi}} \slch^k(A_t, \dot{A}_t)\circ i^*.
\end{split}\end{equation}
\end{proposition}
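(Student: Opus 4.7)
The plan is to reduce Proposition \plref{p:CocycleTransgressEven} to Theorem \ref{P:GETZLER} applied in the $q=0$ graded setting, and then to convert the boundary side via the graded-to-ungraded comparison in Proposition \plref{p:ML200909032}. All the analytic heavy lifting is already packaged in those two results; what remains is essentially a matching of Clifford conventions on the boundary.

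First, since $M$ is even dimensional, the b-Clifford module $W\to M$ is $\Z_2$-graded with no auxiliary Clifford right action, so $q=0$ and $\dirac$ is odd with respect to the grading. Theorem \ref{P:GETZLER} specialized to $q=0$ yields exactly
\begin{align*}
  b\bCh^{k-1}(\dirac_t)+B\bCh^{k+1}(\dirac_t) &= \Ch^k(\diracbdyt)\circ i^*,\\
  \tfrac{d}{dt}\bCh^k(\dirac_t)+b\bslch^{k-1}(\dirac_t,\dot\dirac_t)+B\bslch^{k+1}(\dirac_t,\dot\dirac_t) &= -\slch^k(\diracbdyt,\dot\diracbdyt)\circ i^*,
\end{align*}
where the boundary symbols $\Ch^k(\diracbdyt)$ and $\slch^k(\diracbdyt,\dot\diracbdyt)$ are those formed with the $(q{+}1)=1$ Clifford-covariant supertrace $\Str_1$ on $\partial M$, whose extra generator $E_1=-\Gammabdy$ comes from \eqref{eq:defracbdr} and Proposition \ref{p:b-trace-defect-graded}.

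Second, I would identify $\diracbdy$ with the $1$-graded operator $\widetilde A$ of \eqref{eq:ML200911192} associated to the ungraded Dirac type operator $A$ on the positive half-spinor bundle. In the product collar, Remark \ref{r:DiracCylinderFormulas} gives $\dirac=\Gammabdy(\tfrac{d}{dx}+\Abdy)$ with $\diracbdy=\Gammabdy\Abdy$. The grading $\gamma$ decomposes $W|_{\partial M}=W^+|_{\partial M}\oplus W^-|_{\partial M}$, and $\Gammabdy=\sfc(dx)$ interchanges the two summands; identifying $W^-|_{\partial M}\cong W^+|_{\partial M}$ via $\Gammabdy$ puts $\diracbdy$ in the off-diagonal form $\begin{pmatrix}0 & A\\ A & 0\end{pmatrix}$, which is precisely $\widetilde A$ in the sense of \eqref{eq:ML200911192}--\eqref{eq:ML200911193}. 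Proposition \plref{p:ML200909032} (the remark following its proof notes that the identities hold verbatim for $\Ch^\bullet$ and $\slch^\bullet$ on the closed boundary) then gives
\[
  \Ch^k(\widetilde{A_t})=\tfrac{1}{\sqrt{\pi}}\,\Ch^k(A_t),\qquad \slch^k(\widetilde{A_t},\dot{\widetilde A_t})=\tfrac{1}{\sqrt{\pi}}\,\slch^k(A_t,\dot A_t).
\]
Substituting $\diracbdyt=\widetilde{A_t}$ in the two displays of step one produces \eqref{Eq:cocyclecondEven} and \eqref{Eq:transgressEven}.

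The only point requiring genuine care is the bookkeeping in the middle step: one must check that the sign of $E_1=-\Gammabdy$ in the boundary $\Cl_{q+1}$-action \eqref{eq:defracbdr} agrees with the matrix $E_1=\begin{pmatrix}0 & 1\\ -1 & 0\end{pmatrix}$ used in Proposition \plref{p:ML200909032}, so that $\diracbdy=\widetilde A$ holds on the nose, and not only up to a conjugation which would spoil the clean substitution of the $1/\sqrt{\pi}$ factor. Given the hypothesis that the b-Clifford superconnection is of product form near $\partial M$, this is a direct verification rather than a computation of substance, after which Theorem \ref{P:GETZLER} and the comparison \eqref{eq:relOddGradedUngraded} combine to yield the claim.
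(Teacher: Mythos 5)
Your proposal is correct and follows exactly the route of the paper: specialize Theorem \ref{P:GETZLER} to $q=0$, observe that on the boundary one is in the $(q{+}1)=1$ Clifford-covariant situation with $E_1=-\Gammabdy$ and that the half-spinor splitting identifies $\diracbdy$ with the $1$-graded operator $\widetilde A$ of \eqref{eq:ML200911192}, and then invoke Proposition \ref{p:ML200909032} to produce the $1/\sqrt{\pi}$ factor. The sign check you flag does come out on the nose, since $\Gammabdy=\bigl(\begin{smallmatrix}0&-1\\ 1&0\end{smallmatrix}\bigr)$ gives $E_1=-\Gammabdy=\bigl(\begin{smallmatrix}0&1\\ -1&0\end{smallmatrix}\bigr)$, matching \eqref{eq:ML200911193} exactly.
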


\subsection{$q=-1$} Now let $\dirac$ be ungraded and put $\widetilde D,\ga, E_1$ as
in Eqs.~\eqref{eq:ML200911192}, \eqref{eq:ML200911193}, \eqref{eq:ML200909011}. 
Then by Proposition \plref{p:ML200909032} we have
\begin{align}
	\bCh^k(\dirac_t)(a_0,\ldots,a_k)&= \sqrt{\pi}\; \bCh^k(\widetilde \dirac_t)(a_0,\ldots,a_k), \label{eq:ML200909034}\\
	\bslch^k(\dirac_t,\dot\dirac_t)(a_0,\ldots,a_k)&= \sqrt{\pi}\; \bslch^k(\widetilde\dirac_t,
                          \dot{\widetilde\dirac_t})(a_0,\ldots,a_k).
	    \label{eq:ML200909035}
\end{align}

In the collar of the boundary, we write as usual $\dirac =\Gammabdy \frac{d}{dx}+\diracbdy$, and thus
\begin{equation}
	\widetilde\dirac=\underbrace{\begin{pmatrix} 0 & \Gammabdy \\ \Gammabdy & 0 \end{pmatrix}}_{=:\widetilde \Gammabdy}
		\frac{d}{dx}+\underbrace{\begin{pmatrix} 0 & \diracbdy\\ \diracbdy &0 \end{pmatrix}}_{=: \widetilde \diracbdy}.\
\end{equation}
$\widetilde\diracbdy$ is $2$--graded with respect to 
\begin{equation}
	E_1=
	\begin{pmatrix} 0 & 1 \\ -1 & 0 \end{pmatrix},\quad
      E_2 = -\widetilde \Gammabdy =\begin{pmatrix} 0 & -\Gammabdy \\ -\Gammabdy & 0\end{pmatrix}.
 \end{equation}
Note that
\begin{equation}
	\ga E_1 E_2 = -\Gammabdy \otimes I_2,\quad \widetilde \diracbdy = \ga E_1 (\diracbdy\otimes I_2).
\end{equation}
For even $k$ we have 
\begin{equation}
	\begin{split}
	&\Str_2\bigl( a_0 e^{-\sigma_0 \widetilde\diracbdyt^2} [\widetilde\diracbdyt,a_1]
              \cdot\ldots\cdot [\widetilde\diracbdyt,a_k] e^{-\sigma_k \widetilde\diracbdyt^2}\bigr)\\
      =& \frac{1}{4\pi} \Tr\bigl( \ga E_1 E_2 (\ga E_1)^k \bigl( a_0 e^{-\sigma_0 \diracbdyt^2} [\diracbdyt,a_1]
          \cdot\ldots\cdot [\diracbdyt,a_k] e^{-\sigma_k \diracbdyt^2}\bigr) \otimes I_2\bigr)\\
     = &- \frac{1}{2\pi}  \Tr\bigl(\Gammabdy a_0 e^{-\sigma_0 \diracbdyt^2} [\diracbdyt,a_1]
           \cdot\ldots\cdot [\diracbdyt,a_k] e^{-\sigma_k \diracbdyt^2}\bigr).
     \end{split}
\end{equation}
With respect to the grading given by $-i\Gammabdy$ we can now write
\begin{equation}
	\frac{-1}{2\pi} \Tr\bigl(\Gammabdy \cdot\bigr)= \frac{1}{2\pi i }\Str_0.
\end{equation}	

Together with \eqref{eq:ML200909034} and \eqref{eq:ML200909035} we have thus proved:
\begin{proposition}\label{p:CocycleTransgressOdd}
	Let $M$ be an odd dimensional compact manifold with boundary with an exact \textup{b}-metric
	and let $\dirac$ be an ungraded Dirac operator. Writing $\dirac$ in a collar of the boundary (in cylindrical coordinates)
in the form
	\begin{equation}
		\dirac=:\Gammabdy \frac{d}{dx}+\diracbdy,
	\end{equation}
$\diracbdy$ is a graded Dirac type operator with respect to the grading operator $-i\Gammabdy$.
Furthermore, we have
\begin{equation} 
\label{Eq:cocyclecondOdd}
b\bCh^{k-1}(\dirac_t)+B\bCh^{k+1}(\dirac_t)  =\frac{1}{2\sqrt{\pi}i}\Ch^k(\diracbdy)\circ i^*,
\end{equation}
\begin{equation}
	\begin{split}\label{Eq:transgressOdd}
 \frac{d}{dt}\bCh^k(&\dirac_t) + 
 b\bslch^{k-1}(\dirac_t , \dot{\dirac}_t ) +B\bslch^{k+1}(\dirac_t,\dot\dirac_t)\\
   & =  -\frac{1}{2\sqrt{\pi}i} \slch^k(\dirac_t^\pl, \dot{\dirac}_t^\pl)\circ i^*.
\end{split}
\end{equation}
\end{proposition}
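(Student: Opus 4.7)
The plan is to deduce this odd-case proposition directly from the graded ($q=1$) statement by applying the doubling construction introduced immediately above, namely the passage from the ungraded $\dirac$ to the $\Cl_1$-covariant graded operator $\widetilde{\dirac} = \ga E_1 (\dirac\otimes I_2)$ on $\widetilde{\sH} = \sH\oplus\sH$. Proposition \ref{p:ML200909032} already provides the translation
\[
\bCh^k(\dirac_t) = \sqrt{\pi}\;\bCh^k(\widetilde{\dirac}_t),\qquad \bslch^k(\dirac_t,\dot\dirac_t) = \sqrt{\pi}\;\bslch^k(\widetilde{\dirac}_t,\dot{\widetilde{\dirac}}_t),
\]
so both sides of \eqref{Eq:cocyclecondOdd} and \eqref{Eq:transgressOdd} will be obtained by multiplying the corresponding identities from Theorem \ref{P:GETZLER} for $\widetilde{\dirac}_t$ by $\sqrt{\pi}$ and then rewriting the boundary term appropriately.

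The main work therefore lies in identifying the boundary cocycle on the right-hand side of Theorem \ref{P:GETZLER} applied to $\widetilde{\dirac}_t$. In cylindrical coordinates one has $\widetilde{\dirac} = \widetilde{\Gammabdy}\,\frac{d}{dx} + \widetilde{\diracbdy}$ with $\widetilde{\diracbdy} = \ga E_1(\diracbdy\otimes I_2)$; this is a $\Cl_2$-covariant graded Dirac operator for the Clifford matrices $E_1$ and $E_2 := -\widetilde{\Gammabdy}$ already displayed just before the statement. The $\Cl_2$-covariant JLO Chern character $\Ch^k(\widetilde{\diracbdy}_t)$ then appears on the right-hand side of \eqref{Eq:cocyclecond}--\eqref{Eq:transgress}, and the problem reduces to expressing $\Ch^k(\widetilde{\diracbdy}_t)$ in terms of the ordinary JLO cocycle $\Ch^k(\diracbdy_t)$ of the graded boundary operator with grading operator $-i\Gammabdy$.

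This identification is precisely the computation sketched in the display preceding the statement: since $\ga E_1 E_2 = -\Gammabdy\otimes I_2$ and $(\ga E_1)^{k+1} = I$ for $k$ even, the $\Cl_2$-supertrace collapses to
\[
\Str_2(\cdots) \;=\; \frac{-1}{2\pi}\,\Tr\bigl(\Gammabdy\,(\cdots)\bigr) \;=\; \frac{1}{2\pi i}\,\Str_0(\cdots),
\]
where the last equality uses the grading $-i\Gammabdy$ on $\diracbdy$. Consequently $\Ch^k(\widetilde{\diracbdy}_t) = \frac{1}{2\pi i}\,\Ch^k(\diracbdy_t)$; an entirely parallel computation gives $\slch^k(\widetilde{\diracbdy}_t,\dot{\widetilde{\diracbdy}}_t) = \frac{1}{2\pi i}\,\slch^k(\diracbdy_t,\dot\diracbdy_t)$, since $\dot{\widetilde{\diracbdy}}_t = \ga E_1(\dot\diracbdy_t \otimes I_2)$ carries the same factor $\ga E_1$ as the Dirac operator itself.

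Combining the $\sqrt{\pi}$ from Proposition \ref{p:ML200909032} with the $\frac{1}{2\pi i}$ just produced yields the prefactor $\frac{1}{2\sqrt{\pi}\,i}$ appearing in \eqref{Eq:cocyclecondOdd} and \eqref{Eq:transgressOdd}. The only real obstacle I expect is bookkeeping: carefully tracking the odd/even degree conventions for $k$, the sign picked up when anticommuting $E_1,E_2$ with $\ga$, and the convention-dependent powers of $\sqrt{\pi}$ in the Clifford supertraces $\Str_q$ for $q=0,2$. Once these are reconciled, the two formulas drop out by direct substitution into Theorem \ref{P:GETZLER}, with no further analytic input required.
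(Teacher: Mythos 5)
Your proposal is correct and follows essentially the same route as the paper: the paper's own proof of Proposition \ref{p:CocycleTransgressOdd} consists precisely of applying Proposition \ref{p:ML200909032} to pass to the $1$-graded doubled operator $\widetilde\dirac_t=\ga E_1(\dirac_t\otimes I_2)$, invoking Theorem \ref{P:GETZLER}, and reducing the boundary $\Str_2$ to $\frac{1}{2\pi i}\Str_0$ with respect to the grading $-i\Gammabdy$, so that $\sqrt{\pi}\cdot\frac{1}{2\pi i}=\frac{1}{2\sqrt{\pi}i}$ gives the stated prefactor. The bookkeeping you flag (the collapse $\ga E_1E_2=-\Gammabdy\otimes I_2$ and $(\ga E_1)^{k+1}=1$ for $k$ even) is exactly the computation displayed in the text immediately preceding the statement.
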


\section{Sketch of Proof of Theorem \ref{P:GETZLER}}
\label{s:CocycleFormula}
Recall that Theorem \ref{P:GETZLER} is stated for $q\ge 0$, hence in this section all
Dirac operators will be $q$--graded with $q\ge 0$.

\begin{proposition}\label{p:shuffleRelations} Let $A_0,\ldots,A_k\in \bpdosub{\Cl_q}^{\bullet}\big( M ; W \big)$. Assume that
for all but one index $j_0$ the indicial family is independent of $\gl$ and commutes with
the actions of $E_1,\ldots,E_q$ and $E_{q+1}=-\Gammabdy$ (\emph{cf.} Section
\ref{s:qDirac}). For the possible exception $j_0$ we assume that $A_{j_0}$ is proportional
to $\dot\dirac_t$. Then
\begin{equation}\label{eq:shuffleRelationA}
   \blangle A_0,\ldots,A_k\rangle= (-1)^{\eps} \blangle A_k,A_0,\ldots,A_{k-1}\rangle,
\end{equation}
where $\eps=|A_k|(|A_0|+\ldots+|A_{k-1}|)$.
\begin{equation}\label{eq:shuffleRelationB}
\begin{split}
    \blangle A_0,\ldots,A_k\rangle &= \sum_{j=0}^k \blangle A_0,\ldots,A_j,1,A_{j+1},\ldots,A_k\rangle\\
           &=  \sum_{j=0}^k (-1)^{\eps_j} \blangle 1, A_j,\ldots,A_k,A_0,\ldots,A_{j-1}\rangle,
	   \end{split}
	   \end{equation}
where $\eps_j= (|A_0|+\ldots+|A_{j-1}|)(|A_j|+\ldots+|A_k|)$.

For $j<k$
\begin{equation}\label{eq:shuffleRelationC}
\begin{split}
   \blangle &A_0,\ldots,A_{j-1},[\dirac^2,A_j],A_{j+1},\ldots,A_k\rangle\\
        =&\blangle A_0,\ldots,A_{j-2}, A_{j-1}A_j,A_{j+1},\ldots,A_k\rangle\\ 
                               &- \blangle A_0,\ldots,A_{j-1}, A_{j}A_{j+1},A_{j+2},\ldots,A_k\rangle.
\end{split}
\end{equation}
Similarly, for $j=k$
\begin{equation}\label{eq:shuffleRelationD}
\begin{split}
   \blangle &A_0,\ldots,A_{k-1},[\dirac^2,A_k]\rangle \\
       =&\blangle A_0,\ldots,A_{k-2}, A_{k-1}A_k\rangle\\
                   &-  (-1)^ {|A_k|(|A_0|+\ldots+|A_{k-1}|)}\blangle A_k A_0,\ldots,A_{k-1}\rangle.
\end{split}
\end{equation}
\end{proposition}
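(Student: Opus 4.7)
\medskip

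\noindent\textbf{Proof plan.} I separate the four identities, then discuss the main obstacle.

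For the cyclic identity \eqref{eq:shuffleRelationA}, the plan is to combine the cyclic symmetry of $\Delta_k$ under the change of variables $(\sigma_0,\ldots,\sigma_k)\mapsto(\sigma_k,\sigma_0,\ldots,\sigma_{k-1})$ with the graded cyclicity of $\bStr_q$. The simplex reparametrization rewrites the integrand as $\bStr_q\bigl(A_0 e^{-\sigma_0\dirac^2}A_1 e^{-\sigma_1\dirac^2}\cdots A_{k-1}e^{-\sigma_{k-1}\dirac^2}A_k e^{-\sigma_k\dirac^2}\bigr)$ with $A_k e^{-\sigma_k\dirac^2}$ sitting at the tail; moving this factor to the head introduces the Koszul sign $(-1)^{|A_k|(|A_0|+\cdots+|A_{k-1}|)}$. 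The b-supertrace is not strictly tracial: its defect on supercommutators is given, via Propositions \ref{p:b-trace-defect} and \ref{p:b-trace-defect-graded}, by an integral involving $\tfrac{d\cI(\cdot)}{d\gl}$. The standing hypothesis — that all but one of the $A_j$ have $\gl$-independent indicial families commuting with $E_1,\ldots,E_q,E_{q+1}=-\Gammabdy$, and that the exceptional $A_{j_0}\propto\dot\dirac_t$ has indicial family $\dot f(t)(i\gl\Gammabdy+\diracbdy)$ linear in $\gl$ — guarantees exactly that these defect terms either vanish outright (for factors whose $\cI$ is constant in $\gl$) or reduce, after $\gl$-integration, to $\Str_{q+1,\pl M}$ of a product of Clifford elements whose total parity is wrong, hence also $0$.

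For the insertion identity \eqref{eq:shuffleRelationB}, first line, the plan is a direct change of variables on $\Delta_{k+1}$. Each summand rewrites as
\begin{equation*}
   \blangle A_0,\ldots,A_j,1,A_{j+1},\ldots,A_k\rangle
   =\int_{\Delta_{k+1}}\bStr_q\bigl(A_0 e^{-\tau_0\dirac^2}\cdots A_j e^{-(\tau_j+\tau_{j+1})\dirac^2}A_{j+1}\cdots A_k e^{-\tau_{k+1}\dirac^2}\bigr)d\tau.
\end{equation*}
Setting $\sigma_i=\tau_i$ for $i<j$, $\sigma_j=\tau_j+\tau_{j+1}$ and $\sigma_i=\tau_{i+1}$ for $i>j$, the inner integral over $\tau_j\in[0,\sigma_j]$ contributes a factor $\sigma_j$; summing $j=0,\ldots,k$ and invoking $\sum_j\sigma_j=1$ on $\Delta_k$ recovers $\blangle A_0,\ldots,A_k\rangle$. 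The second line then follows by applying \eqref{eq:shuffleRelationA} $(j+1)$ times to each summand so as to bring the inserted $1$ into the leading slot; since $|1|=0$, the accumulated Koszul sign collapses, after a short mod-$2$ computation using $|A_i|^2\equiv|A_i|$, to exactly $(-1)^{\eps_j}$ with $\eps_j=(|A_0|+\cdots+|A_{j-1}|)(|A_j|+\cdots+|A_k|)$, modulo the obvious reindexing $j\mapsto j+1\pmod{k+1}$.

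For the Duhamel identities \eqref{eq:shuffleRelationC} and \eqref{eq:shuffleRelationD}, the plan is to pass to the ordered parametrization $u_i=\sigma_0+\cdots+\sigma_{i-1}$, $i=1,\ldots,k$, under which $\Delta_k=\{0\le u_1\le\cdots\le u_k\le 1\}$ and the integrand becomes $A_0\,A_1^{(-u_1)}A_2^{(-u_2)}\cdots A_k^{(-u_k)}e^{-\dirac^2}$ with $A_i^{(-u)}:=e^{-u\dirac^2}A_i e^{u\dirac^2}$. The identity $\tfrac{\partial}{\partial u_j}A_j^{(-u_j)}=-[\dirac^2,A_j]^{(-u_j)}$ shows that inserting the commutator $[\dirac^2,A_j]$ in slot $j$ is $-\partial_{u_j}$ applied to the total integrand. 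Integration by parts in $u_j$ over $[u_{j-1},u_{j+1}]$ (with $u_0:=0$, $u_{k+1}:=1$) produces boundary contributions: at $u_j=u_{j-1}$ (i.e.\ $\sigma_{j-1}=0$) the factors $A_{j-1}$ and $A_j$ coalesce into $A_{j-1}A_j$, while at $u_j=u_{j+1}$ (i.e.\ $\sigma_j=0$) the factors $A_j$ and $A_{j+1}$ coalesce into $A_jA_{j+1}$; combined with the sign from $-\partial_{u_j}$, this gives \eqref{eq:shuffleRelationC} for $0<j<k$. For $j=k$, the $u_k=1$ face instead leaves $A_k^{(-1)}e^{-\dirac^2}=e^{-\dirac^2}A_k$ at the end of the product; applying \eqref{eq:shuffleRelationA} to cycle this trailing $A_k$ to the front yields the sign $(-1)^{|A_k|(|A_0|+\cdots+|A_{k-1}|)}$ in \eqref{eq:shuffleRelationD}.

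The main obstacle is the justification of cyclicity \eqref{eq:shuffleRelationA}: the heat kernel factors $e^{-\sigma\dirac^2}$ have indicial family $e^{-\sigma(\gl^2+\diracbdy^2)}$, which is emphatically $\gl$-dependent, so the b-trace defect is \emph{a priori} nonzero on each individual supercommutator. The careful bookkeeping of these defects — showing that the only potentially surviving term comes from $\dot\dirac_t$ and that it is killed by the Clifford-parity vanishing of $\Str_{q+1,\pl M}$ — is the technical heart of the argument; identities \eqref{eq:shuffleRelationB}–\eqref{eq:shuffleRelationD} then reduce to formal manipulations of heat-kernel simplex integrals.
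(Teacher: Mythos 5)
Your proposal is correct and follows essentially the same route as the paper, which itself only cites Getzler--Szenes and Getzler for the simplex reparametrization behind \eqref{eq:shuffleRelationA}, the insertion/rescaling argument behind \eqref{eq:shuffleRelationB}, and the Duhamel integration by parts behind \eqref{eq:shuffleRelationC}--\eqref{eq:shuffleRelationD}, while flagging the vanishing of the \textup{b}-trace defect terms as the only genuinely new point. The single organizational difference is that where the paper invokes the Berezin Lemma \ref{l:BerezinLemma} to kill the defect contributions containing no factor of $\Gammabdy=-E_{q+1}$, you kill them via the odd power of $\gl$ they necessarily carry; both mechanisms cover those terms, and the remaining contributions carrying a single $\Gammabdy$ are disposed of, as you indicate, by the degree-parity vanishing of $\Str_{q+1,\pl M}$ from Lemma \ref{l:clifford-trace}.
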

Note that these formul\ae\  are the same as in Getzler-Szenes \cite[Lemma 2.2]{GetSze:CCT}. In particular there is no
boundary term. 
The proof proceeds exactly as the proofs of \cite[Lemma 6.3]{Get:CHA} (1),(2), (4), and we omit the details.
We only note that one has to make heavy use of the following lemma in order to show the vanishing
of certain terms:

\begin{lemma}[Berezin Lemma]\label{l:BerezinLemma}
 Let $K\in\sL_{\Cl_q}^1(\sH)$ (\emph{cf.}~Section \ref{s:qDirac}). Then for $j<q$
\[ \Tr(\ga E_1\cdot\ldots\cdot E_j K)=0.\]
\end{lemma}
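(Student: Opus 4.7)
The plan is to split the argument according to the parity of $j+|K|$. Since $\ga E_1\cdots E_j K$ has total $\Z_2$-degree $j+|K|$, when this degree is \emph{odd} the operator is off-diagonal in the grading decomposition $\sH=\sH^+\oplus\sH^-$, so its ordinary trace vanishes automatically. This disposes of half the cases with no real work.

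For the even parity case, I would exploit the hypothesis $j<q$ by choosing an index $l$ with $j<l\le q$, so that $E_l$ does not appear in the product $E_1\cdots E_j$. The idea is to insert $E_l^{-1}(\cdot)E_l$ around $\ga E_1\cdots E_j K$ inside the trace, using cyclicity on one hand, and moving $E_l$ through every factor on the other. Since $E_l^2=-I$, one has $E_l^{-1}=-E_l$. The relevant commutation relations are: $E_l\ga=-\ga E_l$ (because $E_l$ is odd), $E_l E_i=-E_i E_l$ for $i\ne l$ (from the Clifford relations \eqref{eq:Clifford-relations}), and $E_l K=(-1)^{|K|}K E_l$ (because $K\in\sL_{\Cl_q}(\sH)$, \emph{i.e.} $[K,E_l]_\super=0$).

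Tracking these signs, moving the leftmost $E_l$ in $E_l^{-1}\ga E_1\cdots E_j K\, E_l$ all the way to the right yields a factor $(-1)^{1+j+|K|}$, and combining with $E_l^2=-I$ together with $E_l^{-1}=-E_l$ gives
\begin{equation*}
\Tr(\ga E_1\cdots E_j K)=\Tr(E_l^{-1}\ga E_1\cdots E_j K\, E_l)=(-1)^{1+j+|K|}\Tr(\ga E_1\cdots E_j K).
\end{equation*}
When $j+|K|$ is even, the sign $(-1)^{1+j+|K|}=-1$, so the trace equals its own negative and hence vanishes, completing the argument.

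The only delicate point, and the one I would carry out most carefully, is the sign bookkeeping in the commutation of $E_l$ past $\ga E_1\cdots E_j K$; any slip there collapses the argument. Everything else is a one-line use of cyclicity of the trace, of the Clifford relation $E_l^2=-I$, and of the defining property of $\sL_{\Cl_q}(\sH)$. Note that the hypothesis $j<q$ is used \emph{only} to guarantee the existence of a spare generator $E_l$, which matches the lemma's hypothesis exactly.
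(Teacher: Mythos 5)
Your proof is correct and follows essentially the same route as the paper's: the even-parity case is handled by conjugating with a spare Clifford generator $E_l$, $l>j$ (the paper takes $l=q$), and the odd-parity case by the (anti)commutation with $\ga$, which is just your off-diagonality observation. Your sign bookkeeping checks out, and splitting on the parity of $j+|K|$ rather than the paper's $j+q$ is if anything the more careful choice.
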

%This Lemma will be crucial for proving Proposition \ref{p:shuffleRelations} below.
\begin{proof}
If $j+q$ is odd then moving $\ga$ past $E_1\cdot\ldots\cdot E_j K$ and using the trace 
property gives
\begin{equation}
\begin{split}
   \Tr(\ga E_1\cdot\ldots\cdot E_j K)&= - \Tr(E_1\cdot\ldots\cdot E_j K \ga)\\
                                &= - \Tr(\ga E_1\cdot\ldots\cdot E_j K) =0.
\end{split}
\end{equation}
If $j+q$ is even then, since $j<q$, $E_q$ anti commutes with $\ga E_1\cdot \ldots\cdot E_j K$
and hence similarly
\begin{multline*}
%\begin{split}
   \Tr(\ga E_1\cdot\ldots\cdot E_j K)= - \Tr(E_q^2 \ga E_1\cdot\ldots\cdot E_j K)   
   =  \Tr(E_q \ga E_1\cdot\ldots\cdot E_j K E_q)\\ =  \Tr(E_q^2 \ga E_1\cdot\ldots\cdot E_j K) = -   \Tr(\ga E_1\cdot\ldots\cdot E_jK)=0.\qedhere
%\end{split}
\end{multline*}
\end{proof}

We will make repeated use of the equations \eqref{eq:shuffleRelationA}--\eqref{eq:shuffleRelationD}.
Now we can proceed as for a $\theta$--summable Fredholm module. Following
\cite[p. 451]{GraVarFig:ENG} we start with the supercommutator
\begin{equation}\label{eq:SuperCommutatorA}
\int_{\Delta_k} \bStr_q\bigl(\bigl[\dirac_t,a_0 e^{-\sigma_0 \dirac_t^2}[\dirac_t,a_1]\ldots  
[\dirac_t,a_k] e^{-\sigma_k \dirac_t^2}\bigr] \bigr)d\sigma, 
\end{equation}
with $a_0,\ldots, a_k\in \bcC(M^\circ).$
As in \cite[bottom of p. 37]{Get:CHA} one shows, using Proposition \plref{p:b-trace-defect-graded}
and the fact that $\int_{-\infty}^\infty e^{-\gl^2}d\gl=\sqrt{\pi}$, that this supercommutator equals
\begin{equation}\label{eq:ML200908281}
   \langle a_{0,\pl}, [\dirac_t^\partial,a_{1,\pl}],\ldots,[\dirac_t^\partial,a_{k,\pl}]\rangle_{\dirac_t^\partial}.
\end{equation}
It is important to note that here we are in the case $q+1$, where the grading is the induced grading on the boundary
and $E_{q+1}=-\Gammabdy$.

For convenience we will write $\dirac$ instead of $\dirac_t$.
Expanding the supercommutator \eqref{eq:SuperCommutatorA} on the other hand gives
\begin{equation}\label{eq:SuperCommutatorB}
\begin{split}
    \blangle &[\dirac,a_0],\ldots,[\dirac,a_k]\rangle\\
         &+\sum_{j=1}^k (-1)^{j-1} \blangle a_0,[\dirac,a_1]\ldots,[\dirac,a_{j-1}],[\dirac^2,a_j],\ldots,[\dirac,a_k]\rangle,
	 \end{split}
\end{equation}
where we have used $[\dirac^2,a_j]=[\dirac,[\dirac,a_j]]_{\Z^2}$.

We can now calculate the effect of $b$ and $B$ on $\bCh$. 
\begin{equation}\label{eq:ML200909012}
\begin{split}
   B &\bCh^{k+1}(\dirac)(a_0,\ldots,a_k)\\
      &= \sum_{j=0}^k (-1)^{kj} \blangle 1,[\dirac,a_j],\ldots,[\dirac,a_k],[\dirac,a_0],\ldots,[\dirac,a_{j-1}]\rangle \\
      &= \sum_{j=0}^k \blangle [\dirac,a_0],\ldots,[\dirac,a_{j-1}],1,[\dirac,a_j],\ldots,[\dirac,a_k]\rangle \\
      &= \blangle  [\dirac,a_0],\ldots,[\dirac,a_k]\rangle, 
\end{split}
\end{equation}
where we used  \eqref{eq:shuffleRelationB} twice. Thus, the first summand in \eqref{eq:SuperCommutatorB}
equals $B\bCh^{k+1}(\dirac)(a_0,\ldots,a_k).$

Furthermore,
\begin{equation}\label{eq:SuperCommutatorC}
\begin{split}
   b &\bCh^{k-1}(\dirac)(a_0,\ldots,a_k)\\
      &= \blangle a_0 a_1,[\dirac,a_2],\ldots,[\dirac,a_k]\rangle\\
      &\qquad +\sum_{j=1}^{k-1} (-1)^j \blangle a_0,\ldots,[\dirac,a_j a_{j+1}],\ldots,[\dirac,a_k]\rangle\\
      &\qquad + (-1)^k \blangle a_k a_0,[\dirac,a_1],\ldots,[\dirac,a_{k-1}]\rangle\\
      &= \blangle a_0 a_1,[\dirac,a_2],\ldots,[\dirac,a_k]\rangle\\
       &\qquad - \blangle a_0, a_1[\dirac,a_2],\ldots,[\dirac,a_k]\rangle\\
       &\qquad + \sum_{j=1}^{k-2} (-1)^{j}\Biggl( \blangle a_0,[\dirac,a_1],\ldots,[\dirac,a_j]a_{j+1},\ldots,[\dirac,a_{k}]\rangle \\
       &\qquad \qquad - \blangle a_0,\ldots,[\dirac,a_j],a_{j+1}[\dirac,a_{j+2}],\ldots,[\dirac,a_{k}]\rangle \Biggr)\\
       &\qquad + (-1)^{k-1} \blangle a_0,[\dirac,a_1],\ldots,[\dirac,a_{k-1}]a_k\rangle\\
       &\qquad +(-1)^{k} \blangle a_k a_0,[\dirac,a_1],\ldots,[\dirac,a_{k-1}]\rangle\\
       &=\sum_{j=1}^k (-1)^{j-1} \blangle a_0,[\dirac,a_1],\ldots,[\dirac^2,a_j],\ldots,[\dirac,a_k]\rangle,
\end{split}
\end{equation}
where we have used \eqref{eq:shuffleRelationC} and \eqref{eq:shuffleRelationD}. The right hand side
of \eqref{eq:SuperCommutatorC} equals the sum in the second line of \eqref{eq:SuperCommutatorB}.

Summing up \eqref{eq:SuperCommutatorA}, \eqref{eq:ML200908281}, \eqref{eq:SuperCommutatorB}, \eqref{eq:ML200909012},
and \eqref{eq:SuperCommutatorC} we arrive at Eq.~\eqref{Eq:cocyclecond}.
%\begin{equation}\label{eq:BocycleB}
%	b\bCh^{k-1}(\dirac_t)+B\bCh^{k+1}(\dirac_t) = \Ch^k(\dirac_t^\pl).
%\end{equation}
\medskip

For additional clarity, let us perform two direct checks, for small values of $k$. 

\subsection*{Case 1: $k=0$} In this case \eqref{eq:SuperCommutatorB} equals
\begin{equation}
	\blangle [\dirac,a_0]\rangle= \blangle 1,[\dirac,a_0]\rangle= B\bCh^1(\dirac)(a_0),
\end{equation}
by \eqref{eq:shuffleRelationB} and we are done in this case.

\subsection*{Case 2: $k=1$} Then \eqref{eq:SuperCommutatorB} equals
\begin{equation}
  \blangle [\dirac,a_0],[\dirac,a_1]\rangle + \blangle a_0,[\dirac^2,a_1]\rangle.
\end{equation}
The first summand is $B\bCh^2(a_0,a_1)$ and the second summand equals in view of \eqref{eq:shuffleRelationD}
\begin{equation}
\begin{split}
  \blangle a_0 a_1\rangle - \blangle a_1 a_0\rangle = b\bCh^0(\dirac)(a_0,a_1).
\end{split}
\end{equation}

\subsection{The transgression formula}\sind{transgression formula|(}
To prove the transgression formula we proceed analogously
and start with the supercommutator
\begin{multline}\label{eq:TransgressSuperCommutatorA}
\sum_{j=0}^k (-1)^j
\int_{\Delta_{k+1}} \bStr_q\bigl(\bigl[\dirac_t,a_0 e^{-\sigma_0 \dirac_t^2}[\dirac_t,a_1]\ldots\\
   \ldots [\dirac_t,a_j] e^{-\sigma_j \dirac_t^2}
                               \dot D e^{-\sigma_{j+1}\dirac_t^2}\ldots [\dirac_t,a_k] e^{-\sigma_{k+1}\dirac_t^2}
\bigr] \bigr)d\sigma.
\end{multline}

We compute this supercommutator using Proposition \plref{p:b-trace-defect-graded}.
Note that by Proposition \ref{p:Dirac-indicial-family} $\cI(\dot\dirac_t,\gl)$ is proportional to $i\Gammabdy\gl+\dirac^\partial$.
The summand $i\Gammabdy\gl$ contributes a term proportional to $\int_{-\infty}^\infty \gl e^{-\gl^2}d\gl=0$.
The remaining summand gives, since  $\int_{-\infty}^\infty e^{-\gl^2}d\gl=\sqrt{\pi}$,
\begin{equation}\label{eq:ML200909021}
  \begin{split}
\sum_{j=0}^k (-1)^j \langle &a_{0,\pl},[\dirac_t^\pl,a_{1,\pl}],\ldots,[\dirac_t^\pl,a_{j,\pl}],\dot\dirac_t^\pl,\ldots,[\dirac_t^\pl,a_{k,\pl}]\rangle\\
      &=	  \slch^k(\diracbdy,\dot\diracbdy)(a_{0,\pl},\ldots,a_{k,\pl}).
      \end{split}
\end{equation}
Let us again emphasize that here we are in the case $q+1$, where the grading is the induced grading on the boundary
and $E_{q+1}=-\Gammabdy$.

Next we expand the commutator \eqref{eq:TransgressSuperCommutatorA}. However, we will confine ourselves
to small $k$. The calculation is basically the same as in \cite[p. 451]{GraVarFig:ENG}. The only difference is that
on a closed manifold \eqref{eq:TransgressSuperCommutatorA} is a priori $0$ while here it coincides with
the transgressed Chern character on the boundary.

\subsection*{Case 1: $k=0$} 
\eqref{eq:TransgressSuperCommutatorA} expands to
\begin{equation}\label{eq:TransgressSuperCommutatorEven}
    \blangle [\dirac_t,a_0],\dot\dirac_t]\rangle +\blangle a_0,[\dirac_t,\dot\dirac_t]\rangle.
\end{equation}
On the other hand
\begin{equation}\label{eq:ML200909022}
	\begin{split}	
	B\bslch^1(\dirac_t,\dot\dirac_t)(a_0)&=\bslch^1(\dirac_t,\dot\dirac_t)(1,a_0)\\
	         &= \blangle 1,\dot\dirac_t,[\dirac_t,a_0]\rangle
		     - \blangle 1,[\dirac_t,a_0],\dot\dirac_t \rangle\\
		 &= - \blangle [\dirac_t,a_0],1,\dot\dirac_t\rangle
		     - \blangle [\dirac_t,a_0],\dot\dirac_t,1 \rangle\\
		 &=  - \blangle [\dirac_t,a_0],\dot\dirac_t]\rangle,
	 \end{split}
\end{equation}
by \eqref{eq:shuffleRelationB}. Moreover using the well--known formula
\begin{equation}\label{eq:ML200909023}
    \frac{d}{dt} e^{-\sigma \dirac_t^2} = - \int_0^\sigma e^{(\sigma - s)\dirac_t^2} [\dirac_t,\dot\dirac_t] e^{-s\dirac_t^2} ds,
\end{equation}    
we have
\begin{equation}
	\frac{d}{dt} \bCh^0(\dirac_t)(a_0)=-\blangle a_0,[\dirac_t,\dot\dirac_t]\rangle,
\end{equation}
hence altogether
\begin{equation}
     	\frac{d}{dt} \bCh^0(\dirac_t)+B\bslch^1(\dirac_t,\dot\dirac_t)= -\slch^0(\dirac_t^\pl,\dot\dirac_t^\pl).
\end{equation}

\subsection*{Case 2: $k=1$} To be on the safe side, we also look at an example in the odd case.
We will again make repeated use of the formul\ae\ in Proposition \plref{p:shuffleRelations} without
further mentioning.
Eq. \eqref{eq:TransgressSuperCommutatorA} now expands to
\begin{equation}\label{eq:TransgressSuperCommutatorOdd}
	\begin{split}
	&	\blangle [\dirac_t,a_0],\dot\dirac_t,[\dirac_t,a_1]\rangle -
		\blangle [\dirac_t,a_0],[\dirac_t,a_1],\dot\dirac_t\rangle\\
	&	+ \blangle a_0,[\dirac_t,\dot\dirac_t],[\dirac_t,a_1]\rangle
	        + \blangle a_0,[\dirac_t,a_1],[\dirac_t,\dot\dirac_t]\rangle\\
	&       -\blangle a_0,\dot\dirac_t,[\dirac_t^2,a_1]\rangle
	        - \blangle a_0,[\dirac_t^2,a_1],\dot\dirac_t\rangle.
	\end{split}
\end{equation}

On the other hand
\begin{equation}\label{eq:ML200909024}
	\begin{split}	
	B\bslch^2&(\dirac_t,\dot\dirac_t)(a_0)= \bslch^2(\dirac_t,\dot\dirac_t)(1,a_0,a_1)-
	     \bslch^2(\dirac_t,\dot\dirac_t)(1,a_1,a_0)\\
          = &\blangle 1,\dot\dirac_t,[\dirac_t,a_0], [\dirac_t,a_1]\rangle
	    -\blangle 1,[\dirac_t,a_0],\dot\dirac_t, [\dirac_t,a_1]\rangle\\
	 & +\blangle 1,[\dirac_t,a_0], [\dirac_t,a_1],\dot\dirac_t\rangle
	    -\blangle 1,\dot\dirac_t,[\dirac_t,a_1], [\dirac_t,a_0]\rangle\\
	 & + \blangle 1,[\dirac_t,a_1],\dot\dirac_t, [\dirac_t,a_0]\rangle
	    -\blangle 1,[\dirac_t,a_1], [\dirac_t,a_0],\dot\dirac_t\rangle\\
            =& \blangle [\dirac_t,a_0], [\dirac_t,a_1],1,\dot\dirac_t\rangle
	    +\blangle [\dirac_t,a_0],1,[\dirac_t,a_1],\dot\dirac_t\rangle\\
	  &+\blangle [\dirac_t,a_0], [\dirac_t,a_1],\dot\dirac_t,1\rangle
	  -\blangle [\dirac_t,a_0],\dot\dirac_t, [\dirac_t,a_1],1\rangle\\
	  & -\blangle [\dirac_t,a_0],\dot\dirac_t,1, [\dirac_t,a_1]\rangle
          -\blangle [\dirac_t,a_0],1,\dot\dirac_t, [\dirac_t,a_1]\rangle\\
	 =& \blangle [\dirac_t,a_0],[\dirac_t,a_1],\dot\dirac_t\rangle
	    -\blangle [\dirac_t,a_0],\dot\dirac_t,[\dirac_t,a_1]\rangle,
    \end{split}
    \end{equation}
which equals the negative of the first two summands of \eqref{eq:TransgressSuperCommutatorOdd}.

Furthermore,
\begin{equation}
	b \bslch^0(\dirac_t,\dot\dirac_t)(a_0,a_1)=\bslch^0(\dirac_t,\dot\dirac_t)([a_0,a_1])
	  = \blangle [a_0,a_1],\dot\dirac_t\rangle.
\end{equation}
Applying \eqref{eq:shuffleRelationC} and \eqref{eq:shuffleRelationD} to the last
two summands of \eqref{eq:TransgressSuperCommutatorOdd} we find
\begin{equation}
    \begin{split}
	\blangle& a_0,\dot\dirac_t,[\dirac_t^2,a_1]\rangle+ \blangle a_0,[\dirac_t^2,a_1],\dot\dirac_t\rangle\\
	=& \blangle a_0,\dot\dirac_t a_1\rangle - \blangle a_1a_0,\dot\dirac_t\rangle
	 +\blangle a_0 a_1,\dot\dirac_t\rangle - \blangle a_0, a_1\dot\dirac_t\rangle \\
	=& \blangle a_0,[\dot\dirac_t,a_1]\rangle +b\bslch^0(\dirac_t,\dot\dirac_t)(a_0,a_1),
\end{split}
\end{equation}
hence adding $B\bslch^2(\dirac_t,\dot\dirac_t)(a_0,a_1)$ and $b\bslch^0(\dirac_t,\dot\dirac_t)(a_0,a_1)$
to the right hand side of \eqref{eq:TransgressSuperCommutatorOdd} we obtain
\begin{equation}
	\begin{split}
\slch^1(&\dirac_t^\pl,\dot\dirac_t^\pl)(a_{0,\pl},a_{1,\pl})+B\bslch^2(\dirac_t,\dot\dirac_t)(a_0,a_1)
   +b\bslch^0(\dirac_t,\dot\dirac_t)(a_0,a_1)\\
   =&- \blangle a_0,[\dot\dirac_t,a_1]\rangle	+ \blangle a_0,[\dirac_t,\dot\dirac_t],[\dirac_t,a_1]\rangle
	        + \blangle a_0,[\dirac_t,a_1],[\dirac_t,\dot\dirac_t]\rangle\\
   =& -\frac{d}{dt} \bCh^1(\dirac_t)(a_0,a_1)		
 \end{split}
\end{equation}
in view of \eqref{eq:ML200909023}.

With more effort but in a similar manner, the previous considerations can be extended to arbitrary $k$,
thus proving Eq.~\eqref{Eq:transgress}.

%\begin{proposition}\label{eq:TransgressionFormula}
%	\begin{equation}
%		\frac{d}{dt}\bCh^k(\dirac_t)+b\bslch^{k-1}(\dirac_t,\dot\dirac_t)+B\bslch^{k+1}(\dirac_t,\dot\dirac_t)
%		   =\warning{-} \bslch^k(\dirac_t^\pl,\dot\dirac_t^\pl).
%	\end{equation}
%\end{proposition}	
    
\sind{transgression formula|)}

\chapter{Heat Kernel and Resolvent Estimates}
%\chapter{Comparison results and estimates on the cylinder}
\label{s:cylinder-estimates}
%
%28.10.08 section 6 b-heat kernel estimates from
% ML draft of 27.06.08 copied here.
% 
%\chapter{Basic resolvent and heat kernel estimates}
This is the most technical chapter of the paper. It is devoted to prove
some crucial estimates for the heat kernel\sind{heat kernel} of a \textup{b}-Dirac operator. 
These estimates will be used to analyze the short and long time 
behavior of the Chern character. Throughout this chapter we will mostly
work in the cylindrical context.

For the convenience of the reader we start by summarizing some
basic estimates for the resolvent and the heat operator associated
to an elliptic operator. These estimates will then be applied
in Section \ref{s:comparison-results} to prove comparison results for
the heat kernel\sind{heat kernel} and {\JLO} integrand of a Dirac operator on a general manifold with
cylindrical ends to those of a corresponding Dirac operator on the model
cylinder. In the remainder of the Chapter we will then prove short and
large time estimates for the \textup{b}-Chern character. This is in
preparation for proving heat kernel asymptotics
in the \textup{b}-setting in Section \ref{s: heat expansion}.

\section{Basic resolvent and heat kernel estimates on general manifolds}
\label{sec:basic-estimates} 

During the whole section $M$ will be a riemannian manifold without boundary and 
$D_0:\Gamma^\infty(M;W)\longrightarrow \Gamma^\infty(M;W)$ 
will denote a first order formally self-adjoint elliptic differential operator acting between sections
of the hermitian vector bundle $W$. 
We assume that there exists a self-adjoint extension, $D$, of $D_0$.
E.g. if $M$ is complete and $D_0$ is of Dirac type
then $D_0$ is essentially self-adjoint; if $M$ is the interior of a compact manifold
with boundary then $D$ can be obtained by imposing an appropriate boundary condition. 
For the following
considerations it is irrelevant which self-adjoint extension is chosen. We just fix one.

\subsection{Resolvent estimates}
\label{ss:ResolventEstimates}
We fix an open sector 
$\Lambda:=\bigsetdef{z\in\C\setminus\{0\}}{0< \eps < \arg z < 2\pi-\eps}\subset \C\setminus \R_+$ in the complex plane.

We introduce the following notation: for a function $f:\gL\to \C$ we write 
$f(\gl)= O(|\gl|^{\ga+0}), \gl\to\infty, \gl\in\gL$
if for every $\delta>0, \gl_0\in\gL,$ there is a constant $C_{\delta,\gl_0}$ such that 
$|f(\gl)|\le C_\delta |\gl|^{\ga+\delta}$ for $\gl\in\gL, |\gl|\ge |\gl_0|$.

We write $f(\gl)=O(|\gl|^{-\infty})$, $\gl\to\infty,\gl\in\gL$ if
$f(\gl)=O(|\gl|^{-N})$ for every $N$; the $O$--constant may depend on $N$.

% Mess: | not allowed in index command
%\newcommand{\norm}[1]{\|#1\|}
%\nind{Ltwo@$L^2_s(M;W)$}
%\nind{norm@$\norm{T}$}
%\nind{norm@$\|T\|_p$}
\sind{Schatten class}
%\nind{norm@$\|T\|_\infty$}
$L^2_s(M;W)$ denotes the Hilbert space of sections of $W$ which are of Sobolev
class $s$. The Sobolev norm of an element $f\in L^2_s(M;W)$ is denoted by
$\|f\|_s$. For a linear operator $T:L^2_s(M;W)\to L^2_t(M;W)$ its operator 
norm is denoted by $\|T\|_{s,t}$.

For an operator $T$ in a Hilbert space $\sH$ we denote by $\|T\|_p$
the $p$--th Schatten norm. To avoid confusions the letter $p$ will not
be used for Sobolev orders. 
Note that the operator norm of $T$ in $\sH$ coincides with $\|T\|_\infty$.

\begin{proposition}\label{p:ML20080403-A1}
Let $A,B\in\pdo^\bullet(M,W)$ be pseudodifferential operators of order $a,b$ 
with compact support.\footnote{This means that their
Schwartz kernels are compactly supported in $M\times M$.}

%\begin{enumerate}
%\item 

\textup{1.} If $k>(\dim M)/4+a/2$ then $A (D^2-\gl)^{-k}, (D^2-\gl)^{-k}A$ are Hilbert--Schmidt operators for $\gl\not\in\spec D^2$
and we have
\begin{equation}\label{eq:Hilbert-Schmidt-resolvent-estimate}
      \| A(D^2-\gl)^{-k}\|_2=O(|\gl|^{a/2+(\dim M)/4-k+0}),\quad \text{as } \gl\to\infty\text{ in }\gL.
\end{equation}
The same estimate holds for $\| (D^2-\gl)^{-k}A\|_2$.
%\item 

\textup{2.} If $k>(\dim M+a+b)/2$ then $A (D^2-\gl)^{-k}B$ is of trace class for $\gl\not\in\spec D^2$
and
\begin{equation}\label{eq:trace-class-resolvent-estimate}
      \| A(D^2-\gl)^{-k}B\|_1= O(|\gl|^{(\dim M+a+b)/2-k+0}),\quad \text{as } \gl\to\infty\text{ in }\gL.
\end{equation}

%\item 
\textup{3.} Denote by $\pi_1, \pi_2:M\times M\to M$ the projection onto the first resp. second factor
and assume that $\pi_2(\supp A)\cap \pi_1(\supp B)=\emptyset$. Then $A(D^2-\gl)^{-k}B$ is a trace class
operator for any $k\ge 1$ and 
\begin{equation}\label{eq:trace-class-smoothing-estimate}
        \| A(D^2-\gl)^{-k}B\|_1=O(|\gl|^{-\infty}), \quad \text{as } \gl\to\infty\text{ in }\gL.
\end{equation}
%with $N$ arbitrarily large.
%\end{enumerate}
\end{proposition}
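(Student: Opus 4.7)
The plan is to prove all three parts as consequences of (i) the functional-calculus/Sobolev mapping bound
\[
   \|(D^2-\lambda)^{-k}\|_{L^2\to L^2_{2s}} \;=\; O\bigl(|\lambda|^{-(k-s)+0}\bigr),\qquad \lambda\to\infty\text{ in }\Lambda,\ 0\le s\le k,
\]
together with (ii) the fact that Sobolev embedding $L^2_t(K)\hookrightarrow L^2(K)$ is Hilbert--Schmidt once $t>(\dim M)/2$, and trace class once $t>\dim M$, for compact $K\subset M$. Estimate (i) is obtained by spectral theorem: the operator norm equals $\sup_{x\ge 0}(1+x)^s/|x-\lambda|^k$, and elementary case analysis ($x\le 2|\lambda|$ versus $x\ge 2|\lambda|$, using $\mathrm{dist}(\lambda,[0,\infty))\asymp|\lambda|$ uniformly in $\Lambda$) yields the claimed bound for $s<k$ and a uniform bound for $s=k$; the $|\lambda|^{+0}$ tolerance absorbs the fact that at the endpoint $s=\dim M/4+a/2$ Sobolev embedding just fails.

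For part~1, write $A=\chi A\chi$ with $\chi\in\cC^\infty_\textup{cpt}(M)$ equal to $1$ on a neighbourhood of the (compact) projections of $\supp A$. Then $A\chi:L^2_s\to L^2_{s-a}$ is bounded for every $s$, and $\chi(D^2-\lambda)^{-k}$ has image in $L^2_{2s}(\supp\chi)$ for $s\le k$. Composing,
\[
  A(D^2-\lambda)^{-k}\;=\;(A\chi)\,\chi(D^2-\lambda)^{-k}\;:\;L^2\longrightarrow L^2_{2s-a}(\supp\chi)\hookrightarrow L^2.
\]
Taking $s=\dim M/4+a/2+\delta$ with small $\delta>0$ makes the trailing embedding Hilbert--Schmidt, and its norm is bounded by a $\lambda$-independent constant coming from the Sobolev geometry of $\supp\chi$. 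The $\lambda$-decay $|\lambda|^{s-k}=|\lambda|^{\dim M/4+a/2-k+\delta}$ from (i) then gives the stated bound; the corresponding estimate for $(D^2-\lambda)^{-k}A$ follows by taking adjoints.

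For part~2, split $k=k_1+k_2$ with $k_1>\dim M/4+a/2$ and $k_2>\dim M/4+b/2$ (possible because $k>(\dim M+a+b)/2$), insert the product $(D^2-\lambda)^{-k}=(D^2-\lambda)^{-k_1}(D^2-\lambda)^{-k_2}$, and apply part~1 to $A(D^2-\lambda)^{-k_1}$ and to $(D^2-\lambda)^{-k_2}B$ separately. The Hilbert--Schmidt $\times$ Hilbert--Schmidt $=$ trace-class inequality $\|ST\|_1\le\|S\|_2\|T\|_2$ then yields the trace-norm bound with exponent $(a+b)/2+\dim M/2-k+0$.

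For part~3, the disjointness hypothesis lets us choose compactly supported cut-offs $\chi_1,\chi_2\in\cC^\infty_\textup{cpt}(M)$ with $\chi_1\equiv1$ on $\pi_2(\supp A)$, $\chi_2\equiv1$ on $\pi_1(\supp B)$, and $\supp\chi_1\cap\supp\chi_2=\emptyset$, so that $A(D^2-\lambda)^{-k}B=A\chi_1(D^2-\lambda)^{-k}\chi_2 B$. The key step is the commutator identity
\[
  \chi_1(D^2-\lambda)^{-1}\;=\;(D^2-\lambda)^{-1}\chi_1\;-\;(D^2-\lambda)^{-1}[D^2,\chi_1](D^2-\lambda)^{-1};
\]
iterating this across all $k$ factors and using $\chi_1\chi_2=0$ expresses $\chi_1(D^2-\lambda)^{-k}\chi_2$ as a finite sum of operators of the form $(D^2-\lambda)^{-k_0}P_1(D^2-\lambda)^{-k_1}\cdots P_N(D^2-\lambda)^{-k_N}$ where $k_0+\cdots+k_N=k+N$ and each $P_j=[D^2,\chi_j']$ is a first-order differential operator with compactly supported coefficients, for any chosen $N$. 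Applying part~2 (with the factor $(D^2-\lambda)^{-(k+N)}$ grouped appropriately and the sandwich of operators of total order $N$) gives $O(|\lambda|^{-N+c+0})$ with $c$ a fixed constant depending on $a,b,\dim M$ but not on $N$, hence $O(|\lambda|^{-\infty})$ since $N$ is arbitrary. The main bookkeeping obstacle here is arranging the intermediate $P_j$ factors so that part~2 applies uniformly to give the gained power of $|\lambda|$ per commutation; this is where one must keep track that each extra commutator raises the differential order by at most $2$ while the added resolvent factor lowers the $|\lambda|$-exponent by $1$ relative to the Sobolev order balance.
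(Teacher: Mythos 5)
Your parts 1 and 2 follow essentially the same route as the paper: part 1 is the combination of the spectral--theorem bound $\|(D^2+1)^{s}(D^2-\lambda)^{-k}\|=O(|\lambda|^{s-k})$ on the sector with the fact that the Sobolev embedding on a compact set becomes Hilbert--Schmidt past order $\dim M/2$ (the paper phrases this as a pointwise bound on the Schwartz kernel via $L^2_{a+r}\hookrightarrow C^0$ and then integrates, which is the same estimate), and part 2 is exactly the paper's ``H\"older inequality'' step, i.e.\ the factorization through two Hilbert--Schmidt operators. Part 3 is where you genuinely diverge: the paper simply quotes the parameter-dependent pseudodifferential calculus (Shubin) to assert that $\varphi(D^2-\lambda)^{-k}\psi$ is smoothing with $O(|\lambda|^{-N})$ norms for disjointly supported cut-offs, whereas you give a self-contained commutator iteration. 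Your route is more elementary and fits the paper's own toolkit (the identity you iterate is precisely the one the paper uses in the proof of the \emph{next} proposition, Eq.~\eqref{eq:ML20080403-A7}), at the cost of the support bookkeeping: one must nest the cut-offs so that each new commutator $[D^2,\chi']$ still has support disjoint from $\supp\chi_2$, which works because $\supp d\chi_1$ stays inside $\supp\chi_1$. Two small blemishes, neither fatal: the sign in your resolvent--commutator identity should be $+$ rather than $-$ (irrelevant for norms), and your closing sentence claims each commutator raises the order ``by at most $2$,'' which would make the iteration exactly break even against the gained resolvent factor and yield no decay; what saves you is your own earlier (correct) observation that $[D^2,\chi]$ is \emph{first} order, so each iteration nets a gain of $|\lambda|^{-1/2}$ and the $O(|\lambda|^{-\infty})$ conclusion follows.
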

\begin{proof}
1. Sobolev embedding and elliptic regularity implies that for $f\in L^2(M;W)$ the section $A(D^2-\gl)^{-k}f$ is
continuous. Moreover, for $r>\dim M/2, |\gl|\ge |\gl_0|,$ and $x$ in the compact set $\supp A=:K$ 
\begin{equation}
   \begin{split}
       \|\bigl(A (D^2-\gl)^{-k}f\bigr)(x)\|&\le C \| (D^2-\gl)^{-k}f\|_{a+r,K}\\
                           &\le C \|(D^2+I)^{(a+r)/2}(D^2-\gl)^{-k} f\|_0\\
                           &\le C |\gl|^{-k+(a+r)/2} \|f\|.
   \end{split}
\end{equation}
For the Schwartz--kernel this implies the estimate
\begin{equation}
        \sup_{x\in\supp A} \int_M \|A(D^2-\gl)^{-k}(x,y)\|^2 d\vol(y)\le C |\gl|^{-2k+a+r},
\end{equation}
and since $A$ has compact support, integration over $x$ yields
\begin{equation}
    \begin{split}
        \| A&(D^2-\gl)^{-k}\|_2^2\\
         &\le \int_{\supp A} \int_M \|A(D^2-\gl)^{-k}(x,y)\|^2d\vol(x)d\vol(y)\\
         &\le C |\gl|^{-2k+a+r},
       \end{split}
\end{equation}       
proving the estimate \eqref{eq:Hilbert-Schmidt-resolvent-estimate}.
The estimate for $(D^2-\gl)^{-k}A$ follows by taking the adjoint.

2. The second claim follows from the first one using the H\"older inequality.  

3. To prove the third claim we choose cut--off functions $\varphi,\psi\in\cinfz{M}$
with $\varphi=1$ on $\pi_2(\supp A)$, $\psi=1$ on $\pi_1(\supp B)$ and $\supp\varphi\cap\supp \psi=\emptyset$.

Then $A(D^2-\gl)^{-k}B=A\varphi (D^2-\gl)^{-k}\psi B$ and
$\varphi (D^2-\gl)^{-k}\psi$ is a smoothing operator in the 
\semph{parameter dependent calculus} (\emph{cf.}~Shubin \cite[Chap. II]{Shu:POS}).
Hence for any real numbers $s,t,N$ we have
\begin{equation}
       \|\varphi (D^2-\gl)^{-k}\psi\|_{s,t}\le C(s,t,N)\; |\gl|^{-N}, \quad \text{as } \gl\to\infty\text{ in }\gL.
\end{equation}
Since the Sobolev orders $s,t$ are arbitrary this implies the claim.
\end{proof}

\begin{proposition}\label{p:ML20080403-A2}
Let $A\in\pdo^a(M,W)$ be a pseudodifferential operator with compact support.

%\begin{enumerate}
%\item 

\textup{1.} Let $\varphi\in\cinf{M}$ be a smooth function such that $\supp d\varphi$ is compact,
i.e. outside a compact set $\varphi$ is locally constant. Moreover suppose that
$\supp \varphi\cap \pi_1(\supp A)=\emptyset$. Then $\varphi(D^2-\gl)^{-k}A$ is a trace class
operator for any $k\ge 1$ and the estimate \eqref{eq:trace-class-smoothing-estimate}
holds for $\varphi(D^2-\gl)^{-k}A$.

%\item 

\textup{2.} If $k>(\dim M+a)/2$ then $A (D^2-\gl)^{-k}, (D^2-\gl)^{-k}A$ are trace class operators
and the estimate \eqref{eq:trace-class-resolvent-estimate} holds with $B=I$. 
%\end{enumerate}
\end{proposition}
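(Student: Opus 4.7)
The plan is to reduce both claims to parts 2 and 3 of Proposition \ref{p:ML20080403-A1} by a commutator expansion. Setting $R:=(D^2-\gl)^{-1}$, the basic identity I shall employ is
\begin{equation*}
  [\psi, R^k] \, = \, -\sum_{j=0}^{k-1} R^{j+1} [\psi, D^2] R^{k-j},
\end{equation*}
valid for any $\psi \in \cC^\infty(M)$; here $[\psi, D^2]$ is a first order differential operator whose coefficients are compactly supported in $\supp d\psi$.

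For part 1, note that the hypothesis $\supp\varphi \cap \pi_1(\supp A) = \emptyset$ is equivalent to the identity $\varphi A = 0$, so that $\varphi R^k A = [\varphi, R^k] A$. Each resulting summand has the form $R^{j+1} [\varphi, D^2] R^{k-j} A$. Since $\supp d\varphi \subset \supp\varphi$ is compact and disjoint from $\pi_1(\supp A)$, I choose a compactly supported cut-off $\chi$ with $\chi \equiv 1$ on an open neighborhood of $\supp d\varphi$ and with $\supp\chi \cap \pi_1(\supp A) = \emptyset$. Locality of the differential operator $[\varphi, D^2]$ then gives $[\varphi, D^2] = [\varphi, D^2]\chi$, so that each summand factors as $R^{j+1}[\varphi, D^2] \cdot (\chi R^{k-j} A)$. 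Part 3 of Proposition \ref{p:ML20080403-A1}, applied with $\chi$ and $A$ (whose supports satisfy the required disjointness), shows that $\chi R^{k-j} A$ is trace class with $O(|\gl|^{-\infty})$ trace norm, and composition on the left with the $L^2$-bounded operator $R^{j+1}[\varphi, D^2]$, whose operator norm grows at most polynomially in $\gl$, preserves both properties.

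For part 2, choose $\psi \in \cC^\infty(M)$ compactly supported with $\psi \equiv 1$ on a neighborhood of the compact set $\pi_2(\supp A)$, so that $A\psi = A$ and $\supp d\psi \cap \pi_2(\supp A) = \emptyset$. Decomposing $AR^k = AR^k\psi + A[\psi, R^k]$, the first term has both $A$ and $\psi$ compactly supported (of orders $a$ and $0$), and part 2 of Proposition \ref{p:ML20080403-A1} gives directly the trace class property with the required bound $O(|\gl|^{(\dim M + a)/2 - k + 0})$. The second term is treated as in part 1: expanding $[\psi, R^k]$ and inserting a cut-off $\chi'$ with $\chi' \equiv 1$ near $\supp d\psi$ and $\supp\chi' \cap \pi_2(\supp A) = \emptyset$, part 3 of Proposition \ref{p:ML20080403-A1} yields $AR^{j+1}\chi'$ trace class with $O(|\gl|^{-\infty})$ norm, an error absorbed into the leading estimate. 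The assertion for $R^k A$ follows by taking adjoints, using that the sector $\gL$ is stable under complex conjugation.

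The only technical subtlety lies in the kernel bookkeeping required to check that, after insertion of cut-offs, the compactly supported smooth kernel of the trace class smoothing factor---whose Schwartz norms decay as $O(|\gl|^{-\infty})$---survives composition with the first order operator $[\varphi, D^2]$ or $[\psi, D^2]$ while retaining trace class membership and the rapid decay. This is reasonably direct: these commutators are local with compactly supported coefficients, so they differentiate one of the diagonal-side variables of a compactly supported smooth kernel, producing another such kernel with analogous $\gl$-uniform Schwartz estimates.
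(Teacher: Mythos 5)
Your argument is correct and follows essentially the same route as the paper's: both reduce the claims to parts 2 and 3 of Proposition \ref{p:ML20080403-A1} by exploiting the locality of $[\varphi,D^2]$ together with a resolvent--commutator identity (the paper phrases your summed expansion as an induction via $(D^2-\gl)\varphi(D^2-\gl)^{-k}A=[D^2,\varphi](D^2-\gl)^{-k}A+\varphi(D^2-\gl)^{-k+1}A$, and in part 2 places the cutoff on the left and feeds the remainder back into part 1 rather than re-expanding the commutator). These are only organizational differences; the substance is identical.
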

\begin{proof}
From
\begin{equation}\label{eq:ML20080403-A7}
       (D^2-\gl) \varphi (D^2-\gl)^{-k} A= [D^2,\varphi] (D^2-\gl)^{-k}A+\varphi (D^2-\gl)^{-k+1}A
\end{equation}
we infer since $\varphi A=0$
\begin{equation}\begin{split}
           \varphi (&D^2-\gl)^{-k} A\\
     &= (D^2-\gl)^{-1}
                 \begin{cases} [D^2,\varphi] (D^2-\gl)^{-k}A,& k=1,\\
                               [D^2,\varphi] (D^2-\gl)^{-k}A+\varphi (D^2-\gl)^{-k+1}A,& k>1.
  \end{cases}
  \end{split}
 \end{equation}
Applying Proposition \plref{p:ML20080403-A1}.3 to the right hand side we 
inductively obtain the first assertion.

To prove the second assertion we choose a cut--off function $\varphi\in\cinfz{M}$ with $\varphi=1$ on
$\pi_1(\supp A)$. Then we apply Proposition \plref{p:ML20080403-A1}.2 to $\varphi(D^2-\gl)^{-k}A$ and
the proved first assertion to $(1-\varphi)(D^2-\gl)^{-k}A$ to reach the conclusion.
\end{proof}

For the following Proposition it is crucial that we are precise about
{\domain}s of operators:

\begin{definition}\label{d:commutator-compact} By $\Diff^d(M,W)$
 we denote the space of differential operators of order $d$
 acting on the sections of $W$.
Given a differential operator $A\in\Diff^a(M,W)$ we say
that the commutator $[D^2,A]$ has compact support if
\begin{enumerate}
\item $A$ and $A^t$ map the domain $\dom(D^k)$ into the domain $\dom(D^{k-a})$ for $k\ge a$ and
\item the differential expression $[D^2,A]$ has compact support.
\end{enumerate}
\end{definition}

The main example we have in mind is where $D$ is a Dirac type operator
on a complete manifold and $A$ is multiplication by a smooth function
$\varphi$ such that $d\varphi$ has compact support. Then $[D^2,\varphi]$
has compact support in the above sense.

\begin{proposition}\label{p:ML20081104-A19} 
Let $A\in\Diff^a(M,W)$ be a differential operator
such that $[D^2,A]$ has compact support and is of order $\le a+1$. 
Then for $k>\dim M+a$ the
commutator $[A,(D^2-\gl)^{-k}]$ is trace class and
\begin{equation}\label{eq:commutator-resolvent-estimate}
      \| [A,(D^2-\gl)^{-k}]\|_1= O(|\gl|^{(\dim M+a-1)/2-k+0}),\quad \text{as } \gl\to\infty\text{ in }\gL.
\end{equation}
\end{proposition}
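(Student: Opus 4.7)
The plan is to use the telescoping commutator identity
\[
 [A,(D^2-\lambda)^{-k}] \,=\, \sum_{j=0}^{k-1} (D^2-\lambda)^{-(j+1)}\,C\,(D^2-\lambda)^{-(k-j)},
\]
where $C:=[D^2,A]$ is, by hypothesis, a compactly supported differential operator of order at most $a+1$. Abbreviating $P=P(\lambda)=(D^2-\lambda)^{-1}$, each summand has total resolvent power $k+1$, and the compact support of $C$ is the sole mechanism through which trace-class estimates on the noncompact manifold $M$ will be extracted. The target is to bound each summand $P^{j+1}\,C\,P^{k-j}$ in trace norm by $O(|\lambda|^{(\dim M+a-1)/2-k+0})$ as $\lambda\to\infty$ in $\Lambda$.

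The decisive combinatorial role of the hypothesis $k>\dim M+a$ is that for every $j\in\{0,\ldots,k-1\}$ at least one of
\[
 j+1>\tfrac{\dim M+a+1}{2} \qquad \text{or} \qquad k-j>\tfrac{\dim M+a+1}{2}
\]
holds, and these are exactly the thresholds making Proposition \ref{p:ML20080403-A1}.2 applicable. I will handle the case $k-j>(\dim M+a+1)/2$, the other being symmetric. Choosing $\chi\in\cC^\infty_{\textup{cpt}}(M)$ equal to $1$ on a fixed open neighborhood of $\supp C$, I decompose
\[
 P^{j+1}\,C\,P^{k-j} \,=\, P^{j+1}\bigl(C\,P^{k-j}\chi\bigr) \,+\, P^{j+1}\,C\,P^{k-j}(1-\chi).
\]

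For the first piece, Proposition \ref{p:ML20080403-A1}.2, applied to the compactly supported operators $C$ (of order $a+1$) and $\chi$ (of order $0$), shows that $C\,P^{k-j}\chi$ is trace class with norm $O(|\lambda|^{(\dim M+a+1)/2-(k-j)+0})$. Multiplying by $P^{j+1}$, whose operator norm is $O(|\lambda|^{-(j+1)})$ on $\Lambda$, and invoking $\|T\,S\|_1\le \|T\|_\infty\|S\|_1$ yields exactly the desired estimate $O(|\lambda|^{(\dim M+a-1)/2-k+0})$. For the second piece, the Schwartz kernel of $P^{j+1}\,C\,P^{k-j}(1-\chi)$ is nonzero only when its second argument lies in $\supp(1-\chi)$, which is separated by a positive distance from $\supp C$. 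Decomposing $1-\chi$ as a locally finite sum of compactly supported cutoffs $\eta_n$ on shells ever further from $\supp C$ reduces the estimate to Proposition \ref{p:ML20080403-A1}.3 applied term by term to $C\,P^{k-j}\eta_n$, with the resulting sum converging to an $O(|\lambda|^{-\infty})$ bound using the exponential decay of the resolvent kernel as $|\lambda|\to\infty$.

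The hardest part is the off-diagonal control: Proposition \ref{p:ML20080403-A1}.3 is only stated for a pair of compactly supported operators, so the summability of the annular decomposition of $1-\chi$ must be justified separately. This ultimately reduces to an Agmon-type exponential decay estimate for the resolvent kernel of $D^2-\lambda$ in the sector $\Lambda$, which is standard on complete manifolds of bounded geometry but still requires genuine analytic work. Once these pieces are assembled, summing the $k$ contributions gives the proposition.
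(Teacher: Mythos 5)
Your skeleton coincides with the paper's: the telescoping identity for $[A,(D^2-\lambda)^{-k}]$, the observation that $k>\dim M+a$ forces, in each summand, one of the two resolvent powers flanking $C=[D^2,A]$ to exceed the trace-class threshold $(\dim M+a+1)/2$, and the H\"older bound $\|TS\|_1\le\|T\|_\infty\|S\|_1$ with the Spectral Theorem supplying $\|(D^2-\lambda)^{-l}\|_\infty=O(|\lambda|^{-l})$ on $\Lambda$. The exponent bookkeeping is correct.

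The gap is in your treatment of the far-field piece $P^{j+1}\,C\,P^{k-j}(1-\chi)$. Proposition \ref{p:ML20080403-A1}.3 gives $O(|\lambda|^{-\infty})$ only for a \emph{fixed} pair of compactly supported operators with disjoint supports; it asserts no uniformity as the supports separate, so your annular decomposition of $1-\chi$ cannot be summed on its basis alone, and the Agmon-type decay you invoke to close this is neither in the paper nor supplied by you -- you explicitly defer it. The extra analysis is in any case unnecessary: Proposition \ref{p:ML20080403-A2}.2 already asserts exactly the one-sided estimate you need, namely that $C(D^2-\lambda)^{-(k-j)}$ (with no cutoff on the right) is trace class with norm $O(|\lambda|^{(\dim M+a+1)/2-(k-j)+0})$ once $k-j>(\dim M+a+1)/2$; the paper's proof simply cites that proposition for one factor and uses the operator-norm bound for the other, with no decomposition at all. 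The far-field control you are struggling with is handled inside the proof of Proposition \ref{p:ML20080403-A2}.1, not by resolvent decay estimates but by the algebraic identity $(D^2-\lambda)\varphi(D^2-\lambda)^{-k}A=[D^2,\varphi](D^2-\lambda)^{-k}A+\varphi(D^2-\lambda)^{-k+1}A$: since $[D^2,\varphi]$ is compactly supported with support disjoint from that of $A$, induction reduces everything to Proposition \ref{p:ML20080403-A1}.3 for a single fixed pair. Replacing your second piece by a citation of Proposition \ref{p:ML20080403-A2}.2 (or reproducing that commutator trick) closes your argument. One further small point the paper makes and you omit: one must first check that $[A,(D^2-\lambda)^{-k}]$ is even defined on $L^2$, which is where the hypothesis that $A$ and $A^t$ map $\dom(D^k)$ into $\dom(D^{k-a})$ enters.
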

\begin{proof} Note first that since $A$ and $A^t$ map $\dom(D^k)$ into $\dom(D^{k-a})$
the commutator  $[A,(D^2-\gl)^{-k}]$ is defined as a linear operator on $L^2(M;W)$
and we have the identity
\begin{equation}\label{eq:ML20081104-A20}
[A,(D^2-\gl)^{-k}] = \sum_{j=1}^k (D^2-\gl)^{-j} [D^2,A](D^2-\gl)^{-k+j-1}.
\end{equation}
Since $k>\dim M+a$ we have in each summand $j>(\dim M+a+1)/2$ or
$k-j+1>(\dim M+1+1)/2$. Say in the first case we apply 
Proposition \plref{p:ML20080403-A2}
to $\|(D^2-\gl)^{-j}[D^2,A]\|_1$ and the Spectral Theorem
to estimate $\|(D^2-\gl)^{-k+j-1}\|$ and find
\begin{equation*}
  \begin{split}
      \|(D^2-\gl)^{-j} &[D^2,A](D^2-\gl)^{-k+j-1}\|_1\\
     &\le \|(D^2-\gl)^{-j} [D^2,A]\|_1 \|(D^2-\gl)^{-k+j-1}\|_{\infty}\\
     &\le O(|\gl|^{(\dim M+a+1)/2-j+0})\;\cdot\; O(|\gl|^{-k+j-1})\\
     &=O(|\gl|^{(\dim M+a-1)/2-k+0}).\qedhere
  \end{split}
\end{equation*}
\end{proof}

\subsection{Heat kernel estimates}
\label{ss:HeatKernelEstimates}
From Propositions \plref{p:ML20080403-A1}, \plref{p:ML20080403-A2} we can
derive short and large times estimates for the heat operator $e^{-tD^2}$.
We write 
\begin{equation}\label{eq:contour-integral}
   \begin{split}
         e^{-tD^2}&=\frac{1}{2\pi i}\int_\gamma e^{-t\gl} (D^2-\gl)^{-1} d\gl\\
                 &=\frac{t^{-k} k!}{2\pi i}\int_{\gamma} e^{-t \gl} (D^2-\gl)^{-k-1} d\gl,
    \end{split}
\end{equation}
where integration is over the contour sketched in Figure \ref{fig:ContourNonFredholm}.
The notation $O(t^{\ga-0})$, $O(t^\infty)$ as $t\to 0+$ resp. 
$O(t^{\ga-0}), O(t^{-\infty})$ as $t\to\infty$
is defined analogously to the corresponding notation for $\gl\in\gL$ in
the previous Section.
\FigContourNonFredholm
 
We infer from Propositions 
\plref{p:ML20080403-A1}, \plref{p:ML20080403-A2}

\begin{proposition}\label{p:ML20080403-A3}
Let $A,B\in\pdo^\bullet(M,W)$ be pseudodifferential operators of order $a,b$ 
with compact support.

%\begin{enumerate}
%\item 

\textup{1.} For $t>0$ the operators $A e^{-tD^2},  e^{-tD^2} A$ 
are trace class operators. For $t_0,\eps>0$ there is a constant
$C(t_0,\eps)>0$ such that for all $1\le p\le \infty$
we have the following estimate in the Schatten $p$--norm
\begin{equation}\label{eq:trace-heat-estimate}
      \| Ae^{-tD^2}\|_{p}\le C(t_0,\eps)\; t^{-a/2-\frac{\dim M+\eps}{2p}},\quad 0<t\le t_0.
\end{equation}
Note that $C(t_0,\eps)$ is independent of $p$. 
The same estimate holds for $\|e^{-tD^2}A\|_p$.
%For the Hilbert--Schmidt Norm we have the estimate
%\begin{equation}\label{eq:Hilbert-Schmidt-heat-estimate}
%      \| Ae^{-tD^2}\|_{\HS}\le C t^{-a/2-(\dim M)/4-0},\quad 0<t\le t_0.
%\end{equation}
%The same estimates hold for the trace-- and Hilbert--Schmidt norms of $e^{-tD^2}A$.
%
%\item $A (D^2-\gl)^{-k}B$ is of trace class for $t>0$ and
%\begin{equation}\label{eq:trace-class-heat-estimate}
%      \| Ae^{-tD^2}B\|_1\le C t^{-(\dim M+a+b)/2-0},\quad 0<t<t_0.
%\end{equation}

%\item 

\textup{2.} Denote by $\pi_1, \pi_2:M\times M\to M$ the projection onto the first resp. second factor
and assume that $\pi_2(\supp A)\cap \pi_1(\supp B)=\emptyset$. Then 
\begin{equation}\label{eq:heat-smoothing-estimate}
        \| Ae^{-tD^2}B\|_1=O(t^\infty), \quad 0<t<t_0,
\end{equation}
with $N$ arbitrarily large.
%\item 

\textup{3.} Let $\varphi\in\cinf{M}$ be a smooth function such that $\supp d\varphi$ is 
compact. Moreover suppose that
$\supp \varphi\cap \pi_1(\supp A)=\emptyset$. Then 
the estimate \eqref{eq:heat-smoothing-estimate} also holds
for $\varphi e^{-tD^2}A$.
%\end{enumerate}
\end{proposition}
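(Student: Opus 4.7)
The plan is to derive each of the three estimates by feeding the resolvent bounds from Propositions \plref{p:ML20080403-A1} and \plref{p:ML20080403-A2} into the contour integral representation \eqref{eq:contour-integral}. The contour $\gamma$ (Figure \ref{fig:ContourNonFredholm}) stays in the sector $\Lambda$, so all resolvent bounds are available along it and the weight $e^{-t\lambda}$ provides convergence.

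For part 1, I would establish two endpoint estimates and then interpolate. At $p=\infty$, elementary spectral calculus gives
\[
\|Ae^{-tD^2}\|_\infty \le \|A(D^2+1)^{-a/2}\|_\infty \,\|(D^2+1)^{a/2}e^{-tD^2}\|_\infty \le C\,t^{-a/2}, \qquad 0<t\le t_0 .
\]
At $p=1$, choose $k$ with $k+1>(\dim M+a)/2$; Proposition \plref{p:ML20080403-A2}.2 together with \eqref{eq:trace-class-resolvent-estimate} gives $\|A(D^2-\lambda)^{-k-1}\|_1=O(|\lambda|^{(\dim M+a)/2-k-1+0})$. Substituting into the representation \eqref{eq:contour-integral} and rescaling $\lambda\mapsto\lambda/t$ along the rays (the small circular arc contributes $O(1)$) yields, for any $\eps>0$,
\[
\|Ae^{-tD^2}\|_1 \le C_1(t_0,\eps)\,t^{-a/2-(\dim M+\eps)/2}, \qquad 0<t\le t_0 .
\]
The uniform-in-$p$ conclusion then follows from the log-convexity inequality
$\|T\|_p\le \|T\|_1^{1/p}\,\|T\|_\infty^{1-1/p}$ for Schatten norms, since after choosing $C(t_0,\eps):=\max(C_1(t_0,\eps),C_\infty)$ the resulting constant is independent of $p$. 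The bound for $e^{-tD^2}A$ is obtained by taking adjoints and using $\|T\|_p=\|T^*\|_p$.

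For parts 2 and 3, the same contour argument applies, now with the smoothing estimate \eqref{eq:trace-class-smoothing-estimate} from Proposition \plref{p:ML20080403-A1}.3 (for $Ae^{-tD^2}B$ with separated supports) or from Proposition \plref{p:ML20080403-A2}.1 (for $\varphi e^{-tD^2}A$). Because the relevant quantity is $O(|\lambda|^{-N})$ for every $N$, the contour integral, after the same rescaling, is controlled by $C_N\,t^{N-k}$ with $N$ arbitrary, yielding $\|Ae^{-tD^2}B\|_1=O(t^\infty)$ and likewise $\|\varphi e^{-tD^2}A\|_1=O(t^\infty)$ as $t\to 0+$.

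The main technical nuisance is making the scaling on the contour explicit and keeping track of the $+0$ exponent uniformly: the small arc around the origin must be handled separately from the two rays, and one must verify that the constant arising from the rescaling $\lambda\mapsto\lambda/t$ is finite for the relevant range of $k$. Once this bookkeeping is done the $\eps>0$ in the statement absorbs the $+0$, and the fact that both endpoint constants depend only on $(t_0,\eps)$ — and not on $p$ — is what allows the uniform Schatten bound that will be critical in the later multiple-integral estimates (\emph{cf.}~\eqref{eq:integrated-multiple-btrace-estimate-large}).
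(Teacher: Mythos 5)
Your proposal is correct and follows essentially the same route as the paper: endpoint estimates at $p=1$ (via the resolvent trace-norm bounds of Proposition \plref{p:ML20080403-A2} fed into the contour integral \eqref{eq:contour-integral}) and at $p=\infty$ (via the Spectral Theorem), combined through the interpolation inequality $\|T\|_p\le \|T\|_\infty^{1-1/p}\|T\|_1^{1/p}$, with parts 2 and 3 obtained from the $O(|\gl|^{-\infty})$ smoothing estimates along the same contour. The only difference is that you spell out the rescaling and the adjoint argument explicitly, which the paper leaves implicit.
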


\begin{proof} 1. From Proposition \ref{p:ML20080403-A2} and the contour integral
\eqref{eq:contour-integral} we infer the inequality \eqref{eq:trace-heat-estimate}
for $p=1$. For $p=\infty$ it follows from the Spectral Theorem.
The H\"older inequality implies the following interpolation inequality\sind{interpolation inequality}
for Schatten norms
\begin{equation}\label{eq:Hoelder-interpolation}
     \|T\|_p=\Tr(|T|^p)^{1/p}\le \|T\|_\infty^{1-1/p}\|T\|_1^{1/p}, \quad 1\le p\le \infty.
\end{equation}
From this we infer \eqref{eq:trace-heat-estimate}.

The remaining claims follow immediately from the contour integral 
\eqref{eq:contour-integral} and the corresponding resolvent estimates.
\end{proof}

For the next result we 
assume additionally that $D$ is a Fredholm operator and we denote by $H$ the orthogonal
projection onto $ \Ker D$. $H$ is a finite rank smoothing operator. Let $c:=\min \spec_{\ess} D^2$
be the bottom of the essential spectrum of $D^2$.
Then $e^{-tD^2}(I-H)=e^{-tD^2}-H$ can again be expressed in terms of a contour integral as in
\eqref{eq:contour-integral} where the contour is now depicted in Figure \plref{fig:ContourFredholm}.
\FigContourFredholm

This allows to make large time estimates. The result is as follows:

\begin{proposition}\label{p:ML20080403-A4} 
Assume that $D$ is Fredholm and let $A\in \pdo^a(M,W)$ be a pseudodifferential
operator with compact support. Then for any $0<\delta<\inf\spec_{\ess} D^2$
and any $\eps>0$ there is a constant $C(\delta,\eps)$ such that
for $1\le p\le \infty$
\begin{equation}
        \| A e^{-tD^2}(I-H)\|_{p} \le C(\delta,\eps)\; t^{-a/2-\frac{\dim M+\eps}{2p}} e^{-t\delta},\quad 0<t<\infty.
\end{equation}
\end{proposition}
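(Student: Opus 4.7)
The strategy is a standard semigroup decomposition: factor $e^{-tD^2}(I-H)$ into a short-time piece controlled by Proposition~\ref{p:ML20080403-A3} and a long-time piece controlled by the spectral theorem, then combine them to produce both the required polynomial blow-up as $t \searrow 0$ and the required exponential decay as $t \nearrow \infty$. Set $\alpha_p := a/2 + (\dim M + \eps)/(2p)$ and fix an auxiliary $\delta'$ with $\delta < \delta' < \inf\spec_\ess D^2$. Since $D$ is Fredholm the spectrum of $D^2$ below $\inf\spec_\ess D^2$ is purely discrete, and (possibly after replacing $H$ by the spectral projection onto $\spec(D^2)\cap[0,\delta')$, which is still finite-rank smoothing and agrees with $H$ in the generic case) the spectral theorem yields
\begin{equation*}
  \|e^{-sD^2}(I-H)\|_\infty \leq e^{-s\delta'}, \qquad s \geq 0.
\end{equation*}

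For $0 < t \leq 2$, write $e^{-tD^2}(I-H) = e^{-(t/2)D^2}\cdot e^{-(t/2)D^2}(I-H)$; then by the H\"older-type bound $\|XY\|_p \leq \|X\|_p\|Y\|_\infty$ and Proposition~\ref{p:ML20080403-A3} (applied with $t_0 = 1$),
\begin{equation*}
  \|A e^{-tD^2}(I-H)\|_p \leq \|Ae^{-(t/2)D^2}\|_p\,\|e^{-(t/2)D^2}(I-H)\|_\infty \leq C(t/2)^{-\alpha_p}\, e^{-(t/2)\delta'}.
\end{equation*}
Since $t\in(0,2]$ the factor $e^{-(t/2)\delta' + t\delta}$ is bounded, so this contributes $\leq C_1\, t^{-\alpha_p}\, e^{-t\delta}$. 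For $t \geq 2$, factor instead as $A e^{-tD^2}(I-H) = (Ae^{-D^2})\cdot e^{-(t-1)D^2}(I-H)$; by Proposition~\ref{p:ML20080403-A3} the first factor has $p$-norm bounded by a constant $C_2$ \emph{independent of} $p$, and the spectral bound gives $\|e^{-(t-1)D^2}(I-H)\|_\infty \leq e^{-(t-1)\delta'}$. The key cosmetic manipulation is to exploit the surplus $\delta'-\delta>0$: write $e^{-(t-1)\delta'} = e^{\delta'}\, e^{-t\delta}\cdot e^{-t(\delta'-\delta)}$ and observe that $t \mapsto t^{\alpha_p} e^{-t(\delta'-\delta)}$ is bounded on $[2,\infty)$, uniformly in $p \in [1,\infty]$ (since $\alpha_p$ ranges over a compact interval). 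This delivers $\|Ae^{-tD^2}(I-H)\|_p \leq C_3\, t^{-\alpha_p} e^{-t\delta}$ on $[2,\infty)$.

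\textbf{Main obstacle.} The only nontrivial point is the spectral estimate $\|e^{-sD^2}(I-H)\|_\infty \leq e^{-s\delta'}$. Fredholmness of $D$ ensures that $0$ is isolated in $\spec(D^2)$, but in principle finitely many discrete eigenvalues $0 < \lambda_1 \leq \cdots \leq \lambda_N < \inf\spec_\ess D^2$ can occur; if some $\lambda_j < \delta'$, the naive bound fails. The standard fix is to decompose $I - H = (I - H - P_{\mathrm{disc}}) + P_{\mathrm{disc}}$, where $P_{\mathrm{disc}}$ is the (finite-rank, smoothing) spectral projection onto the eigenvalues in $(0,\delta')$; the first summand obeys the desired spectral bound with $\delta'$, and the finite-rank correction $Ae^{-tD^2}P_{\mathrm{disc}}$ is handled directly using that $AP_{\mathrm{disc}}$ is trace class (hence bounded in every Schatten norm) and picks up exponential decay $e^{-t\lambda_j}$. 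In the setting where this proposition is invoked in the paper (\textup{b}-Dirac operators with invertible tangential operator) the auxiliary projection $P_{\mathrm{disc}}$ is absent or trivial, so this step is transparent.
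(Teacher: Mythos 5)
Your argument is correct and reaches the stated estimate, but it takes a different route from the paper for the large-time regime. The paper splits by Schatten exponent: for $t\to 0+$ it quotes Proposition \ref{p:ML20080403-A3}.1; for $t\to\infty$ it proves the case $p=1$ by running the contour-integral representation \eqref{eq:contour-integral} over the shifted (Fredholm) contour of Figure \ref{fig:ContourFredholm} together with the resolvent trace-norm estimates of Proposition \ref{p:ML20080403-A1}, gets $p=\infty$ from the Spectral Theorem, and then interpolates via \eqref{eq:Hoelder-interpolation}. You instead split by time and use the semigroup property: $Ae^{-tD^2}(I-H)=\bigl(Ae^{-(t/2)D^2}\bigr)\,e^{-(t/2)D^2}(I-H)$ for small $t$ and $\bigl(Ae^{-D^2}\bigr)\,e^{-(t-1)D^2}(I-H)$ for large $t$, combining the already-proved, $p$-uniform bound of Proposition \ref{p:ML20080403-A3}.1 with the operator-norm spectral bound and absorbing the polynomial factor into the surplus $\delta'-\delta$. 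This avoids redoing the contour integral and the interpolation at this stage (both are already baked into Proposition \ref{p:ML20080403-A3}), at the price of invoking the semigroup factorization; the uniformity in $p$ of your constants is justified since $\alpha_p$ ranges over a compact interval. Both arguments are sound.

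One remark on your ``main obstacle''. Your concern about discrete eigenvalues of $D^2$ in $(0,\delta')$ is legitimate, but note that the patch you sketch does not actually restore the claimed bound: the finite-rank correction $Ae^{-tD^2}P_{\mathrm{disc}}$ decays only like $e^{-t\lambda_j}$, which is \emph{not} $O(e^{-t\delta})$ when $\lambda_j<\delta$. The honest reading is that the estimate holds for $\delta$ below the bottom of the nonzero spectrum of $D^2$; the paper's own proof carries exactly the same implicit restriction, since its contour must pass to the left of every nonzero eigenvalue. So this is a (harmless, in the applications) imprecision of the statement rather than a defect of your argument, and you should state the restriction rather than appeal to its absence ``in the generic case''.
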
      
\begin{proof} For $t\to 0+$ the estimate follows from Proposition \plref{p:ML20080403-A3}.1. 

For $t\to\infty$ and $p=1$ the estimate follows from Proposition \plref{p:ML20080403-A1}
and \eqref{eq:contour-integral} by taking the contour as in Figure \ref{fig:ContourFredholm}.
For $p=\infty$ the estimate is a simple consequence of the Spectral Theorem.
The general case then follows again from 
the interpolation inequality \eqref{eq:Hoelder-interpolation}.
\end{proof}

Finally we state the analogue of Proposition \ref{p:ML20081104-A19}
for the heat kernel.\sind{heat kernel} 

\begin{proposition}\label{p:ML20081104-A20} 
Let $A\in\Diff^a(M,W)$ be a differential operator
such that $[D^2,A]$ has compact support
(in the sense of Definition \textup{\ref{d:commutator-compact}})
and is of order $\le a+1$. 

Then for $t>0$ the operator $[A,e^{-tD^2}]$ is of trace class. For $t_0,\eps>0$
there is a constant $C(t_0,\eps)$ such that for all $1\le p\le \infty$
we have the following estimate in the Schatten $p$--norm
\begin{equation}\label{eq:commutator-heat-estimate}
      \| [A,e^{-tD^2}]\|_{p}\le C(t_0,\eps)\; t^{-a/2-\frac{\dim M-1+\eps}{2p}},\quad 0<t\le t_0;
\end{equation}
$C(t_0,\eps)$ is independent of $p$. 

If $D$ is a Fredholm operator then for any $0<\delta<\inf\spec_{\ess} D^2$
and any $\eps>0$ there is a constant $C(\delta,\eps)$ such that
for $1\le p\le \infty$
\begin{equation}
        \| [A, e^{-tD^2}(I-H)]\|_{p} \le C(\delta,\eps)\; 
       t^{-a/2-\frac{\dim M-1+\eps}{2p}} e^{-t\delta},\quad 0<t<\infty.
\end{equation}
\end{proposition}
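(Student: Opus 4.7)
The plan is to follow exactly the strategy of Proposition \ref{p:ML20081104-A19} (for the resolvent) combined with the techniques of Propositions \ref{p:ML20080403-A3} and \ref{p:ML20080403-A4} (for heat kernel estimates): pass from resolvents to heat operators via a contour integral, then interpolate between the two Schatten endpoints. Duhamel's identity
\begin{equation*}
  [A, e^{-tD^2}] = \int_0^t e^{-(t-s)D^2}[D^2, A]e^{-sD^2}\, ds,
\end{equation*}
which is valid on $L^2$ because $A$ and $A^t$ preserve $\dom(D^k)$, combined with Proposition \ref{p:ML20080403-A3} applied to the compactly supported operator $[D^2, A]$ of order $\le a+1$, immediately shows that $[A, e^{-tD^2}]$ is of trace class for each $t > 0$.

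For the quantitative bound, I would establish the endpoints $p = 1$ and $p = \infty$ and interpolate via the H\"older inequality \eqref{eq:Hoelder-interpolation}. At $p = 1$, substitute the resolvent commutator bound \eqref{eq:commutator-resolvent-estimate} of Proposition \ref{p:ML20081104-A19} into the contour integral representation
\begin{equation*}
  [A, e^{-tD^2}] = \frac{k!\, t^{-k}}{2\pi i} \int_\gamma e^{-t\lambda}[A, (D^2-\lambda)^{-k-1}] d\lambda
\end{equation*}
along the contour of Figure \ref{fig:ContourNonFredholm}; the standard scaling $\int_\gamma |e^{-t\lambda}||\lambda|^\beta|d\lambda| \sim t^{-\beta-1}$ as $t \to 0^+$ (as in the proof of Proposition \ref{p:ML20080403-A3}) combines with the $t^{-k}$ prefactor to yield the exponent $-(a + \dim M - 1)/2 - \delta$, matching the stated estimate \eqref{eq:commutator-heat-estimate} for $p = 1$. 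At $p = \infty$, the Spectral Theorem gives $\|[A, e^{-tD^2}]\|_\infty \le C t^{-a/2}$ essentially directly, after expressing $A e^{-tD^2}$ and $e^{-tD^2}A$ in terms of $(D^2 + 1)^{a/2}e^{-tD^2}$ and using the local boundedness of $A(D^2 + 1)^{-a/2}$. The Fredholm case proceeds identically, with the contour of Figure \ref{fig:ContourFredholm} around $[\delta, \infty)$ contributing the uniform factor $|e^{-t\lambda}| \le e^{-t\delta}$.

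The main obstacle is the optimal choice of $k$ in the contour integral. A direct application with the hypothesis $k > \dim M + a$ from Proposition \ref{p:ML20081104-A19} drives $\beta$ below $-1$, so that the contour integral is only bounded in $t$ and produces the weak estimate $O(t^{-k})$. One must instead exploit the sharper threshold $k > (\dim M + a + 1)/2$ at which the summand decomposition in the proof of Proposition \ref{p:ML20081104-A19} still yields a trace class commutator with the same exponent $(\dim M + a - 1)/2 - k + \delta$, or alternatively employ complex interpolation in the resolvent power; careful bookkeeping of the exponents through the scaling then produces the precise ``gain of one derivative,'' reflected in the $\dim M - 1$ rather than $\dim M$ in the final bound.
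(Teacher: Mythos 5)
Your overall route is the same as the paper's: establish the endpoint $p=1$ from the resolvent commutator estimate of Proposition \ref{p:ML20081104-A19} via the contour integral \eqref{eq:contour-integral}, get $p=\infty$ from the Spectral Theorem, and interpolate with \eqref{eq:Hoelder-interpolation}. You are also right to flag the exponent problem in your last paragraph: with $k+1>\dim M+a$ as required by Proposition \ref{p:ML20081104-A19}, the bound $\|[A,(D^2-\gl)^{-k-1}]\|_1=O(|\gl|^{(\dim M+a-1)/2-k-1+0})$ has exponent below $-1$, so the contour integral is merely $O(1)$ and the $t^{-k}$ prefactor survives, giving only $O(t^{-k})$.

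However, the repair you propose does not work. Lowering the threshold to $k>(\dim M+a+1)/2$ is not possible: in the telescoping sum \eqref{eq:ML20081104-A20} the middle summands $(D^2-\gl)^{-j}[D^2,A](D^2-\gl)^{-k+j-1}$ have neither factor trace class on its own unless $j$ or $k-j+1$ exceeds $(\dim M+a+1)/2$, and covering \emph{all} $j$ forces $k>\dim M+a$ (even a Hilbert--Schmidt$\times$Hilbert--Schmidt splitting only lowers this to roughly $3\dim M/4+a$, still incompatible with keeping the exponent above $-1$). The correct resolution is the Duhamel identity you already wrote down for the qualitative statement — but note that even there the integrand is \emph{not} integrable near $s=0$ and $s=t$ if you bound it crudely. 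Split the integral at $s=t/2$ and in each half put the Schatten $p$-norm on the factor carrying the \emph{long} time: for $s\le t/2$ estimate $\|e^{-(t-s)D^2}[D^2,A]\|_p\,\|e^{-sD^2}\|_\infty\le C\,(t/2)^{-(a+1)/2-\frac{\dim M+\eps}{2p}}$ by Proposition \ref{p:ML20080403-A3}.1 applied to the compactly supported operator $[D^2,A]$ of order $\le a+1$, and symmetrically for $s\ge t/2$. The $ds$-integration then contributes a factor $t$, yielding $\|[A,e^{-tD^2}]\|_p\le C\,t^{-a/2-\frac{\dim M+\eps}{2p}+\frac12}$, which implies \eqref{eq:commutator-heat-estimate} since $\tfrac12\ge\tfrac1{2p}$ (equivalently, this is what the contour integral gives once one recognizes that $\int_\gamma e^{-t\gl}(D^2-\gl)^{-j}[D^2,A](D^2-\gl)^{-k+j-1}d\gl$ is a convolution $\int_0^t s^{j-1}(t-s)^{k-j}e^{-sD^2}[D^2,A]e^{-(t-s)D^2}ds$ up to constants, so the Beta-function integral restores the missing powers of $t$). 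The Fredholm case is identical with $e^{-tD^2}(I-H)=e^{-tD^2}-H$ and the contour of Figure \ref{fig:ContourFredholm} supplying the factor $e^{-t\delta}$, together with Proposition \ref{p:ML20080403-A4}.
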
      
\begin{proof} For $p=1$ this follows from Proposition
\ref{p:ML20081104-A19} and the contour integral representation
\eqref{eq:contour-integral} by taking the contours as in Figure \ref{fig:ContourNonFredholm}
for $t\to 0+$ and as in Figure \ref{fig:ContourFredholm} in the Fredholm case
as $t\to\infty$.
For $p=\infty$ the estimates are a simple consequence of the Spectral Theorem.
The general case then follows from 
the interpolation inequality \eqref{eq:Hoelder-interpolation}.
\end{proof}

\subsection{Estimates for the \textup{JLO} integrand}
\label{ss:EstimatesJLOIntegrand}
%\marginpar{Coauthors: please read the following carefully, it is messy}
Recall that we denote the standard $k$--simplex by 
$\Delta_k:=\bigsetdef{(\sigma_0,...,\sigma_k)\in\R^{k+1}}{\sigma_j\ge 0, \sigma_0+...+\sigma_k=1}$. 
Furthermore, recall the notation \eqref{eq:ML20090128-3}.

\begin{proposition}\label{p:multiple-heat-estimate}
Let $A_j\in\Diff^{d_j}(M;W)$, $j=0,...,k$, be $D^{d_j}$--bounded differential operators on
of order $d_j$ on $M$; let $d:=\sum_{j=0}^k d_j$ be the sum of their
orders.
Furthermore, assume that $\supp A_{j_0}$ is compact for at least one index $j_0$.
%\begin{enumerate}
%\item 

\textup{1.} For $t_0, \eps>0$ there is a constant $C(t_0,\eps)$ such that
for all $\sigma=(\sigma_0,...,\sigma_k)\in\Delta_k, \sigma_j>0$,
\begin{equation}\label{eq:multiple-heat-estimate-short}
\begin{split}
     \| A_0 &e^{-\sigma_0 t D^2}A_1\cdot\ldots\cdot A_k e^{-\sigma_k t D^2}\|_1\\
       &\le C(t_0,\eps)\; \Biggl(\prod_{j=0}^k \sigma_j^{-d_j/2} \Biggr)
       t^{-d/2-(\dim M)/2-\eps},\quad 0<t\le t_0.
     \end{split}
\end{equation}
In particular, if $d_j\le 1, j=0,...,k,$ then 
\begin{equation}\label{eq:integrated-multiple-heat-estimate-short}
     \|(A_0,...,A_k)_{\sqrt{t}D}\|=O( t^{-d/2-(\dim M)/2-0}),\quad t \to 0+.
\end{equation}
% \item 

\textup{2.} Assume additionally that $D$ is Fredholm and denote by $H$ the orthogonal
projection onto $ \Ker D$. Then for $\eps>0$ and any $0<\delta<\inf\spec_{\ess} D^2$
there is a constant $C(\delta,\eps)$ such that for all $\sigma\in\Delta_k, \sigma_j>0$
\begin{equation}\label{eq:multiple-heat-estimate-large}
   \begin{split}
     \| A_0 &e^{-\sigma_0 t D^2}(I-H)A_1\cdot\ldots\cdot A_k e^{-\sigma_k t D^2}(I-H)\|_1\\
      &\le C(\delta,\eps)\; \Biggl(\prod_{j=0}^k \sigma_j^{-d_j/2} \Biggr)
       t^{-d/2-(\dim M)/2-\eps} e^{-t\delta},\quad \text{for \emph{all} } 0<t<\infty.
    \end{split}
  \end{equation}
In particular, $d_j\le 1, j=0,...,k,$ then
\begin{equation}\label{eq:integrated-multiple-heat-estimate-large}
    \begin{split} 
    \|(A_0&(I-H),...,A_k(I-H))_{\sqrt{t}D}\|\\
      &=O(t^{-d/2-(\dim M)/2-0} e^{-t\delta}),\quad \text{for \emph{all }} 0<t<\infty.
    \end{split}
\end{equation}
%\end{enumerate}
\end{proposition}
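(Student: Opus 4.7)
The plan is to reduce Proposition \ref{p:multiple-heat-estimate} to the single-factor Schatten-norm estimates of Propositions \ref{p:ML20080403-A3} and \ref{p:ML20080403-A4} by means of the semigroup property and the generalized Hölder inequality for Schatten ideals. Setting $u_j := \sigma_j t$, one first splits each heat kernel as $e^{-u_j D^2} = e^{-u_j D^2/2}\cdot e^{-u_j D^2/2}$ and regroups the product into the $k+2$ blocks
\[
\bigl(A_0\, e^{-u_0 D^2/2}\bigr)\prod_{j=1}^{k}\bigl(e^{-u_{j-1}D^2/2} A_j\, e^{-u_j D^2/2}\bigr)\cdot e^{-u_k D^2/2},
\]
so that each $A_j$ sits between halves of adjacent heat kernels. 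The two hypotheses then drive the argument complementarily: the $D^{d_j}$-boundedness of $A_j$ (which makes $A_j(D^2+1)^{-d_j/2}$ bounded) combined with the spectral-theorem bound $\|(D^2+1)^{d_j/2}e^{-uD^2/2}\|_\infty\le C u^{-d_j/2}$ supplies the operator-norm estimate $Cu_j^{-d_j/2}$ for each inner block with $j\neq j_0$; whereas the compact support of $A_{j_0}$ is precisely what allows us to extract a \emph{trace-class} bound on the $j_0$-block, without which the full product fails to be trace class at all on the non-compact manifold $M$.

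For Part 1, I would apply $\|XYZ\|_1\le \|X\|_\infty\|Y\|_1\|Z\|_\infty$ with $Y$ the $A_{j_0}$-block. Writing $A_{j_0}=\chi A_{j_0}$ for a compactly supported cutoff $\chi\equiv 1$ on $\supp A_{j_0}$, the block is bounded by Hilbert--Schmidt factorization,
\[
\|e^{-u_{j_0-1}D^2/2}\chi A_{j_0}\,e^{-u_{j_0}D^2/2}\|_1
\le \|\chi e^{-u_{j_0-1}D^2/2}\|_2\cdot\|A_{j_0}e^{-u_{j_0}D^2/2}\|_2,
\]
each factor controlled by Proposition \ref{p:ML20080403-A3}.1 at $p=2$ (with $a=0$ and $a=d_{j_0}$ respectively). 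Reassembling the blocks and converting back to $\sigma_j$ and $t$ yields the pointwise bound \eqref{eq:multiple-heat-estimate-short}, the exponent $t^{-d/2-(\dim M)/2-\varepsilon}$ arising from the $(\dim M+\varepsilon)/2$ penalty carried by the $A_{j_0}$-block. Part 2 proceeds identically, with Proposition \ref{p:ML20080403-A4} substituted for \ref{p:ML20080403-A3}.1: the insertions of $I-H$ preserve the semigroup structure (operator norm at most $1$), and under the Fredholm hypothesis the spectral bound $\|e^{-sD^2}(I-H)\|_\infty\le e^{-s\delta}$ with $\delta<\inf\spec_{\ess}D^2$ supplies the global factor $e^{-t\delta}$ uniformly for \emph{all} $t>0$.

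The integrated estimates \eqref{eq:integrated-multiple-heat-estimate-short} and \eqref{eq:integrated-multiple-heat-estimate-large} then follow by integrating the pointwise bounds over $\Delta_k$: the hypothesis $d_j\le 1$ guarantees $\prod_j\sigma_j^{-d_j/2}\le\prod_j\sigma_j^{-1/2}$, which is integrable on the simplex. The main technical obstacle is distributing the $(\dim M+\varepsilon)/2$ trace-class cost across the simplex variables compatibly with the clean form $\prod_j\sigma_j^{-d_j/2}$ claimed on the right-hand side of \eqref{eq:multiple-heat-estimate-short}: the naive choice $p_{j_0}=1$, $p_j=\infty$ in generalized Hölder produces an extra factor $\sigma_{j_0}^{-(\dim M+\varepsilon)/2}$, and even the Hilbert--Schmidt factorization above only splits this penalty between the two adjacent variables $\sigma_{j_0-1},\sigma_{j_0}$. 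Absorbing the residual $\sigma_j^{-\bullet}$ factors into $\prod_j\sigma_j^{-d_j/2}$ requires iterating the heat-kernel splitting further and exploiting the freedom to take $\varepsilon$ arbitrarily small, which is the principal delicate point of the proof.
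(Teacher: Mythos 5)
Your reduction and your use of Schatten-norm H\"older are in the right spirit, but the argument as written does not prove the stated bound, and the gap is exactly at the point you flag as ``the principal delicate point'': you never succeed in producing the clean factor $\prod_j\sigma_j^{-d_j/2}$ with the dimension penalty absorbed into the constant. Any scheme of the kind you describe --- splitting the heat semigroups into finitely many pieces and assigning \emph{fixed} H\"older exponents to the blocks --- distributes the trace-class cost $(\dim M+\eps)/2$ as fixed negative powers $\sigma_{j}^{-c_j}$ with $\sum_j c_j=(\dim M+\eps)/2$. These exponents are independent of $\sigma$, so they can neither be absorbed into a constant nor (in general) integrated over $\Delta_k$: for instance with $d_j\le 1$ and an even distribution one gets $\prod_j\sigma_j^{-1/2-(\dim M+\eps)/(2(k+1))}$, which is integrable on the simplex only when $k+1>\dim M+\eps$. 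Taking $\eps$ small does not help, since the $(\dim M)/2$ part of the penalty remains. So both the pointwise estimate \eqref{eq:multiple-heat-estimate-short} and, for small $k$, even the integrated consequence \eqref{eq:integrated-multiple-heat-estimate-short} are out of reach by this route.

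The missing idea is to let the H\"older exponents depend on $\sigma$: after reducing to the case where \emph{every} $A_j$ is compactly supported (the paper does this by inserting cut-offs $\varphi_{j_0-1},\varphi_{j_0}$ around $A_{j_0}$ and disposing of the term $A_{j_0-1}(1-\varphi_{j_0-1})e^{-\sigma_{j_0-1}tD^2}\varphi_{j_0}$ via the disjoint-support smoothing estimate of Proposition \plref{p:ML20080403-A3}, then iterating outward), one applies
\[
\bigl\| A_0 e^{-\sigma_0 tD^2}\cdots A_k e^{-\sigma_k tD^2}\bigr\|_1\le \prod_{j=0}^k \bigl\|A_j e^{-\sigma_j tD^2}\bigr\|_{\sigma_j^{-1}},
\]
which is legitimate because $\sum_j\sigma_j=1$. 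The single-factor bound of Proposition \plref{p:ML20080403-A3}.1, whose constant is \emph{independent of the Schatten index} $p$, then gives $\|A_je^{-\sigma_jtD^2}\|_{\sigma_j^{-1}}\le C(\sigma_jt)^{-d_j/2-\frac{\dim M+\eps}{2}\sigma_j}$; the exponents now sum to $d/2+(\dim M+\eps)/2$ in $t$, while the residual factors $\sigma_j^{-\frac{\dim M+\eps}{2}\sigma_j}$ are uniformly bounded because $\sigma\mapsto\sigma^{-c\sigma}$ is bounded as $\sigma\to 0$. This is what yields \eqref{eq:multiple-heat-estimate-short} exactly as stated; Part 2 is the same with Proposition \plref{p:ML20080403-A4} supplying the extra factor $e^{-\sigma_jt\delta}$ in each slot. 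Your observation that the $I-H$ insertions and the compact-support hypothesis on a single $A_{j_0}$ are what make the product trace class is correct, and your treatment of the integrated estimates from the pointwise ones is fine once the pointwise bound is actually established.
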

\begin{proof} We first reduce the problem to the case that all $A_j$ are compactly
supported. To this end choose $\varphi_{j_0-1},\varphi_{j_0}\in\cinfz{M}$ such that
$\supp \varphi_{j_0}\cap \supp(1-\varphi_{j_0-1})=\emptyset$ and such that
$\varphi_{j_0}A_{j_0}=A_{j_0}\varphi_{j_0}=A_{j_0}$. Decompose
$A_{j_0-1}=A_{j_0-1}\varphi_{j_0-1}+A_{j_0-1}(1-\varphi_{j_0-1})$.

First we show that the estimates \eqref{eq:multiple-heat-estimate-short}, \eqref{eq:multiple-heat-estimate-large}
hold if we replace $A_{j_0-1}$ by $A_{j_0-1}(1-\varphi_{j_0-1})$:

Case 1. Proposition \ref{p:ML20080403-A3}.3 gives
\begin{equation}\label{eq:ML20080428-A18}
 \|A_{j_0-1}(1-\varphi_{j_0-1})e^{-\sigma_{j_0-1} t D^2} \varphi_{j_0}\|_1\le C_{t_0} \sigma_{j_0-1}^Nt^{N},\quad \text{ for } \sigma_{j_0-1}t\le t_0.
\end{equation}
The operator norm of the other factors can be estimated using 
the Spectral Theorem, taking into account the $D^{d_j}$-boundedness of $A_j$:
\begin{equation}
   \| A_j e^{-\sigma_j tD^2}\|\le C_{t_0} (\sigma_j t)^{-d_j/2},
\quad \text{ for } \sigma_{j}t\le t_0.
 \end{equation}
Hence by the H\"older inequality
\begin{equation}
   \begin{split}
     \| A_0 &e^{-\sigma_0 t D^2}A_1\cdot\ldots\cdot A_{j_0-1}(1-\varphi_{j_0-1})e^{-\sigma_{j_0-1}tD^2}
                                \cdot A_k e^{-\sigma_k t D^2}\|_1\\
      &\le C_{t_0}  \Biggl(\prod_{j=0}^k \sigma_j^{-d_j/2} \Biggr)
       t^{-\sum\limits_{j=0}^k d_j+N},\quad 0<t\le t_0,
    \end{split}
  \end{equation}
which is even better than \eqref{eq:multiple-heat-estimate-short}.

Case 2 ($D$ Fredholm). 
From \eqref{eq:ML20080428-A18}, Proposition \ref{p:ML20080403-A4} 
and the fact that $H$ is a finite rank operator with $e^{-\xi D^2}H=H$ we infer
\begin{equation}\label{eq:ML20080425-1}
  \begin{split}
 \|A_{j_0-1}(1-\varphi_{j_0-1})&e^{-\sigma_{j_0-1} t D^2} (I-H)\varphi_{j_0}\|_1\\
       &   \le C_\delta e^{-\sigma_{j_0-1}t\delta},\quad \text{ for all } 0<t<\infty.
  \end{split}
\end{equation}
To the other factors we apply 
Proposition \ref{p:ML20080403-A4} with $p=\infty$:
\begin{equation}\label{eq:ML20080425-2}
   \| A_j e^{-\sigma_j tD^2}(I-H)\|\le C_{\delta}  
 (\sigma_jt)^{-d_j/2} e^{-\sigma_jt\delta},\quad 0<t<\infty.
 \end{equation}
The H\"older inequality combined with 
\eqref{eq:ML20080425-1},\eqref{eq:ML20080425-2} gives 
\eqref{eq:multiple-heat-estimate-large}.

Altogether we are left to consider $A_0,...,A_{j_0-1}\varphi_{j_0-1},A_{j_0},...A_k$
where now $A_{j_0-1}\varphi_{j_0-1}$ and $A_{j_0}$ are compactly supported. 
Continuing this way, also to the right of $j_0$, 
it remains to treat the case where \emph{each} $A_j$ 
has compact support.

Case 1. We apply H\"older's inequality for Schatten norms and Proposition \ref{p:ML20080403-A3}:
\begin{equation}\label{eq:ML20080426}
    \begin{split}
    \| A_0 e^{-\sigma_0 t D^2} &A_1\cdot\ldots\cdot A_k e^{-\sigma_k t D^2}\|_1\\
                          &\le \prod_{j=0}^k \|A_j e^{-\sigma_j t D^2}\|_{\sigma_j^{-1}}\\
       & \le C(t_0,\eps)\; \prod_{j=0}^k (\sigma_j t)^{-d_j/2-\frac{\dim M+\eps}{2}\sigma_j}\\
       & \le C(t_0,\eps)\; \Biggl(\prod_{j=0}^k \sigma_j^{-d_j/2}\Biggr)
               t^{-d/2-\frac{\dim M+\eps}{2}}, \quad 0<t\le t_0,
     \end{split}
\end{equation}
thanks to the fact that $\sigma\mapsto \sigma^{-\frac{\dim M+\eps}{2}\sigma}$ is bounded as $\sigma\to 0$.

Case 2. If $D$ is Fredholm we estimate 
\[\| A_0 e^{-\sigma_0 t D^2}(I-H)A_1\cdot\ldots\cdot A_k e^{-\sigma_k t D^2}(I-H)\|_1\]
using H\"older as in \eqref{eq:ML20080426} and apply Proposition \ref{p:ML20080403-A4}
to the individual factors:
\begin{equation}
\|A_j e^{-\sigma_j t D^2}(I-H)\|_{\sigma_j^{-1}}\le 
    C_\delta  (\sigma_j t)^{-d_j/2-\frac{\dim M+\eps}{2}\sigma_j}e^{-\sigma_jt\delta},
    \quad 0<t<\infty,
\end{equation}
to reach the conclusion.

Finally we remark that the inequalities \eqref{eq:integrated-multiple-heat-estimate-short}, \eqref{eq:integrated-multiple-heat-estimate-large}
follow by integrating the inequalities \eqref{eq:multiple-heat-estimate-short}, \eqref{eq:multiple-heat-estimate-large}
over the standard simplex $\Delta_k$. Note that 
$\int_{\Delta_k} (\sigma_0\cdot\ldots\cdot\sigma_k)^{-1/2}d\sigma<\infty$.
\end{proof}

%end of heatestimates.tex
%%%%%%%%%%%%%%%%%%%%%%%%%%%%%%%%%%%%%%%%%%%%%%%%%%%%%%%%%%%%%%%%%%%%%%%%%%%%%%%%%%%%%%%%%%%%%%%%%%%%%%%%%%%

\section{Comparison results}\label{s:comparison-results}
Let $M_j$, $j=1,2,$ be complete riemannian manifolds with cylindrical ends,
\cf~Proposition \ref{p:bsmooth}. Assume that $M_1$ and $M_2$ share a common 
cylinder component $(-\infty,0]\times Z$.
That is, if $M_j= (-\infty,0]\times Z_j\cup_{Z_j} X_j, j=1,2$, 
then $Z$ is (after a suitable identification) a common (union of) 
connected component(s) of $Z_1, Z_2$.  A typical example will be 
$M_2=\R\times Z$.

Suppose that $\dirac_j$ are formally self-adjoint Dirac 
operators (\emph{cf.}~Section \plref{s:qDirac}) 
on $M_j$, $j=1,2$ with 
${\dirac_1}\rest{(-\infty,0]\times Z}={\dirac_2}\rest{(-\infty,0]\times Z}
=:\dirac=\sfc(dx)\frac{d}{dx}+\diracbdy$. 
The operators $\dirac_j$ are supposed to act on sections of the hermitian
vector bundles $W_j$ such that 
${W_1}\rest{(-\infty,0]\times Z}={W_2}\rest{(-\infty,0]\times Z}$.
As explained in Section \ref{App:bdefbmet}, we identify 
$\Gamma^\infty \big( (-\infty,c]\times Z;W \big)$, $c\in \R$,
with the completed tensor product
$\cC^\infty \big( (-\infty,c] \big)\hatotimes\Gamma^\infty(Z;W)$ 
and accordingly
$\bcptGamma \big( (-\infty,c)\times Z;W \big)$ with 
$\bcptC \big( (-\infty,c) \big) \hatotimes \Gamma^\infty(Z;W)$.
We want to compare the
resolvents and heat kernels of $\dirac_j$ on the common cylinder 
$(-\infty,0]\times Z$.

To this end, we will make repeated use of Remark \ref{r:DiracCylinderFormulas} 
without mentioning it every time. There is an intimate relation between
the \emph{spectrum}, $\spec \diracbdy$, of the boundary operator $\diracbdy$
and the \emph{essential spectrum}, $\specess \dirac$, of $D$. We will only
need that  
\begin{equation}
  \label{eq:essspecbd}
  \inf\specess \dirac^2=\inf\spec\diracbdy^2.
\end{equation}
A proof of this can be found in \cite[Sec. 4]{Mul:EIM}.
Concerning notation, $\|T\|_p$ stands for the $p$-th Schatten norm
of an operator $T$ acting on a Hilbert space $\sH$ unless otherwise stated.

\begin{proposition}\label{t:heat-resolvent-comparison}
\textup{1.} Fix an open sector 
$\Lambda:=\bigsetdef{z\in\C^*}{\eps < \arg z < 2\pi-\eps}
\subset \C\setminus\R_+$ where $\eps > 0$.
Then on $(-\infty,c]\times Z,c<0,$ 
the difference of the resolvents $(\dirac_1^2-\gl)^{-1}-(\dirac_2^2-\gl)^{-1}, \gl\in\Lambda,$
is trace class. Moreover, for $N>0$ and $\gl_0\in\Lambda$ there is a constant
$C(c,N,\gl_0)$ such that
\begin{multline}\label{eq:ML20081028-2}
     \Bigl\|\bigl(  (\dirac_1^2-\gl)^{-1}-(\dirac_2^2-\gl)^{-1}\bigr)
     \rest{(-\infty,c]\times Z}  \Bigr\|_1\\
           \le C(c,N,\gl_0)\; |\gl|^{-N},\quad \text{ for }\gl\in \gL,\,
           |\gl|\ge |\gl_0|.
\end{multline}

\textup{2.} On $(-\infty,c]\times Z, c<0$, the difference of the heat kernels
\begin{equation}
     \bigl(\dirac_1^l e^{-t\dirac_1^2}-\dirac_2^l e^{-t\dirac_2^2}\bigr)
     \rest{(-\infty,c]\times Z}, \quad l\in\Z_+,
\end{equation}
is trace class for $t>0$. Moreover, for $N, t_0>0$ there is a constant
$C(c,l,N,t_0)$ such that 
\begin{equation}
     \Bigl\|\bigl(\dirac_1^l e^{-t\dirac_1^2}-\dirac_2^l e^{-t\dirac_2^2}\bigr)
     \rest{(-\infty,c]\times Z} \Bigr\|_1\\
         \le C(c,l,N,t_0)\; t^N,\quad 0<t\le t_0.
\end{equation}

\textup{3. } Assume in addition that $\dirac_1, \dirac_2$ are Fredholm operators and denote by $H_j$
the orthogonal projections onto $ \Ker \dirac_j, j=1,2$. 
Then for $0<\delta<\inf\specess \dirac^2$ there is a constant $C(c,\delta)$ such
that 
\begin{equation}
     \Bigl\|\bigl(\dirac_1^le^{-t\dirac_1^2}(I-H_1)- \dirac_2^le^{-t\dirac_2^2}(I-H_2)\bigr)\rest{(-\infty,c]\times Z}  \Bigr\|_1
         \le C(c,\delta) e^{-t\delta},
\end{equation}
for $0<t<\infty,\; l\in\Z_+$.
\end{proposition}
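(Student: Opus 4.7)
The plan is to reduce the comparison of operators on the two manifolds to operators on the model cylinder $\R \times Z$ by means of a two-sided sandwich identity with compactly supported middle factors. To set it up, I will fix points $c < c_1 < c_2 < c_3 < c_4 < 0$ and choose cut-offs $\psi_1, \psi_2 \in \cC^\infty(\R,[0,1])$ with $\psi_k = 1$ on $(-\infty, c_{2k-1}]$ and $\psi_k = 0$ on $[c_{2k}, \infty)$. Viewed as functions on $M_1$, $M_2$, and $\R \times Z$ (extended by $0$), they satisfy $\psi_2\psi_1 = \psi_1$ and $\supp\psi_1 \cap \supp d\psi_2 = \emptyset$, while $[\dirac_j^2, \psi_2] =: [\dirac^2, \psi_2]$ is a first-order differential operator with compact support independent of $j$. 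Writing $E_j := (\dirac_j^2 - \lambda)^{-1}$ and $E_0$ for the resolvent of the Dirac operator on $\R \times Z$, the identity $(\dirac^2 - \lambda)(\psi_2 E_j) = \psi_2 + [\dirac^2, \psi_2] E_j$, valid on the cylinder since $\psi_2 E_j$ is supported there, yields after applying $E_0$ and iterating once using the adjoint identity the sandwich formula
\begin{equation*}
\psi_1(E_1 - E_2)\psi_1 = - \psi_1 E_0 [\dirac^2, \psi_2] (E_1 - E_2) [\dirac^2, \psi_2] E_0 \psi_1 .
\end{equation*}
Since $\psi_1 = 1$ on $(-\infty,c]\times Z$, the trace norm of the restriction of $E_1 - E_2$ to this set is bounded by that of the right-hand side.

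The key estimate will then be $\|\psi_1 E_0 [\dirac^2, \psi_2]\|_2 = O(|\lambda|^{-N})$ for every $N$. The Schwartz kernel of this operator is supported in $\supp\psi_1 \times \supp d\psi_2$, two sets separated in the cylinder coordinate by $c_3 - c_2 > 0$. Using the product structure $\dirac^2 = -\partial_x^2 + \diracbdy^2$ (Remark \ref{r:DiracCylinderFormulas}) and the spectral decomposition of $\diracbdy$, each spectral mode of $E_0$ has an explicit kernel of the form $(2\sqrt{\mu^2-\lambda})^{-1}e^{-|x-y|\sqrt{\mu^2 - \lambda}}$, exhibiting exponential decay in $|x-y|$ at a rate $\sim \operatorname{Re}\sqrt{-\lambda}$ that tends to $\infty$ in the sector $\Lambda$. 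Combined with the separation of supports and with $\|E_1 - E_2\|_\infty \le C|\lambda|^{-1}$ from the spectral theorem, H\"older's inequality for Schatten norms then yields \eqref{eq:ML20081028-2}.

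Part 2 will follow from the contour integral representation $e^{-t\dirac_j^2} = \tfrac{1}{2\pi i}\int_\gamma e^{-t\lambda}(\dirac_j^2 - \lambda)^{-1}d\lambda$ with the contour of Figure \ref{fig:ContourNonFredholm}: the arbitrary polynomial decay in $|\lambda|$ from Part 1, combined with the rescaling $\mu = t\lambda$, produces arbitrary polynomial decay $O(t^N)$ as $t \to 0+$. The powers $\dirac_j^l$ are absorbed, up to commutators, into additional compactly supported differential operators flanking the sandwich. Finally, for Part 3 I will exploit the Fredholm hypothesis: by \eqref{eq:essspecbd} we have $\inf\specess\dirac_j^2 \geq \inf\spec\diracbdy^2 > 0$, so the contour of Figure \ref{fig:ContourFredholm} can be drawn to lie in $\{\operatorname{Re}\lambda \ge \delta'\}$ for $\delta < \delta' < \inf\specess\dirac^2$, yielding the exponential factor $|e^{-t\lambda}| \le e^{-t\delta'}$ uniformly on the contour, which dominates any polynomial growth from the resolvent differences. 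The main obstacle is the key estimate of the second paragraph: because $\psi_1$ is not compactly supported, the standard trace-class bounds from Section \ref{sec:basic-estimates} do not apply directly, and the exponential decay of the cylinder resolvent must be extracted by explicit spectral analysis of $\dirac^2 = -\partial_x^2 + \diracbdy^2$; a secondary technical point is ensuring that the sandwich identity, and its heat-operator analogues used in Parts 2 and 3, close in trace norm in the presence of the projections $I - H_j$ and the factors $\dirac_j^l$.
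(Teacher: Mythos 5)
Your argument is correct and rests on the same core mechanism as the paper's proof: commute a cut-off through the resolvent, exploit the disjointness of supports to get an $O(|\gl|^{-N})$ trace-norm bound for every $N$, and then feed this into the contour integral \eqref{eq:contour-integral} for parts 2 and 3. The execution of part 1, however, is genuinely different. The paper works one-sidedly: with $\psi\varphi=\varphi$ and $\supp d\psi\cap\supp\varphi=\emptyset$ it derives the identity \eqref{eq:ML20081028-1}, namely $(\dirac_1^2-\gl)\,\psi(E_1-E_2)\varphi=[\dirac_1^2,\psi]E_1\varphi-[\dirac_2^2,\psi]E_2\varphi$, and then simply invokes the disjoint-support resolvent estimates of Propositions \plref{p:ML20080403-A1}.3 and \plref{p:ML20080403-A2}, already established via the parameter-dependent calculus; the model operator on $\R\times Z$ never enters. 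You instead build a two-sided sandwich through the model resolvent $E_0$ and re-derive the off-diagonal superpolynomial decay by hand from the separation of variables $\dirac^2=-\partial_x^2+\diracbdy^2$. Both routes are sound; yours isolates $E_1-E_2$ in the middle so that only its operator norm is needed, at the price of redoing an estimate the paper already has in stock — your ``main obstacle'' can be short-circuited by noting that $\psi_1E_0[\dirac^2,\psi_2]$ is (after taking adjoints) exactly of the type covered by Proposition \plref{p:ML20080403-A2}.1, whose hypotheses require only that $\supp d\psi_1$ be compact, not $\supp\psi_1$. Two points to tighten: for the factors $\dirac_j^l$ in part 2 you should run the sandwich (or the paper's one-sided identity) directly for $\dirac_j^l(\dirac_j^2-\gl)^{-k}$, since a telescoping of $E_1^k-E_2^k$ through cross terms is unavailable (the resolvents act on different Hilbert spaces); and in part 3 the contour must still enclose all of $\spec\dirac_j^2\setminus\{0\}$, so it cannot literally lie in $\{\Re\gl\ge\delta'\}$ — it passes between $0$ and the rest of the spectrum as in Figure \ref{fig:ContourFredholm}, with the exponential gain coming from the part of the contour to the right of $\delta$.
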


\begin{proof}
\textup{1. } We choose cut--off functions $\varphi,\psi\in\cinf{\R}$
such that
\begin{equation}\label{eq:cut-off-functions}
   \varphi(x)=\begin{cases} 1, & x\le 4/5 c,\\
                            0, & x\ge 3/5 c,\\
               \end{cases}
\qquad
    \psi(x)=\begin{cases}   1, & x\le 2/5 c,\\
                            0, & x\ge 1/5 c,\\
               \end{cases}
\end{equation}
see Figure \ref{fig:CutOff}.
\FigCutOff
We have $\psi\varphi=\varphi$, $\supp d\psi\cap\supp \varphi=\emptyset$. Consider 
\begin{equation}
     R_{\psi,\varphi}(\gl):= \psi \bigl((\dirac_1^2-\gl)^{-1}-(\dirac_2^2-\gl)^{-1}\bigr) \varphi,
\end{equation}
viewed as an operator acting on sections over $M_1$. Then
\begin{equation}\label{eq:ML20081028-1}
       (\dirac_1^2-\gl)R_{\psi,\varphi}(\gl)=[\dirac_1^2,\psi](\dirac_1^2-\gl)^{-1}\varphi-[\dirac_2^2,\psi](\dirac_2^2-\gl)^{-1}\varphi,
\end{equation}
where again $[\dirac_2^2,\psi](\dirac_2^2-\gl)^{-1}\varphi$ is considered as acting on sections over $M_1$.
Since the operators $[D_j^2,\psi], j=1,2$ have compact support which is disjoint from
the support of $\varphi$ we may apply
Proposition \plref{p:ML20080403-A2} to the r.h.s. of \eqref{eq:ML20081028-1}
to infer that $R_{\psi,\varphi}(\gl)$ is trace class and that the estimate \eqref{eq:ML20081028-2}
holds.

\textup{2. } 
This follows from 1. and the contour integral representation \eqref{eq:contour-integral}
of the heat kernel. Cf. Proposition \plref{p:ML20080403-A3}.

\textup{3. } This follows from 1. and \eqref{eq:contour-integral} by taking the contour as in Figure \ref{fig:ContourFredholm},
page \pageref{fig:ContourFredholm}.
\end{proof}

We recall from Section \ref{App:bdefbmet} the notation $\bcptdiff^a((-\infty,0)\times Z;W)$
\eqref{eq:defbcptdiff}. In what follows, the subscript \emph{cpt} indicates that
the objects are supported away from $\{0\}\times Z$; it does \emph{not} indicate compact support.
The support of objects in $\bcptdiff^a$, and other spaces having the $\emph{cpt}$ decoration, may 
be unbounded towards $\{-\infty\}\times Z$. \emph{Compactly} supported
functions resp. sections are written with a $c$ decoration, e.g. $\cC_c^\infty$ resp. $\Gamma_c^\infty$. 

\begin{theorem}\label{t:JLO-comparison}
Let $A_j\in \bcptdiff^{d_j}((-\infty,0)\times Z;W)$ be
$b$--differential operators of order $d_j, j=0,...,k$ which are supported
away from $\{0\}\times Z$. Let $d:=\sum_{j=0}^kd_j$ be the sum of their
orders.

\textup{1. } For $t_0,N>0$ there is a constant $C(t_0,N)$ such that for all
$\sigma\in \Delta_k, \sigma_j>0$
\begin{multline}\label{eq:ML20090128-1}
   \Bigl\| A_0 e^{-\sigma_0 t \dirac_1^2}\cdot...\cdot A_l \bigl( e^{-\sigma_l t \dirac_1^2}-e^{-\sigma_l t \dirac_2^2}
\bigr) A_{l+1}\cdot...\cdot A_k e^{-\sigma_k t \dirac_1^2}\Bigr\|_1\\
    \le C(t_0,N)\Biggl(\prod_{j=0, j\not=l}^k \sigma_j^{-d_j/2} \Biggr) (\sigma_l t)^N,\quad 
             0< t<t_0.
\end{multline}

\textup{2. }  Assume in addition that $\dirac_1, \dirac_2$ are Fredholm operators and denote by $H_j$
the orthogonal projections onto $ \Ker \dirac_j, j=1,2$. 
Then for $0<\delta<\inf \specess \dirac^2$ 
and all $\sigma\in\Delta_k,\sigma_j>0,$
\begin{multline}\label{eq:ML20090128-2}
   \Bigl\| A_0 e^{-\sigma_0 t \dirac_1^2}(I-H_1)\cdot...\cdot A_l \bigl( e^{-\sigma_l t \dirac_1^2}(I-H_1)-e^{-\sigma_l t \dirac_2^2}(I-H_2)\bigr)A_{l+1}\cdot...\\ ...\cdot A_k e^{-\sigma_k t \dirac_1^2}(I-H_1)\Bigr\|_1\\
    \le C(\delta)\Biggl(\prod_{j=0, j\not=l}^k \sigma_j^{-d_j/2} \Biggr)  e^{-t\delta},\quad 
             0< t<\infty.
\end{multline}
\end{theorem}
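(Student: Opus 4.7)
The plan is to localize the heat-kernel difference to the common cylinder by inserting cutoff functions, and then combine Proposition~\ref{t:heat-resolvent-comparison} with a H\"older estimate in Schatten norms. Since each $A_j$ lies in $\bcptdiff$ and is supported away from $\{0\}\times Z$, there is some $c_0<0$ such that $\supp A_j\subset(-\infty,c_0]\times Z$ for all $j$. I would first pick a cutoff $\psi\in\cC^\infty(\R)$ with $\psi=1$ on $(-\infty,c_0]$ and $\psi=0$ on $[c_0/2,\infty)$, viewed as a function on both $M_1$ and $M_2$ via the cylinder coordinate. Then $\psi A_j=A_j\psi=A_j$ for all $j$, so one may freely sandwich the heat-kernel difference by $\psi$ in the $l$-th position, writing
\begin{equation*}
  A_l\bigl(e^{-\sigma_l t\dirac_1^2}-e^{-\sigma_l t\dirac_2^2}\bigr)A_{l+1}
  \;=\; A_l\psi\bigl(e^{-\sigma_l t\dirac_1^2}-e^{-\sigma_l t\dirac_2^2}\bigr)\psi A_{l+1}.
\end{equation*}

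I would then apply H\"older for Schatten norms with the trace norm on this middle block and operator norms on the outer blocks $A_0 e^{-\sigma_0 t\dirac_1^2}\cdots A_{l-1}e^{-\sigma_{l-1}t\dirac_1^2}$ and $e^{-\sigma_{l+1}t\dirac_1^2}A_{l+2}\cdots A_k e^{-\sigma_k t\dirac_1^2}$. As in the proof of Proposition~\ref{p:multiple-heat-estimate}, the Spectral Theorem together with the $\dirac_1^{d_j}$-boundedness of $A_j$ yields $\|A_j e^{-\sigma_j t\dirac_1^2}\|_\infty\le C(\sigma_j t)^{-d_j/2}$, so the outer operator norms contribute at most $C\prod_{j\neq l,\,j\neq l+1}(\sigma_j t)^{-d_j/2}$. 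Since $\sigma_{l+1}^{-d_{l+1}/2}\geq 1$, the missing factor may be inserted trivially at the end, and any residual negative powers of $t$ from the exponents are absorbed by enlarging $N$ below.

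The core step, and the main obstacle, is the trace-norm bound $\|A_l\psi\bigl(e^{-\sigma_l t\dirac_1^2}-e^{-\sigma_l t\dirac_2^2}\bigr)\psi A_{l+1}\|_1\le C_N(\sigma_l t)^N$ for every $N$. I would establish this by writing the heat-kernel difference as a contour integral of the resolvent difference,
\begin{equation*}
 \psi\bigl(e^{-\sigma_l t\dirac_1^2}-e^{-\sigma_l t\dirac_2^2}\bigr)\psi
 \;=\; \frac{1}{2\pi i}\int_\gamma e^{-\sigma_l t\lambda}\,
  \psi\bigl[(\dirac_1^2-\lambda)^{-1}-(\dirac_2^2-\lambda)^{-1}\bigr]\psi\,d\lambda,
\end{equation*}
and upgrading the pointwise-in-$\lambda$ estimate of Proposition~\ref{t:heat-resolvent-comparison}.1 to accommodate the flanking differential operators $A_l,A_{l+1}$. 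Iterating the identity $(\dirac_1^2-\lambda)R_{\psi,\psi}(\lambda)=[\dirac_1^2,\psi](\dirac_1^2-\lambda)^{-1}\psi-[\dirac_2^2,\psi](\dirac_2^2-\lambda)^{-1}\psi$ from the proof of that proposition gains arbitrarily many resolvent powers on either side; since each commutator $[\dirac_j^2,\psi]$ is compactly supported in the cylinder, Proposition~\ref{p:ML20080403-A2} then yields $\|A_l\psi[(\dirac_1^2-\lambda)^{-1}-(\dirac_2^2-\lambda)^{-1}]\psi A_{l+1}\|_1=O(|\lambda|^{-N})$ for every $N$. Deforming $\gamma$ so that $\Re\lambda\gtrsim 1/(\sigma_l t)$ on the relevant portion of the contour converts this into the required $(\sigma_l t)^N$ decay. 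Part~2 is handled identically, with Proposition~\ref{t:heat-resolvent-comparison}.3 replacing part~1 and the contour deformed around $\specess\dirac^2\setminus\{0\}$ as in Figure~\ref{fig:ContourFredholm} to produce the uniform-in-$t$ exponential decay $e^{-t\delta}$.
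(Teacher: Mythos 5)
Your proof is correct and follows essentially the same route as the paper: a H\"older split placing the heat-kernel difference block in trace norm and all remaining factors in operator norm (via the Spectral Theorem and the $\dirac^{d_j}$-boundedness of the $A_j$), with the trace-norm bound obtained from the resolvent comparison of Proposition \ref{t:heat-resolvent-comparison} through the contour-integral representation. The only cosmetic difference is that the paper keeps $A_{l+1}$ in the outer block and reduces the middle factor directly to the already-proven Proposition \ref{t:heat-resolvent-comparison}.2 by writing $A_l=\bigl(A_l(i+\dirac)^{-d_l}\bigr)(i+\dirac)^{d_l}$, whereas you absorb $A_l$ and $A_{l+1}$ into the difference block and re-derive the trace estimate from the iterated resolvent identity.
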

\begin{remark}\label{rem:EstimateOptimality}
With some more efforts one can show that the factors $\Bigl(\prod_{j=0}^k \sigma_j^{-d_j/2}\Bigr)$ on the right
hand sides of the estimates \eqref{eq:ML20090128-1}, \eqref{eq:ML20090128-2},
and also below in \eqref{eq:b-multiple-heat-estimate-short}, \eqref{eq:b-multiple-heat-estimate-large}
are obsolete. But since this is not needed for our purposes we prefer a less
cumbersome presentation.
\end{remark} 
\begin{proof} 
First note that by Proposition \ref{Prop:PropbSob} 
the operator $A(i+\dirac)^{-a}$ is bounded 
for $A\in \bcptdiff^a((-\infty,0)\times Z;W)$ and hence
the Spectral Theorem implies that for $t_0>0$ there is a $C(t_0)$ such that
\begin{equation}\label{eq:ML20081028-4}
      \|A e^{-t\dirac^2}\|_\infty\le C(t_0) t^{-a/2},\quad 0<t<t_0.
\end{equation}
If $D$ is Fredholm then for $0<\delta<\inf\specess D^2$ and $t_0>0$ 
there is a $C(t_0,\delta)$ such that 
\begin{equation}\label{eq:ML20081028-5}
      \|A e^{-t\dirac^2}(I-H)\|_\infty\le C(\delta) e^{-t\delta},\quad t_0<t<\infty.
\end{equation}
\eqref{eq:ML20081028-4} and \eqref{eq:ML20081028-5} together imply that
for $0<\delta<\inf\specess D^2$ there is a $C(\delta)$ such that
\begin{equation}\label{eq:ML20081028-6}
      \|A e^{-t\dirac^2}(I-H)\|_\infty\le C(\delta) t^{-a/2} e^{-t\delta},\quad 0<t<\infty.
\end{equation}

The first claim now follows
from the second assertion of Theorem \ref{t:heat-resolvent-comparison}.
Namely, with some $c<0$ such that $\supp A_j\subset (-\infty,c]\times Z$ we find
\begin{equation}\label{eq:ML20081028-7}
\begin{split}
     \Bigl\| A_0 &e^{-\sigma_0 t \dirac_1^2}\cdot...\cdot A_l \bigl( e^{-\sigma_l t \dirac_1^2}-e^{-\sigma_l t \dirac_2^2}
\bigr) A_{l+1}\cdot...\cdot A_k e^{-\sigma_k t \dirac_1^2}\Bigr\|_1\\
     &\le \Biggl(\prod_{j=0,j\not=l}^k \| A_j e^{-\sigma_j t \dirac_1^2} \|_\infty \Biggr) \;
             \bigl\| A_l (i+\dirac)^{-d_l}\bigr\|_\infty \;\cdot\\
     & \qquad \cdot \Bigl\|\bigl((i+\dirac_1)^{d_l} e^{-\sigma_l t \dirac_1^2}-(i+\dirac_2)^{d_l}e^{-\sigma_l t \dirac_2^2}\bigr)_{|(-\infty,c]\times Z}\Bigr\|_1\\
     &\le C(t_0,N) \Biggl(\prod_{j=0,j\not=l}^k \sigma_j^{-d_j/2} \Biggr) \; \sigma_l^N t^{N-d/2},\quad 0<t<t_0.
\end{split}
\end{equation}
Here we have used \eqref{eq:ML20081028-4}. Since $\sigma_j<1$ an inequality which
is valid for $0<\sigma_j t<t_0$ is certainly also valid for $0<t<t_0$.
Since $N$ is arbitrary \eqref{eq:ML20081028-7} proves the first claim.

2. Similarly, using the third assertion of Theorem \ref{t:heat-resolvent-comparison} and
\eqref{eq:ML20081028-6}
\begin{equation}
\begin{split} 
 \Bigl\|& A_0 e^{-\sigma_0 t \dirac_1^2}(I-H_1)\cdot...\cdot A_l \bigl( e^{-\sigma_l t \dirac_1^2}(I-H_1)-e^{-\sigma_l t \dirac_2^2}(I-H_2)\bigr)A_{l+1}\cdot...\\ 
     &\qquad ...\cdot A_k e^{-\sigma_k t \dirac_1^2}(I-H_1)\Bigr\|_1\\
     &\le \Biggl(\prod_{j=0,j\not=l}^k \| A_j e^{-\sigma_j t \dirac_1^2} (I-H_1)\|_\infty \Biggr)\;
         \bigl\| A_l(i+\dirac)^{-d_l}\bigr\|_\infty \cdot\\
     &\quad \cdot  \Bigl\|\bigl((i+\dirac_1)^{d_l} e^{-\sigma_l t \dirac_1^2}(I-H_1)-
            (i+\dirac_2)^{d_l} e^{-\sigma_l t \dirac_2^2}(I-H_2)\bigr)_{|(-\infty,c]\times Z}\Bigr\|_1\\
     &\le C(\delta) \Biggl(\prod_{j=0,j\not=l}^k \sigma_j^{-d_j/2} \Biggr) \; t^{-d/2} e^{-t\delta}.
\end{split}
\end{equation}
Together with the proved short time estimate and the fact that the $H_j$ are of finite rank and thus of trace class
we reach the conclusion.
\end{proof}

\section{Trace class estimates for the model heat kernel}
\label{s:estimates-model-heat-kernel}
\nind{Delta@$\Delta_\R$}
We consider the heat kernel\sind{heat kernel} of the Laplacian $\Delta_\R$ on the real
line
\begin{equation}
  \label{eq:heat-kernel}
  k_t(x,y)=\frac{1}{\sqrt{4\pi t}} e^{-(x-y)^2/4t}.
\end{equation}
By slight abuse of notation we will denote the operator of multiplication
by $\id_\R$ by $X$. We want to estimate the Schatten norms
of $e^{\ga |X|}e^{-t\Delta_\R}e^{\beta |X|}$.
Before we start with this let us note for future reference:
\begin{equation}\label{eq:gaussian-integral}
   \begin{split}
   \int_\R e^{-\frac{z^2}{\gl t}-\beta z} dz&= \sqrt{\pi \gl t}\; e^{\gl \beta^2 t/4},\\
   \int_\R e^{-\frac{z^2}{\gl t}+\beta |z|} dz&\le 2\sqrt{\pi \gl t}\; e^{\gl \beta^2 t/4},
   \end{split}
\quad \beta\in\R;\; \gl,t>0.
\end{equation}
Furthermore, we will need the well-known \sindp{Schur's test}:
\begin{lemma}[{\cite[Thm.~5.2]{HalSun:BIO}}]\label{l:Schurs-test}
Let $K$ be an integral operator on a measure space $(\Omega,\mu)$ with kernel
$k:\Omega\times \Omega\to \C$. Assume that there are positive measurable functions 
$p,q: \Omega \to (0,\infty)$ such that
\begin{equation}\label{eq:Schurs-test}
  \begin{split}
       \int_X |k(x,y)| p(y) d\mu(y)&\le C_p\; q(x),\\      
       \int_X |k(x,y)| q(x) d\mu(x)&\le C_q\; p(y).
  \end{split}
\end{equation}
Then $K$ is bounded in $L^2(\Omega,\mu)$ with $\|K\|\le \sqrt{C_pC_q}$.
\end{lemma}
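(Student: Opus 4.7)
The plan is to prove Schur's test via the standard two-step Cauchy--Schwarz argument applied to a clever weighted factorization of the integrand. The key observation is that the weights $p$ and $q$ are chosen precisely so that the two hypotheses in \eqref{eq:Schurs-test} can absorb the two halves of a Cauchy--Schwarz split, one in each integration variable.

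First, for $f\in L^2(\Omega,\mu)$ and (almost every) $x\in\Omega$, I would write
\begin{equation*}
|(Kf)(x)| \le \int_\Omega |k(x,y)|\,|f(y)|\,d\mu(y)
= \int_\Omega \bigl(|k(x,y)|\,p(y)\bigr)^{1/2}\,\bigl(|k(x,y)|/p(y)\bigr)^{1/2}|f(y)|\,d\mu(y),
\end{equation*}
and apply the Cauchy--Schwarz inequality in $L^2(\Omega,\mu)$ to the two factors. Using the first inequality in \eqref{eq:Schurs-test} to bound the first factor, this gives the pointwise estimate
\begin{equation*}
|(Kf)(x)|^2 \le C_p\, q(x)\int_\Omega |k(x,y)|\,\frac{|f(y)|^2}{p(y)}\,d\mu(y).
\end{equation*}

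Next, I would integrate this inequality in $x$, apply Fubini's theorem to swap the order of integration, and use the second inequality in \eqref{eq:Schurs-test} in the inner integral:
\begin{equation*}
\|Kf\|_{L^2}^2 \le C_p \int_\Omega \frac{|f(y)|^2}{p(y)}\left(\int_\Omega |k(x,y)|\,q(x)\,d\mu(x)\right)d\mu(y)
\le C_p C_q \int_\Omega |f(y)|^2\,d\mu(y),
\end{equation*}
which yields $\|Kf\|_{L^2} \le \sqrt{C_p C_q}\,\|f\|_{L^2}$, as desired.

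There is no real obstacle here; the only minor points are the measurability of the integrands (which follows from positivity of $p,q$ and measurability of $k$) and the applicability of Fubini (guaranteed because all integrands are non-negative, so Tonelli's theorem applies unconditionally). If the integrals defining $(Kf)(x)$ fail to converge on a null set one argues on the complement; density considerations are unnecessary since the estimate is established directly on arbitrary $f\in L^2(\Omega,\mu)$.
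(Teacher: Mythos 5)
Your proof is correct and uses essentially the same idea as the paper's: the weighted Cauchy--Schwarz splitting of $|k(x,y)|$ using the factors $\sqrt{p}$ and $\sqrt{q}$ so that each hypothesis in \eqref{eq:Schurs-test} absorbs one half. The only cosmetic difference is that you apply Cauchy--Schwarz pointwise to the inner integral and then invoke Tonelli, whereas the paper applies it once to the bilinear form $\int\!\!\int |f(x)|\,|k(x,y)|\,|g(y)|\,d\mu(x)\,d\mu(y)$; both routes give $\|K\|\le\sqrt{C_pC_q}$.
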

% BEGIN COMMENT
\comment{\begin{proof} 
For $f,g\in L^2(\Omega ,\mu)$ we estimate using Cauchy--Schwarz and 
\eqref{eq:Schurs-test}
\begin{equation}
  \begin{split}
    \int_X\int_X &|f(x)| |k(x,y)| |g(y)| d\mu(x)d\mu(y)\\
    &=  \int_X\int_X \Big( \frac{|f(x)|}{\sqrt{q(x)}}  \sqrt{p(y)} |k(x,y)|^{1/2}\Big)\cdot\\ 
    &\qquad  
      \cdot \Big( \sqrt{q(x)}|k(x,y)|^{1/2} \frac{|g(y)|}{\sqrt{p(y)}}\Big) d\mu(x)d\mu(y)\\
    &\le \Big( \int_X\int_X \frac{|f(x)|^2}{q(x)}  |k(x,y)| p(y) d\mu(y) d\mu(x)\Big)^{1/2}
      \cdot\\
    &\qquad \cdot\Big( \int_X\int_X 
      \frac{|g(y)|^2}{p(y)}  |k(x,y)| q(x) d\mu(x) d\mu(y)\Big)^{1/2}\\
    &\le \sqrt{C_pC_q} \|f\| \; \|g\|.\qedhere
  \end{split}
\end{equation}
\end{proof}} 
% end of comment
Now we can prove the following estimate. 
\begin{proposition}\label{p:heat-estimate-scalar-laplace}
Let $\Delta_\R=-\frac{d^2}{dx^2}$ be the Laplacian on the real line.
Then for $\ga>\gb>0,t>0, l\in \Z_+$, the integral operator
$e^{-\ga|X|}\bigl(\frac{d}{dx}\bigr)^l e^{-t\Delta_\R} e^{\beta|X|}$
with the (not everywhere smooth) kernel
\begin{equation}\label{eq:heat-estimate-scalar-laplace-1}
   \frac{1}{\sqrt{4\pi t}}e^{-\ga |x|} \partial_x^l e^{-(x-y)^2/4t +\gb |y|}
\end{equation}
is $p$-summable for $1\le p\le \infty$ and we have the estimate
   \begin{multline}\label{eq:heat-estimate-scalar-laplace-2}
  \|e^{-\ga|X|}\bigl(\frac{d}{dx}\bigr)^l e^{-t\Delta_\R} e^{\beta|X|}\|_p\\
      \le \bigl( c_1 t^{-\frac{l}{2}-\frac{1}{2p}}+c_2 t^{-\frac{1}{2p}}\bigr)
              (\ga-\gb)^{-1/p} e^{(\ga^2+\gb^2)t},\quad 0<t<\infty
   \end{multline}
with (computable) constants $c_1,c_2$ independent of $\ga,\gb,p,t$.
\end{proposition}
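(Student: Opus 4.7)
The plan is to establish the endpoint bounds $\|K\|_\infty$ (via Schur's test) and $\|K\|_1$ (via a Hilbert--Schmidt factorization), and then obtain the intermediate $\|K\|_p$ by Schatten H\"older interpolation \eqref{eq:Hoelder-interpolation}; all constants should come out uniform in $\ga>\gb>0$. As preparation I use the Hermite identity $\partial_x^l e^{-(x-y)^2/4t} = (-1)^l (2\sqrt{t})^{-l} H_l((x-y)/(2\sqrt{t})) e^{-(x-y)^2/4t}$ and absorb the polynomial $H_l$ into a small fraction of the Gaussian (a fixed $\eta=1/4$ suffices), yielding the pointwise bound
\begin{equation*}
|k(x,y)| \le C_l\, t^{-(l+1)/2}\, e^{-\ga|x| - 3(x-y)^2/(16t) + \gb|y|}.
\end{equation*}

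For $p=\infty$ I apply Schur's test (Lemma \ref{l:Schurs-test}) with constant weights $p=q=1$. The substitution $z=y-x$, the triangle inequality $|y|\le|x|+|z|$ and the Gaussian integral \eqref{eq:gaussian-integral} give $\int|k(x,y)|\,dy \le C\, t^{-l/2}\, e^{-(\ga-\gb)|x|}\, e^{4\gb^2 t/3}$ and symmetrically for the $x$-integral. Since $e^{-(\ga-\gb)|\cdot|}\le 1$, Schur then yields
\begin{equation*}
\|K\|_\infty \le C\, t^{-l/2}\, e^{2(\ga^2+\gb^2)t/3} \le C\, t^{-l/2}\, e^{(\ga^2+\gb^2)t}.
\end{equation*}

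For $p=1$ I would insert $e^{\gamma|X|}e^{-\gamma|X|}$ with $\gamma:=(\ga+\gb)/2$ between the two halves of $e^{-t\Delta_\R}=e^{-t\Delta_\R/2}\cdot e^{-t\Delta_\R/2}$, producing the factorization $K=AB$ with $A:=e^{-\ga|X|}(d/dx)^l e^{-t\Delta_\R/2}e^{\gamma|X|}$ and $B:=e^{-\gamma|X|}e^{-t\Delta_\R/2}e^{\gb|X|}$. Computing $\|A\|_2^2,\|B\|_2^2$ as double integrals of the squared kernels and performing the same $z=y-x$ substitution splits the exponential weights, producing the integrable factors $e^{-2(\ga-\gamma)|x|}=e^{-(\ga-\gb)|x|}$ and $e^{-2(\gamma-\gb)|y|}=e^{-(\ga-\gb)|y|}$, whose $\R$-integrals contribute the crucial $(\ga-\gb)^{-1}$. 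A short computation with \eqref{eq:gaussian-integral} gives
\begin{equation*}
\|A\|_2^2 \le C\, t^{-l-1/2}(\ga-\gb)^{-1}\,e^{4\gamma^2 t/3}, \quad \|B\|_2^2 \le C\, t^{-1/2}(\ga-\gb)^{-1}\,e^{4\gb^2 t/3},
\end{equation*}
and using $2(\gamma^2+\gb^2)/3=(\ga^2+2\ga\gb+5\gb^2)/6\le \ga^2+\gb^2$ one concludes $\|K\|_1 \le \|A\|_2\|B\|_2 \le C\, t^{-(l+1)/2}(\ga-\gb)^{-1}e^{(\ga^2+\gb^2)t}$.

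Interpolating via \eqref{eq:Hoelder-interpolation} finally yields $\|K\|_p \le \|K\|_\infty^{1-1/p}\|K\|_1^{1/p} \le C\, t^{-l/2-1/(2p)}(\ga-\gb)^{-1/p}e^{(\ga^2+\gb^2)t}$, which is \eqref{eq:heat-estimate-scalar-laplace-2} with $c_2=0$ (the summand $c_2t^{-1/(2p)}$ in the stated bound is redundant for this proof). The delicate point will be the simultaneous choice of the two parameters: the fixed $\eta$-margin absorbing the Hermite polynomial, and the weight $\gamma=(\ga+\gb)/2$ splitting the exponential weights across the HS factorization. The latter is forced by the requirement that the two factors of $(\ga-\gamma)^{-1}$ and $(\gamma-\gb)^{-1}$ combine into a single $(\ga-\gb)^{-1}$, while the numerical inequality $2(\gamma^2+\gb^2)/3 \le \ga^2+\gb^2$ must be checked to close the bound; both become tight precisely in the degenerate limit $\ga\searrow\gb$, which is exactly where $(\ga-\gb)^{-1/p}$ is supposed to blow up.
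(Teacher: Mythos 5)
Your proof is correct and follows essentially the same route as the paper's: Schur's test with constant weights for $p=\infty$, a Hilbert--Schmidt factorization through the midpoint weight $\gamma=(\ga+\gb)/2$ (equivalently, the semigroup property combined with the $p=2$ kernel estimate) for $p=1$, and the interpolation inequality \eqref{eq:Hoelder-interpolation} for intermediate $p$. The only deviation is that you handle all $l$ at once by absorbing the Hermite polynomial into a fixed fraction of the Gaussian, whereas the paper reduces $l\ge 2$ to $l\in\{0,1\}$ via $\partial_x^{2k}e^{-t\Delta_\R}=\partial_t^k e^{-t\Delta_\R}$; your variant in fact yields the slightly sharper single-term bound (i.e.\ $c_2=0$), which implies the stated estimate.
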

\begin{remark}\label{rem:EstimateOptimality-a}We are not striving to make these estimates optimal. 
We chose to formulate them in such a way that they are sufficient 
for our purposes and such that the proofs 
do not become too cumbersome.
\end{remark}
\begin{proof} We will prove this estimate for $p=\infty$ using Schur's test
Lemma \plref{l:Schurs-test} and for $p=2$ by estimating the $L^2$-norm
of the kernel. The case $p=1$ will then follow from the semigroup property of
the heat kernel. The result for general $p$ follows from the cases $p=1$ and $p=\infty$
and the interpolation inequality \eqref{eq:Hoelder-interpolation}.

The case $l\ge 2$ can easily be reduced to the case $l\in\{0,1\}$ in view of the 
identity
\begin{equation}\label{eq:ML20090127-1}
  \partial_x^{2k} e^{-t\Delta_\R} = (-\Delta_\R)^k e^{-t\Delta_\R}= 
  \partial_t^k e^{-t\Delta_\R}.
\end{equation}

\subsection*{The case $p=\infty$} We apply Schur's test with $p(x)=q(x)=1$. We will make frequent
use of \eqref{eq:gaussian-integral} without explicitly mentioning it all the time.
\begin{equation}\label{eq:heat-estimate-scalar-laplace-3}
   \begin{split}
        \frac{1}{\sqrt{4\pi t}} &\int_\R e^{-\ga|x| -(x-y)^2/4t +\gb |y|}dy\\
       &\le \frac{1}{\sqrt{4\pi t}}e^{-\ga |x|}\int_\R e^{-z^2/4t +\gb |x|+\gb|z|}dz\\
       & \le e^{(\gb-\ga)|x|}2 e^{\gb^2 t}\le 2 e^{\gb^2 t}.
   \end{split}
\end{equation}
Reversing the roles of $x$ and  $y$ one gets similarly
\begin{equation}\label{eq:heat-estimate-scalar-laplace-4}
   \begin{split}
        \frac{1}{\sqrt{4\pi t}} &\int_\R e^{\gb|y| -(x-y)^2/4t -\ga |x|}dx\\
       &\le \frac{1}{\sqrt{4\pi t}}e^{\gb |y|}\int_\R e^{-z^2/4t -\ga |y|+\ga |z|}dz\\
       & \le e^{(\gb-\ga)|y|}2 e^{\ga^2 t}\le 2 e^{\ga^2 t}.
   \end{split}
\end{equation}
This proves the result for $l=0$ and $p=\infty$.
In the case $l=1$ the integral
\begin{equation}\label{eq:heat-estimate-scalar-laplace-5}
         \frac{1}{\sqrt{4\pi t}} \int_\R \frac{|x-y|}{2t}e^{-\ga|x| -(x-y)^2/4t +\gb |y|}dy
\end{equation}
is estimated similarly.

\subsection*{The case $p=2$} We estimate the $L^2$-norm of the  kernel on $\R\times\R$ by:
\begin{equation}\label{eq:heat-estimate-scalar-laplace-6}
  \begin{split}
     \frac{1}{4\pi t} &\int_\R\int_\R e^{-2\ga|x| -\frac{(x-y)^2}{2t}+2\beta|y|} dx dy\\
           &\le   \frac{1}{4\pi t} \int_\R  e^{-2\ga|x|}\int_\R e^{-\frac{z^2}{2t}+2\beta|z|+2\beta|x|} dz dx\\
       &\le \frac{1}{\sqrt{2\pi t}}\int_\R e^{-2(\ga-\gb)|x|} e^{2\gb^2 t} dx\\
       &= \frac{1}{\sqrt{2\pi t}}\frac{1}{\ga-\gb}e^{2\gb^2 t},
  \end{split}
\end{equation}
proving the result for $l=0$ and $p=2$. Again, the case $l=1$ is similar.

\subsection*{The case $p=1$} Put $c=(\ga+\gb)/2$. Then the semigroup property of the heat 
kernel gives
\begin{equation}
  \begin{split}
      \| e^{-\ga|X|} &e^{-t\Delta_\R} e^{\gb|X|}\|_1\\
           &\le   \| e^{-\ga|X|} e^{-t/2\Delta_\R} e^{c|X|}\|_2 
            \| e^{- c |X|} e^{-t/2\Delta_\R} e^{\gb |X|}\|_2,
  \end{split}
\end{equation}
and using the proved case $p=2$ gives the result for $p=1$.
\end{proof}

The previous Proposition and standard estimates for the heat kernel\sind{heat kernel} 
on closed manifolds
(\cf~the Section \ref{sec:basic-estimates}) immediately give the following result 
for the heat kernel of the model Dirac operator on the cylinder.

\begin{proposition}\label{p:heat-estimate-model-laplace} 
Let $Z$ be a compact closed manifold and $\dirac=\Gammabdy\bigl(\frac{d}{dx}+\Abdy\bigr)$ 
a Dirac operator on the cylinder $M=\R\times Z$ 
(\cf~Remark \plref{r:DiracCylinderFormulas}). 
%%%%% next three lines now obsolete, I kept it in case of second thoughts.
%That is $A$ is a first order self-adjoint elliptic differential operator on $Z$
%and $\Gamma$ is a unitary bundle endomorphism satisfying 
%$\Gamma^*=-\Gamma, \Gamma A+A\Gamma=0$ (cf. \eqref{eq:dirac-cylinder-formulas}). 
Furthermore, let $Q\in \bcptdiff^q((-\infty,0)\times Z;W)$ a \textup{b}-differential operator
of order $q$ with support in some cylindrical end $(-\infty, c)\times Z$.
Then for $\ga>\gb>0,t>0$ the integral operator $e^{-\ga|X|}Q e^{-t\dirac^2} e^{\beta|X|}$
with kernel
\begin{equation}\label{eq:heat-estimate-model-laplace-1}
   \frac{1}{\sqrt{4\pi t}}e^{-\ga |x|} Q_x e^{-(x-y)^2/4t +\gb |y|}e^{-tA^2}
\end{equation}
is $p$-summable for $1\le p\le \infty$.
Furthermore, for $\eps>0$, $t_0>0$, there is a constant $C(\eps,t_0)$, such that
for $1\le p\le\infty$, $0<t<t_0$, $0<\beta<\ga$
\begin{equation}
   \|e^{-\ga|X|} Q e^{-t \dirac^2} e^{\beta|X|}\|_p
       \le  C(\eps,t_0) (\ga-\gb)^{-1/p} 
             t^{-\frac{\dim M+\eps}{2p}-\frac q2}.
\end{equation}
If in addition the operator $A$ is invertible then for $0<\delta<\inf\spec A^2$
and $\eps>0$ there are constants $C_j(\delta,\eps)$, $j=1,2$ 
such that for $1\le p\le \infty,$
$0<t<\infty$, $0<\gb<\ga$ we have the estimate
\begin{multline}
\label{eq:heat-estimate-model-laplace-2}
   \|e^{-\ga|X|} Q e^{-t \dirac^2} e^{\beta|X|}\|_p \\
       \le \bigl( C_1(\delta,\eps) t^{-\frac{q}{2}}+C_2(\delta,\eps)\bigr)(\ga-\gb)^{-1/p} 
          t^{-\frac{\dim M+\eps}{2p}} e^{(\ga^2+\gb^2-\delta)t}.
\end{multline}
\end{proposition}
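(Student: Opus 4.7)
The plan is to exploit the product structure on the cylinder. By Remark \ref{r:DiracCylinderFormulas}, on $\R \times Z$ we have $\dirac = \Gammabdy \partial_x + \diracbdy$ with $\Gammabdy^2 = -I$ and $\Gammabdy \Abdy + \Abdy \Gammabdy = 0$; a direct computation then gives
\begin{equation*}
\dirac^2 \;=\; \Delta_\R \otimes I \;+\; I \otimes A^2
\end{equation*}
under the natural identification $L^2(\R \times Z;W) \cong L^2(\R) \otimes L^2(Z;W|_Z)$. Consequently the heat operator factorizes as $e^{-t \dirac^2} = e^{-t \Delta_\R} \otimes e^{-t A^2}$, so that its Schwartz kernel is precisely the product of the scalar Gaussian \eqref{eq:heat-kernel} on $\R$ and the heat kernel of $A^2$ on the closed manifold $Z$.

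Next I would decompose $Q$. By Proposition \ref{prop:DefEqbDiff}, after a finite partition of unity on $Z$ subordinate to coordinate charts trivializing $W|_Z$, $Q$ is a finite sum of terms of the form $c_{k,\alpha}(x,y) \, \partial_x^k \partial_y^\alpha$ with $k + |\alpha| \le q$ and matrix-valued $c_{k,\alpha} \in \bcptC((-\infty, 0) \times Z)$. The support condition on $Q$ together with the b-asymptotic expansion at $x = -\infty$ implies each $c_{k,\alpha}$ is uniformly bounded on $\R \times Z$, so by linearity it suffices to treat a single such term. Since the weights $e^{\pm \ga|X|}, e^{\pm \gb|X|}$ and the derivative $\partial_x^k$ act only on the $\R$-factor, while $\partial_y^\alpha$ and $e^{-t A^2}$ act only on the $Z$-factor, the factorization of $e^{-t \dirac^2}$ gives
\begin{equation*}
e^{-\ga|X|} c_{k,\alpha} \partial_x^k \partial_y^\alpha e^{-t \dirac^2} e^{\gb|X|} \;=\; M_{c_{k,\alpha}} \cdot \bigl[\, \bigl(e^{-\ga|X|} \partial_x^k e^{-t \Delta_\R} e^{\gb|X|}\bigr) \otimes \bigl(\partial_y^\alpha e^{-t A^2}\bigr) \,\bigr],
\end{equation*}
where $M_{c_{k,\alpha}}$ is the bounded multiplication operator with operator norm controlled by $\|c_{k,\alpha}\|_\infty$.

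Combining the ideal property $\|M T\|_p \le \|M\|_\infty \|T\|_p$ with the Schatten tensor identity $\|T_1 \otimes T_2\|_p = \|T_1\|_p \|T_2\|_p$, the proof reduces to bounding the two factors separately. The $\R$-factor is handled by Proposition \ref{p:heat-estimate-scalar-laplace} with $l = k$, contributing a bound of the form $(c_1 t^{-k/2 - 1/(2p)} + c_2 t^{-1/(2p)}) (\ga - \gb)^{-1/p} e^{(\ga^2 + \gb^2) t}$. The $Z$-factor $\|\partial_y^\alpha e^{-t A^2}\|_p$ is handled by the closed-manifold heat kernel estimate of Proposition \ref{p:ML20080403-A3}.1 applied on $Z$, which gives $C(\eps, t_0) \, t^{-|\alpha|/2 - (\dim Z + \eps)/(2p)}$ for $0 < t \le t_0$. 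Summing the exponents, using $\dim M = 1 + \dim Z$ together with $k + |\alpha| \le q$ and absorbing $\eps$, yields the first stated estimate.

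For the invertible case, $\spec A^2 \subset [\inf \spec A^2, \infty)$ on the closed manifold $Z$ gives, via the standard splitting of the short-time estimate from the long-time spectral bound $\|e^{-(t - 1) A^2}\|_\infty \le e^{-(t - 1) \delta}$, the upgraded estimate
\begin{equation*}
\|\partial_y^\alpha e^{-t A^2}\|_p \;\le\; C(\delta, \eps) \, t^{-|\alpha|/2 - (\dim Z + \eps)/(2p)} \, e^{-t \delta}
\end{equation*}
valid for \emph{all} $t > 0$ and all $0 < \delta < \inf \spec A^2$. Multiplying by the $\R$-factor's $e^{(\ga^2 + \gb^2) t}$ produces the advertised $e^{(\ga^2 + \gb^2 - \delta) t}$. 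The main bookkeeping obstacle is tracing the two-term polynomial prefactor $C_1 t^{-q/2} + C_2$ in \eqref{eq:heat-estimate-model-laplace-2} through the product of bounds: this structure is inherited directly from the $c_1 t^{-l/2 - 1/(2p)} + c_2 t^{-1/(2p)}$ form in Proposition \ref{p:heat-estimate-scalar-laplace}, and upon multiplying by the $Z$-factor and summing over $(k, \alpha)$ the sum collapses (using $k + |\alpha| \le q$) to precisely the form stated.
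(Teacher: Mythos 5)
Your proposal is correct and follows essentially the same route as the paper's proof: decompose $Q$ into normal form via Proposition \ref{prop:DefEqbDiff}, pull out the bounded coefficient, factor the remaining operator as a tensor product of an $\R$-factor and a $Z$-factor, use multiplicativity of Schatten norms under tensor products, and conclude from Proposition \ref{p:heat-estimate-scalar-laplace} together with the closed-manifold heat estimates on $Z$. The only cosmetic difference is that you localize $\partial_y^\alpha$ with a partition of unity where the paper keeps a global $x$-independent differential operator $P$ on $Z$.
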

For the definition of $\bcptdiff^q, \bdiff^q$ see Proposition \plref{prop:DefEqbDiff} and Eq.~\eqref{eq:defbcptdiff}.

\begin{proof} 
By Proposition \ref{prop:DefEqbDiff} we may write $Q$ as a sum of operators of the form 
\begin{equation}\label{eq:dDiffOp-normalForm}
f(x,p) P \Bigl(\frac{d}{dx}\Bigr)^l 
\end{equation}
with
\begin{itemize}
\item $f\in\bcptGamma((-\infty,c)\times Z;\End W)$,
\item $P\in\Diff^{b-l}(Z;W\rest{Z})$ a differential operator of order $b-l$
on $Z$ which is \emph{constant} in $x$-direction.
\end{itemize}
Note that $e^{-\ga|X|}$ commutes with $f$. Furthermore, $f$ is uniformly bounded.
Thus
\begin{equation}\label{eq:ML20090127-2}
  \Bigl\| e^{-\ga|X|} f P \partial_x^l e^{-t\dirac^2} e^{\gb |X|}\Bigr\|_p
  \le \|f\|_\infty \Bigl\|e^{-\ga|X|} P\partial_x^l e^{-t\dirac^2} e^{\gb |X|}\Bigr\|_p.
\end{equation}
Inside the $p$-norm is now a tensor product of operators
\begin{equation}
  e^{-\ga|X|}  P \partial_x^l e^{-t\dirac^2} e^{\gb |X|}
  = \bigl( e^{-\ga|X|} \partial_x^l e^{-t\Delta_\R} e^{\gb|X|}\bigr)
                 \otimes  \bigl(P e^{-tA^2}\bigr).
\end{equation}
Since the $p$-norm of a tensor product is the product of the $p$-norms 
the claim follows from Proposition \ref{p:heat-estimate-scalar-laplace} and 
standard elliptic estimates for the closed manifold $Z$
(Propositions \ref{p:ML20080403-A3}, \ref{p:ML20080403-A4}).
\end{proof}

\section[Trace class estimates]{
Trace class estimates for the \textup{JLO} integrand on manifolds with cylindrical ends}
\label{s:estimates-JLO-cylindrical}

The heat kernel estimate for the Dirac operator on the model cylinder from Proposition 
\plref{p:heat-estimate-model-laplace}
together with the comparison result in Section \ref{s:comparison-results} 
will now be used to obtain trace class estimates for 
\textup{b}-differential operators similar to the one in Proposition 
\plref{p:multiple-heat-estimate} if the indicial operator of at least
one of the operators $A_0,\ldots,A_k$ vanishes. Let us mention here that
in the following we will use the notation introduced in
Subsection \ref{ss:JLODO}, in particular Eq.~\eqref{eq:ML20090128-3}.

\begin{proposition}\label{p:multiple-heat-estimate-b}
Let $M=(-\infty,0]\times Z\cup_Z X$, where $X$ is a compact manifold with
boundary,  be a complete manifold with cylindrical ends 
and let $\dirac$ be a Dirac operator on $M$. 
Let $A_0,...,A_k\in\bdiff(M;W)$ be \textup{b}-differential operators of order
$d_0,...,d_k; d:=\sum_{j=0}^k d_j$.
Assume that for at least one index
$l\in \{0,...,k\}$ the indicial family of $A_l$ vanishes.
Then for $t>0$, $\sigma\in\Delta_k$ the operator
\begin{equation}
   A_0 e^{-\sigma_0 t \dirac^2}A_1\cdot\ldots\cdot A_k e^{-\sigma_k t \dirac^2}
\end{equation}
is trace class. Furthermore, there are the following estimates:

\paragraph*{1} For $t_0>0$, $\eps>0$ there is a constant $C(t_0,\eps)$ such that
for all $\sigma=(\sigma_0,...,\sigma_k)\in\Delta_k$, $\sigma_j>0$,
\begin{equation}\label{eq:b-multiple-heat-estimate-short}
\begin{split}
     \| A_0 &e^{-\sigma_0 t \dirac^2}A_1\cdot\ldots\cdot A_k e^{-\sigma_k t \dirac^2}\|_1\\
       &\le C(t_0,\eps)\; \Biggl(\prod_{j=0}^k \sigma_j^{-d_j/2} \Biggr)
       t^{-d/2-(\dim M)/2-\eps},\quad 0<t\le t_0.
     \end{split}
\end{equation}
In particular, if $d_j\le 1, j=0,...,k,$ then 
\begin{equation}\label{eq:b-integrated-multiple-heat-estimate-short}
     \|( A_0,...,A_k)_{\sqrt{t}\dirac}\|=O( t^{-d/2-(\dim M)/2-0}),\quad t \to 0+.
\end{equation}
% \item 

\paragraph*{2} Assume additionally that $\dirac$ is Fredholm and denote by $H$ the orthogonal
projection onto $ \Ker \dirac$. Then for $\eps>0$ and any $0<\delta<\inf\spec_{\ess} \dirac^2$
there is a constant $C(\delta,\eps)$ such that for all $\sigma\in\Delta_k, \sigma_j>0$
\begin{equation}\label{eq:b-multiple-heat-estimate-large}
   \begin{split}
     \| A_0 &e^{-\sigma_0 t \dirac^2}(I-H)A_1\cdot\ldots\cdot A_k e^{-\sigma_k t \dirac^2}(I-H)\|_1\\
      &\le C(\delta,\eps)\; \Biggl(\prod_{j=0}^k \sigma_j^{-d_j/2} \Biggr)
       t^{-d/2-(\dim M)/2-\eps} e^{-t\delta},\quad \text{for \emph{all} } 0<t<\infty.
    \end{split}
  \end{equation}
In particular, if $d_j\le 1, j=0,...,k,$ then 
\begin{equation}\label{eq:b-integrated-multiple-heat-estimate-large}
    \begin{split} 
    \|( A_0&(I-H),...,A_k(I-H))_{\sqrt{t}\dirac}\|\\
                &\le \tilde C(\eps,\delta)\; 
      t^{-d/2-(\dim M)/2-\eps} e^{-t\delta},\quad \text{for \emph{all }} 0<t<\infty.
    \end{split}
\end{equation}
%\end{enumerate}
\end{proposition}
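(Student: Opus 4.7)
The plan is to reduce the problem to an estimate on the model cylinder $\R\times Z$, where the vanishing of the indicial family of $A_l$ provides an exponential decay factor at $-\infty$ that compensates for the non-compactness. First, choose cut-off functions $\chi_{\textup{int}}, \chi_{\textup{cyl}}\in \cC^\infty(M)$ with $\chi_{\textup{int}}+\chi_{\textup{cyl}}=1$, $\chi_{\textup{int}}$ compactly supported in $M^\circ$, and $\chi_{\textup{cyl}}$ supported in the cylindrical end $(-\infty, c_0]\times Z$ for some $c_0<0$. Write $A_j=A_j\chi_{\textup{int}}+A_j\chi_{\textup{cyl}}$ for $j\neq l$; crucially, $A_l\chi_{\textup{int}}$ is compactly supported while $A_l\chi_{\textup{cyl}}$ still has vanishing indicial family. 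Expanding the product yields a sum of $2^{k+1}$ terms. Any term containing at least one factor of $A_j\chi_{\textup{int}}$ (which has compact support) is controlled directly by Proposition \ref{p:multiple-heat-estimate}, giving the desired bounds. The single remaining term, in which every operator $A_j\chi_{\textup{cyl}}$ is supported in the cylindrical end, requires genuinely new work.

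For this remaining term, I would apply Theorem \ref{t:JLO-comparison} to replace each heat kernel $e^{-\sigma_jt\dirac^2}$ by the heat kernel $e^{-\sigma_jt\dirac_{\textup{mod}}^2}$ of the model Dirac operator $\dirac_{\textup{mod}}=\sfc(dx)\frac{d}{dx}+\diracbdy$ on $\R\times Z$. The resulting error terms are $O((\sigma_jt)^N)$ for any $N$ (short time) or $O(e^{-\delta t})$ (long time, in the Fredholm case), which are absorbed into the desired estimates. Thus it suffices to prove \eqref{eq:b-multiple-heat-estimate-short} and \eqref{eq:b-multiple-heat-estimate-large} on the model cylinder.

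On the model cylinder, the hypothesis that the indicial family of $A_l$ vanishes translates via Proposition \ref{prop:DefEqbDiff} into the statement that, in cylindrical coordinates on the end $x\le c_0$, the coefficients of $A_l$ are $O(e^x)$ along with all derivatives. Consequently we may factor $A_l=\varphi(x)B_l$ where $\varphi\in\cC^\infty(\R)$ satisfies $\varphi(x)=e^x$ for $x\le c_0$ (so $\varphi=e^{-|X|}$ on the negative end) and $B_l\in\bcptdiff^{d_l}((-\infty,0)\times Z;W)$. This turns the troublesome factor into an ordinary b-bounded operator multiplied by an exponential decay weight. Split the weight evenly across the product as $e^{-|X|}=\prod_{j=0}^{k}e^{-\alpha_j|X|}$ with $\alpha_j>0$ and $\sum_j\alpha_j=1$, and insert sandwiches $e^{\beta_j|X|}e^{-\beta_j|X|}$ at appropriate positions with $0<\beta_j<\alpha_j$ chosen so that each resulting block has the form $e^{-\alpha_j|X|}A_jQ\,e^{-\sigma_j t\dirac_{\textup{mod}}^2}e^{\beta_j|X|}$ covered by Proposition \ref{p:heat-estimate-model-laplace} (after commuting the weights through $A_j$, which only produces bounded multiplicative corrections since $A_j$ is b-differential). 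Applying H\"older's inequality for Schatten norms with exponents $p_j=1/\sigma_j$ (so $\sum 1/p_j=1$), exactly as in the proof of Proposition \ref{p:multiple-heat-estimate}, and inserting the bounds from Proposition \ref{p:heat-estimate-model-laplace} yields the combined estimate $\prod_j(\sigma_jt)^{-d_j/2-(\dim M+\eps)\sigma_j/2}=(\prod_j\sigma_j^{-d_j/2})\,t^{-d/2-(\dim M+\eps)/2}$, which is precisely \eqref{eq:b-multiple-heat-estimate-short}.

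For the Fredholm case \eqref{eq:b-multiple-heat-estimate-large}, I would run the same argument with $e^{-\sigma_jt\dirac^2}$ replaced by $e^{-\sigma_jt\dirac^2}(I-H)$, using the large-time estimates of Proposition \ref{p:heat-estimate-model-laplace} (the $e^{(\alpha^2+\beta^2-\delta)t}$ factor there; by choosing the $\alpha_j,\beta_j$ sufficiently small one gains a genuine exponential decay $e^{-\delta't}$ for any $\delta'<\inf\spec\diracbdy^2=\inf\specess\dirac^2$). The main obstacle I foresee is the bookkeeping in Step 3: arranging the exponents $\alpha_j,\beta_j$ so that each of the $k+1$ factors satisfies the strict inequality $\alpha_j>\beta_j>0$ required by Proposition \ref{p:heat-estimate-model-laplace}, while ensuring that the resulting constants $(\alpha_j-\beta_j)^{-\sigma_j}$ combine to a quantity finite and uniformly bounded in $\sigma\in\Delta_k$. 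A symmetric choice such as $\alpha_j=\tfrac{1}{k+1}+\epsilon$, $\beta_j=\tfrac{1}{k+1}-\epsilon$ should work; in the Fredholm case the $e^{(\alpha_j^2+\beta_j^2)\sigma_jt}$ factors contribute at most $e^{Ct/(k+1)}$, harmless compared to the available $e^{-\delta t}$.
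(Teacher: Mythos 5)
Your proposal is correct and follows essentially the same route as the paper's proof: split each $A_j$ into a compactly supported piece (handled by Proposition \ref{p:multiple-heat-estimate}) and a piece supported in the cylindrical end, pass to the model cylinder via Theorem \ref{t:JLO-comparison}, factor the vanishing indicial family as $A_l=e^{-|X|}\tilde A_l$, insert telescoping exponential weights, and conclude with H\"older's inequality in Schatten norms ($p_j=\sigma_j^{-1}$) combined with Proposition \ref{p:heat-estimate-model-laplace}. Your bookkeeping worry is resolved exactly as you suggest (the paper uses a decreasing chain $1\ge\beta_0>\cdots>\beta_{k+1}=0$ and notes that $2\sum\beta_j^2$ can be made arbitrarily small to beat the $e^{-\delta t}$ in the Fredholm case).
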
 
\begin{proof}
We first reduce the problem to a problem on the cylinder $(-\infty,0]\times Z$.

Write $A_j=A_j^{(0)}+A_j^{(1)}$ where $A_j^{(0)}$ has compact support and $A_j^{(1)}$
is supported on $(-\infty,c]\times Z$ for some $c<0$. 

Then we split $A_0e^{-\sigma_0t \dirac^2}A_1\cdot\ldots\cdot A_ke^{-\sigma_k t\dirac^2}$
(resp. $A_0e^{-\sigma_0t \dirac^2}(I-H)A_1\cdot\ldots\cdot A_ke^{-\sigma_k t\dirac^2}(I-H)$)
into a sum of terms obtained by decomposing  $A_j=A_j^{(0)}+A_j^{(1)}$.

To the summands involving at least one term $A_j^{(0)}$ we apply Proposition
\ref{p:multiple-heat-estimate}. To the remaining summand involving only $A_j^{(1)}$
we first apply the comparison Theorem \ref{t:JLO-comparison}
with $\dirac_1=\dirac$ and $\dirac_2=\Gammabdy \frac{d}{dx}+\diracbdy
=\Gammabdy\bigl(\frac{d}{dx}+A\bigr)$,
where $\dirac_2$ acts on sections over the cylinder $\R\times Z$;
\emph{cf.}~Remark \ref{r:DiracCylinderFormulas}.

Hence it remains to prove the claim for the cylinder $M=\R\times Z$ where
each $A_j$ is supported on $(-\infty,c]\times Z$ for some $c<0$.

For definiteness it is not a big loss of generality if
we assume that the indicial family of $A_0$ vanishes (I.e. $l=0$). 
Write $A_0=e^{-|X|}\tilde A_0$ with $A_0\in\bcptdiff^{d_0}((-\infty,0)\times Z;W)$.
Let $\gb_0,...,\gb_{k+1}$ be real numbers with $1\ge \gb_0>\gb_1>...>\gb_k>\gb_{k+1}=0$.

Let us assume that $\dirac$ is Fredholm and prove the claim 2. The proof of claim 1.
is similar and left to the reader.
%\begin{equation}
%\begin{split}
%   \|A_0 &e^{-\sigma_0 t \dirac^2}(I-H)A_1\cdot...\cdot A_k e^{-\sigma_k t \dirac^2}(I-H)\|_1\\
%         &\le \| B_0 e^{\gb_0 |X|}\|_\infty \| e^{-\gb_0|X|} \dirac e^{-\sigma_0 t \dirac^2}(I-H)\cdot...\|_1\\
%         &\quad +\| C_0 e^{\gb_0 |X|}\|_\infty \| e^{-\gb_0|X|} e^{-\sigma_0 t \dirac^2}(I-H)\cdot...\|_1.
%\end{split}
%\end{equation}
%Both summands on the right are estimated by the same method. So we present the calculation
%only for the second summand.
H\"older's inequality yields
\begin{multline}
   \bigl \|A_0 e^{-\sigma_0 t \dirac^2}(I-H)A_1\cdot...\cdot A_k e^{-\sigma_k t \dirac^2}(I-H)\bigr\|_1         \\
      \le C \bigl\| e^{-\gb_0 |X|} \tilde A_0 e^{-\sigma_0 t\dirac^2}(I-H)e^{\gb_1|X|}\bigr\|_{\sigma_0^{-1}}\cdot   \\ 
          \cdot \prod_{j=1}^k \bigl \| e^{-\gb_j|X|} A_j e^{-\sigma_j t \dirac^2}(I-H)  e^{\gb_{j+1}|X|}\bigr\|_{\sigma_j^{-1}}.
\end{multline}
The individual factors are estimated by Proposition 
\ref{p:heat-estimate-model-laplace} and we obtain
for $0<t<\infty$:
\begin{equation}
\begin{split}
    ...&\le  \prod_{j=0}^k 
       \bigl( C_{1,j}(\delta,\eps,\gb) 
            (t\sigma_j)^{-d_j/2}+C_{2,j}(\delta,\eps,\gb)\bigr)\cdot\\
        &\qquad \cdot     
             t^{-\frac{\dim M+\eps}{2}\sigma_j} e^{(\gb_j^2+\gb_{j+1}^2-\delta)\sigma_j t}\\
        &\le C(\delta,\eps,\gb,\gamma) \Bigl(\prod_{j=0}^k \sigma_j^{-d_j/2}\Bigr) 
          t^{-d/2-\frac{\dim M+\eps}{2}} e^{(2\sum \gb_j^2 +\gamma-\delta)t},
\end{split}
\end{equation}
for any $\gamma>0$. The $\gamma>0$ is introduced to compensate $t^{-d/2}$ as $t\to\infty$.
Since we may choose $2\sum \gb_j^2 +\gamma$ as small as we please, the claim is proved.

The remaining cases are treated similarly.
\end{proof}

\section{Estimates for \textup{b}-traces}
\label{s:EstbTraces}

\comment{\marginpar{Due to the commutator formula for the \textup{b}-trace Thm. \eqref{t:bTraceAsTrace}
this section can be presented in a much shorter way. However, since the results
are secured now this steamlining does not have high priority; I wouldn't mind by the
way if somebody else would do it}

Finally we are going to estimate expressions of the form $\bTr(A_0e^{-\sigma_0t \dirac^2}A_1\cdot...\cdot
A_ke^{-\sigma_kt \dirac^2})$. 
As a preparation we first discuss the one dimensional case.

\begin{proposition}\label{p:1d-b-estimate}
Let $\Delta_\R=-\frac{d^2}{dx^2}$ on the real line. Furthermore, let 
$\varphi_j\in \bcptC(-\infty,0), j=0,...,k$, with $\varphi_j(x)=1$ for $x\le x_0$.
Moreover, given $d_0,...,d_k\in\Z_+$, $d=\sum_{j=0}^k d_j$.

Then for $\eps,\delta>0$ there is a constant $C(\eps,\delta)$ 
such that for all $\sigma\in\Delta_k, \sigma_j>0,$ 
\begin{equation}\label{eq:ML20081104-1}
   \begin{split}
\bTr(&\varphi_0 \partial_x^{d_0}e^{-\sigma_0t\Delta_\R}\varphi_1\cdot...\cdot
  \varphi_k \partial_x^{d_k}e^{-\sigma_kt\Delta_\R})\\
    &\le C(\eps,\delta)\Biggl(\prod_{j=0}^k \sigma_j^{-d_j/2-\eps} \Biggr)
           t^{-(a+1)/2-\eps}e^{t\delta},\quad 0<t<\infty.
   \end{split}
\end{equation}
\end{proposition}
\begin{proof} Fix a $c>0$ and consider the invertible operator $\Delta_\R+c$.
Although $\Delta_\R+c$ is not the square of a first order differential operator
it is clear that the basic estimates of Section \ref{sec:basic-estimates}
hold verbatim for $\Delta_\R+c$ instead of $D^2$. Note that since $\Delta_\R+c$
is invertible, the long time estimates of Section \ref{sec:basic-estimates}
hold with $H=0$. Finally, note that
\begin{equation}
\begin{split}
e^{-t c} \bTr((&\varphi_0 \partial_x^{d_0}e^{-\sigma_0t\Delta_\R}\varphi_1\cdot...\cdot
  \varphi_k \partial_x^{d_k}e^{-\sigma_kt\Delta_\R})\\
 \bTr((&\varphi_0 \partial_x^{d_0}e^{-\sigma_0t(\Delta_\R+c)}\varphi_1\cdot...\cdot
  \varphi_k \partial_x^{d_k}e^{-\sigma_kt(\Delta_\R+c)}),
\end{split}
\end{equation}
and hence we need to show that for $\eps>0$ and $0<\delta<c$ there is
a $C(\eps,\delta)$ such that for all $\sigma\in\Delta_k, \sigma_j>0,$ 
\begin{equation}\label{eq:ML20081104-2}
   \begin{split}
\bTr(&\varphi_0 \partial_x^{d_0}e^{-\sigma_0t(\Delta_\R+c)}\varphi_1\cdot...\cdot
  \varphi_k \partial_x^{d_k}e^{-\sigma_kt(\Delta_\R+c)})\\
    &\le C(\eps,\delta)\Biggl(\prod_{j=0}^k \sigma_j^{-d_j/2-\eps} \Biggr)
           t^{-(a+1)/2-\eps}e^{-t\delta},\quad 0<t<\infty.
   \end{split}
\end{equation}
We want to move the $\varphi_j$ to the left. If we commute $\varphi_j$
with a term $e^{-\sigma_{j-1}t(\Delta_\R+c)}$ we apply Proposition
\ref{p:ML20081104-A20} to the term
\begin{equation}
\varphi_0 \partial_x^{d_0}e^{-\sigma_0t(\Delta_\R+c)}\varphi_1...
[\varphi_j,e^{-\sigma_{j-1}t(\Delta_\R+c)}]...
  \varphi_k \partial_x^{d_k}e^{-\sigma_kt(\Delta_\R+c)}).
\end{equation}
This expression is of trace class and the trace norm can be estimated by
the r.h.s. of \eqref{eq:ML20081104-2}.

Similarly, if we commute $\varphi_j$ and a derivative $\partial_x$ we need
to estimate the trace norm of a term of the form
\begin{equation}
\varphi_0 \partial_x^{d_0}e^{-\sigma_0t(\Delta_\R+c)}\varphi_1...[\varphi_j,\partial_x]...
  \varphi_k \partial_x^{d_k}e^{-\sigma_kt(\Delta_\R+c)}).
\end{equation}
Since $[\varphi,\partial_x]$ is compactly supported the desired estimate
follows from Proposition \plref{p:multiple-heat-estimate}.

Moving all the $\varphi_j$ to the left leaves us to estimate
\begin{equation}
    \bTr(\underbrace{\varphi_0...\varphi_k}_{=:\chi(x)} \partial_x^a e^{-t(\Delta_\R+c)}).
\end{equation}
If $a$ is odd this is easily seen to be $0$ since the kernel of $\chi \partial_x^a 
e^{-t(\Delta_\R+c)}$ then vanishes on the diagonal. If $a$ is even we find
\begin{equation}
\begin{split}
    \bTr(\chi \partial_x^a e^{-t(\Delta_\R+c)})&= e^{-ct}\partial_t^{d/2} \bTr(\chi e^{-t\Delta_\R})\\
   &= e^{-ct}\partial_t^{d/2} \textup{p.f.-}\int_{-\infty}^0 \chi(x) \frac{1}{\sqrt{4\pi t}} dx\\
   &=e^{-ct}\partial_t^{d/2} (4\pi t)^{-1/2} \int_{-\infty}^0 \chi(x)-1 dx.
\end{split}
\end{equation}
This can certainly be estimated by the r.h.s. of \eqref{eq:ML20081104-2} and
the proof is complete.    
\end{proof}
}

Now we come to the main result of this chapter.

\begin{theorem}\label{t:main-b-estimate}
In the notation of Proposition \textup{\ref{p:multiple-heat-estimate-b}}
we now drop the assumption that the indicial family of one of the $A_l$ vanishes.
%Then the estimates \ref{eq:integrated-multiple-heat-estimate-short} and
%\ref{eq:integrated-multiple-heat-estimate-large} hold. In detail:
Then the following estimates hold:

\paragraph*{1} For $\eps>0, t_0>0$ there is a constant $C(\eps,t_0)$ such that
for all $\sigma=(\sigma_0,...,\sigma_k)\in\Delta_k, \sigma_j>0$,
\begin{equation}\label{eq:multiple-btrace-estimate-short}
\begin{split}
     \bTr\bigl(A_0 &e^{-\sigma_0 t \dirac^2}A_1\cdot\ldots\cdot A_k e^{-\sigma_k t \dirac^2}\bigr)\\
      &\le C(\eps,t_0) \Biggl(\prod_{j=0}^k \sigma_j^{-d_j/2-\eps} \Biggr)
       t^{-d/2-(\dim M)/2-\eps},\quad 0<t\le t_0.
     \end{split}
\end{equation}
In particular,
\begin{equation}\label{eq:integrated-multiple-btrace-estimate-short}
     |\blangle A_0,...,A_k\rangle_{\sqrt{t}\dirac}|=O( t^{-d/2-(\dim M)/2-\eps}),\quad t \to 0+.
\end{equation}
% \item 
\paragraph*{2} If $\dirac$ is Fredholm then for $\eps>0$ and any $0<\delta<\inf\spec_{\ess} \dirac^2$
there is a constant $C(\eps,\delta)$ such that for all $\sigma\in\Delta_k, \sigma_j>0$
\begin{equation}\label{eq:multiple-btrace-estimate-large}
   \begin{split}
     \bTr\bigl(A_0 &e^{-\sigma_0 t \dirac^2}(I-H)A_1\cdot\ldots\cdot A_k e^{-\sigma_k t \dirac^2}(I-H)\bigr)\\
      &\le C(\eps,\delta) \Biggl(\prod_{j=0}^k \sigma_j^{-d_j/2-\eps} \Biggr)
       t^{-d/2-(\dim M)/2-\eps} e^{-t\delta},\quad \text{for \emph{all} } 0<t<\infty.
    \end{split}
  \end{equation}
In particular,
\begin{equation}\label{eq:integrated-multiple-btrace-estimate-large}
    \begin{split} 
    |\blangle A_0&(I-H),...,A_k(I-H)\rangle_{\sqrt{t}\dirac}|\\
                &\le \tilde C_{\delta,\eps} 
      t^{-d/2-(\dim M)/2-\eps} e^{-t\delta},\quad \text{for \emph{all }} 0<t<\infty.
    \end{split}
\end{equation}
%\end{enumerate}
\end{theorem}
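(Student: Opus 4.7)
The approach is to reduce to the model cylinder and then exploit the formula $\bTr(A) = -\Tr\bigl(x[\tfrac{d}{dx}, A]\bigr)$ from Proposition \ref{t:bTraceAsTrace}. First, decompose each $A_j = A_j^{(0)} + A_j^{(1)}$, where $A_j^{(0)}$ has compact support in $M^\circ$ and $A_j^{(1)}$ is supported in the cylindrical end $(-\infty, c]\times Z$ for some $c<0$. Expanding the product $A_0 e^{-\sigma_0 t\dirac^2}\cdots A_k e^{-\sigma_k t\dirac^2}$ gives $2^{k+1}$ terms. Any term containing at least one compactly supported factor $A_{j_0}^{(0)}$ has vanishing indicial family, so by Corollary \ref{prop:smotrcl} the {\btrace} coincides with the ordinary trace, and the required bound follows directly from Proposition \ref{p:multiple-heat-estimate} (respectively its Fredholm analogue for claim 2). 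Thus we are left with the pure cylinder term involving only the $A_j^{(1)}$'s, to which we apply the comparison Theorem \ref{t:JLO-comparison} to replace $\dirac$ by the model operator $\dirac_{\mathrm{mod}}=\Gammabdy\tfrac{d}{dx}+\diracbdy$ acting on sections over the full cylinder $\R\times Z$, at the cost of an $O(t^N)$ (respectively $O(e^{-t\delta})$) error.

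On the model cylinder, the key observation is that $\dirac_{\mathrm{mod}}^2 = -\tfrac{d^2}{dx^2} + \diracbdy^2$ (by Remark \ref{r:DiracCylinderFormulas}), so $[\tfrac{d}{dx},\dirac_{\mathrm{mod}}^2]=0$ and consequently $[\tfrac{d}{dx}, e^{-\sigma_j t\dirac_{\mathrm{mod}}^2}]=0$. Applying Proposition \ref{t:bTraceAsTrace} and the Leibniz rule yields
\begin{equation*}
\bTr\bigl(A_0^{(1)} e^{-\sigma_0 t\dirac_{\mathrm{mod}}^2} \cdots A_k^{(1)} e^{-\sigma_k t\dirac_{\mathrm{mod}}^2}\bigr)
= -\sum_{j=0}^k \Tr\bigl(x\cdot A_0^{(1)} e^{-\sigma_0 t\dirac_{\mathrm{mod}}^2}\cdots [\tfrac{d}{dx}, A_j^{(1)}]\cdots e^{-\sigma_k t\dirac_{\mathrm{mod}}^2}\bigr).
\end{equation*}
Writing $A_j^{(1)}$ in the normal form of Proposition \ref{prop:DefEqbDiff}(3) as $\sum a_{l,\alpha}(x,y)\partial_y^\alpha \partial_x^l$ with $a_{l,\alpha}$ a b-function, the commutator $[\tfrac{d}{dx}, A_j^{(1)}] = \sum \partial_x a_{l,\alpha}(x,y)\partial_y^\alpha \partial_x^l$ has vanishing indicial family since $\partial_x a_{l,\alpha}(x,y) = O(e^x)$ as $x\to -\infty$. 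In particular each term in the sum above contains one b-differential factor whose indicial family vanishes, which is precisely the hypothesis of Proposition \ref{p:multiple-heat-estimate-b}.

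The remaining obstacle is the unbounded factor $x$ in front of the operator. This is handled by combining the exponential decay of $[\tfrac{d}{dx}, A_j^{(1)}]$ with the weighted estimates of Proposition \ref{p:heat-estimate-model-laplace}: one writes $[\tfrac{d}{dx}, A_j^{(1)}] = e^{x}\tilde A_j$ with $\tilde A_j \in \bcptdiff^{d_j}((-\infty,0)\times Z;W)$, so that $x[\tfrac{d}{dx}, A_j^{(1)}] = (xe^{x/2})\cdot e^{x/2}\tilde A_j$, with $xe^{x/2}$ uniformly bounded on $(-\infty,0]$. Choosing weights $\beta_0 > \beta_1 > \cdots > \beta_{k+1}=0$ with $\beta_j - \beta_{j+1}$ small, inserting trivial factors $e^{-\beta_l|X|}e^{\beta_l|X|}$ between consecutive heat operators, and applying H\"older's inequality with the weighted estimates of Proposition \ref{p:heat-estimate-model-laplace} in place of Proposition \ref{p:multiple-heat-estimate-b} gives the bound \eqref{eq:multiple-btrace-estimate-short}. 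The large-time estimate \eqref{eq:multiple-btrace-estimate-large} follows by the identical argument using the Fredholm-case estimate \eqref{eq:heat-estimate-model-laplace-2} (with the weight exponent $2\sum \beta_j^2$ chosen smaller than $\inf\spec_{\ess}\dirac^2 - \delta$ so that the factor $e^{-t\delta}$ survives); the projection $H$ is of finite rank and contributes only a term of rapid decay that is dominated by the same bound. The integrated estimates \eqref{eq:integrated-multiple-btrace-estimate-short} and \eqref{eq:integrated-multiple-btrace-estimate-large} follow by integration over $\Delta_k$ since $\int_{\Delta_k}\prod_j \sigma_j^{-d_j/2-\varepsilon}d\sigma < \infty$ when each $d_j\le 1$ and $\varepsilon$ is sufficiently small.
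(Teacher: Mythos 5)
Your proposal is correct and follows essentially the same route as the paper: reduction to the model cylinder, the identity $\bTr(A)=-\Tr\bigl(x[\tfrac{d}{dx},A]\bigr)$ from Proposition \ref{t:bTraceAsTrace} combined with translation invariance of the model heat operator, the observation that $[\tfrac{d}{dx},A_j]$ has coefficients $O(e^x)$ and hence vanishing indicial family so that Proposition \ref{p:multiple-heat-estimate-b} applies, and absorption of the unbounded factor $x$ into the exponential weights since $xe^{-\eps|x|}$ is bounded. The paper states the last two points more tersely; your version merely spells out the weighted H\"older argument.
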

\begin{proof} 
Arguing as in the proof of Proposition \ref{p:multiple-heat-estimate-b} 
we may assume that $\dirac$ is the model Dirac operator and that
$A_0,\dots,A_k\in\bcptdiff\bigl((-\infty,0)\times\partial M;W\bigr)$.

By Proposition \plref{t:bTraceAsTrace} we have
\begin{multline}\label{eq:ML200909171}
  \bTr\bigl(A_0 e^{-\sigma_0 t \dirac^2} A_1\cdot\ldots\cdot A_k e^{-\sigma_k t \dirac^2}\bigr)\\
= -\sum_{j=0}^k \Tr\bigl(x A_0 e^{-\sigma_0 t \dirac^2}\dots [\frac{d}{dx},A_j]\dots e^{-\sigma_k t \dirac^2}\bigr).
\end{multline}
Although multiplication by $x$ is not a \textup{b}-differential operator it is easy to
see that Proposition \plref{p:multiple-heat-estimate-b} still holds true for the summands on the
right of \eqref{eq:ML200909171}. The reason is that for any fixed $\eps>0$ the function
$x e^{-\eps |x|}$ is bounded. In fact any $0<\eps<1$ will do since the coefficients of
$[\frac{d}{dx},A_j]$ are $O(e^x)$ as $x\to -\infty$.
\end{proof}

\section{Estimates for the components of the entire \textup{b}-Chern character} \label{estim Chern}

%\section{The set-up}
We continue to work in the setting of a complete riemannian manifold with
cylindrical ends $M$, which is equivalent to a compact manifold 
with boundary with an exact \textup{b}-metric, \cf~Section \ref{App:bdefbmet}.
Furthermore, let $\dirac$ be a Dirac operator on $M$. 

\subsection{Short time estimates}
\begin{proposition} 
\label{p:ShortTimeEst}
%Let $(M,\bmet)$ be a compact manifold with boundary with an exact \textup{b}-metric.
The Chern characters $\bCh^k$ and $\bslch^k$ defined in 
\eqref{Eq:DefChern} and \eqref{Eq:DefSlChern} satisfy the following estimates
for $k\in\Z_+$:
\begin{equation}
\begin{split}
   \bCh^k (t \dirac)(a_0,\cdots,a_k) & = O ( t^{k-\dim M-0} ), \\
   \bslch^k(t \dirac,\dirac)(a_0,\cdots,a_k) & = O ( t^{k-\dim M-0} ),
\end{split}\quad t\to 0+,
\end{equation}
for $a_j\in \bcC(M), j=0,...,k$.

In particular,
\begin{enumerate}
\item $\lim\limits_{t\to 0+} \bCh^k(t\dirac)=0$ for $k>\dim M$.
\item The function $t\mapsto \bslch^k(t\dirac,\dirac)(a_0,...,a_k)$ is integrable on
$[0,T]$ for $T>0$ and $k>\dim M-1$.
\end{enumerate}
\end{proposition}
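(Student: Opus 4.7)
The plan is to apply Theorem \ref{t:main-b-estimate}.1 after unwinding the definitions. Since $\dirac_t := t\dirac$, the commutators rescale as $[\dirac_t, a_j] = t[\dirac, a_j] = t\sfc(da_j)$, and the heat semigroup becomes $e^{-\sigma\dirac_t^2} = e^{-\sigma t^2 \dirac^2}$; setting $\tau := t^2$, one has $\blangle\cdot\rangle_{t\dirac} = \blangle\cdot\rangle_{\sqrt{\tau}\dirac}$. Each $a_j \in \cC^\infty(M)$ and each $[\dirac, a_j] = \sfc(da_j)$ is a $\textup{b}$-differential operator of order zero and is bounded. Pulling out the $k$ scalar factors gives
\[
  \bCh^k(t\dirac)(a_0,\ldots,a_k) = t^k \blangle a_0, [\dirac, a_1], \ldots, [\dirac, a_k]\rangle_{\sqrt{\tau}\dirac}.
\]
Since the sum of orders is $d = 0$, Theorem \ref{t:main-b-estimate}.1 yields $|\blangle\cdot\rangle_{\sqrt{\tau}\dirac}| = O(\tau^{-\dim M/2 - 0}) = O(t^{-\dim M - 0})$ as $\tau\to 0+$, and the asserted bound $\bCh^k(t\dirac) = O(t^{k - \dim M - 0})$ follows.

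For the slant character I would proceed analogously, writing
\[
  \bslch^k(t\dirac, \dirac)(a_0,\ldots,a_k) = t^k \sum_{j=0}^k (-1)^j \blangle a_0, [\dirac, a_1], \ldots, \dirac, \ldots, [\dirac, a_k]\rangle_{\sqrt{\tau}\dirac},
\]
where the $\dirac$ is inserted in position $j+1$. A direct count gives total order $d=1$, and Theorem \ref{t:main-b-estimate}.1 alone would yield only $O(t^{k-1-\dim M - 0})$. To recover one power of $t$ I would exploit the commutation identity $e^{-s\dirac^2}\dirac\, e^{-s'\dirac^2} = \dirac\, e^{-(s+s')\dirac^2}$ to fuse the inserted $\dirac$ with one of the neighboring heat factors; substituting $w = s_j + s_{j+1}$ collapses the $\Delta_{k+1}$ integration to a $\Delta_k$ integration with an extra linear factor $w$, which compensates the $w^{-1/2}$ singularity that the order-one operator at that simplex coordinate produces via \eqref{eq:multiple-btrace-estimate-short}. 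Theorem \ref{t:main-b-estimate}.1, applied to the fused $\Delta_k$ integrand, then delivers $O(\tau^{-\dim M/2 - 0}) = O(t^{-\dim M - 0})$, and hence $\bslch^k(t\dirac, \dirac) = O(t^{k - \dim M - 0})$. The main obstacle in this step will be tracking the interplay between the extra $w$-factor, the Schatten-norm bookkeeping behind Proposition \ref{p:multiple-heat-estimate-b}, and the $\textup{b}$-trace regularization: one must verify that after fusing, the resulting block $[\dirac, a_j]\dirac$ is still controlled by the theorem with the expected effectively order-zero behavior.

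The two stated consequences follow at once from the short-time estimates. For (1), the exponent $k - \dim M$ is strictly positive when $k > \dim M$, so $\bCh^k(t\dirac)(a_0,\ldots,a_k) \to 0$ as $t \to 0+$. For (2), the estimate shows that near $t = 0$ the function $t \mapsto \bslch^k(t\dirac, \dirac)(a_0,\ldots,a_k)$ is dominated by $C_\eps\, t^{k - \dim M - \eps}$ for any $\eps > 0$; this is Lebesgue-integrable on $[0, T]$ precisely when $k - \dim M - \eps > -1$, and letting $\eps\to 0$ yields integrability under the hypothesis $k > \dim M - 1$.
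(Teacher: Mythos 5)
Your argument for $\bCh^k$ and for the two stated consequences is correct and is exactly what the paper's one-line proof intends: the rescaling produces the factor $t^k$, the bracket then consists only of order-zero operators, and \eqref{eq:integrated-multiple-btrace-estimate-short} with heat parameter $\tau=t^2$ and $d=0$ gives $O(\tau^{-(\dim M)/2-0})=O(t^{-\dim M-0})$.

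For $\bslch^k$ there is a genuine gap. You rightly observe that the direct count (total order $d=1$) yields only $O(t^{k-1-\dim M-0})$, but the fusing device you propose cannot recover the missing power of $t$. The bound \eqref{eq:multiple-btrace-estimate-short} is a product $\bigl(\prod_j\sigma_j^{-d_j/2-\eps}\bigr)\,t^{-d/2-(\dim M)/2-\eps}$, and the Jacobian factor $w$ obtained by collapsing $\Delta_{k+1}$ to $\Delta_k$ can only absorb the $\sigma$-singularity $\sigma_l^{-1/2-\eps}$ (which was integrable anyway, since all $d_j\le 1$); the power of $t$ is governed by the \emph{total} order $d$, and the fused block $[\dirac,a_j]\dirac$ is still honestly of order one. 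No norm argument can do better here, since $\|\dirac e^{-w\tau\dirac^2}\|\le C(w\tau)^{-1/2}$ is sharp; so Theorem \ref{t:main-b-estimate} applied to the fused integrand still delivers only $O(\tau^{-1/2-(\dim M)/2-0})$, i.e. $O(t^{k-1-\dim M-0})$ overall, not the claimed $O(t^{k-\dim M-0})$.

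The extra power of $t$ is real, but it comes from a cancellation under the trace that only the asymptotic expansion of Theorem \ref{t:JLObCommutatorAsymptotic} detects, not the crude estimate of Theorem \ref{t:main-b-estimate}. In that expansion the coefficient of the leading power $\tau^{(-\dim M-1)/2}$ is $\int_{\tb M}$ of the leading pointwise heat coefficient of $Qe^{-\tau\dirac^2}$, where $Q$ is the \emph{product} of all $k+2$ entries of the bracket, a differential operator of odd order one. Its principal symbol is odd in the cotangent variable, so the fiberwise Gaussian integral defining that coefficient vanishes --- the same observation as in the ``$l$ odd'' case of the proof of Theorem \ref{thm:bHeatExpansion}. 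Hence the expansion of the \textup{JLO} integrand actually begins at $\tau^{-\dim M/2}$, and $O(t^{k-\dim M-0})$ follows. This sharpening is not cosmetic: consequence (2) at the threshold $k=\dim M$, which is what legitimizes $\bTslch^{\dim M}_t(\dirac)$ in \eqref{eq:ML200909263} and \eqref{Eq:LittlebCapitalBChern}, fails under the cruder exponent $k-1-\dim M$. (To be fair, the paper's own proof is the single sentence that the claim follows from Theorem \ref{t:main-b-estimate}, and thereby elides exactly the same point; but the mechanism you supply in its place does not close the gap.)
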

\begin{proof}
This follows immediately from Theorem \plref{t:main-b-estimate}.
\end{proof}

\subsection{Large time estimates}

Unless otherwise said we assume in this Subsection that 
$\diracbdy$ is invertible. Then $\inf\specess \dirac^2=\inf\spec\diracbdy^2>0$
(\cf~Eq.~\eqref{eq:essspecbd}) and hence $\dirac$ is a Fredholm operator.
We denote by $H$ the finite rank orthogonal projection onto the kernel of $\dirac$. 

\begin{lemma}\label{l:H1} Let $A_j\in\bdiff(M;W)$
 be \textup{b}-differential operators of order $d_j, j=0,\ldots,k;
 d:=\sum\limits_{j=0}^k d_j$ the total order.
Furthermore let $H_j=H$ or $H_j=I-H$, $j=0,...,k$ and assume that $H_j=H$ for at least one index $j$. 
Then for each $0<\delta<\inf\specess \dirac^2$
\begin{equation}
   \begin{split}
     \Bigl\|&A_0H_0e^{-\sigma_0 t\dirac^2} A_1 H_1e^{-\sigma_1 t \dirac^2}\ldots A_k H_k e^{-\sigma_k t \dirac^2}\Bigr\|_1\\
     &\le C(\delta) \Bigl(\prod_{l\in \{j_1,...,j_q\}} \sigma_{l}^{-d_l/2}\Bigr) t^{-d/2} e^{-(\sigma_{j_1}+...+\sigma_{j_q}) t\delta},\quad 0<t<\infty,
     \end{split}
\end{equation}
where $j_1,...,j_q$ are those indices with $H_j=I-H$, $d=\sum_{l\in \{j_1,...,j_q\}} d_l$.
\end{lemma}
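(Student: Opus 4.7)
The plan is to exploit the presence of at least one $H_j = H$ to split the long product at a finite rank factor, and then estimate the remaining factors in operator norm only. Fix an index $j_0$ with $H_{j_0} = H$. Since $H$ is the orthogonal projection onto $\Ker\dirac \subset \Ker \dirac^2$, it commutes with every function of $\dirac^2$ and acts as the identity on its range; in particular
\[
A_{j_0} H e^{-\sigma_{j_0} t \dirac^2} = A_{j_0} H.
\]
By elliptic regularity and the cylindrical structure of $\dirac$ at infinity, elements of $\Ker\dirac$ are smooth and decay exponentially on the cylindrical end, so $A_{j_0} H$ is a finite rank smoothing operator whose trace norm $c_{j_0} := \|A_{j_0} H\|_1$ is independent of $\sigma$ and $t$. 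Writing
\[
P = L \cdot A_{j_0} H \cdot R,
\]
with $L$ and $R$ the subproducts to the left and right of position $j_0$ (with $L = I$ or $R = I$ at the endpoints), H\"older's inequality for Schatten norms yields
\[
\|P\|_1 \le \|L\|_\infty \cdot c_{j_0} \cdot \|R\|_\infty.
\]

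It then suffices to bound the operator norm of each remaining factor $A_j H_j e^{-\sigma_j t \dirac^2}$ individually. For indices $j$ with $H_j = H$ the same commutation argument gives $A_j H e^{-\sigma_j t \dirac^2} = A_j H$, bounded with norm $c_j$ independent of $\sigma, t$. For indices with $H_j = I - H$ I would decompose
\[
A_j(I-H) e^{-\sigma_j t \dirac^2} = \bigl( A_j (i + \dirac)^{-d_j} \bigr) \cdot \bigl( (i + \dirac)^{d_j} (I - H) e^{-\sigma_j t \dirac^2} \bigr);
\]
the first factor is bounded on $L^2(M;W)$ by Proposition \ref{Prop:PropbSob}, while functional calculus on the range of $I - H$ (where $\dirac^2 \ge \delta$) gives
\[
\| (i + \dirac)^{d_j} (I - H) e^{-\sigma_j t \dirac^2} \|_\infty \le C \, (\sigma_j t)^{-d_j/2} \, e^{-\sigma_j t \delta}.
\]
Multiplying these bounds through $L$ and $R$ and combining with the constant $c_{j_0}$ produces exactly the factor $\prod_{\ell} \sigma_{j_\ell}^{-d_{j_\ell}/2} \cdot t^{-d/2} \cdot e^{-(\sigma_{j_1} + \cdots + \sigma_{j_q}) t \delta}$ asserted in the statement, since the $H$-indices contribute only uniform constants that are absorbed into $C(\delta)$.

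The only non-routine point, and the main bookkeeping obstacle, is the spectral bound $\dirac^2 \ge \delta$ on the range of $I - H$: strictly, this presupposes that $\Ker\dirac$ exhausts the spectrum of $\dirac^2$ below $\delta$. Since $\dirac$ is Fredholm and $\delta < \inf\specess\dirac^2$, any spectrum of $\dirac^2$ in $(0,\delta)$ is discrete and only finitely many such eigenvalues can lie below any fixed level; they may be handled either by shrinking $\delta$ (the hypothesis only imposes an upper bound on it, and $C(\delta)$ is allowed to depend on $\delta$) or by enlarging $H$ to include the corresponding finite-dimensional eigenspaces, at the cost of adjusting the constants. Beyond this minor subtlety, the argument requires no heat kernel parametrix analysis, relying solely on elementary spectral calculus and the Sobolev mapping property recorded in Proposition \ref{Prop:PropbSob}.
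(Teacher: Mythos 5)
Your argument is essentially identical to the paper's proof: pick one index with $H_{j}=H$, use $He^{-sD^2}=H$ to extract the finite-rank, trace-class factor $A_{j}H$ via H\"older, and bound every remaining factor in operator norm, treating the $(I-H)$-factors by writing $A_j(i+\dirac)^{-d_j}\cdot(i+\dirac)^{d_j}(I-H)e^{-\sigma_j t\dirac^2}$ and invoking Proposition \ref{Prop:PropbSob} together with the spectral theorem. The caveat you raise about possible nonzero eigenvalues of $\dirac^2$ below $\delta$ is real but is equally implicit in the paper's Proposition \ref{p:ML20080403-A4}, and is harmless for the way the lemma is used.
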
   
\begin{proof}  We pick an index $l$ with $H_l=H$. Then H\"older's inequality gives
\begin{equation} \begin{split}
\Bigl\|&A_0H_0e^{-\sigma_0 t\dirac^2} A_1 H_1e^{-\sigma_1 t \dirac^2}\ldots A_k H_k e^{-\sigma_k t \dirac^2}\Bigr\|_1\\
&\le \bigl\| A_l H_l e^{-\sigma_l t \dirac^2}\bigr\|_1 \quad \prod_{j\not=l} \bigl\| A_j H_j e^{-\sigma_j t \dirac^2} \bigr\|_\infty.
\end{split}
\end{equation}
The individual factors are estimated as follows: if $H_j=H$ then 
\begin{equation}
    \| A_j H e^{-\sigma_j t \dirac^2} \|_p\le \| A_j H\|_p,\quad \text{for } p\in\{1,\infty\}.
\end{equation}
If $H_j=I-H$ then by the Spectral Theorem and Proposition \ref{Prop:PropbSob} 
we have for  $0<\delta<\inf\spec_\ess \dirac^2$
\[
    \| A_j (I-H) e^{-\sigma_j t \dirac^2} \|_\infty \le  C(\delta,A_j) (\sigma_j t)^{-d_j/2}  e^{-\sigma_jt \delta}, \quad 0<t<\infty.\qedhere
\]
\end{proof}

The next Lemma is extracted from the proof of \cite[Prop. 2]{ConMos:TCC}.

\begin{lemma}\label{l:H2} Let $f:\R_+^q\to \C$ be a (continuous) rapidly decreasing function
of $q\le n$ variables. Then
\begin{equation}
   \begin{split}
    \lim_{t\to\infty} t^{2q}&\int_{\Delta_n} f(t^2\sigma_1,...,t^2\sigma_q)d\sigma\\
      &=\frac{1}{(n-q)!} \int_{\R_+^q} f(u) du
    \end{split}
\end{equation}    
\end{lemma}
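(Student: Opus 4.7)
The plan is to rescale the first $q$ simplex coordinates so that the limit reduces to a straightforward application of Lebesgue's dominated convergence theorem. First I would parametrize $\Delta_n$ by $(\sigma_1,\ldots,\sigma_n)\in\R_+^n$ subject to $\sigma_1+\ldots+\sigma_n\le 1$ (implicitly setting $\sigma_0=1-\sigma_1-\ldots-\sigma_n$), so that $d\sigma=d\sigma_1\cdots d\sigma_n$ in the usual convention for integration over the simplex.

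Next I would substitute $u_i=t^2\sigma_i$ for $i=1,\ldots,q$; this contributes a Jacobian factor $t^{-2q}$ which exactly cancels the prefactor $t^{2q}$. Applying Fubini's theorem to integrate first in the variables $\sigma_{q+1},\ldots,\sigma_n$ puts the expression in the form
\[
 \int_{\R_+^q} f(u)\,V_t(u)\,du,
\]
where $V_t(u)$ is the volume of the truncated simplex
\[
 \bigl\{(\sigma_{q+1},\ldots,\sigma_n)\in\R_+^{n-q}:\sigma_{q+1}+\ldots+\sigma_n\le 1-t^{-2}(u_1+\ldots+u_q)\bigr\},
\]
understood to vanish when $u_1+\ldots+u_q>t^2$. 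An elementary scaling computation for a standard $(n-q)$-simplex gives
\[
 V_t(u)=\frac{\bigl(1-t^{-2}(u_1+\ldots+u_q)\bigr)_+^{n-q}}{(n-q)!},
\]
with $(\cdot)_+$ the positive part.

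Finally, as $t\to\infty$ the function $V_t(u)$ converges pointwise to $1/(n-q)!$ while obeying the uniform bound $0\le V_t(u)\le 1/(n-q)!$. The rapid decay of $f$ furnishes the integrable majorant $|f(u)|/(n-q)!$ on $\R_+^q$, so dominated convergence delivers the claimed identity. The only point requiring attention is this uniform domination, which is really an obstacle in name only, since it follows immediately from the explicit expression for $V_t(u)$; the whole argument is thus essentially a careful application of Fubini and a passage to the limit under the integral sign.
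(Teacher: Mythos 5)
Your proof is correct and follows essentially the same route as the paper's: rescale $u_j=t^2\sigma_j$ for $j\le q$, integrate out the remaining simplex variables to produce the factor $\frac{1}{(n-q)!}\bigl(1-t^{-2}(u_1+\ldots+u_q)\bigr)_+^{n-q}$, and conclude by dominated convergence using the rapid decay of $f$. Your explicit attention to the positive part and the uniform bound $0\le V_t(u)\le 1/(n-q)!$ is a minor tidying of what the paper leaves implicit.
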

\begin{proof} 
    Changing variables $u_j=t^2\sigma_j, j=1,...,q; u_j=\sigma_j, j=q+1,...,n$ we find
\begin{equation}
   \begin{split}
     t^{2q}\int_{\Delta_n}& f(t^2\sigma_1,...,t^2\sigma_j)d\sigma\\
         &= \int_{\{t^{-2}(u_1+...+u_q)+u_{q+1}+...+u_n)\le 1\}}
                 f(u_1,...,u_q)du\\
         &= \int_{t^2\Delta_q } f(u_1,...,u_q) \int_{\bigl(1-t^{-2}(u_1+...+u_q)\bigr)\Delta_{n-q}}du\\
         &= \frac{1}{(n-q)!}\int_{t^2\Delta_q}\bigl(1-t^{-2}(u_1+...+u_q)\bigr)^{n-q} f(u) du.
   \end{split}
\end{equation}
By assumption $f$ is rapidly decreasing, hence we may apply the Dominated Convergence Theorem
to reach the conclusion.
\end{proof}

\subsection{Estimating the transgressed \textup{b}-Chern character} 

\begin{proposition}\label{p:estimate-tChern-infty} For $k\ge 1$ and
 $a_0,...,a_k\in \bcC(M)$ we have
\begin{equation}
      \bslch^k(t\dirac,\dirac)(a_0,...,a_k)=\begin{cases}O(t^{-2}),&  k \text{ even },\\
                                           O(t^{-3}), & k \text{ odd },
                            \end{cases} t\to\infty.
\end{equation}
\end{proposition}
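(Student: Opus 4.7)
The plan is to expand each heat factor $e^{-\sigma_l t^2 \dirac^2}$ appearing in the integrand of $\bslch^k(t\dirac,\dirac)$ as $H + (I-H)e^{-\sigma_l t^2\dirac^2}$, where $H$ is the finite-rank orthogonal projection onto $\Ker\dirac$, and to group the resulting $(k+1)\cdot 2^{k+2}$ pieces by the subset $J\subseteq\{0,\ldots,k+1\}$ of positions carrying $(I-H)$. In the $j$-th summand of $\bslch^k(t\dirac,\dirac)$, the operator $V=\dirac$ occupies position $j+1$ in a string of $k+2$ operators $B_0,\ldots,B_{k+1}$ with $B_0=a_0$, $B_{j+1}=\dirac$ and $B_l$ a commutator $[t\dirac,a_\bullet]$ otherwise, separated by $k+2$ heat operators integrated over $\Delta_{k+1}$.

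The first step will be to isolate the non-vanishing pieces via two cancellation mechanisms. Because $\dirac H = H\dirac = 0$, any piece with $H$ in slot $j$ or $j+1$ vanishes, forcing $\{j,j+1\}\subseteq J$. More importantly, the identity
$$
H[t\dirac,a_l]H \;=\; tH\dirac a_l H - tH a_l\dirac H \;=\; 0
$$
kills any piece in which two consecutive slots $l-1,l$ both carry $H$, for some $l\in\{1,\ldots,k+1\}\setminus\{j+1\}$. Thus survival requires, in addition to $\{j,j+1\}\subseteq J$, the covering condition $J\cup(J+1)\supseteq\{1,\ldots,k+1\}\setminus\{j+1\}$. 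A short counting argument — the $k-2$ residual indices of $\{1,\ldots,k+1\}\setminus\{j,j+1,j+2\}$ must each be covered by $J\cup(J+1)$, and each element of $J$ covers at most two — yields $|J|\ge r_0:=\lceil k/2\rceil+1$, a bound that is sharp at interior values of $j$.

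Next I will estimate each surviving piece. After pulling out the factor $t^k$ from the $k$ commutators $[t\dirac,a_l]=t[\dirac,a_l]$, Lemma \ref{l:H1} (with the only nonzero operator order $d_{j+1}=1$ coming from $\dirac$, while all other $d_l=0$) gives, for any $0<\delta<\inf\specess\dirac^2$,
$$
\|{\cdot}\|_1 \;\le\; C\, t^{k-1}\,\sigma_{j+1}^{-1/2}\, \exp\!\Bigl(-\Bigl(\sum_{l\in J}\sigma_l\Bigr)t^2\delta\Bigr).
$$
Integrating over $\Delta_{k+1}$ via the rescaling $u_l=t^2\sigma_l$ for $l\in J$ — the integrable singularity $u_{j+1}^{-1/2}$ is harmless because $\int_0^\infty u^{-1/2}e^{-u\delta}du<\infty$, a mild extension of Lemma \ref{l:H2} — produces a factor of order $t^{1-2|J|}$. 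Each surviving piece therefore contributes $O(t^{k-2|J|})$, and summing the finitely many pieces with $|J|\ge r_0$ yields $O(t^{k-2r_0})$, which evaluates to $O(t^{-2})$ for $k$ even and $O(t^{-3})$ for $k$ odd.

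The hard part will be the combinatorial lower bound $|J|\ge\lceil k/2\rceil+1$, which relies crucially on the sandwich identity $H[t\dirac,a]H=0$; without it, the condition $\{j,j+1\}\subseteq J$ alone forces only $|J|\ge 2$, and the naive estimate gives $O(t^{k-4})$, which is insufficient for $k\ge 5$. A secondary but routine technical point is to verify that the simplex integration with the integrable singularity $\sigma_{j+1}^{-1/2}$ remains legitimate under the Lemma \ref{l:H2} rescaling.
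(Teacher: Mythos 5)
Your proposal reproduces the paper's proof of Proposition \ref{p:estimate-tChern-infty} essentially step for step: the same $H$/$(I-H)$ decomposition of the heat factors, the same two cancellation mechanisms ($H\dirac=\dirac H=0$ and $H[\dirac,a]H=0$) giving the lower bound $|J|\ge\lceil k/2\rceil+1$ on the number of $(I-H)$-slots, and the same appeal to Lemmas \ref{l:H1} and \ref{l:H2} to turn each surviving piece into $O(t^{k-2|J|})$. Your bookkeeping (the prefactor $t^{k-1}\sigma_{j+1}^{-1/2}$, the rescaling gain $t^{1-2|J|}$, the integrable singularity $u^{-1/2}e^{-u\delta}$) is correct and matches the paper's $O(t^{k-2q})$.

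One piece is not covered by the tool you cite: the summand with $J=\{0,\ldots,k+1\}$, i.e.\ every slot carrying $(I-H)$. Lemma \ref{l:H1} explicitly assumes $H_j=H$ for at least one index --- its proof extracts trace-class control from the finite-rank factor via $\|A_lH\|_1$ --- so it says nothing about this term, and on the noncompact manifold $M^\circ$ operator-norm bounds alone do not control the b-supertrace. The paper treats exactly this summand separately by invoking Theorem \ref{t:main-b-estimate} (the large-time estimate \eqref{eq:integrated-multiple-btrace-estimate-large}), which gives exponential decay; adding that one reference closes the gap, and the rest of your argument stands.
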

\newcommand{\roott}{\sqrt{t}}
\begin{proof} $\slch^k(t\dirac,\dirac)(a_0,...,a_k)$ is a sum of terms
of the form 
\begin{equation}
T=\blangle a_0, [t \dirac,a_1],...,[t \dirac,a_{i-1}],\dirac,[t \dirac,a_i],...,[t \dirac,a_k] \rangle_{t \dirac}.
\end{equation}
Writing $A_0=a_0$, $A_j=[\dirac,a_j], j=1,...,i-1,$ $A_j=[\dirac,a_{j-1}], j=i+1,...,k+1$,
$A_i:=\dirac$ we find
\begin{equation}
    T=t^k\sum_{H_j\in\{H,I-H\}} \blangle A_0 H_0,...,A_{k+1}H_{k+1}\rangle_{t \dirac},
\end{equation}
where the sum runs over all sequences $H_0,...,H_{k+1}$ with $H_j\in\{H,I-H\}$.
Since $H[\dirac,a_j]H=0, H\dirac=\dirac H=0$ (note $A_i=\dirac$ !) only terms containing no more than $[k/2]+1$ copies of $H$
can give a non-zero contribution. 

Consider such a nonzero summand with at least one index $j$ with $H_j=H$ and denote by $q$ the number
of indices $l$ with $H_l=I-H$. Then $q\ge [\frac{k+1}{2}]+1$ and we infer from Lemmas \plref{l:H1},\plref{l:H2}
\begin{equation}
      t^k \;\blangle A_0H_0,...,A_{k+1}H_{k+1} \rangle_{t\dirac}
=O(t^{k-2q})=\begin{cases}O(t^{-2}),& k \text{ even, }\\
                                          O(t^{-3}),  &k \text{ odd }.
                            \end{cases}
\end{equation}
We infer from Theorem \ref{t:main-b-estimate} that
the remaining summand with $H_j=I-H$ for all $j$ decays exponentially as $t\to\infty$ and we are done.
\end{proof}

\subsection{The limit as $t\to\infty$ of the \textup{b}-Chern character}
As in \cite{ConMos:TCC} we put
\begin{align}
            \varrho_H(A)&:=HAH,\\
\intertext{and}
             \go_H(A,B)&:= \varrho_H(AB)-\varrho_H(A)\varrho_H(B).
\end{align}

\begin{proposition}\label{p:ML20090928} Let $a_0,...,a_k\in\bcC(M)$. If $k$ is odd then
\begin{equation}
      \lim_{t\to\infty}\bCh^k(t \dirac)(a_0,...,a_k)=0.
\end{equation}
If $k=2q$ is even then
\begin{equation}
\begin{split}
    \lim_{t\to\infty}&\bCh^k(t \dirac)(a_0,...,a_{2q})\\
&= \frac{(-1)^q }{q!} \Str\bigl(\varrho_H(a_0)\go_H(a_1,a_2)\dots\go_H(a_{2q-1},a_{2q})\bigr)\\
&=: \kappa^{2q}(\dirac)(a_0,\dots,a_{2q}).
\end{split}
\end{equation}
\end{proposition}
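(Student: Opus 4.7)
The strategy is the Connes--Moscovici large-time argument adapted to the \textup{b}-setting. Since $\diracbdy$ is invertible, $\dirac$ is Fredholm by \eqref{eq:essspecbd} and the projection $H$ onto $\Ker\dirac$ has finite rank. I would split each heat factor as
$e^{-\sigma_j t^2\dirac^2}=H+(I-H)e^{-\sigma_j t^2\dirac^2}$ and expand
\[
\bCh^k(t\dirac)(a_0,\ldots,a_k)=t^k\!\int_{\Delta_k}\!\bStr_q\bigl(a_0\,e^{-\sigma_0 t^2\dirac^2}[\dirac,a_1]\cdots[\dirac,a_k]\,e^{-\sigma_k t^2\dirac^2}\bigr)d\sigma
\]
into $2^{k+1}$ terms indexed by placements of $H$ or $(I-H)$ in the $k+1$ slots. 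Each term containing at least one $H$-slot is automatically trace class (so $\bStr_q$ reduces to $\Str_q$), while the all-$(I-H)$ term is controlled by \eqref{eq:integrated-multiple-btrace-estimate-large} and decays like $e^{-t^2\delta}$.

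Two principles then winnow the sum. Algebraically, $H\dirac=\dirac H=0$ gives $H[\dirac,a]H=0$, so patterns with two consecutive $H$-slots (separated only by a single $[\dirac,a_j]$) vanish identically. Analytically, for a pattern with $m$ factors $(I-H)e^{-\sigma_{j_l}t^2\dirac^2}$, Lemma \ref{l:H1} combined with the substitution $u_l=t^2\sigma_{j_l}$ and Lemma \ref{l:H2} shows the contribution is $O(t^{k-2m})$ as $t\to\infty$. Only patterns with (i) no two adjacent $H$'s and (ii) $2m\ge k$ survive in the limit. A direct count on $k+1$ slots shows that (i) forces $h:=k+1-m\le q+1$ where $q=\lfloor k/2\rfloor$. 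For $k=2q+1$ this yields $m\ge q+1$, hence $t^{k-2m}\le t^{-1}\to 0$, so every term vanishes and $\lim_{t\to\infty}\bCh^{2q+1}(t\dirac)=0$.

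For $k=2q$ the two constraints together force $m=q$, $h=q+1$, and the unique admissible pattern is the strictly alternating one $(H,I-H,H,\ldots,I-H,H)$. Using the identities $H[\dirac,a](I-H)=-Ha\dirac$, $(I-H)[\dirac,a]H=\dirac aH$, and $H^2=H$ to absorb the $H$'s at the chunk boundaries, this term reduces to
\[
(-1)^q t^{2q}\!\int_{\Delta_{2q}}\!\Str_q\Bigl(a_0\prod_{j=1}^q Ha_{2j-1}\dirac^2 e^{-t^2\sigma_{2j-1}\dirac^2}a_{2j}H\Bigr)d\sigma.
\]
Integrating out the $q+1$ ``$H$-slot'' variables $\sigma_0,\sigma_2,\ldots,\sigma_{2q}$ produces the factor $(1-\sum_j s_j)^q/q!$ with $s_j=\sigma_{2j-1}$; substituting $u_j=t^2 s_j$ cancels the $t^{2q}$, and dominated convergence (justified by the exponential decay of $\dirac^2 e^{-u\dirac^2}(I-H)$ and the finite rank of $H$) passes the limit inside. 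Applying $\int_0^\infty\dirac^2 e^{-u\dirac^2}du=I-H$ to each $u$-integral one at a time (valid by Fubini since each factor is a bounded finite-rank block in $u$), recognizing $Ha(I-H)bH=\omega_H(a,b)$, and finally using cyclicity of $\Str_q$ together with $H^2=H$ identifies the limit with $\tfrac{(-1)^q}{q!}\Str_q\bigl(\varrho_H(a_0)\omega_H(a_1,a_2)\cdots\omega_H(a_{2q-1},a_{2q})\bigr)$.

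The main obstacle will be uniform control of the non-extremal patterns: one must verify that $m>q$ patterns truly yield $o(1)$ (not merely bounded) uniformly over $\sigma\in\Delta_k$, and that dominated convergence applies in the scaled $u$-variables. Both rely on pairing Lemma \ref{l:H1} with Lemma \ref{l:H2} and on using the finite rank of $H$ to pass back and forth between $\bStr_q$ and $\Str_q$ while justifying the final Fubini step; the rest is bookkeeping of the algebraic identities and cyclic permutations.
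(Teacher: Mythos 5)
Your proposal is correct and follows essentially the same route as the paper: the same decomposition of each heat factor into $H$ and $I-H$ slots, the same algebraic vanishing via $H[\dirac,a]H=0$, the same $O(t^{k-2m})$ estimates from Lemmas \ref{l:H1} and \ref{l:H2}, and the same identification of the unique surviving alternating pattern in the even case. The only difference is cosmetic: where the paper delegates the final identification with $\frac{(-1)^q}{q!}\Str\bigl(\varrho_H(a_0)\go_H(a_1,a_2)\cdots\bigr)$ to the computation in Connes--Moscovici, you carry it out explicitly, and your bookkeeping there is sound.
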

Of course, since $H$ is of finite rank $\varrho_H(a_j)$ and $\go_H(a_j,a_{j+1})$ are of trace class.
\begin{proof}
As in the previous proof we abbreviate $A_0=a_0, A_j=[\dirac,a_j], j\ge 1$ and decompose
\begin{equation}
    \blangle A_0,A_1,...,A_k \rangle_{t \dirac}=t^{k}\sum_{H_j\in\{H,I-H\}} 
    \blangle A_0 H_0,...,A_{k}H_{k} \rangle_{t \dirac}.
\end{equation}
Since $H[\dirac,a_j]H=0$ only terms containing no more than $[k/2]+1$ copies of $H$ can give a nonzero
contribution. 

The term containing no copy of $H$ decreases exponentially in view of Theorem
\ref{t:main-b-estimate}.

Consider a term containing $q$ copies of $I-H$.
If the number $k+1-q$ of copies of $H$ is at least one but less than $k/2+1$, which is always the case
if $k$ is odd, then $q>k/2$ and hence in view of Lemmas  \plref{l:H1}, \plref{l:H2} 
\begin{equation}
t^k \blangle A_0 H_0,...,A_{k}H_{k} \rangle_{t \dirac}=O(t^{k-2q})=O(t^{-1}), \quad t\to \infty.
\end{equation}

If $n=2q$ is even there is exactly one term containing $k/2+1$ copies of $H$, namely
\begin{equation}
  \begin{split}
   t^{2q}\;\blangle &A_0 H,A_1(I-H),...,A_{2q}H \rangle_{t \dirac}\\
        &=t^{2q}\int_{\Delta_{2q}}
      \Tr\bigl(\gamma a_0 H e^{-\sigma_0 t^2 \dirac^2}[\dirac,a_1](I-H)\cdot...\cdot e^{-\sigma_{2q}t^2 \dirac^2}H\bigr)d\sigma.
    \end{split}
  \end{equation}
The integrand depends only on the $q=k/2$ variables $\sigma_1,\sigma_3,...,\sigma_{2q-1}$ and
so we infer from Lemma \plref{l:H2} that the limit as $t\to\infty$ equals
\begin{equation}
   \frac{1}{q!}\int_{\R_+^q}\Tr\bigl(\gamma a_0H[\dirac,a_1]e^{-u_1 \dirac^2}(I-H)[\dirac,a_2]H...e^{-u_{2q-1}\dirac^2}(I-H)[\dirac,a_{2q}]H\bigr) du.
\end{equation}
As in \cite[2.2]{ConMos:TCC} one shows that this equals
\[
    \frac{(-1)^q }{q!} \Str\bigl(\varrho_H(a_0)\go_H(a_1,a_2)...\go_H(a_{2q-1},a_{2q})\bigr).\qedhere
\]
\end{proof}

\chapter{The Main Results}
\label{chap:Main}
We are now in a position to establish the main results of this paper. 
After discussing in Section \ref{s: heat expansion} asymptotic
expansions for the \textup{b}-analogues of the Jaffe-Lesniewski-Osterwalder components, 
we construct in Section \ref{s:retracted-relative-cocycle}
the retracted relative cocycle representing the Connes--Chern character
in relative cyclic cohomology and compute its small and large scale limits.
Section \ref{s: geom pairing} derives the ensuing pairing formula 
with the $K$-theory, and discusses the geometric consequences. 
The final remark (Section \ref{s:conclude}) offers an explanation
for the restrictive eta-pairing which appears in the work of Getzler and  Wu.

\section{Asymptotic \textup{b}-heat expansions} \label{s: heat expansion}

\subsection{\textup{b}-Heat expansion}
Let $M$ be a complete riemannian manifold with cylindrical ends and let $\dirac$
be a Dirac operator on $M$ (\cf~Remark \plref{r:DiracCylinderFormulas},
Section \ref{App:bdefbmet}).

Let $Q\in\bdiff^q(M;W)$ be an auxiliary \textup{b}-differential operator of order $q$. It
is well-known (\emph{cf.~e.g.}~\cite{Gil:ITH}) that the Schwartz-kernel of the operator
$Qe^{-t\dirac^2}$ has a \emph{pointwise} asymptotic expansion
\begin{equation}\label{eq:ML20090122-4}
    (Qe^{-t\dirac^2})(x,p;x,p)
       \sim_{t\to 0+} \sum_{j=0}^\infty a_j(Q,\dirac)(x,p)\; t^{\frac{j-\dim M-q}{2}}.
\end{equation}
The problem is that in general neither $Qe^{-t\dirac^2}$ is of trace class nor
are the local heat invariants $a_j(Q,\dirac)$ integrable over the manifold.
Nevertheless we have the following theorem, which has been used implicitly by 
Getzler \cite{Get:CHA}. However, we could not find a reference where the result is 
cleanly stated and proved. Therefore, we provide here a proof for the convenience of the 
reader. 
\begin{theorem}\label{thm:bHeatExpansion}
Under the previously stated assumptions the \textup{b}-heat trace of $Qe^{-t\dirac^2}$ has
the following asymptotic expansion:\sind{bHeatexpansion@\textup{b}-heat expansion}
\begin{equation}
%\begin{split}
    \bTr\bigl(Q e^{-t\dirac^2}\bigr)
    \sim_{t\to 0+} \sum_{j=0}^\infty \int_{\tb M} \tr_{p}\bigl(a_j(Q,\dirac)(p)\bigr) d\vol(p) \; t^{\frac{j-\dim M-q}{2}}.
%\end{split}
\end{equation}
\end{theorem}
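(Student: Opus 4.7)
The plan is to reduce the problem to the model cylinder $\R\times\partial M$, where the heat kernel factorizes by separation of variables, and to combine this with the standard on-diagonal heat expansion on the compact core of $M$. The starting point is the partie finie representation \eqref{eq:bTrace-as-Pf} together with a cutoff $\chi\in\cC^\infty(M)$ that equals $1$ on $\overline{M^1}$ and vanishes on $(-\infty,-2]\times\partial M$, so that
\begin{equation*}
\bTr(Qe^{-t\dirac^2})=\int_M\chi(p)\tr_p\!\bigl((Qe^{-t\dirac^2})(p,p)\bigr)d\vol(p)+\pfint_{-\infty}^{0}\!\int_{\partial M}(1-\chi)\tr_p\!\bigl((Qe^{-t\dirac^2})(x,p;x,p)\bigr)d\vol\,dx.
\end{equation*}

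On the compact piece, the pointwise expansion \eqref{eq:ML20090122-4} holds uniformly in $p\in\supp\chi$ by local heat kernel theory, and integration produces the expected terms $\int_M\chi\,\tr_p(a_j)\,d\vol$. On the cylindrical piece, Theorem \ref{t:heat-resolvent-comparison} (combined with an argument as in Theorem \ref{t:JLO-comparison} to handle the left composition with the \textup{b}-differential operator $Q$) permits replacing $e^{-t\dirac^2}$ by $e^{-t\dirac_{\textup{cyl}}^2}$, where $\dirac_{\textup{cyl}}:=\Gammabdy\partial_x+\diracbdy$ is the translation-invariant model operator on $\R\times\partial M$, at the cost of an $O(t^\infty)$ error in trace norm. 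Since $\dirac_{\textup{cyl}}^2=-\partial_x^2+\diracbdy^2$, the model kernel factorizes as $e^{-t\dirac_{\textup{cyl}}^2}((x,p),(y,q))=(4\pi t)^{-1/2}e^{-(x-y)^2/4t}\,e^{-t\diracbdy^2}(p,q)$. Writing $Q$ in the normal form of Proposition \ref{prop:DefEqbDiff} as $\sum f_{j,\alpha}(x,p)P_\alpha\partial_x^j$ with $f_{j,\alpha}$ \textup{b}-smooth and $P_\alpha$ independent of $x$, the on-diagonal value becomes an explicit sum of products of (i) the \textup{b}-smooth functions $f_{j,\alpha}(x,p)$, (ii) half-integer powers of $t$ produced by $\partial_x^j$ applied to the scalar Gaussian at $y=x$ (vanishing for odd $j$), and (iii) on-diagonal values $(P_\alpha e^{-t\diracbdy^2})(p,p)$, which admit the standard Seeley--deWitt expansion uniformly in $p\in\partial M$.

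Multiplying these three expansions yields a pointwise expansion of the diagonal whose coefficients are \textup{b}-smooth functions of $(x,p)$, with remainders bounded uniformly in $(x,p)$ by a power of $t$ times a bounded \textup{b}-function. This uniform control is precisely what is needed to commute $\pfint$ with the $t\to 0^+$ expansion, producing on the cylindrical end an expansion whose $j$-th coefficient is the partie finie integral of the $j$-th coefficient of the pointwise expansion. Because the local heat invariants $a_j(Q,\dirac)$ are determined by finitely many derivatives of the symbols of $\dirac$ and $Q$ at a single point, they coincide on the cylindrical end with $a_j(Q,\dirac_{\textup{cyl}})$; the splitting of $\int_{\tb M}$ along $\chi$ and $1-\chi$ then reassembles the compact and cylindrical contributions into $\int_{\tb M}\tr_p(a_j(Q,\dirac))\,d\vol$. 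The principal obstacle is exactly this interchange of $\pfint$ with the asymptotic expansion: \eqref{eq:ML20090122-4} is only locally uniform on the noncompact manifold $M^\circ$, so a direct argument would fail; the reduction to the model cylinder is what converts local uniformity into the globally uniform, partie-finie-integrable remainder estimate that makes the interchange legitimate.
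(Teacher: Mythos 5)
Your strategy coincides with the paper's: split $Q$ into an interior part (handled by standard local heat kernel theory) and a part supported on the cylindrical end, reduce the latter to the translation-invariant model operator on $\R\times\partial M$ via the comparison results, write $Q$ in the normal form of Proposition \ref{prop:DefEqbDiff}, and exploit the factorization of the model heat kernel into a one-dimensional Gaussian times $e^{-t\diracbdy^2}$. Up to that point your argument is the paper's argument.

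The one step where your justification is not airtight is the interchange of $\pfint$ with the asymptotic expansion. You assert that a remainder bounded ``uniformly in $(x,p)$ by a power of $t$ times a bounded \textup{b}-function'' suffices to commute the partie finie integral with the expansion. As a general principle this is false: $\pfint_{-\infty}^{0}$ is defined by subtracting the divergent term $f_0^-R$ from $\int_{-R}^0$ and is therefore \emph{not} dominated by the sup norm of the integrand; by \eqref{eq:PfIntFormula} it is controlled by $\int_{-\infty}^0 x\,\partial_x f\,dx$, i.e.\ by the $x$-derivative, not by $f$ itself. What rescues your argument is a structural feature you set up but do not invoke: on the model cylinder, for $Q=f(x,p)P\partial_x^{2k}$ the diagonal kernel is $f(x,p)\,c_k t^{-1/2-k}\,(Pe^{-tA^2})(p,p)$, so the $t$-dependent factor is \emph{$x$-independent} and its Seeley--deWitt remainder $r_N(t,p)=O(t^N)$ uniformly on the compact manifold $\partial M$; hence $\pfint f(\cdot,p)\,r_N(t,p)\,dx=r_N(t,p)\,\pfint f(\cdot,p)\,dx=O(t^N)$, and the interchange is legitimate because the $x$- and $t$-dependences decouple, not because of a uniform sup bound. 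The paper sidesteps the issue entirely by first applying Proposition \ref{t:bTraceAsTrace}, i.e.\ $\bTr(A)=-\Tr\bigl(x[\tfrac{d}{dx},A]\bigr)$, which replaces the partie finie integral by an absolutely convergent Lebesgue integral against $x\,\partial_x f(x,p)=O(e^{(1-\delta)x})$ uniformly in $p$; one may then insert the $x$-independent expansion of $(Pe^{-tA^2})(p,p)$ with no interchange problem. Either repair closes the gap; as written, the sentence justifying the interchange is the only part of your proof that does not stand on its own.
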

The $\textup{b}$-integral $\int_{\tb M}$ was defined in
Section \plref{s: b-trace formula}, \emph{cf.}~Definition-Proposition
\plref{defprop15}.
\begin{proof}
We first write the operator $Q$ as a sum $Q=Q^{(0)}+Q^{(1)}$ of differential operators with
$Q^{(0)}\in\bcptdiff^q((-\infty,0)\times \pl M;W)$ and $Q^{(1)}$ a differential operator supported in the interior.
By standard elliptic theory (\cite{Gil:ITH}) $Q^{(1)}e^{-t\dirac^2}$ is trace class and
since the asymptotic expansion \eqref{eq:ML20090122-4} is uniform on compact subsets of $M$
the claim follows for $Q^{(1)}$ instead of $Q$. 

So it remains to prove the claim for an operator $Q\in \bcptdiff^q((-\infty,0)\times\pl M;W)$; for convenience
we write from now on again $Q$ instead of $Q^{(0)}$. 
Next we apply the comparison Theorem \plref{t:heat-resolvent-comparison} which allows us to assume
that $M=\R\times\partial M$ is the model cylinder, $\dirac=\sfc(dx)\frac{d}{dx}+D^\partial$, and $Q$ is supported
on $(-\infty,c)\times\partial M$ for some $c>0$.

Furthermore, we may assume that $Q$ is of the form \eqref{eq:dDiffOp-normalForm}.
Since the heat kernel\sind{heat kernel} of the model operator is explicitly known
(\cf~\eqref{eq:heat-kernel}) we have
\begin{equation}
\begin{split}
      \Bigl( f(x,p) &P \partial_x^l e^{-t\dirac^2}\Bigr)(x,p;y,q)\\
        &= \frac{1}{\sqrt{4\pi t}} \bigl(\partial_x^l e^{-(x-y)^2/4t}\bigr) 
         \bigl(P e^{-t A^2}\bigr)(p,q),\qquad \Abdy:=\Gammabdy \diracbdy.
\end{split}
\end{equation}
If $l$ is odd then by induction one easily shows that this kernel vanishes on the diagonal and
hence the {\btrace}  $\bTr(Qe^{-t\dirac^2})$ as well as all local heat coefficients
vanish, proving the Theorem in this case. So let $l=2k$ be even. Then
using \eqref{eq:ML20090127-1} and
since on the diagonal $\partial_t^k e^{-t\Delta_\R}(x,x)=\partial_t^k (4\pi t)^{-1/2}=:c_k t^{-1/2-k}$,
we have
\begin{equation}\label{eq:ML20090123-2}
\begin{split}
      \Bigl( f(x,p) &P \partial_x^l e^{-t\dirac^2}\Bigr)(x,p;x,p)\\
       &= f(x,p)\bigl(P e^{-t A^2}\bigr)(p,p) c_k \; t^{-1/2-k}\\
       &\sim_{t\to 0+} \sum_{j=0}^\infty f(x,p) a_j(P,A)(p) c_k \; t^{\frac{j-\dim M-q}{2}}.
\end{split}
\end{equation}
Comparing with \eqref{eq:ML20090122-4} we find for the heat coefficients $a_j(Q,D)$
\begin{equation}\label{eq:ML20090123-3}
     a_j(Q,D)(x,p)= f(x,p) a_j(P,A)(p) c_k .
\end{equation}
Furthermore, we have using Theorem \eqref{t:bTraceAsTrace}
\begin{equation}\label{eq:ML20090123-1}
      \bTr\bigl(Qe^{-t\dirac^2}\bigr) \\
       =\int_{-\infty}^0\int_{\partial M} \tr_{x,p}\Bigl(x\partial_x f(x,p) \bigl(Pe^{-tA^2}\bigr)(p,p)\Bigr)
        d\vol_{\partial M}(p) dx. 
\end{equation}
$\bigl(Pe^{-tA^2}\bigr)(p,p)$ has an \emph{$x$-independent} asymptotic expansion as $t\to 0+$.
Since $x\partial_x f(x,p)=O(e^{(1-\delta)x}), x\to -\infty$, uniformly in $p$, we can plug
the asymptotic expansion \eqref{eq:ML20090123-2} 
into \eqref{eq:ML20090123-1} and use \eqref{eq:ML20090123-3} to find
\begin{equation*}
\begin{split}
     \bTr&\bigl(Qe^{-t\dirac^2}\bigr)\\
      &\sim_{t\to 0+}\sum_{j=0}^\infty  \int_{-\infty}^0  \int_{\partial M} \tr_{x,p}\Bigl(x\partial_x f(x,p) a_j(P,A)(p)\Bigr)
        d\vol_{\partial M}(p) dx\; c_k\; t^{\frac{j-\dim M-q}{2}}\\
      &\sim_{t\to 0+}\sum_{j=0}^\infty \int_{\tb (-\infty,0)\times \partial M} \tr_{x,p}\bigl( a_j(Q,D)(x,p) \bigr) d\vol(x,p)
         \; t^{\frac{j-\dim M-q}{2}}.
\end{split}
\end{equation*}
The claim is proved.
\end{proof}

\subsection{The \textup{b}-trace of the \textup{JLO} integrand}
\newcommand{\nablad}{\nabla_{\mathsf{D}}}

To extend Theorem \plref{thm:bHeatExpansion} to expressions of the form 
$\bTr\Bigl(A_0 e^{-\sigma_0 t\dirac^2} A_1 e^{-\sigma_1 t\dirac^2} ... A_k e^{-\sigma_k t \dirac^2}\Bigr) $
we use a trick which was already applied successfully in the proof of the local index formula in noncommutative
geometry \cite{ConMos:LIF}. Namely, we successively commute $A_j e^{-\sigma_j t \dirac^2}$ and control
the remainder. We will need the estimates proved in Sections \plref{s:estimates-JLO-cylindrical}
and \plref{s:EstbTraces}. 

We first need to introduce some notation (\cf~\cite[Lemma 4.2]{Les:NRP}).  
For a \textup{b}-differential operator $B\in\bdiff(M;W)$ we put inductively
\begin{equation}\label{eq:IterCommutator}
   \nablad^0B:=B,\qquad \nablad^{j+1}B:=[\dirac^2,\nabla_D^jB].
\end{equation}
Note that since $\dirac^2$ has scalar leading symbol we have $\ord(\nabla_D^jB)\le j+\ord B$.
The following formula can easily be shown by induction. 
\begin{equation}\label{eq:HeatOpCommutator}
\begin{split}
     e^{-t\dirac^2}B& = \sum_{j=0}^{n-1} \frac{(-t)^j}{j!} \bigl(\nablad^j B \bigr) e^{-t\dirac^2}+\\
           &\qquad +\frac{(-t)^n}{(n-1)!}\int_0^1 (1-s)^{n-1} e^{-st\dirac^2}\bigl(\nablad^n B\bigr)
                           e^{-(1-s)t\dirac^2}ds.
\end{split}
\end{equation}

The identity \eqref{eq:HeatOpCommutator} easily allows to prove the following statement about
\semph{local heat invariants}, \cf~\cite{Wid:STC}, \cite{ConMos:CCN}, \cite{BloFox:APO}:

\begin{proposition}\label{p:JLOCommutatorAsymptotic}
 Let $A_0,...,A_k\in\bdiff(M;W)$ of order $d_0,...,d_k; d:=\sum_{j=0}^k d_j$. Then the Schwartz kernel
of $A_0 e^{-\sigma_0 t\dirac^2}A_1 e^{-\sigma_1 t\dirac^2}... A_k e^{-\sigma_k t \dirac^2}$
has a pointwise asymptotic expansion
\begin{equation}\label{eq:JLOCommutatorAsymptotic}
\begin{split}
    \Bigl(A_0 &e^{-\sigma_0 t\dirac^2} A_1 e^{-\sigma_1 t\dirac^2} ... A_k e^{-\sigma_k t \dirac^2}\Bigr)(p,p) \\
        &= \sum_{\ga\in\Z_+^k, |\ga|\le n} \frac{(-t)^{|\ga|}}{\ga!} \sigma_0^{\ga_1}(\sigma_0+\sigma_1)^{\ga_2}...(\sigma_0+...+\sigma_{k-1})^{\ga_k}
                       \cdot\\
             &\qquad\cdot \bigl( A_0 \nablad^{\ga_1}A_1 ... \nablad^{\ga_k} A_k e^{-t\dirac^2}\bigr)(p,p)+O_p(t^{(n+1-d-\dim M)/2}),\\
        &=: \sum_{j=0}^{n} a_j(A_0,...,A_k,\dirac)(p)\; t^{\frac{j-\dim M -d}{2}}+O_p(t^{(n+1-d-\dim M)/2}),
\end{split}
\end{equation} 
where $d=\sum\limits_{j=0}^k d_j$.
The asymptotic expansion is locally uniformly in $p$. Furthermore, it is uniform for $\sigma\in \Delta_k$.
\end{proposition}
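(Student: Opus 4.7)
The strategy I would follow is to reduce the multi-heat-kernel product to a single heat kernel with one combined differential operator on the left, by iteratively sweeping each $A_j$ past the heat semigroup factors to its left via the commutator identity \eqref{eq:HeatOpCommutator}. Then the standard pointwise asymptotic expansion \eqref{eq:ML20090122-4} for $Q e^{-t\dirac^2}$ on the diagonal (applied to the combined left operator) yields \eqref{eq:JLOCommutatorAsymptotic}. The whole argument is purely local and pointwise, so the \textup{b}-structure plays no role here; we only need that $\dirac^2$ has scalar principal symbol so that $\ord(\nablad^j B)\le j+\ord B$.

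Concretely, I would proceed by induction on $k$. For the inductive step, start with
\[
A_0 e^{-\sigma_0 t\dirac^2}\cdots A_{j-1} e^{-(\sigma_0+\cdots+\sigma_{j-1}) t\dirac^2}\, A_j e^{-\sigma_j t\dirac^2}\cdots A_k e^{-\sigma_k t\dirac^2}
\]
(after having already swept $A_1,\dots,A_{j-1}$ to the left) and apply \eqref{eq:HeatOpCommutator} with $B=A_j$ and parameter $\tau_{j-1}:=\sigma_0+\cdots+\sigma_{j-1}$, truncating at order $n$. This produces the main terms
\[
\sum_{\alpha_j<n} \frac{(-\tau_{j-1} t)^{\alpha_j}}{\alpha_j!}\;(A_0)(\nablad^{\alpha_1}A_1)\cdots(\nablad^{\alpha_j}A_j)\,e^{-\tau_j t\dirac^2}\cdots A_k e^{-\sigma_k t\dirac^2}
\]
plus an explicit remainder integral. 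After all $k$ sweeps, the accumulated main terms are
\[
\sum_{|\alpha|\le n}\frac{(-t)^{|\alpha|}}{\alpha!}\, \sigma_0^{\alpha_1}(\sigma_0+\sigma_1)^{\alpha_2}\cdots(\sigma_0+\cdots+\sigma_{k-1})^{\alpha_k}\, A_0 \nablad^{\alpha_1}A_1\cdots\nablad^{\alpha_k}A_k\, e^{-t\dirac^2},
\]
which is precisely the stated leading sum. Applying \eqref{eq:ML20090122-4} to each operator $A_0\nablad^{\alpha_1}A_1\cdots\nablad^{\alpha_k}A_k$ (whose order is $d+|\alpha|$) and reorganizing by powers of $t$ identifies the coefficients $a_j(A_0,\dots,A_k,\dirac)(p)$ as universal polynomial expressions in the jets of $\dirac^2$ and the $A_j$ at $p$.

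The main obstacle is controlling the pointwise size of the remainders on the diagonal uniformly in $\sigma\in\Delta_k$. Each sweep produces a remainder of the form
\[
\frac{(-\tau_{j-1} t)^n}{(n-1)!}\int_0^1 (1-s)^{n-1} (\cdots) e^{-s\tau_{j-1}t\dirac^2}(\nablad^n A_j) e^{-(1-s)\tau_{j-1} t\dirac^2}(\cdots)\, ds,
\]
sandwiched between the earlier and later factors. The key observation is that $\nablad^n A_j$ has order $\le n+d_j$, the extra factor $\tau_{j-1}^n t^n$ provides the gain $t^{n/2}$ needed to push the expansion to order $(n+1-d-\dim M)/2$, and the integrand's Schwartz kernel on the diagonal is $O\!\bigl(t^{-(n+d+\dim M)/2}\bigr)$ by the standard pointwise Gaussian heat-kernel estimate (applied locally, after a cut-off if $M$ is non-compact). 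Since $\tau_{j-1}\le 1$ on $\Delta_k$, the $\sigma$-dependence is uniformly bounded. The same pointwise Gaussian bound handles the subtlety that $M$ may be non-compact with cylindrical ends: because we only look at the kernel at a single point $p$, all the relevant heat kernels decay off the diagonal, so the estimate is local and no \textup{b}-specific analysis is needed. Once the pointwise remainder estimate is in place, adding up the $k$ sweeps and matching powers of $t$ in \eqref{eq:JLOCommutatorAsymptotic} finishes the proof; uniformity in $\sigma\in\Delta_k$ is automatic because all $\sigma$-factors that appear are bounded by $1$.
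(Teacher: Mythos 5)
Your proposal follows essentially the same route as the paper, which proves this statement by iterated application of the Duhamel identity \eqref{eq:HeatOpCommutator} to sweep each $A_j$ leftward past the accumulated heat factors — exactly the strategy spelled out in the proof of Theorem \ref{t:JLObCommutatorAsymptotic}, which the paper explicitly notes also proves Proposition \ref{p:JLOCommutatorAsymptotic}. One bookkeeping caveat: truncating each Taylor expansion at order $n$ leaves a remainder of size $t^{n}\cdot O\bigl(t^{-(n+d+\dim M)/2}\bigr)=O\bigl(t^{(n-d-\dim M)/2}\bigr)$, half a power short of the stated error, so you should expand to order $n+1$ (or higher) and absorb the discarded main terms with $|\alpha|=n+1$, each of which is itself $O\bigl(t^{(n+1-d-\dim M)/2}\bigr)$, into the remainder.
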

Again we are facing the problem explained before Theorem \ref{thm:bHeatExpansion}. Still we will be able
to show that one obtains a correct formula by taking the {\btrace}  on the left and
partie finie integrals on the right of \eqref{eq:JLOCommutatorAsymptotic}:
\sind{partie finie}

\begin{theorem}\label{t:JLObCommutatorAsymptotic}
Under the assumptions of the previous Proposition \plref{p:JLOCommutatorAsymptotic} we have an asymptotic
expansion
\begin{equation}\label{eq:JLObCommutatorAsymptotic}
\begin{split}
    \bTr\Bigl(A_0 &e^{-\sigma_0 t\dirac^2} A_1 e^{-\sigma_1 t\dirac^2} ... A_k e^{-\sigma_k t \dirac^2}\Bigr) \\
        &= \sum_{\ga\in\Z_+^k, |\ga|\le n} \frac{(-t)^{|\ga|}}{\ga!} \sigma_0^{\ga_1}(\sigma_0+\sigma_1)^{\ga_2}...(\sigma_0+...+\sigma_{k-1})^{\ga_k}
                       \cdot\\
             &\qquad\cdot \bTr\bigl( A_0 \nablad^{\ga_1}A_1 ... \nablad^{\ga_k} A_k e^{-t\dirac^2}\bigr)+\\
             &\qquad+ O\Bigl(\bigl(\prod_{j=1}^k \sigma_j^{-d_j/2}\bigr)t^{(n+1-d-\dim M)/2}\Bigr),\\
        &= \sum_{j=0}^{n} \int_{\tb M} a_j(A_0,...,A_k,\dirac)d\vol \; t^{\frac{j-\dim M -d}{2}}+\\
              &\qquad +O\Bigl(\bigl(\prod_{j=1}^k \sigma_j^{-d_j/2}\bigr)t^{(n+1-d-\dim M)/2}\Bigr).
\end{split}
\end{equation} 
\end{theorem}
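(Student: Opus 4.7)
The strategy is to reduce the multi-heat expression to a single heat-kernel trace, and then apply Theorem \ref{thm:bHeatExpansion} term by term. First I would apply the commutator identity \eqref{eq:HeatOpCommutator} successively from left to right. Starting with $e^{-\sigma_0 t \dirac^2}A_1$, writing
\[
e^{-\sigma_0 t\dirac^2}A_1=\sum_{j_1<n} \frac{(-\sigma_0 t)^{j_1}}{j_1!}(\nablad^{j_1}A_1)e^{-\sigma_0 t\dirac^2}+R_1(\sigma_0,t),
\]
then doing the same for $e^{-(\sigma_0+\sigma_1)t\dirac^2}A_2$, and iterating through all $k$ positions, one obtains the polynomial expansion
\[
\sum_{|\ga|\le n}\frac{(-t)^{|\ga|}}{\ga!}\sigma_0^{\ga_1}(\sigma_0+\sigma_1)^{\ga_2}\cdots(\sigma_0+\cdots+\sigma_{k-1})^{\ga_k}A_0\nablad^{\ga_1}A_1\cdots\nablad^{\ga_k}A_k\,e^{-t\dirac^2}
\]
plus a remainder $\cR_n(\sigma,t)$ consisting of a sum of terms each of which still has the shape
\[
\widetilde A_0 e^{-\tau_0 t\dirac^2}\widetilde A_1 e^{-\tau_1 t\dirac^2}\cdots \widetilde A_{k+1} e^{-\tau_{k+1} t\dirac^2}
\]
with $(\tau_0,\ldots,\tau_{k+1})$ living on a sub-simplex and with $\sum\ord(\widetilde A_j)=d+n+1$ (one of the $\widetilde A_j$'s is a $\nablad^{n+1}A_l$ that produces the extra factor $t^{n+1}$ when moved out).

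Second, I would take the \textup{b}-trace of both sides. For the polynomial part this gives exactly the first line of \eqref{eq:JLObCommutatorAsymptotic}. For the remainder $\cR_n(\sigma,t)$, the estimate \eqref{eq:multiple-btrace-estimate-short} of Theorem \ref{t:main-b-estimate} applied to each term yields, after pulling out the $t^{n+1}$ factor and noting that the sum of orders of the $\widetilde A_j$ equals $d+n+1$,
\[
|\bTr \cR_n(\sigma,t)|\le C_\eps \Bigl(\prod_{j=1}^k \sigma_j^{-d_j/2-\eps}\Bigr) t^{(n+1-d-\dim M)/2-\eps},
\]
which is the required remainder bound (up to an inconsequential $\eps$, which can be absorbed by going one step further in the expansion).

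Third, to each summand $\bTr(A_0\nablad^{\ga_1}A_1\cdots\nablad^{\ga_k}A_k\,e^{-t\dirac^2})$ in the polynomial part I would apply Theorem \ref{thm:bHeatExpansion} with $Q=A_0\nablad^{\ga_1}A_1\cdots\nablad^{\ga_k}A_k$. The resulting powers of $t$ combine with the prefactor $t^{|\ga|}$ to produce terms of the form $c_j(\sigma)\,t^{(j-\dim M-d)/2}$. Collecting these and comparing with the pointwise expansion provided by Proposition \ref{p:JLOCommutatorAsymptotic}, the coefficient of $t^{(j-\dim M-d)/2}$ must be the partie finie integral $\int_{\tb M} \tr(a_j(A_0,\ldots,A_k,\dirac))\,d\vol$: indeed, the pointwise diagonal expansion is locally uniform, and the same commutator identity \eqref{eq:HeatOpCommutator} identifies $a_j(A_0,\ldots,A_k,\dirac)(p)$ with the corresponding sum of pointwise heat invariants of $A_0\nablad^{\ga_1}A_1\cdots\nablad^{\ga_k}A_k$ on the diagonal. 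This gives the second line of \eqref{eq:JLObCommutatorAsymptotic}.

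The main obstacle is the uniformity of the remainder estimate in $\sigma\in\Delta_k$. A naive bookkeeping would put powers of the form $\tau_j^{-\ord(\widetilde A_j)/2}$ into the estimate, and because the $\tau_j$ are sums of the original $\sigma_l$, these could generate factors worse than $\prod\sigma_j^{-d_j/2}$. The resolution is to keep track, during the iteration, of which original variable $\sigma_l$ each $\tau_j$ inherits its smallness from, and to use that $\nablad^\ga A$ has order $\le \ord A+\ga$, so that the extra orders contributed by the commutator are exactly compensated by the extra factors of $t$ (not of $\sigma$) produced by \eqref{eq:HeatOpCommutator}. Carried out carefully, this is precisely the bookkeeping that makes Theorem \ref{t:main-b-estimate} applicable with the stated bound; the remaining details are routine combinatorics of the iteration, exactly as in \cite{ConMos:LIF}.
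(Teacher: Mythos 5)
Your plan is correct and follows essentially the same route as the paper: iterate the Duhamel-type identity \eqref{eq:HeatOpCommutator} to collapse the product onto a single heat factor, control the remainder by the trace estimates of Chapter 3, and then use Theorem \ref{thm:bHeatExpansion} together with the pointwise expansion of Proposition \ref{p:JLOCommutatorAsymptotic} to identify the coefficients as partie finie integrals. The only organizational difference is that the paper first rewrites the b-trace as $-\sum_j \Tr\bigl(x A_0 e^{-\sigma_0 t\dirac^2}\cdots[\frac{d}{dx},A_j]\cdots A_k e^{-\sigma_k t\dirac^2}\bigr)$ via Proposition \ref{t:bTraceAsTrace}, so that genuine trace-class estimates (Proposition \ref{p:multiple-heat-estimate-b}) apply and the $s$-integration and $\sigma$-bookkeeping you flag are handled one commutation step at a time (choosing the auxiliary exponent $\gb$ and $n$ large enough), whereas you apply Theorem \ref{t:main-b-estimate} to the remainder directly --- which amounts to the same thing, since that theorem is itself proved by exactly this conversion.
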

\begin{remark}
The O-constant in \eqref{eq:JLObCommutatorAsymptotic} is independent of $\sigma\in\Delta_k$. However, the
factor $\bigl(\prod_{j=1}^k \sigma_j^{-d_j/2}\bigr)$ inside the $O()$ causes some trouble because it is
integrable over the standard simplex $\Delta_k$ only if $d_1,...,d_k\le 1$.
We do not claim that this factor is necessarily there. It might be an artifact of the inefficiency of
our method. Cf. also Remarks \plref{rem:EstimateOptimality}, \plref{rem:EstimateOptimality-a}.
%\marginpar{The more I think about it: this factor can most likely be eliminated. So I stop making silly comments,
%just improve my calculations. Idea: since $\sigma_0^{-d_0/2}$ does not appear in the $O(..)$ by a symmetry argument
%the whole factor $\bigl(\prod_{j=1}^k \sigma_j^{-d_j/2}\bigr)$ should not be there.}
\end{remark}

\begin{proof} 
The strategy of proof we present here can also be used to prove Proposition \plref{p:JLOCommutatorAsymptotic}. 

Again by the comparison Theorem \plref{t:JLO-comparison} we may
assume that $\dirac$ is
the model Dirac operator and $A_0,...,A_k\in \bcptdiff((-\infty,0)\times\partial M;W)$.

Using Proposition \plref{t:bTraceAsTrace} we have
\begin{equation}\label{eq:ML20090126-1}
\begin{split}
    \bTr\Bigl( A_0 &e^{-\sigma_0t\dirac^2}A_1... A_k e^{-\sigma_k t\dirac^2}\Bigr)\\
                   &= -\bTr\Bigl(x\bigl[\frac{d}{dx}, A_0 e^{-\sigma_0t\dirac^2}A_1... A_k e^{-\sigma_k t\dirac^2}\bigr]\Bigr)\\
        &= -\sum_{j=0}^k \Tr\Bigl(x A_0 e^{-\sigma_0t\dirac^2}A_1...[\frac{d}{dx},A_j]... A_k e^{-\sigma_k t\dirac^2}\Bigr).
\end{split}
\end{equation}
$[\frac{d}{dx},A_j]$ is again in $\bcptdiff((-\infty,0)\times\partial M;W)$ and its indicial family vanishes.
Hence by Proposition \plref{p:multiple-heat-estimate-b}
all summands on the right are of trace class. Cf. also the comment at the end of the proof of
Theorem \plref{t:main-b-estimate}.

It therefore suffices to prove the claim for the summands on the right
of \eqref{eq:ML20090126-1}, i.e. for $\Tr\Bigl(x A_0 e^{-\sigma_0t\dirac^2}A_1... A_k e^{-\sigma_k t\dirac^2}\Bigr)$ 
where at least one of the $A_j$ has vanishing indicial family.

Applying \eqref{eq:HeatOpCommutator} to $A_1$ we get 
\begin{align}\label{eq:HeatOpCommutator-1}
     e^{-\sigma_0 t\dirac^2}A_1& = \sum_{j=0}^{n-1} \frac{(-\sigma_0 t)^j}{j!} \bigl(\nablad^j A_1 \bigr) e^{-\sigma_0t\dirac^2}+\\
           &\qquad +\frac{(-\sigma_0 t)^n}{(n-1)!}\int_0^1 (1-s)^{n-1} e^{-s\sigma_0 t\dirac^2}\bigl(\nablad^n A_1\bigr)
                           e^{-(1-s)\sigma_0 t\dirac^2}ds.\nonumber
\end{align}
Therefore we need to estimate the expression
\begin{equation}\label{eq:ML20090126-2}
  x(\sigma_0 t)^n (1-s)^{n-1} A_0 e^{-s\sigma_0 t\dirac^2}\bigl(\nablad^n A_1\bigr)
                           e^{-(1-s)\sigma_0 t\dirac^2} e^{-\sigma_1t\dirac^2}...A_k e^{-\sigma_kt\dirac^2}
\end{equation}
in the trace norm.

If the index $l$ for which the indicial family of $A_l$ vanishes is $0$ we write $A_0$ as $e^{x}\tilde A_0$
with $\tilde A_0\in\bcptdiff((-\infty,0)\times \partial M;W)$ and move $x e^x$ under the trace to the right.
This assures that Proposition \plref{p:multiple-heat-estimate-b} applies to 
$\bigl(\nablad^n A_1\bigr)e^{-(1-s)\sigma_0 t\dirac^2} e^{-\sigma_1t\dirac^2}...A_k e^{-\sigma_kt\dirac^2}x e^x$.

If $l\ge 1$ we just move $x$ under the trace to the right. After all w.l.o.g. we may assume that $l\ge 1$. 

Next we choose an integer $\gb$ such that $A_0(\dirac^2+I)^{-\gb}$ has order $\in \{ 0,1\}$. 
Then H\"older's inequality yields
\begin{multline}
   (\sigma_0 t)^n (1-s)^{n-1} \Bigl\| A_0 (I+\dirac^2)^{-\gb} e^{-s\sigma_0 t\dirac^2}(I+\dirac^2)^\gb\bigl(\nablad^n A_1\bigr)\\
                           e^{-(1-s)\sigma_0 t\dirac^2} e^{-\sigma_1t\dirac^2}...A_k e^{-\sigma_kt\dirac^2}x\Bigr\|_1\\
       \le  (\sigma_0 t)^n (1-s)^{n-1} (s\sigma_0t)^{-d_0/2+\gb}\Bigl\|(I+\dirac^2)^\gb\bigl(\nablad^n A_1\bigr)\\
                           e^{-(1-s)\sigma_0 t\dirac^2} e^{-\sigma_1t\dirac^2}...A_k e^{-\sigma_kt\dirac^2}x\Bigr\|_1
\end{multline}
To the remaining trace we apply Proposition \plref{p:multiple-heat-estimate-b} and obtain
\begin{equation}
\begin{split}
 ...&\le \sigma_0^n t^n (1-s)^{n-1} (s\sigma_0t)^{-d_0/2+\gb} C(t_0,\eps) \bigl((1-s)\sigma_0\bigr)^{-\gb-n/2-d_1/2}\\
    &\qquad \bigl(\prod_{j=2}^k \sigma_j^{-d_j/2}\bigr) t^{-d/2+d_0/2-\dim M/2-\eps -n/2-\gb}\\
    &\le C(t_0,\eps) s^{-1/2}(1-s)^{n/2-1-\gb-d_1/2} \sigma_0^{\frac{n-d_0-d_1}{2}} \bigl(\prod_{j=2}^k \sigma_j^{-d_j/2}\bigr)
           t^{\frac{n-d-\dim M}{2}-\eps}.
\end{split}
\end{equation}
If we choose $n$ large enough the right hand side is integrable in $s$ and we obtain the desired estimate. 

In the next step we apply \eqref{eq:HeatOpCommutator} to  $e^{-(\sigma_0+\sigma_1)t\dirac^2}$ and $A_2$.
Continuing this way we reach the conclusion after $k$ steps.
\end{proof}

%end of cylinder.tex
%%%%%%%%%%%%%%%%%%%%%%%%%%%%%%%%%%%%%%%%%%%%%%%%%%%%%%%%%%%%%%%%%%%%%%%%%%%%%%%%%%%%%%%%%%%%%%%%%%%%%%%%%%%%%%%%%%
%
%
\section{The Connes--Chern character of the relative Dirac class}
\label{s:retracted-relative-cocycle}
\subsection{Retracted Connes--Chern character} \label{ss:retracted-CC}
In this section we assume that $\dirac$ is a Dirac operator on a \textup{b}-Clifford 
bundle  $W\rightarrow M$  over the \textup{b}-manifold $M$ and 
$\dirac_t = t \dirac$ is a family of Dirac type operators.
We now have all tools to apply the method of \cite{ConMos:TCC} to convert
the entire relative {\CoChch}, which was constructed using the {\btrace},
into a finitely supported cocycle.

By integrating Eq.~\eqref{Eq:transgress}, one obtains for 
$0 < \varepsilon <t$ 
\begin{equation}
\begin{split} 
 \bCh^k & \, (\varepsilon \dirac) - \bCh^k (t \dirac)  
 = \, b \int_\varepsilon^t \bslch^{k-1} (s \dirac, \dirac) ds \\
 & + B  \int_\varepsilon^t \bslch^{k+1} (s \dirac, \dirac) ds
 + \int_\varepsilon^t \slch^k (s \diracbdy, \diracbdy)\circ i^* ds .
\end{split}
\end{equation}
$\Ch^\bullet(\diracbdy)$ satisfies the cocycle and transgression formul\ae\
Eq.~\eqref{eq:cocycle}, \eqref{eq:transgression}. Integrating
these we obtain
\begin{equation}\label{eq:ML200911262}
\begin{split} 
 \Ch^k & \, (\varepsilon \diracbdy) - \Ch^k (t \diracbdy)  
 = \, b \int_\varepsilon^t \slch^{k-1} (s \diracbdy, \diracbdy) ds \\
 & + B  \int_\varepsilon^t \slch^{k+1} (s \diracbdy, \diracbdy) ds.
\end{split}
\end{equation}

By Proposition \ref{p:ShortTimeEst} (1), %(cf.~also \ref{Prop:EstChern} in\marginpar{ref to Appendix orphan ML 20.02.09} the Appendix), 
the limit $\varepsilon \searrow 0$ exists for $k > \dim M$, and
\begin{equation}
\label{Eq:limit}
\begin{split}
&\lim_{\varepsilon \searrow 0} \bCh^k(\varepsilon \dirac)= 0,\\ 
&\lim_{\varepsilon \searrow 0} \Ch^{k-1}(\varepsilon \diracbdy)= 0,
\end{split}
\qquad \text{for all $k > \dim M $}.
\end{equation}
The second limit statement follows either from an obvious adaption of our calculations to the ordinary
trace or from \cite{ConMos:TCC}.
Hence one gets for $k > \dim M$
\begin{equation}
\begin{split}
\label{Eq:LittlebCapitalBChern}
 - \bCh^k (t \dirac) &=  
   \,  b \bTslch_t^{k-1} (\dirac)+
 B  \bTslch_t^{k+1} (\dirac)
 + \Tslch_t^k (\diracbdy)\circ i^*,\\
 - \Ch^{k-1} (t \diracbdy) &= \,  b \Tslch_t^{k-2} (\diracbdy)+
 B  \Tslch_t^{k} (\diracbdy),\\
\end{split}
\end{equation}
where
\begin{equation}\label{eq:ML200909263}
\begin{split}
  \bTslch_t^k (\dirac) &:= \int_0^t \slch ^k (s\dirac, \dirac) \, ds,\\
   \Tslch_t^{k-1} (\diracbdy) &:= \int_0^t \slch^{k-1} (s\diracbdy, \diracbdy) \, ds.
\end{split}
\end{equation}
The above integrals exist in view of Proposition \plref{p:ShortTimeEst} (2) even for $k\ge \dim M$.
From Eq.~\eqref{eq:ML200911262} and Theorem \ref{P:GETZLER}
we obtain for $k\ge \dim M$:
\begin{equation}
\begin{split}
   b\Bigl( &\bCh^k(t\dirac)+B\int_{\eps}^t \bslch^{k+1}(s\dirac,\dirac) ds\Bigr)\\
     &=-B \bCh^{k+2}(\eps \dirac) +\Ch^{k+1}(\eps\diracbdy)\circ i^*-b\int_{\eps}^t \slch^k(s\diracbdy,\diracbdy)ds\\
   &\longrightarrow -b \Tslch^k_t(\diracbdy)\circ i^*, \quad \eps\to 0+.
\end{split}
\end{equation}
Thus
%In particular, upon applying $b$, resp. $B$, one obtains: 
\begin{equation}
\label{Eq:RellbcB}
\begin{split}
  b\Big( \bCh^k (t \dirac) + B\bTslch_t^{k+1} (\dirac) \Big) &= - b \Tslch_t^k (\diracbdy )\circ i^*,\\
  b\Big( \Ch^{k-1} (t \diracbdy) + B\bTslch_t^{k} (\diracbdy) \Big) &=0,
\end{split}\quad k\ge \dim M.
\end{equation}

Following \textnm{Connes-Moscovici} \cite{ConMos:TCC}, we define for $k\ge \dim M$
the Chern characters 
$\bch^k_t (\dirac)$, $\btch^k(\dirac)$ and $\ch^{k-1}_t (\diracbdy)$ by
\begin{align}
  \bch^k_t (\dirac) & = \sum_{j\geq 0} \bCh^{k-2j} (t \dirac) +
  B \bTslch^{k+1}_t (\dirac),\\
  \ch^{k-1}_t (\diracbdy) & = 
  \sum_{j\geq 0} \Ch^{k-2j-1} (t \diracbdy) +
  B \Tslch^{k}_t (\diracbdy), \\
  \btch^k_t(\dirac) \, &=\, \bch^k_t(\dirac) +\Tslch^k_t(\diracbdy)\circ i^*.
\end{align}

Let us now compute $(b+B) \bch^\bullet_t (\dirac)$. 
%To this end recall that $\diracbdy$ is the Dirac operator 
%on the boundary $\partial M$, hence the results from 
%\cite[Sec.~2]{ConMos:TCC} hold for $\diracbdy$.
%In particular \cite[Eq.~(2.3)]{ConMos:TCC} entails that
%\begin{equation}
% - \Ch^{k+1}(\diracbdy) = 
% b \Tslch_t^k (\diracbdy)+B\Tslch_t^{k+2} (\diracbdy).
%\end{equation}
Using Eq.~\eqref{Eq:cocyclecond} and Eq.~\eqref{Eq:RellbcB} above, we write
\begin{equation}
\label{Eq:RelativityCheck}
\begin{split} 
  b\bch^k_t & (\dirac) + B\bch^k_t (\dirac)  = \\
  = \, & 
  \sum_{j \geq 1}   \Big( b\bCh^{k-2j} (t\dirac) + 
  B\bCh^{k-2j + 2} (t \dirac) \Big) \\
  & + b \Big( \bCh^k (t\dirac ) + B \bTslch_t^{k+1} (\dirac) \Big) \\
  = \, &
  \sum_{j \geq 1} \Ch^{k-2j +1} (t \diracbdy)\circ i^* - b \Tslch^k_t (\diracbdy)\circ i^* \\
  = \, &
  \sum_{j \geq 0} \Ch^{k-2j -1} (t \diracbdy)\circ i^* - b \Tslch^k_t (\diracbdy)\circ i^* \\
  = \, &
  \ch^{k-1}_t (\diracbdy)\circ i^* - B \Tslch^{k}_t (\diracbdy)\circ i^*
   - b \Tslch^k_t (\diracbdy) \circ i^*
    \\ 
   = \, & \ch^{k+1}_t (\diracbdy)\circ i^*,
\end{split}
\end{equation}
where the last equality follows from the second line of Eq.~\eqref{Eq:LittlebCapitalBChern}.

In conclusion
\begin{equation}
\begin{split}
      (b+B) \bch_t^k(\dirac)&= \ch_t^{k+1}(\diracbdy)\circ i^*\\
      (b+B) \btch_t^k(\dirac)&= \ch_t^{k-1}(\diracbdy)\circ i^*.
\end{split}
\end{equation}
Denoting by $\widetilde b,\widetilde B$ the relative Hochschild resp. Connes'
coboundaries, \emph{cf.}~Eq.~\eqref{Eq:DefCoBdrRelMixDer}, we thus infer
\sind{Hochschild (co)homology}
\begin{equation}
\begin{split}
    (\widetilde b+\widetilde B)\bigl(\bch_t^k(\dirac),\ch_t^{k+1}(\diracbdy)\bigr)&=0,\\
    (\widetilde b+\widetilde B)\bigl(\btch_t^k(\dirac),\ch_t^{k-1}(\diracbdy)\bigr)&=0, 
\end{split}
\end{equation}
\emph{i.e.} the pairs $(\bch_t^k(\dirac),\ch_t^{k+1}(\diracbdy))$ and $(\btch_t^k(\dirac),\ch_t^{k-1}(\diracbdy))$ are
relative cocycles in the direct sum of total complexes
\begin{equation}\label{eq:ML200909264}
\begin{split}
  \tot^k  & \, \mathcal B C^{\bullet,\bullet}
  (\mathcal C^\infty (M), \mathcal C^\infty (\partial M) ) := \\
  & \, := 
  \tot^k  \, \mathcal B C^{\bullet,\bullet}
  (\mathcal C^\infty (M)) \oplus 
  \tot^{k+1} \mathcal B C^{\bullet,\bullet}
  (\mathcal C^\infty (\partial M)).
\end{split}
\end{equation}

By Eq.~\eqref{Eq:RelativityCheck} we have
\begin{equation}
\begin{split}
     \Bigl(&\bch_t^k(\dirac)-\btch_t^k(\dirac),\ch_t^{k+1}(\diracbdy)-\ch_t^{k-1}(\diracbdy)\Bigr)\\
       &= \Bigl(-\Tslch_t^k(\diracbdy)\circ i^*,-(b+B)\Tslch_t^k(\diracbdy)\Bigr)\\
       &=\bigl(\widetilde b+\widetilde B\bigr)\Bigl(0,\Tslch_t^k(\diracbdy)\Bigr),
\end{split}
\end{equation}       
hence the two pairs differ only by a coboundary.
%with differential $\widehat b + \widehat B$, where
%\begin{displaymath}
%  \widehat b =  \left(
%  \begin{array}{cc}
%     b & -\sigma^* \\
%     0 & -b  
%  \end{array}
%  \right), \, \text{ and } \, 
%  \widehat B = \left(
%  \begin{array}{cc}
%     B & 0 \\
%     0 & -B 
%  \end{array}
%  \right)  .
%\end{displaymath}
%Since $( b+B )\ch^{k+1}_t (\diracbdy) =0$ by \cite{ConMos:TCC},
%Eq.~\ref{Eq:RelativityCheck} thus implies that the pair 
%$\big(\ch^k_t (\dirac),\ch^{k+1}_t (\diracbdy) \big)$
%is a relative cyclic $k$-cocycle. i

Next let us compute
$\big(\bch^{k+2}_t (\dirac),\ch^{k+3}_t (\diracbdy) \big)
  - S \big(\bch^k_t (\dirac),\ch^{k+1}_t (\diracbdy) \big)$
in the above relative cochain complex. Using Eq.~\eqref{Eq:LittlebCapitalBChern}
one checks immediately that
\begin{displaymath}
\begin{split}
  \bch^{k+2}_t  (\dirac) - \bch^k_t  (\dirac) = \, &
  \bCh^{k+2} (t \dirac) + B \bTslch^{k+3}_t (\dirac)
  -  B \Tslch^{k+1}_t (\dirac)\\
  = \, &
  - (b +B ) \Tslch^{k+1}_t (\dirac ) - \Tslch^{k+2}_t (\diracbdy)\circ i^*.
\end{split}
\end{displaymath}
From the second line of Eq.~\eqref{Eq:LittlebCapitalBChern} (or from \cite[Sec.~2.1]{ConMos:TCC})
\begin{displaymath}
  \ch^{k+3}_t  (\diracbdy) - \ch^{k+1}_t  (\diracbdy)
  = - (b+B) \Tslch^{k+2}_t (\diracbdy ),
\end{displaymath}
one thus gets %coboundary relative \label{Eq:DefCoBdrRelMixDer}MARKE
\begin{equation}
\begin{split}
\big(\bch^{k+2}_t  &(\dirac),\ch^{k+3}_t (\diracbdy) \big)
  - S \big(\bch^k_t (\dirac),\ch^{k+1}_t (\diracbdy) \big)\\
 &= (\widetilde b + \widetilde B) 
 \big(-\bTslch^{k+1}_t (\dirac ),\Tslch^{k+2}_t (\diracbdy ) \big).
\end{split}
\end{equation}
%Here $\widetilde b,\widetilde B$ denote the relative Hochschild resp. Connes'
%coboundaries, cf. Eq. \eqref{Eq:DefCoBdrRelMixDer}.\marginpar{this remark earlier}
Hence, the relative cocycles 
$\big(\bch^{k+2}_t  (\dirac),\ch^{k+3}_t (\diracbdy) \big)$
and $S\big(\bch^k_t (\dirac),\ch^{k+1}_t (\diracbdy) \big)$
are cohomologous.  Similarly, one gets
\begin{displaymath}
\begin{split}
  \bch^k_t & (\dirac) - \ch^k_\tau (\dirac) = \\
  = \, & \sum_{j\geq 0}
  \big( \bCh^{k-2j} (t \dirac) - \bCh^{k-2j} (\tau \dirac) \big)
  + B \int_\tau^t \bslch^{k+1} (s\dirac,\dirac)\, ds \\
  = \, & - (b+B) \sum_{j\geq 0} \int_\tau^t \bslch^{k-2j-1} 
  (s\dirac,\dirac)\, ds - \sum_{j\geq 0} 
  \int_\tau^t \slch^{k-2j} (s\diracbdy,\diracbdy)\, ds  ,
\end{split} 
\end{displaymath}
resp.
\begin{displaymath}
\begin{split}
  \ch^{k+1}_t & (\diracbdy) - \ch^{k+1}_\tau (\diracbdy) = 
   - (b+B) \sum_{j\geq 0} \int_\tau^t \slch^{k-2j} 
  (s\diracbdy,\diracbdy)\, ds,  
\end{split} 
\end{displaymath}
hence 
$\big(\bch^k_t (\dirac),\ch^{k+1}_t (\diracbdy) \big)$
and $\big(\bch^k_\tau (\dirac),\ch^{k+1}_\tau (\diracbdy) \big)$
are cohomologous in the total relative complex as well.
Thus, we have proved (1)-(3) of the following result.

\begin{theorem} \label{t: CC-character}
\begin{enumerate}
\item The pairs of retracted cochains 
 $\big(\bch^k_t (\dirac),\ch^{k+1}_t (\diracbdy) \big)$,  
$\big(\btch^k_t (\dirac),\ch^{k-1}_t (\diracbdy) \big)$, $t>0$, 
 $t>0$, 
 $k\ge m=\dim M, k-m\in 2\Z$ are cocycles in the relative total complex  
 $\tot^\bullet  \, \mathcal B C^{\bullet,\bullet}
  (\mathcal C^\infty (M), \mathcal C^\infty (\partial M) )$.
\item  They represent the same class 
%The class of $\big(\bch^n_t (\dirac),\ch^{n+1}_t (\diracbdy) \big)$
in $HC^n(\mathcal C^\infty (M), \mathcal C^\infty (\partial M) )$ which
is independent of $t>0$.

\item They represent the same class
% of $\big(\bch^n_t (\dirac),\ch^{n+1}_t (\diracbdy) \big)$
in $HP^\bullet(\mathcal C^\infty (M), \mathcal C^\infty (\partial M) )$
which is independent of $k$.

\item Denote by $\bomega_{\dirac}, \go_{\diracbdy}$ the local index forms of
$\dirac$ resp.~$\diracbdy$ \cite[Thm.~4.1]{BerGetVer:HKD}, \emph{cf.} Eq.~\eqref{eq: 0lim cocycle},
 \eqref{eq: A-currents} and see Eq.~\eqref{eq:ML200909265} below. 
Then one has a pointwise limit
\[
\lim_{t\to 0+}\big(\btch^k_t (\dirac),  \ch^{k-1}_t (\diracbdy) \big)
 =  \Big(\int_{\tb M} \bomega_{\dirac}\wedge \bullet, 
    \int_{\pl M} \omega_{\diracbdy} \wedge \bullet\Big).
%\hat{A}(\bnabla^2_g)\wedge . , \int_{\pl M} \hat{A}(\nabla^2_{g_\partial})\wedge . \big),
\]
Moreover, 
$\big(\btch^k_t (\dirac),\ch^{k-1}_t (\diracbdy) \big)$
represents the {\CoChch} of 
$[\dirac] \in KK_m(C_0 (M) ; \C)= K_m(M,\partial M)$.
\end{enumerate}
\end{theorem}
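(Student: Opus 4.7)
The plan is to deduce (1)--(3) from the retraction computations already assembled in this subsection, and to invest the real work in (4): the pointwise identification of the small-$t$ limit with the pair of $\hat A$-currents, together with the statement that the relative cocycle so obtained represents the Connes--Chern character of $[\dirac]$.

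For (1), the identity $(\widetilde b + \widetilde B)(\bch^k_t(\dirac), \ch^{k+1}_t(\diracbdy)) = 0$ is exactly what the chain of computations ending at \eqref{Eq:RelativityCheck} delivers, and the cocycle condition for $(\btch^k_t(\dirac),\ch^{k-1}_t(\diracbdy))$ follows because the two pairs differ by the $(\widetilde b + \widetilde B)$-coboundary of $(0, \Tslch^k_t(\diracbdy))$, as recorded just above the theorem. For (2) and (3), I would invoke the transgression identities \eqref{Eq:LittlebCapitalBChern} and Theorem \ref{P:GETZLER} once more to write both the dependence on $t$ and the effect of the periodicity operator $S$ as explicit $(\widetilde b + \widetilde B)$-coboundaries in the relative complex, built out of integrals of $\bslch^\bullet$ and $\slch^\bullet$; this is essentially what is already carried out for $\bch^{k+2}_t(\dirac) - S\bch^k_t(\dirac)$ and for $\bch^k_t(\dirac) - \bch^k_\tau(\dirac)$ in the paragraphs preceding the theorem.

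The main effort goes into (4). For the pointwise limit, I would fix $a_0,\ldots,a_k \in \cC^\infty(M)$ and expand $\btch^k_t(\dirac)(a_0,\ldots,a_k)$ into its three constituents: the truncated sum $\sum_{j\geq 0}\bCh^{k-2j}(t\dirac)$, the transgression term $B\,\bTslch^{k+1}_t(\dirac)$, and the boundary correction $\Tslch^k_t(\diracbdy)\circ i^\ast$. Proposition \ref{p:ShortTimeEst} shows $\bCh^\ell(t\dirac) \to 0$ for $\ell > \dim M$, so only finitely many terms in the sum survive in the limit. For each surviving term I would apply the \textup{b}-heat asymptotic expansion of Theorem \ref{t:JLObCommutatorAsymptotic} to the product $a_0 e^{-\sigma_0 t\dirac^2}[\dirac,a_1] e^{-\sigma_1 t\dirac^2}\cdots [\dirac,a_k] e^{-\sigma_k t\dirac^2}$; this converts the leading behaviour into a \textup{b}-integral $\int_{\tb M}$ of a local heat invariant. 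The identification of the resulting sum as $\int_{\tb M} \hat A(\bnabla^2_g)\wedge a_0\, da_1\wedge\cdots\wedge da_k$ is then the standard Getzler rescaling argument, applied pointwise and valid verbatim in the \textup{b}-setting because the rescaling is a local computation at each point of $M$. The transgression contribution $B\,\bTslch^{k+1}_t(\dirac)$ vanishes in the limit because the integrability statement in Proposition \ref{p:ShortTimeEst}(2) yields $\bTslch^{k+1}_t(\dirac) = O(t^{k+2-\dim M - 0}) \to 0$ as $t\to 0+$ for $k \geq \dim M$. The boundary term $\Tslch^k_t(\diracbdy)\circ i^\ast$ is treated by the same argument on the closed manifold $\partial M$ using the classical Connes--Moscovici small-time analysis, and combined with the truncated boundary sum inside $\ch^{k-1}_t(\diracbdy)$ it reproduces the second component in \eqref{eq: 0lim cocycle}.

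For the identification with the Connes--Chern character of $[\dirac]$, I would use excision together with the Connes--Moscovici characterization of the Connes--Chern character in entire cyclic cohomology. Via the quasi-isomorphism \eqref{Eq:qismRelCycCoh}, the class of $(\btch^k_t(\dirac), \ch^{k-1}_t(\diracbdy))$ in $HP^\bullet\bigl(\cC^\infty(M), \cC^\infty(\partial M)\bigr)$ corresponds to a class in $HP^\bullet\bigl(\cJ^\infty(\partial M, M)\bigr)$; restricting the cochain to $\cJ^\infty(\partial M, M)^{\otimes \bullet+1}$ kills every term factoring through $i^\ast$, and the boundary components of $\bTslch$ vanish because their indicial family contributions disappear on functions that vanish to infinite order on $\partial M$. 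What remains is precisely the Connes--Moscovici retracted JLO cocycle for $\dirac$ on $\cJ^\infty(\partial M, M)$, which by \cite{ConMos:TCC} represents the Connes--Chern character of $[\dirac]$ as recalled in Section \ref{Sec:RelConCheDirac}. Finally, the $t$-independence from (2), combined with the explicit $t\to 0+$ limit already established, pins down the class. The hardest step, and the principal obstacle, is the rigorous control of Getzler rescaling in the cylindrical end: one must argue that the pointwise asymptotic expansion furnished by Theorem \ref{t:JLObCommutatorAsymptotic} can be combined with Getzler's rescaling at each point, that the resulting \textup{b}-integral is finite and equals the $\hat A$-current with the correct normalization $(2\pi i)^{-q}/(2q)!$, and that no anomalous boundary-at-infinity contribution intrudes through the partie finie.
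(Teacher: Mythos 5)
Your treatment of (1)--(3) and of the pointwise limit in (4) follows the paper's own route: the cocycle, $t$-independence and $S$-periodicity statements are exactly the coboundary identities assembled in the paragraphs preceding the theorem, and the small-time limit is obtained, as in the paper, by combining the short-time estimates of Proposition \ref{p:ShortTimeEst} (which kill $B\,\bTslch^{k+1}_t(\dirac)$ and $\Tslch^k_t(\diracbdy)$ in the limit and reduce the sum to finitely many terms) with the asymptotic expansion of Theorem \ref{t:JLObCommutatorAsymptotic} and Getzler's pointwise rescaling to identify the constant term with the $\hat{A}$-current. The issue you single out as the principal obstacle --- that no anomalous contribution intrudes through the partie finie at the cylindrical end --- is precisely what Theorem \ref{t:JLObCommutatorAsymptotic} settles: it asserts that the \textup{b}-trace of the JLO integrand expands with coefficients given by the $\int_{\tb M}$-regularized integrals of the same local invariants, so nothing further is needed there.

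The one genuine gap is in your identification of the class with the Connes--Chern character of $[\dirac]$. After restricting to $\cJ^\infty(\pl M, M)$ you assert that what remains ``is precisely the Connes--Moscovici retracted JLO cocycle for $\dirac$, which by \cite{ConMos:TCC} represents the Connes--Chern character of $[\dirac]$''. This cannot be invoked directly: the operator in your restricted cocycle is the \textup{b}-Dirac operator of the exact \textup{b}-metric, whose resolvent is not compact (its essential spectrum fills $[\inf\spec\diracbdy^2,\infty)$), so it is not the finitely summable Fredholm module to which \cite{ConMos:TCC} and Section \ref{Sec:RelConCheDirac} apply --- that module is built from $\dirac_{\APS}$ for a metric smooth up to the boundary. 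The paper bridges this by a cohomological detour which your closing sentence only gestures at: by $t$-independence and the computed $t\to 0+$ limit, the restricted class equals that of the current $\int_{\tb M}\bomega_\dirac\wedge\bullet$; under $HP^\bullet\big(\cJ^\infty(\pl M,M)\big)\cong H^{\textrm{dR}}_\bullet(M,\pl M;\C)$ and Poincar\'e duality this depends only on the absolute de Rham class of $\hat{A}(\bnabla^2_g)$ on $M\setminus\pl M$; Chern--Weil transgression shows that class is independent of the interior metric, so one may replace the \textup{b}-metric by a metric smooth up to the boundary and only then invoke Proposition \ref{p:ML200909292}. Without this metric-comparison step the identification is incomplete.
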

The pointwise limit will be explained in the proof below. Up to normalization 
constants $\bomega_{\dirac}$ is the $\hat A$ form $\hat A(\bnabla^2_g)$ and 
$\omega_{\diracbdy}$ is the $\hat A$ form $\hat A(\nabla^2_{g_\pl})$ on the 
boundary. Note also that $\iota^* \go_{\dirac}=\go_{\diracbdy}$ .
\comment{does anyone have the energy
to make $\go_{\dirac}$ more explicit, I guess no}
\begin{proof}
It remains to prove (4). So consider 
\[a_0, a_1,\dots, a_j\in \bcC(M^\circ).\]
Using
Getzler's asymptotic calculus (\emph{cf.}~\cite{Get:POS}, \cite[\S 3]{ConMos:CCN}, and
\cite[Thm.~4.1]{BloFox:APO}) one shows that the \emph{local} heat invariants \sind{local heat
invariants}
of 
\[ a_0 e^{-\sigma_0 t\dirac^2}[\dirac,a_1] e^{-\sigma_1 t\dirac^2}\ldots 
[\dirac,a_j] e^{-\sigma_j t \dirac^2}\]
(\emph{cf.} Proposition \plref{p:JLOCommutatorAsymptotic}) satisfy 
\begin{multline}%\begin{split}
 t^j\; \int_{\Delta_j}\str_{q,W_p}\Bigl( a_0 e^{-\sigma_0 t\dirac^2}[\dirac,a_1] 
 e^{-\sigma_1 t\dirac^2}\ldots [\dirac,a_j] e^{-\sigma_j t \dirac^2}\Bigr)(p,p) \,
 d\vol_{\bmet} (p) \\
 = \frac{1}{j!}\big(\bomega_{\dirac} \wedge a_0 d a_1\dots\wedge d a_j\big)_{|p}  
 + O(t^{1/2}),\quad t\to 0+.\label{eq:ML200909265}
%\end{split}
\end{multline}
Here $\str_{q,W_p}$ denotes the fiber supertrace in $W_p$, $q$ indicates the Clifford 
degree of $\dirac$, \emph{cf.}~Section \plref{s:qDirac},
the factor $\frac{1}{j!}$ is the volume of the simplex $\Delta_j$.
This statement holds \emph{locally} on any riemannian manifold for any choice 
of a self-adjoint extension of a Dirac operator. So it holds for $\dirac$ and 
accordingly for $\diracbdy$.

From Theorem \plref{t:JLObCommutatorAsymptotic} and its well-known analogue for 
closed manifolds, \emph{cf.}~\cite[Sec.~4]{ConMos:TCC}, we thus infer
\begin{multline}\label{eq:ML200909261}
  \lim_{t\to 0+} \bCh^j(t\dirac)(a_0,\dots,a_j)\\
  =\frac{1}{j!}\int_{\tb M} \bomega_{\dirac} \wedge a_0 d a_1\dots\wedge d a_j,
  \qquad a_0,\ldots, a_j\in\bcC(M^\circ),
\end{multline}
resp. 
\begin{multline}\label{eq:ML200909262}
  \lim_{t\to 0+} \Ch^{j-1}(t\diracbdy)(a_0,\dots,a_{j-1})\\
  =\frac{1}{(j-1)!}\int_{\pl M} \omega_{\diracbdy} 
  \wedge a_0 d a_1\dots\wedge d a_{j-1},\qquad a_0,\ldots, a_{j-1}\in \cC^\infty(M).
\end{multline}
Furthermore, in view of \eqref{eq:ML200909263} we have for $k\geq \dim M -1$
\begin{equation}
\begin{split}
    \lim_{t\to 0+}  \bTslch^{k+1}_t(\dirac)(a_0,\dots,a_{k+1})&=0, \qquad  
    a_0,\ldots,a_{k+1}\in\bcC(M^\circ),\\
    \lim_{t\to 0+} \Tslch^{k}_t(\diracbdy)(a_0,\dots,a_k)&=0, \qquad  
    a_0,\ldots,a_k\in\cC^\infty(M).
\end{split}
\end{equation}

\sind{de Rham!current}
To interpret these limit results we briefly recall the relation between 
\deRham\ currents and relative cyclic cohomology classes over 
$(\bcC(M),\cC^\infty(M))$, \emph{cf.}~also \cite[Sec.~2.2]{LesMosPfl:RTK}.

Given a \deRham\ current $C$ of degree $j$ then $C$ defines naturally
a cochain $\widetilde C\in C^j(\bcC(M))$ by putting
$\widetilde C(a_0,\dots,a_j):= \frac{1}{j!}\langle C,a_0da_1\wedge\dots\wedge da_j\rangle$.
One has $b\widetilde C=0$ and $B\widetilde C=\widetilde{\pl C}$, where $\pl$
is the codifferential. Because of this identification we will from now on
omit the $\sim$ from the notation if no confusion is possible.

Given a closed \textup{b}-differential form $\go$ on $M$ of even degree. 
By $C_\go$ we denote the \deRham\ current $\int_{\tb M} \go\wedge -$.
There is a natural pullback $\iota^*\go$ at $\infty$
(\emph{cf.} Definition and Proposition \plref{defprop15}),
%\footnote{This corresponds to the pullback to the boundary
%under the natural diffeomorphism between the manifold with cylindrical ends
%and the \textup{b}-manifold, see Subsection \plref{App:bdefbmet}.}, 
which is a closed even degree form on $\pl M$. We now find
\begin{equation}
\begin{split}
 \langle \pl C_{\go} , \alpha \rangle &= \int_{\tb M} \go \wedge d\alpha = \int_{\tb M} d (\go \wedge \alpha)
          = \int_{\pl M} \iota^*(\go \wedge \alpha) = \\
&= \int_{\pl M} \iota^*(\go) \wedge \iota^*(\alpha) = \langle  C_{\iota^*\go} ,  \iota^*(\alpha) \rangle .
\end{split}
\end{equation}
In view of Section \plref{SubSec:RelCycCoh} this means that the \emph{pair}
$(C_\go,C_{\iota^* \go})$ is a relative \deRham\ cycle or via the above mentioned
identification between \deRham\ currents and cochains that $(\widetilde{C_\go}, \widetilde{C_{\iota^* \go}})$ 
is a relative cocycle in the relative total complex Eq.~\eqref{eq:ML200909264}.

If $\go=\sum_{j \ge 0} \go_{2j}$ with closed \textup{b}-differential forms of degree $2j$
then the pair $(\go,\iota^*\go)$ still gives rise to a relative cocycle of degree 
$\dim M$ in the relative total complex.
These considerations certainly apply to the even degree forms 
$\bomega_\dirac, \omega_{\diracbdy}$ which satisfy
$\iota^*(\bomega)=\omega_{\diracbdy}$. 
The limit results \eqref{eq:ML200909261}, \eqref{eq:ML200909262}, 
\eqref{eq:ML200909263} can then be summarized as
\begin{equation}\label{eq:ML200909265b}
\lim_{t\to  0+}\big(\btch^k_t (\dirac),  \ch^{k-1}_t (\diracbdy) \big)
 =  \Big( \int_{\tb M} \bomega_{\dirac} \wedge \bullet , 
   \int_{\pl M} \omega_{\diracbdy}\wedge \bullet \Big).
\end{equation}
The limit on left is understood pointwise for each component of pure degree.

Finally we need to relate $\big( \btch^k_t(\dirac),\ch^{k-1}_t(\diracbdy) \big)$ to 
the Chern character of $[\dirac]\in K_m(M,\partial M)$. First recall from 
Eq.~\eqref{Eq:DefRelCycCoh} that 
$HP^\bullet \big( \cC^\infty (M),\cC^\infty (\partial M) \big)$
is naturally isomorphic to $HP^\bullet \big( \cJ^\infty (\partial M, M)\big)$.
Under this isomorphism, the class of the pair 
$\big( \btch^k_t(\dirac),\ch^{k-1}_t(\diracbdy) \big)$ is mapped to 
${\btch^k_t(\dirac)}_{|\cJ^\infty (\partial M, M)}$, just because elements 
of $\cJ^\infty (\partial M, M)$ vanish on $\partial M$. 
We note in passing that by \eqref{eq:ML20090219-4}
a smooth function $f$ on $M^\circ$ lies in $ \cJ^\infty (\partial M, M)$ 
iff in cylindrical coordinates one has for  all $l,R$ and every 
differential operator $P$ on $\partial M$
\begin{displaymath}
  \partial_x^l Df (x,p) = O(e^{Rx}) , \quad x \mapsto -\infty .
\end{displaymath}
In view of \eqref{eq:ML200909265}, the class of 
$\big( \btch^k_t(\dirac),\ch^{k-1}_t(\diracbdy) \big)$ in  
$HP^\bullet \big( \cJ^\infty (\partial M, M)\big) $ equals that of 
$\widetilde{C_{\bomega_\dirac}}$. As explained in Section \ref{Sec:RelConCheDirac}, 
$HP^\bullet \big( \cJ^\infty (\partial M, M)\big) $ is naturally isomorphic to 
$H^\textrm{dR}_\bullet (M,\pl M; \C)$. Under this isomorphism, 
$\widetilde{C_{\bomega_\dirac}}$ corresponds to the relative \deRham\ cycle
$\big( C_{\bomega_\dirac} , C_{\omega_{\diracbdy}} \big)$. Finally, note that under
the Poincar\'e duality isomorphism $H^\textrm{dR}_\bullet (M,\pl M; \C)\cong 
H^\bullet_\textrm{dR} (M\setminus \pl M;\C) $, the relative \deRham\ cycle
$\big( C_{\bomega_\dirac} , C_{\omega_{\diracbdy}} \big)$ is mapped onto the closed form 
$\bomega_\dirac$. This line of argument shows that the class of 
$\big( \btch^k_t(\dirac),\ch^{k-1}_t(\diracbdy) \big)$
depends only on the absolute \deRham\ cohomology class of the closed form
$\bomega_\dirac = c \cdot \hat{A} \big( \bnabla^2_g\big)$ on the open manifold 
$M\setminus \pl M$. The transgression formula in Chern--Weil theory shows that the 
(absolute) \deRham\ cohomology class of $\hat{A}\big( \nabla^2_g\big)$ is independent 
of the metric $g$ on $M^\circ$. Thus the \deRham\ class of $\bomega_\dirac$ 
equals that of $\omega_\dirac = c\cdot \hat{A}\big( \nabla^2_{g_0}\big)$ for any 
smooth metric $g_0$. Choosing $g_0$ to be smooth up to the boundary we infer from
Section \ref{Sec:RelConCheDirac} and Proposition \ref{p:ML200909292} that the class of  
$\big( \btch^k_t(\dirac),\ch^{k-1}_t(\diracbdy) \big)$ in 
$HP^\bullet \big( \cC^\infty (M),\cC^\infty (\pl M) \big) \cong 
HP^\bullet \big( \cJ^\infty (\partial M, M)\big) $ equals that of the
{\CoChch} of $[\dirac]$.
\end{proof}

\subsection{The large time limit and higher $\eta$--invariants}

Let us now assume that the boundary Dirac $\diracbdy$ is invertible.
In view of Proposition \plref{p:estimate-tChern-infty} and
Proposition \plref{p:ShortTimeEst} we can now, for $k\ge\dim M$,
form the transgressed cochain
\begin{equation}
	\bTslch_\infty^k(\dirac)(a_0,...,a_k)=\int_0^\infty \bslch^{k} (s\dirac,
 \dirac)(a_0,\ldots,a_k) \, ds, 
\end{equation}
for $a_0,\ldots,a_k\in \bcC(M^\circ)$.
%From \cite[Lemma 6.3, 2.]{Get:CHA} one derives\marginpar{maybe more direct argument}
%\begin{equation}
%   B \Ch_\textup{G}^{k+1}(\dirac)(a_0,...,a_k)=\bllangle [\dirac,a_0],...,[\dirac,a_k]\rrangle.
%\end{equation}
%Decomposing the lhs and the rhs into a $0$-- and a $1$--form and
%comparing the form degree $1$ part (cf. \eqref{eq:combined-Chern-split} and
%\eqref{eq:split-JLO-scalar-form}) 
In view of Eq.~\eqref{eq:ML200909024} we arrive at 
%\marginpar{Here  $\dot\dirac = \dirac$.}
\begin{align}
  %\begin{split}
   B &\bTslch_\infty^{k+1}(\dirac)(a_0,...,a_k)\nonumber\\
          &= \sum_{j=0}^k (-1)^{j+1} \int_0^\infty s^{k+1}\; \blangle [\dirac,a_0],\ldots ,[\dirac,a_j],\dirac,
                   [\dirac,a_{j+1}],\ldots ,[\dirac,a_k]\rangle\, ds\displaybreak[1]\nonumber \\
          &=\sum_{j=0}^k(-1)^{j+1} \int_0^\infty s^{k+1} \int_{\Delta_{k+1}}
        \bStr_q\bigl([\dirac,a_0]e^{-\sigma_0 s^2\dirac^2} \ldots \\
          &\quad [\dirac,a_j] e^{-\sigma_j s^2\dirac^2}
            \dirac e^{-\sigma_{j+1}s^2\dirac^2}
            \ldots [\dirac,a_k]e^{-\sigma_{k+1}s^2\dirac^2}\bigr)\, d\sigma \,ds.\nonumber
%\end{split}
\end{align}

Together with Proposition \plref{p:ML20090928} we have proved the analogue of \cite[Thm.~1]{ConMos:TCC} in the relative setting:

\begin{theorem}\label{t:ML20081215} Let $k\ge\dim M$ be of
the same parity as $q$ and assume that $\diracbdy$ is invertible.
Then the pair of retracted cochains 
$\big(\bch^k_t (\dirac),\ch^{k+1}_t (\diracbdy) \big)$, $t>0$, has a limit as $t\to\infty$.
For $k=2l$ even we have
\begin{equation}\label{eq:ML20081215-1}
\begin{split}
    \bch_\infty^k(\dirac)&=\sum_{j=0}^l \kappa^{2j}(\dirac)+B\bTslch_\infty^{k+1}(\dirac)\, ,\\
         \ch_\infty^{k+1}(\diracbdy)&=          B\Tslch_\infty^{k+2}(\diracbdy)\, .
\end{split}
\end{equation}
      %&=\sum_{j=0}^k(-1)^{j+1} \int_0^\infty dt\, t^{k+1} \int_{\Delta_{k+1}}
      % \bStr_q\bigl([\dirac,a_0]e^{-\sigma_0 t^2\dirac^2}...\\
      % &\qquad  [\dirac,a_j] e^{-\sigma_j t^2\dirac^2} \dirac e^{-\sigma_{j+1}t^2\dirac^2}...e^{-\sigma_{k+1}t^2\dirac^2}\bigr)\,
     %   d\sigma\, dt.
If $k=2l+1$ is odd then
\begin{equation}\label{eq:ML20081215-2}
\begin{split}
    \bch_\infty^k(\dirac)&=B\bTslch_\infty^{k+1}(\dirac)\, ,\\
         \ch_\infty^{k+1}(\diracbdy)&=          B\Tslch_\infty^{k+2}(\diracbdy)\, . 
\end{split}
\end{equation}
\end{theorem}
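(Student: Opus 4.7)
The plan is to take the termwise limit as $t\to\infty$ of each summand defining $\bch^k_t(\dirac)$ and $\ch^{k+1}_t(\diracbdy)$, using the large-time estimates of Section \ref{estim Chern} for the interior operator together with their closed-manifold analogues from \cite{ConMos:TCC} for the invertible boundary operator. The hypothesis that $\diracbdy$ is invertible enters in two ways: it forces $\dirac$ to be Fredholm with $\inf\specess\dirac^2=\inf\spec\diracbdy^2>0$ via \eqref{eq:essspecbd}, and it ensures $\Ker\diracbdy=\{0\}$, which makes every $\kappa^\bullet(\diracbdy)$-type contribution vanish.

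\textbf{Step 1: existence of the transgression integrals at infinity.} First I would show that
\[
\bTslch_\infty^{k+1}(\dirac):=\int_0^\infty \bslch^{k+1}(s\dirac,\dirac)\,ds,\qquad
\Tslch_\infty^{k+2}(\diracbdy):=\int_0^\infty \slch^{k+2}(s\diracbdy,\diracbdy)\,ds
\]
converge as improper integrals. Integrability near $s=0$ is precisely Proposition \ref{p:ShortTimeEst}(2), which applies since $k+1>\dim M-1$. Integrability near $s=\infty$ follows from Proposition \ref{p:estimate-tChern-infty}, which yields $O(s^{-2})$ (or $O(s^{-3})$) decay of $\bslch^{k+1}(s\dirac,\dirac)$; the analogous bound for $\slch^{k+2}(s\diracbdy,\diracbdy)$ on the closed manifold $\partial M$ is the Connes--Moscovici estimate \cite[Prop.~2]{ConMos:TCC}, applicable because $\diracbdy$ is invertible. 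Consequently $B\bTslch_t^{k+1}(\dirac)\to B\bTslch_\infty^{k+1}(\dirac)$ and $B\Tslch_t^{k+2}(\diracbdy)\to B\Tslch_\infty^{k+2}(\diracbdy)$ as $t\to\infty$.

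\textbf{Step 2: pointwise limits of the JLO components and parity bookkeeping.} Proposition \ref{p:ML20090928} gives $\lim_{t\to\infty}\bCh^{k-2j}(t\dirac)=\kappa^{k-2j}(\dirac)$ when $k-2j$ is even and $=0$ when $k-2j$ is odd. The closed-manifold analogue (\cite[Thm.~1]{ConMos:TCC}, with $H_\pl=0$) yields $\lim_{t\to\infty}\Ch^{k-2j+1}(t\diracbdy)=0$ for every $j$, because the only potentially nonzero limit contributions would be the $\kappa^{\bullet}(\diracbdy)$-cochains, all of which vanish under the invertibility hypothesis. Splitting by parity: when $k=2l$, the indices $k-2j$ for $0\leq j\leq l$ are all even, producing $\sum_{j=0}^l \kappa^{2j}(\dirac)$; when $k=2l+1$, every $k-2j$ is odd and the entire sum $\sum_j \bCh^{k-2j}(t\dirac)$ vanishes in the limit. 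Combining with Step 1 yields \eqref{eq:ML20081215-1} and \eqref{eq:ML20081215-2}.

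\textbf{Main obstacle.} The only delicate point is the interchange of the limit $t\to\infty$ with the infinite sums $\sum_{j\geq 0}$ in the definitions of $\bch^k_t$ and $\ch^{k+1}_t$; but these sums are in fact finite, because $\bCh^{k-2j}(t\dirac)$ and $\Ch^{k-2j+1}(t\diracbdy)$ vanish identically once $k-2j<0$ (indices of Hochschild cochains are nonnegative), so no genuine interchange is required. The real technical content is thus concentrated in Propositions \ref{p:estimate-tChern-infty} and \ref{p:ML20090928} of Chapter \ref{s:cylinder-estimates}, which were proved precisely to handle the non-compactness of $M^\circ$ and the passage through the $\textup{b}$-trace defect \eqref{eq:bTraceDefectClifford}; once those are invoked, the remainder of the proof parallels the closed-manifold argument of \cite[Thm.~1]{ConMos:TCC} mutatis mutandis.
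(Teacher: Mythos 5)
Your proposal is correct and follows essentially the same route as the paper: the authors likewise combine Proposition \ref{p:ShortTimeEst} and Proposition \ref{p:estimate-tChern-infty} to form $\bTslch_\infty^{k+1}(\dirac)$ (and its boundary analogue), and then invoke Proposition \ref{p:ML20090928} together with the vanishing of $\Ker\diracbdy$ to identify the limits of the JLO components. Your added remark that the sums over $j$ are finite, so no limit interchange is needed, is a correct (if tacit in the paper) observation.
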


% end of limit-retracted-large.tex
%%%%%%%%%%%%%%%%%%%%%%%%%%%%%%%%%%%%%%%%%%%%%%%%%%%%%%%%%%%%%%%%%%%%%%%%%%%%%%%%%%%%%%%%%%%%%%%%%%%%%%%%%%%%%%%%%%%%%%%%%%%%%%%%%%%%%%%%%%

%%%%%%%%%%%%%%%%%%%%%%%%%%%%%%%%%%%%%%%%%%%%%%%%%%%%%%%%%%%%%%%%%%%%%%%%%%%%%%%%%%%%%%%%%%%%%%%%%%%%%%%%%%%%%%%%%ML
%\input{pairing}

\section{Relative pairing formul\ae\, and geometric consequences} \label{s: geom pairing}
\sind{pairing|(}

Let us briefly recall some facts from the theory of
boundary value problems for Dirac operators \cite{BooWoj:EBP}.
Given a Dirac operator $\dirac$ acting on sections
of the bundle $W$ on a compact riemannian manifold with boundary
$(M,g)$. In contrast to the rest of the paper $g$ is
a "true" riemannian metric, smooth and non-degenerate up to the boundary,
and not a \textup{b}-metric. We assume that all structures
are product near the boundary, that is there is a collar
$U=[0,\eps)\times\pl M$ of the boundary such
that $g\rest{U}=dx^2\oplus g\rest{\pl M}$ is a product metric.
In particular the formul\ae\, of Remark \plref{r:DiracCylinderFormulas}
hold.

We assume furthermore that we are in the even situation. That is,  
$\dirac$ is odd with respect to a $\Z_2$-grading. 
Then in a collar of the boundary $\dirac$ takes the form
\begin{equation}\label{eq:ML20090218-1}
    D=\left[\begin{matrix}  0 & D^-\\
                             D^+ & 0
             \end{matrix}\right]
      = \left[\begin{matrix}  0 & -\frac{d}{dx}+A^+\\
                             \frac{d}{dx}+A^+ & 0
             \end{matrix}\right]
      =\sfc(dx)\frac{d}{dx} +\diracbdy.
\end{equation}
In the matrix notation we have identified $W^+$ and $W^-$ via
$\sfc(dx)$ and put $A^+:=\bigl(\sfc(dx)^{-1}\diracbdy\bigl)\rest{W^+}$. 
$A^+$ is a first order self-adjoint elliptic differential operator. 

Let $P\in\pdo^0(\partial M;W^+)$
be a pseudodifferential projection with $P-1_{[0,\infty)}(A^+)$
of order $-1$. Then we denote by $\dirac_P$ the operator
$\dirac$ acting on the {\domain}
$\bigsetdef{u\in L^2_1(M;W^+)}{P(u\rest{\partial M})=0}$.

$\dirac_P$ is a Fredholm operator. The Agranovich--Dynin formula 
\cite[Prop. 21.4]{BooWoj:EBP}
\begin{equation}\label{eq:AgrDynin}
 \ind \dirac^+_P - \ind \dirac^+_Q=\ind (P,Q)=:
 \ind\bigl(P:\im Q\longrightarrow \im P\bigr)
\end{equation}
expresses the difference of two such indices
in terms of the relative index of the two projections $P,Q$.

Choosing for $P$ the positive spectral projection 
$P_+(A^+)=1_{[0,\infty)}(A^+)$ of $A^+$ we obtain the {\APS}  index
\begin{equation}\label{eq:APSindex}
\indAPS \dirac^+= \ind \dirac^+_{P_+(A^+)}.
\end{equation}

We shortly comment on the relative index introduced above 
(\emph{cf.}~\cite{AvrSeiSim:IPP}, \cite[Sec.~15]{BooWoj:EBP}, 
\cite[Sec.~3]{BruLes:BVPI}). Two orthogonal projections $P,Q$ in a Hilbert 
space are said to form a Fredholm pair if 
$PQ:\im Q\longrightarrow \im P$ is a Fredholm operator. The index of this 
Fredholm operator is then called the \emph{relative index},\sind{relative!index} $\ind(P,Q)$, 
of $P$ and $Q$. It is easy to see that $P,Q$ form a Fredholm pair if the 
difference $P-Q$ is a compact operator. Furthermore, the relative index 
is additive
\begin{equation}\label{eq:rel-index-additive}
\ind (P,R)=\ind (P,Q)+\ind (Q,R)
\end{equation}
if $P-Q$ or $P-R$ is compact
\cite[Prop. 15.15]{BooWoj:EBP}, \cite[Thm.~3.4]{AvrSeiSim:IPP}.
In general, just assuming that all three pairs
$(P,Q), (Q,R)$ and $(Q,R)$ are Fredholm is not sufficient for \eqref{eq:rel-index-additive} 
to hold.

\sind{spectral flow}
Sometimes the spectral flow of a path of self-adjoint operators can
be expressed as a relative index. The following proposition
is a special case of \cite[Thm.~3.6]{Les:USF}:

\begin{proposition}\label{p:SF-compact-perturbation} Let $T_s=T_0+\widetilde T_s, 0\le s\le 1,$ 
be a path of self-adjoint Fredholm operators in the Hilbert space $H$. Assume
that $\widetilde T_s$ is a continuous family of bounded $T_0$-compact
operators. Then the spectral flow of $(T_s)_{0\le s\le 1}$ is given by
\[
\SF (T_s)_{0\le s\le 1}=-\ind (P_+(T_1),P_+(T_0)),
\]
where $P_+(T_s):= 1_{[0,\infty)}(T_s)$.
\end{proposition}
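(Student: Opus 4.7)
\medskip

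The plan is to reduce the identity to two ingredients: stability of the Fredholm‐pair property of $\bigl(P_+(T_s),P_+(T_0)\bigr)$ along the path, and a local eigenvalue‐counting argument on a partition of $[0,1]$.

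First I would show that for every $s\in[0,1]$ the difference $P_+(T_s)-P_+(T_0)$ is a compact operator, so that in particular $\bigl(P_+(T_s),P_+(T_0)\bigr)$ is a Fredholm pair and the relative index on the right-hand side is well defined. By the second resolvent identity,
\[
  (T_s-i)^{-1}-(T_0-i)^{-1}=-(T_s-i)^{-1}\widetilde T_s (T_0-i)^{-1},
\]
and the $T_0$-compactness of $\widetilde T_s$ makes the right-hand side compact. Hence $\varphi(T_s)-\varphi(T_0)$ is compact for every $\varphi\in C_0(\R)$ by a Stone–Weierstrass argument applied to functions of $(T-i)^{-1}$. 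Since both $T_0$ and $T_s$ are Fredholm, $0$ is isolated in their essential spectrum, and one can write $\chi_{[0,\infty)}=\varphi+\psi$ with $\varphi\in C_0(\R)$ and $\psi$ supported in a spectral gap of $T_0$ and $T_s$ around $0$ (after splitting off a finite-rank correction coming from the finite-dimensional part of $\spec T_{\bullet}\cap(-\varepsilon,\varepsilon)$). This gives the required compactness and, by continuous dependence, norm continuity of $s\mapsto P_+(T_s)$ modulo compacts.

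Next I would partition $[0,1]=\{0=s_0<s_1<\cdots<s_N=1\}$ finely enough that there exist $a_i>0$ with $\pm a_i\notin\spec T_s$ for $s\in[s_{i-1},s_i]$ and such that $Q_i(s):=\mathbf 1_{(-a_i,a_i)}(T_s)$ is a norm-continuous family of finite-rank projections of constant rank on $[s_{i-1},s_i]$; this is possible by norm continuity of $s\mapsto T_s$ and the Fredholm hypothesis, and it is the standard setup for the definition of spectral flow. By the very definition of the spectral flow in this setting,
\[
  \SF\bigl(T_s\bigr)_{s\in[s_{i-1},s_i]}
    =\dim\Bigl(\operatorname{ran}\mathbf 1_{[0,a_i)}(T_{s_i})\Bigr)
     -\dim\Bigl(\operatorname{ran}\mathbf 1_{[0,a_i)}(T_{s_{i-1}})\Bigr).
\]
On the other hand, the pair of projections $P_+(T_{s_i}),P_+(T_{s_{i-1}})$ differs, modulo the common $\ge a_i$ part, only on the finite-dimensional range of $Q_i(s_i)+Q_i(s_{i-1})$, and a direct finite-dimensional computation there identifies
\[
  -\ind\bigl(P_+(T_{s_i}),P_+(T_{s_{i-1}})\bigr)
  =\dim\mathbf 1_{[0,a_i)}(T_{s_i})-\dim\mathbf 1_{[0,a_i)}(T_{s_{i-1}}).
\]
This is the key local identity.

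Finally I would assemble the global statement by adding over the partition. Additivity of spectral flow over concatenation of paths is immediate from the definition. Additivity of the relative index, equation \eqref{eq:rel-index-additive}, applies telescopically because all consecutive differences $P_+(T_{s_i})-P_+(T_{s_{i-1}})$ are compact by the first step. Summing yields
\[
  \SF\bigl(T_s\bigr)_{0\le s\le 1}
  =\sum_{i=1}^N\SF\bigl(T_s\bigr)_{s\in[s_{i-1},s_i]}
  =-\sum_{i=1}^N\ind\bigl(P_+(T_{s_i}),P_+(T_{s_{i-1}})\bigr)
  =-\ind\bigl(P_+(T_1),P_+(T_0)\bigr).
\]
The main obstacle, and the place where $T_0$-compactness is essential rather than cosmetic, is ensuring that \eqref{eq:rel-index-additive} applies at every step of the telescoping: without uniform compactness of $P_+(T_s)-P_+(T_0)$ one cannot rule out pathological failure of additivity even when the individual Fredholm pairs are well defined, so the resolvent estimate of the first step is the load-bearing piece of the argument. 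The finite-dimensional eigenvalue bookkeeping on each small interval is, by contrast, routine once the partition has been chosen.
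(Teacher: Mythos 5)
The paper does not actually prove this proposition: it is quoted verbatim (up to a sign convention) from \cite[Thm.~3.6]{Les:USF}, with the remark that bounded $T_0$-compact perturbations automatically yield a Riesz continuous path so that the cited theorem applies. Your overall architecture --- compactness of $P_+(T_s)-P_+(T_0)$, a fine partition on which spectral flow is computed by finite-rank eigenvalue counting, the local identification with a relative index, and telescoping via the additivity \eqref{eq:rel-index-additive} --- is the standard proof of that cited result, and the sign bookkeeping in your local identity is consistent with the statement.

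There is, however, a genuine flaw in your first (and, as you say, load-bearing) step. The decomposition $\chi_{[0,\infty)}=\varphi+\psi$ with $\varphi\in C_0(\R)$ and $\psi$ supported near $0$ is impossible: $\chi_{[0,\infty)}$ equals $1$ near $+\infty$ while $\varphi+\psi$ tends to $0$ there. More substantively, compactness of the resolvent difference only gives compactness of $\varphi(T_s)-\varphi(T_0)$ for $\varphi$ in the norm closure of the algebra generated by the resolvents, i.e.\ for $\varphi\in C_0(\R)$ (plus constants); it does \emph{not} reach functions with distinct limits at $\pm\infty$ such as $\operatorname{sign}$ or the bounded transform $x\mapsto x(1+x^2)^{-1/2}$, and this is precisely the point where gap continuity fails to imply the conclusion and where the hypothesis of a bounded $T_0$-compact perturbation must be used in earnest. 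The correct route (this is the content of \cite[Prop.~3.1--3.2]{Les:USF}) is to first reduce, using that $0\notin\specess T_s=\specess T_0$, to a smoothed sign function modulo finite rank, and then establish compactness of $\operatorname{sign}(T_s)-\operatorname{sign}(T_0)$ via the norm-convergent integral representation $\operatorname{sign}(T)=\tfrac{2}{\pi}\int_0^\infty T(T^2+\lambda^2)^{-1}\,d\lambda$, whose integrand differences are compact with integrable norm. A second, smaller gloss: the ranges of $\mathbf 1_{[a_i,\infty)}(T_{s_i})$ and $\mathbf 1_{[a_i,\infty)}(T_{s_{i-1}})$ are not a ``common part'' but two different infinite-dimensional subspaces; you need that their relative index vanishes, which follows for a sufficiently fine partition from $\|\mathbf 1_{[a_i,\infty)}(T_{s_i})-\mathbf 1_{[a_i,\infty)}(T_{s_{i-1}})\|<1$ together with compactness of the difference. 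With these two repairs your argument is complete.
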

\begin{remark} If $T_0$ is bounded then the condition of $T_0$-compactness
just means that the $T_s$ are compact operators. If $T_0$ is unbounded with
compact resolvent then any bounded operator is automatically $T_0$-compact.
The second case is the one of relevance for us.

In \cite[Thm.~3.6]{Les:USF} Proposition \plref{p:SF-compact-perturbation}
is proved for Riesz continuous paths of unbounded Fredholm operators. Since
$s\mapsto \widetilde T_s$ is continuous the map $s\mapsto T_0+\widetilde T_s$
is automatically Riesz continuous \cite[Prop. 3.2]{Les:USF}.

Note that our sign convention for the relative index differs from that of \emph{loc. cit.}
Therefore our formulation of Proposition \ref{p:SF-compact-perturbation} differs from
\cite[Thm.~3.6]{Les:USF} by a sign, too.
\end{remark}

\begin{proposition}\label{p:SFRelAPS}
Let $(\dirac_s)_{0\le s\le 1}$ be a smooth family of self-adjoint
$\Z_2$-graded \emph{(\cf~\eqref{eq:ML20090218-1})} Dirac operators on a compact
riemannian manifold with boundary. We assume that $\dirac_s$ is in product 
form near the boundary and that $\dirac_s=\dirac_0+\Phi_s$ with a bundle 
endomorphism $\Phi_s\in\Gamma^\infty(M;\End V)$. Then
\begin{equation}
\indAPS \dirac^+_1-\indAPS \dirac^+_0 =-\SF (A_s^+)_{0\le s\le 1}.
\end{equation}
Here, as explained above, $A_s^+=(\sfc(dx)^{-1}\diracbdyvar{s})\rest{W^+}$.
\end{proposition}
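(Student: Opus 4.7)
The plan is to compare the two APS indices by interpolating through a single fixed boundary condition and then invoking Agranovich--Dynin \eqref{eq:AgrDynin} together with Proposition \ref{p:SF-compact-perturbation}. I would organize the argument in three steps.

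First I would record the structural consequences of the assumption $\dirac_s = \dirac_0 + \Phi_s$ with $\Phi_s \in \Gamma^\infty(M;\End W)$. Since $\Phi_s$ is of order zero and product structure is preserved, the tangential operators satisfy $A_s^+ = A_0^+ + \Phi_s^+|_{\partial M}$, where the perturbation is a smooth self-adjoint bundle endomorphism on $\partial M$. From standard functional calculus for a zeroth-order perturbation of an invertible (or, after a harmless $0$-avoiding deformation, essentially self-adjoint elliptic) first-order operator, the spectral projections $P_+(A_s^+)$ differ from $P_+(A_0^+)$ by an operator of order $-1$, in particular by a compact operator on $L^2(\partial M; W^+|_{\partial M})$. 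Two facts follow: (a)~$P_+(A_0^+)$ is a well-posed APS-type boundary projection for \emph{every} $\dirac_s^+$, so $(\dirac_s^+)_{P_+(A_0^+)}$ makes sense as a continuous family of Fredholm operators on the fixed domain determined by $P_+(A_0^+)$; and (b)~the pair $\big(P_+(A_s^+), P_+(A_0^+)\big)$ is a Fredholm pair, so the relative index $\ind\big(P_+(A_s^+), P_+(A_0^+)\big)$ is well defined.

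Second I would apply Agranovich--Dynin \eqref{eq:AgrDynin} to $\dirac_s^+$ with the two projections $P_+(A_s^+)$ and $P_+(A_0^+)$, obtaining
\[
\indAPS \dirac_s^+ \;=\; \ind (\dirac_s^+)_{P_+(A_0^+)} \;+\; \ind\big(P_+(A_s^+),\,P_+(A_0^+)\big).
\]
The first term on the right is independent of $s$: the realizations $(\dirac_s^+)_{P_+(A_0^+)}$ form a continuous family of Fredholm operators with a \emph{single} fixed well-posed boundary condition, so the index is homotopy invariant. Taking the difference at $s=1$ and $s=0$ (and using $\ind\big(P_+(A_0^+), P_+(A_0^+)\big) = \ind(\mathrm{id}) = 0$) therefore yields
\[
\indAPS \dirac_1^+ - \indAPS \dirac_0^+ \;=\; \ind\big(P_+(A_1^+),\,P_+(A_0^+)\big).
\]

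Third, the path $s \mapsto A_s^+ = A_0^+ + \Phi_s^+|_{\partial M}$ is a continuous family of self-adjoint Fredholm operators on the closed manifold $\partial M$, obtained from $A_0^+$ by adding the continuous family of bounded (hence automatically $A_0^+$-compact, since $A_0^+$ has compact resolvent) perturbations $\Phi_s^+|_{\partial M}$. Proposition \ref{p:SF-compact-perturbation} applies directly and gives
\[
\SF(A_s^+)_{0\le s\le 1} \;=\; -\,\ind\big(P_+(A_1^+),\,P_+(A_0^+)\big),
\]
which combined with the previous display produces the claimed identity. The main obstacle I anticipate is the rigorous verification built into step one: that $P_+(A_0^+)$ is a well-posed boundary projection for every $\dirac_s^+$, and that $(\dirac_s^+)_{P_+(A_0^+)}$ is a continuous family of Fredholm operators in a topology strong enough to force constancy of the index. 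Both points are handled within the framework of \cite{BooWoj:EBP} once one observes that $P_+(A_s^+) - P_+(A_0^+)$ is of order $-1$, but this is the only place in the argument where some care with the pseudodifferential calculus of boundary value problems is required.
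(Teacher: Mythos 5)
Your proposal is correct and follows essentially the same route as the paper's proof: fix the boundary projection $P_+(A_0^+)$ along the whole path (justified because $P_+(A_s^+)-P_+(A_0^+)$ is compact, the $A_s^+$ differing by a zeroth-order term), use constancy of the index for the resulting continuous family with fixed boundary condition, and combine Agranovich--Dynin \eqref{eq:AgrDynin} with Proposition \ref{p:SF-compact-perturbation}. The signs all check out, so no changes are needed.
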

\begin{proof} The family $s\mapsto \dirac_{s,\APS}$ is not necessarily
continuous. The reason is that if eigenvalues of $A^+_s$
cross $0$ the family $P_+(A^+_s)$ of {\APS} projections jump.

However, since $A^+_s-A^+_0$ is $0^{\textup{th}}$ order, the corresponding {\APS} 
projections $P_+(A^+_s)$ all have the same leading symbol and hence 
$P_+(A^+_s)-P_+(A^+_{s'})$ is compact for all $s,s'\in [0,1]$.

Hence we can consider the family $\dirac_{s,P_+(A^+_0)},\; 0\le s\le 1$.
Now the boundary condition is fixed and thus $s\mapsto \dirac_{s,P_+(A^+_0)}$
is a graph continuous family of Fredholm operators
\cite{Nic:MIS,BooFur:MIF,BooLesZhu:CPD}. Therefore, its index is independent of $s$.

Applying the Agranovich--Dynin formula \eqref{eq:AgrDynin} and Proposition
\plref{p:SF-compact-perturbation} we find
\begin{align}
\indAPS(\dirac^+_1)&= \ind \dirac^+_{1,P_+(A^+_0)}+\ind \bigl(P_+(A^+_1),P_+(A^+_0)\bigr)\nonumber \\
		      &=  \ind \dirac^+_{0,P_+(A^+_0)}+\ind \bigl(P_+(A^+_1),P_+(A^+_0)\bigr)\\
		      &=\indAPS \dirac^+_0 -\SF(A_s^+)_{0\le s\le 1}.\qedhere
		      \end{align}
		      \end{proof}

Recall that a smooth idempotent $p:M\longrightarrow \Mat_N(\C)$ corresponds to a smooth
vector bundle $E\simeq \im p$ and using the Grassmann
connection the twisted Dirac operator $\dirac^E$ equals $p(\dirac\otimes \id_N)p$.
To simplify notation we will write $p\dirac p$ for $p(\dirac\otimes\id_N)p$ whenever confusions are unlikely. 

We would like to extend Proposition \plref{p:SFRelAPS} 
to families of twisted Dirac operators of the form $\dirac_s=p_s \dirac p_s$,
where $p_s:M\longrightarrow \Mat_N(\C)$ is a family of orthogonal projections. 

The difficulty is that not
only the leading symbol of $\dirac_s$ but even the Hilbert space of sections, on which the operator
acts, varies.

		      \begin{proposition}\label{p:SFRelAPSa}
Let $\dirac$ be a self-adjoint $\Z_2$-graded \emph{(\cf~\eqref{eq:ML20090218-1})}
Dirac operator on a compact riemannian manifold with boundary $M$. Let 
$p_s:M\longrightarrow \Mat_N(\C)$ be a smooth family of orthogonal projections. Assume
furthermore, that in a collar neighborhood $U=[0,\eps)\times \partial M$ of $\partial M$
we have ${p_s}\rest{U}=p_s^\partial$, i.e. ${p_s}\rest{U}$ is independent of the normal variable. Then
\begin{equation}
   \indAPS p_1\dirac^+ p_1-
   \indAPS p_0\dirac^+ p_0
   =-\SF(p_s A^+ p_s)_{0\le s\le 1}.
\end{equation}
\end{proposition}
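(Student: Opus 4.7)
The plan is to reduce the situation to one where the projection, and hence the bundle on which the twisted Dirac operator acts, is independent of $s$; then Proposition \ref{p:SFRelAPS} applies directly.

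First, I would produce a smooth family of unitaries $U_s\in\Mat_N(\cC^\infty(M))$ with $U_0=I$ that intertwines $p_0$ with $p_s$, i.e.\ $U_s p_0 U_s^{-1}=p_s$. This is standard: solve the linear ODE $\dot U_s=[\dot p_s,p_s]U_s$ with $U_0=I$; a direct computation shows that the resulting $U_s$ is unitary and intertwines the projections. Since ${p_s}\rest{U}=p_s^\partial$ is independent of the normal variable in the collar, $[\dot p_s,p_s]$ is likewise independent of the normal variable there, so $U_s$ inherits this product form near the boundary, i.e.\ $U_s\rest{U}=U_s^\partial$ with $U_s^\partial$ the analogous family of unitaries on $\partial M$.

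Next, conjugate: set $\widetilde\dirac_s:=p_0\,U_s^{-1}(\dirac\otimes\id_N)U_s\,p_0$, viewed as an operator on sections of the fixed bundle $p_0(W\otimes\C^N)$. A short computation gives
\begin{equation*}
   U_s^{-1}(\dirac\otimes\id_N)U_s=\dirac\otimes\id_N+\sfc(U_s^{-1}dU_s),
\end{equation*}
so $\widetilde\dirac_s=\widetilde\dirac_0+\Phi_s$ with $\Phi_s=p_0\,\sfc(U_s^{-1}dU_s)\,p_0$ a smooth family of bundle endomorphisms. Moreover, because $U_s$ is in product form near $\partial M$, so is $\widetilde\dirac_s$, and its tangential part $\widetilde A_s^+$ is obtained from $p_0^\partial(U_s^\partial)^{-1}A^+U_s^\partial p_0^\partial$ in the obvious way. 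Since the unitary $U_s$ acts as an isomorphism $p_0(W\otimes\C^N)\to p_s(W\otimes\C^N)$ that preserves the $L^2$--inner product (and preserves the boundary condition $P_+(\widetilde A_0^+)$ up to the same unitary conjugation), we obtain the identities
\begin{equation*}
   \indAPS(p_s\dirac^+p_s)=\indAPS\widetilde\dirac_s^+,\qquad \SF(p_s A^+p_s)_{0\le s\le 1}=\SF(\widetilde A_s^+)_{0\le s\le 1},
\end{equation*}
where for the spectral flow identity I would invoke the standard unitary invariance of spectral flow for the family $s\mapsto(U_s^\partial)^{-1}(p_s^\partial A^+p_s^\partial)U_s^\partial=\widetilde A_s$.

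Finally, the family $\widetilde\dirac_s$ satisfies all hypotheses of Proposition \ref{p:SFRelAPS}: it is a smooth family of self-adjoint $\Z_2$-graded Dirac-type operators on the fixed bundle $p_0(W\otimes\C^N)$, in product form near the boundary, differing from $\widetilde\dirac_0$ by the bundle endomorphism $\Phi_s$. Applying that proposition yields
\begin{equation*}
   \indAPS\widetilde\dirac_1^+-\indAPS\widetilde\dirac_0^+=-\SF(\widetilde A_s^+)_{0\le s\le 1},
\end{equation*}
and combining with the two identities above gives the claim. The only real subtlety is checking the boundary condition transforms correctly under the unitary $U_s^\partial$; this is the step where the hypothesis that $p_s$ is constant in the normal direction is essential, since it guarantees that the conjugation is genuinely by a \emph{boundary} unitary and preserves the product structure needed for the {\APS} boundary condition to behave functorially.
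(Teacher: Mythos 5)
Your proof is correct and follows essentially the same route as the paper: both reduce to Proposition \ref{p:SFRelAPS} by trivializing the family of projections with a smooth path of unitaries in product form near the boundary (the paper simply cites \cite[Prop.~4.3.3]{Bla:KTO} for their existence, whereas you write out the Kato transport equation $\dot U_s=[\dot p_s,p_s]U_s$ explicitly), then conjugate so that $p_s\dirac^+p_s$ becomes $\dirac^+$ plus a bundle endomorphism on a fixed bundle, and conclude by unitary invariance of the index and of the spectral flow.
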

\begin{proof}
By a standard trick often used in operator $K$-theory \cite[Prop. 4.3.3]{Bla:KTO} we may
choose a smooth path of unitaries $u:M\longrightarrow \Mat_N(\C)$ such that
$p_s=u_sp_0u_s^*, u_0=\id_N$. Furthermore, we may assume that 
$u\rest{[0,\eps)\times \partial M}=u^\partial$
is also independent of the normal variable. Then
$p_s\dirac^+ p_s= u_s\bigl( p_0 u_s^*\dirac^+ u_s p_0\bigr) u_s^*$ 
and
$\bigl(p_s \dirac^+ p_s\bigr)_{\APS}=
u_s\bigl( p_0 u_s^*\dirac^+ u_s p_0\bigr)_{\APS} u_s^*$.

Since $u_s^*\dirac^+ u_s=\dirac^++u_s^* \sfc(d u_s)$ 
Proposition \plref{p:SFRelAPS}  applies
to the family $p_0 u_s^*\dirac u_s p_0$. Since the spectral flow is invariant
under unitary conjugation we reach the conclusion.
\end{proof}

\begin{definition}\label{def:SF} In the sequel we will write somewhat more suggestively
and for brevity $\SF(p_{\cdot},\diracbdy)$ instead of $\SF(p_s A^+ p_s)_{0\le s\le 1}$.
\end{definition}

\begin{theorem}\label{t:RelativePairingKTheory}

Let $M$ be a compact manifold with boundary and $W$ a degree $q$
Clifford module on $M$. Let $g$ be a smooth riemannian metric
on $M$, $h$ a hermitian metric and $\nabla$ a unitary Clifford connection on $W$. Assume that
all structures are product near the boundary. Let $\dirac=\dirac(\nabla,g)$
be the Dirac operator.

%Under the assumptions of the Proposition \plref{p:SFRelAPSa} 
Let
$[p,q,\gamma]\in K^0(M,\partial M)$ be a relative $K$-cycle . 
That is $p,q:M\longrightarrow \Mat_N(\C)$
are orthogonal projections and $\gamma:[0,1]\times\partial M\longrightarrow \Mat_N(\C)$
is a homotopy of orthogonal projections with $\gamma(0)=p^\partial, \gamma(1)=q^\partial$. Then
\begin{equation}\label{eq:RelativePairingKTheory}
\begin{split}
    \langle [\dirac],\, & [p,q,\gamma]\rangle\\
    =&-\indAPS p\dirac^+ p +\indAPS q\dirac^+ q
                     % +\SF\bigl( \gamma(s) A^+ \gamma(s)\bigr)_{0\le s\le 1},\\
                       +\SF(\gamma,\diracbdy),\\
     =& \int_{M} \go_{\dirac(\nabla,g)}\wedge \bigl(\ch_\bullet(q)-\ch_\bullet(p)\bigr)
        - \int_{\pl M} \go_{\diracbdy(\nabla,g)}\wedge \Tslch_\bullet (h),
\end{split}
%\nind{T@$\Tslch_\bullet(h)$|(}
\end{equation}
in particular the right hand side of \eqref{eq:RelativePairingKTheory} 
depends only on the relative $K$-theory class $[p,q,\gamma]\in K^0(M,\partial M)$
and the degree $q$ Clifford module $W$. It is independent of $\nabla$ and $g$.

In case all structures are \textup{b}-structures, and $\dirac = \dirac (\bnabla, \bmet)$
is the \textup{b}-Dirac operator, then we still have
\begin{equation}
  \label{eq:brelpairing}
  \langle [\dirac],\,  [p,q,\gamma]\rangle =
  \int_{\tb M} \bomega_{\dirac}\wedge \bigl(\ch_\bullet(q)-\ch_\bullet(p)\bigr)
        - \int_{\pl M} \go_{\diracbdy}\wedge \Tslch_\bullet (h) .
\end{equation}
\end{theorem}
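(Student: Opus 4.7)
The plan is to establish the three formulas in the theorem in two stages: first I would prove the cohomological formulas (the second equality of \eqref{eq:RelativePairingKTheory} and the b-version \eqref{eq:brelpairing}) directly from the retracted cocycle representatives of Section \ref{s:retracted-relative-cocycle}; then I would derive the index-theoretic formula (the first equality) from the classical Atiyah--Patodi--Singer theorem combined with the variational analysis of the $\xi$-invariant along the boundary homotopy $\gamma$.

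For the cohomological formulas, I would pair the relative cyclic cocycle $\bigl(\btch^n_t(\dirac),\ch^{n-1}_t(\diracbdy)\bigr)$ of Theorem \ref{t: CC-character}, which represents the \CoChch\ of $[\dirac]$, against the relative cyclic homology cycle
\[
\ch_\bullet[p,q,\gamma] = \bigl(\ch_\bullet(q)-\ch_\bullet(p),\,-\Tslch_\bullet(\gamma)\bigr)
\]
from \eqref{eq:ChernCharEven}, using the relative cyclic pairing \eqref{Eq:DualPair}. Since the pairing is independent of $t>0$ by Theorem \ref{t: CC-character}(2), and since the cocycle admits the pointwise small-time limit of Theorem \ref{t: CC-character}(4), letting $t\searrow 0$ yields exactly the b-integral expression \eqref{eq:brelpairing}. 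For the smooth-metric version in \eqref{eq:RelativePairingKTheory}, I would invoke Proposition \ref{p:ML200909292}: the Connes--Chern character is insensitive to the choice of $(\nabla,g)$, and the $\hat A$-forms attached to a smooth product metric and to a b-metric represent the same absolute de Rham class on $M^\circ$. A Chern--Weil transgression argument, together with the fact that the transgression term is supported near $\pl M$ where both metrics agree in product form, identifies the two cohomological expressions.

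For the APS/spectral-flow identity, I would apply the classical APS index theorem to the twisted Dirac operators $p\dirac p$ and $q\dirac q$, giving
\[
\indAPS(p\dirac^+p)=\int_M\hat A(\nabla^2_g)\wedge\ch_\bullet(p)-\xi(p\diracbdy^+ p),
\]
and similarly for $q$. Subtracting and comparing with the cohomological formula reduces the claim to the variational identity
\[
\xi(q\diracbdy^+ q)-\xi(p\diracbdy^+p)=\int_{\pl M}\hat A(\nabla^2_{g_\pl})\wedge\Tslch_\bullet(\gamma)+\SF(\gamma,\diracbdy).
\]
This is a standard variational formula for the $\xi$-invariant along the smooth path of self-adjoint operators $\gamma(s)A^+\gamma(s)$: the smooth (local) part of the variation of $\eta/2$ produces the transgression term, written intrinsically via the secondary Chern character $\slch_\bullet$ integrated against the local index form, while the jumps that occur as eigenvalues cross zero account for the spectral flow in the sense of Definition \ref{def:SF} and Proposition \ref{p:SFRelAPSa}.

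The main obstacle will be the rigorous identification of the smooth part of the $\xi$-variation with $\Tslch_\bullet(\gamma)$ from \eqref{eq:ChernTransEven} with correct normalizations, and the careful bookkeeping of eigenvalue crossings in order to extract the spectral flow cleanly. A secondary technical point is verifying the equivalence of the smooth-metric and b-metric cohomological expressions: this amounts to showing that the difference of local index densities on the collar is exact with a primitive whose pullback to $\pl M$ vanishes against $\ch_\bullet(q)-\ch_\bullet(p)$, so that the bulk integrals agree and the boundary transgression terms are unchanged.
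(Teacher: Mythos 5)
Your treatment of the two cohomological formulas is sound and essentially coincides with the paper's: pairing the retracted relative cocycle with $\ch_\bullet[p,q,\gamma]$, using $t$-independence and the $t\searrow 0$ limit of Theorem \ref{t: CC-character}(4), and then comparing the smooth-metric and \textup{b}-metric index densities as relative de Rham classes. The one point to be careful about there is that the comparison must be made at the level of the \emph{relative} class (bulk form together with its boundary transgression), which the paper handles by observing that the right-hand side descends to $H^\bullet_{\textrm{dR}}(M,\pl M;\C)$ and then specializing to a representative with $\ch_\bullet(q)-\ch_\bullet(p)$ compactly supported in $M^\circ$; your transgression argument needs to be organized the same way.

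The genuine gap is in your route to the first equality. You reduce it to the variational identity $\xi(q^\pl\diracbdy^+ q^\pl)-\xi(p^\pl\diracbdy^+ p^\pl)=\int_{\pl M}\hat{A}(\nabla^2_{g_\pl})\wedge\Tslch_\bullet(\gamma)+\SF(\gamma,\diracbdy)$ and call it ``a standard variational formula for the $\xi$-invariant.'' It is not: modulo the known decomposition of $\xi$-differences into spectral flow plus the integrated smooth variation of $\tfrac12\eta$ (the cited \cite[Lemma 3.4]{KirLes:EIM}), this identity is exactly Corollary \ref{t: gen APS flat} / Eq.~\eqref{eq: smooth eta}, which the paper \emph{deduces from} Theorem \ref{t:RelativePairingKTheory}. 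The classical APS result \cite[Prop.~6.2]{APS:SARIII} covers only trivialized flat bundles, so invoking the general case here without an independent proof is circular. One could in principle close the gap by a direct local index computation (Getzler rescaling applied to $\frac{d}{ds}\eta(\gamma(s)A^+\gamma(s))$, identifying the constant term of the relevant heat trace with the transgressed $\hat{A}$-density), but that is a substantive piece of analysis, not a citation. The paper's proof avoids it entirely: it first proves that $-\indAPS p\dirac^+p+\indAPS q\dirac^+q+\SF(\gamma,\diracbdy)$ is a homotopy invariant of the relative cycle, using the Agranovich--Dynin formula \eqref{eq:AgrDynin} and Proposition \ref{p:SFRelAPSa} to convert index differences into spectral flows that cancel around the homotopy square, and then evaluates on a representative with $p$ and $q$ equal near $\pl M$ and $\gamma$ constant, where the $\eta$-terms in the two APS index formulas cancel outright and no variational formula for $\eta$ is ever needed. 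If you retain your route, you must supply the local computation behind \eqref{eq: smooth eta}; otherwise you should switch to the homotopy-invariance reduction.
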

For the fact 
that relative $K$-cycles can be represented by triples $[p,q,\gamma]$ as above
we refer to  \cite[Thm.~5.4.2]{Bla:KTO}, \cite[Sec.~4.3]{HigRoe:AKH}, see
also \cite[Sec.~1.6]{LesMosPfl:RPC}.
\begin{proof} 
Denote the right hand side of \eqref{eq:RelativePairingKTheory} by $I(p,q,\gamma)$.
We first show that $I(p,q,\gamma)$ depends indeed only on the relative $K$-theory class of 
$(p,q,\gamma)$. By the stability of the Fredholm index we may assume that in a collar 
neighborhood of $\partial M$ the projections $p,q$ do not depend on the normal variable.

After stabilization we need to show the homotopy invariance of $I(p,q,\gamma)$.
Now consider a homotopy $(p_t,q_t,\gamma_t)$ of relative $K$-cycles. 
Then by Proposition \plref{p:SFRelAPSa} we have (\emph{cf.}~Figure \ref{fig:SF}, page \pageref{fig:SF})
\begin{align*}
            &I(p_1,q_1,\gamma_1)-I(p_0,q_0,\gamma_0)\\
       =\,  & \hphantom{-} \indAPS(p_1 \dirac^+ p_1)-\indAPS( q_1\dirac^+ q_1)- 
                 %\SF(\gamma_1(s)A^+ \gamma_1(s))_{0\le s\le 1}\\
                 \SF\bigl(\gamma_1(\cdot),\diracbdy\bigr)\\
            & - \indAPS(p_0 \dirac^+ p_0)+\indAPS( q_0\dirac^+ q_0)
                %+ \SF(\gamma_0(s) A^+ \gamma_0(s))_{0\le s\le 1}\\
                + \SF\bigl(\gamma_0(\cdot),\diracbdy\bigr)\\
       =\,  &   %- \SF\bigl(\gamma_t(0) A^+ \gamma_t(0)\bigr)_{0\le t\le 1}
                - \SF\bigl(\gamma_{\cdot}(0),\diracbdy\bigr)
               %+ \SF\bigl(\gamma_t(1) A^+ \gamma_t(1)\bigr)_{0\le t\le 1}\\
               + \SF\bigl(\gamma_{\cdot}(1),\diracbdy\bigr)
           % &   - \SF\bigl(\gamma_1(s) A^+ \gamma_1(s)\bigr)_{0\le s\le 1}
                %+ \SF\bigl(\gamma_0(s) A^+ \gamma_0(s)\bigr)_{0\le s\le 1}\\
                - \SF\bigl(\gamma_1(\cdot),\diracbdy\bigr) + \SF\bigl(\gamma_0(\cdot),\diracbdy\bigr)\\
      =\,   &0,
\end{align*}
by the homotopy invariance of the Spectral Flow.

So the l.h.s \emph{and} the r.h.s. of \eqref{eq:RelativePairingKTheory} depend only
on the relative $K$-theory class of $[p,q,\gamma]$. By \excision\  in $K$-theory
(it can of course be shown in an elementary way by exploiting Swan's Theorem) every relative
$K$-theory class can even be represented by a triple $(p,q,p\rest{\pl M})$ such that
$p\rest{[0,\eps)\times \pl M}=q\rest{[0,\eps)\times \pl M}$ and hence 
$\gamma(s)=p\rest{\pl M}$ is constant.

\FigSF
Then the twisted version of the {\APS} Index Theorem gives
\begin{equation} 
\label{relAPS}
\begin{split}
   \indAPS q \dirac^+ q - \indAPS   p \dirac^+ p= 
   \int_M \omega_\dirac \wedge \big( \ch_\bullet (q) -  \ch_\bullet (p) \big), 
\end{split}
\end{equation}
where $\go_\dirac$ denotes the local index density of $\dirac$. Note that
since the tangential operators of $ p \dirac^+ p$
and of  $q \dirac^+ q$ coincide the $\eta$-terms cancel.

As outlined in Section \ref{Sec:RelConCheDirac} (\emph{cf.}~also the proof of Theorem
\plref{t: CC-character}) the {\CoChch} of $[\dirac]$
in $HP^\bullet \bigl(\cJ^\infty (\pl M , M )\bigr) \simeq H_\bullet^{\rm dR}
(M  \setminus \partial M ; \C)$ is represented by $\int_M \go_\dirac$. 
By construction, the form $\ch_\bullet(q)-\ch_\bullet(p)$ is compactly supported 
in $M\setminus \pl M$.
Thus the right hand side of \eqref{relAPS} equals 
the pairing $\langle [D], [p,q,p\rest{\pl M}]\rangle$ and the first equality in 
\eqref{eq:RelativePairingKTheory} is proved. 
\sind{pairing}

To prove the second equality in \eqref{eq:RelativePairingKTheory} 
we note that it represents the Poincar{\'e} duality pairing between
the \deRham\ cohomology class of $\go_{\dirac(\nabla,g)}$ 
(note $\iota^*\go_\dirac=\go_{\diracbdy}$)
and the relative \deRham\ cohomology
class of the pair of forms $(\ch_\bullet(q)-\ch_\bullet(p),\Tslch_\bullet(h))$. 
Hence it depends only on the class $[p,q,h]\in K^0 (M,\partial M)$ and on $[\dirac]$.
In the situation above where $p$ and $q$ coincide in a collar of the boundary it
equals $\langle [\dirac],[p,q,\gamma]\rangle$ and hence by homotopy invariance
the claim is proved in general up to Eq.~\eqref{eq:brelpairing}.

For the proof of Eq.~\eqref{eq:brelpairing} note first that for a closed even 
\textup{b}-differential form  $\omega$ the map (\emph{cf.}~Definition and Proposition \plref{defprop15})
\begin{displaymath}
  \begin{split}
    \Omega^k (M) \oplus \Omega^{k-1} (\partial M)  \rightarrow \C, \quad 
    (\eta,\tau)\mapsto\int_{\tb M}\omega \wedge\eta - 
    \int_{\partial M}\iota^*\omega \wedge \tau
  \end{split}
\end{displaymath}
descends naturally to a linear form on $H^k_\textrm{dR} (M,\pl M ; \C)$. Hence
the right hand side of  Eq.~\eqref{eq:brelpairing} is well-defined and depends only on the 
class of $[p,q,h] \in K^0 (M,\pl M)$. As before we may therefore specialize to
$(p,q,p_{|\pl M})$ such that 
$p_{|[0,\varepsilon ) \times \pl M}=q_{|[0,\varepsilon ) \times \pl M} $.
Then $\ch_\bullet(q)-\ch_\bullet(p)$ 
has compact support in $M\setminus \pl M$ and the remaining claim follows from 
Theorem \ref{t: CC-character} (4).  
\end{proof}

We now proceed to
express the pairing between relative K-theory classes and the fundamental
relative $K$-homology class in cohomological terms. We assume here that 
we are in the \textup{b}-setting. 
\sind{pairing}

Recall that a relative K-theory class in
$K^0(M,\pl M)$ is represented by a pair of bundles $(E,F)$ over $M$
whose restrictions $E_\pl,F_\pl$ to $\pl M$ are related by a homotopy $h$.
We will explicitly write the formul\ae\, in the even dimensional case and 
only point out where the odd dimensional case is different.

The Chern character of $[E,F,h] \in K^0 (M, \partial M)$ is then represented by
the relative cyclic homology class 
\begin{equation}
\begin{split}
  \ch_\bullet  \big( [E,F,h]  \big) \,= \,
  \Big(
    \ch_\bullet(p_F) - \ch_\bullet (p_E) \, , \, - \Tslch_\bullet (h)   \Big),
\end{split}
\end{equation}
\emph{cf.}~Eq.~\eqref{eq:ChernCharEven}.
 
By Theorem \plref{t: CC-character} we have for any $t > 0$  
\begin{equation}  \label{eq: McKean--Singer formula}
\begin{split}
\big\langle& [\dirac], \, [E,F,h] \big\rangle 
\,=\,\big\langle \big( \bch_t^n (\dirac), \ch_t^{n+1} (\diracbdy) \big),\, 
 \ch_\bullet([E,F,h])\big\rangle \\
 =\, & \big\langle \sum_{j \geq 0}  \bCh^{n-2j} (t\dirac) + B\, \tb\Tslch_t^{n+1} (\dirac), 
  \,  \ch_\bullet (p_F) -\ch_\bullet (p_E)  \big\rangle  \\
  & -  \big\langle  \sum_{j \geq 0}  \Ch^{n-2j+1} (t\dirac_\partial) +
  B \Tslch_t^{n+2} (\dirac_\partial) , \, \Tslch_\bullet (h)   \big\rangle.
%   & +\big\langle \Tslch_t^n(\diracbdy),\ch_\bullet(p_{F,\pl})-\ch_\bullet(p_{E,\pl})\big\rangle.
\end{split}
\end{equation}
Letting $t \searrow 0$ yields, again by Theorem \plref{t: CC-character}, the local form of the pairing:
\begin{equation} \label{eq: local pairing even}
\begin{split}
\big\langle& [\dirac], \, [E,F,h] \big\rangle \\
& = \int_{\tb M} \go_\dirac \wedge \big( \ch_\bullet (p_F) - \ch_\bullet (p_E) \big) 
- \int_{\partial M} \go_{\diracbdy}(\partial M) \wedge \Tslch_\bullet (h) .
\end{split}
\end{equation}
If $\diracbdy$ is invertible then, at the opposite end, letting $t \nearrow \infty$ gives in view of Theorem   
\plref{t:ML20081215}
\begin{equation} \label{eq: stretched pairing even}
\begin{split}
\big\langle [\dirac], \, [E,F,h] &\big\rangle \, =
\big\langle \sum_{0\leq k \leq \ell}  \kappa^{2k} (\dirac)
 + B\tb \Tslch_\infty^{n+1} (\dirac) , 
\,  \ch_\bullet(p_F)  - \ch_\bullet (p_E)  \big\rangle \\
&\qquad- \big\langle B\Tslch_\infty^{n+2} (\dirac_\partial) ,   \Tslch_\bullet (h) \big\rangle,
%&+ \qquad \big\langle \Tslch_\infty^n(\diracbdy),\ch_\bullet(p_F)-\ch_\bullet(p_E)\big\rangle.
\end{split}
\end{equation}
where $2\ell=n$.

By equating the above two limit expressions \eqref{eq: local pairing even}
and \eqref{eq: stretched pairing even}
one obtains the following identity:

\begin{corollary} \label{t: higher APS}
Let $n =2\ell \geq m $ and assume that $\diracbdy$ is invertible. 
Then 
\begin{equation*}% \label{eq: higher APS}
\begin{split}
  \big\langle \kappa^0(\dirac)  , \, & p_F - p_E \big\rangle + \\
  & + \, \sum_{1\leq k \leq \ell}  (-1)^k \frac{(2k)!}{k!} \, 
 \big\langle \kappa^{2k}(\dirac)  , \,  (p_F-\frac 12)\otimes p_F^{\otimes 2k} - 
 (p_E-\frac 12)\otimes p_E^{\otimes 2k}  \big\rangle \, \\
 =\,& \int_{\tb M} \go_\dirac  \wedge \big( \ch_\bullet (p_F) - \ch_\bullet (p_E) \big)  
 - \int_{\partial M} \go_{\diracbdy}\wedge  \Tslch_\bullet (h) \\
 & - (-1)^{n/2} \frac{n!}{(n/2)!}\, 
  \big\langle B \tb\Tslch_\infty^{n+1} (\dirac) , 
  (p_F-\frac 12)\otimes p_F^{\otimes n} - (p_E-\frac 12)\otimes p_E^{\otimes n}  \big\rangle\\
 & +\big\langle B \Tslch_\infty^{n+2} (\diracbdy),  \Tslch_{n+1}  (h) \big\rangle.
%   - ?_n \big\langle Tslch_\infty^{n}(\diracbdy), p_{F,\pl}^{\otimes n}-p_{E,\pl}^{\otimes n}\big\rangle.
\end{split}
\end{equation*} 
\end{corollary}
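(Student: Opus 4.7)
The proof is essentially bookkeeping, once the two limiting expressions for the pairing are in hand. Both the $t \searrow 0$ formula \eqref{eq: local pairing even} and the $t \nearrow \infty$ formula \eqref{eq: stretched pairing even} represent the same quantity $\langle [\dirac],[E,F,h]\rangle$, since by Theorem \ref{t: CC-character}(2) the class of $\bigl(\bch_t^n(\dirac),\ch_t^{n+1}(\diracbdy)\bigr)$ in $HC^n\bigl(\cC^\infty(M),\cC^\infty(\partial M)\bigr)$ is independent of $t>0$. The plan is simply to equate the two and then unpack each pairing degree by degree.

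First I would rewrite \eqref{eq: stretched pairing even} more explicitly. Pairing $\sum_{0\le k\le \ell}\kappa^{2k}(\dirac)$ with $\ch_\bullet(p_F)-\ch_\bullet(p_E)$ and invoking the definition
\[
  \ch_{2k}(p)=\begin{cases}\tr_0(p), & k=0,\\ (-1)^k\tfrac{(2k)!}{k!}\tr_{2k}\bigl((p-\tfrac12)\otimes p^{\otimes 2k}\bigr), & k>0,\end{cases}
\]
produces precisely the left-hand side of the corollary. Next, the cochain $B\bTslch_\infty^{n+1}(\dirac)$ is concentrated in degree $n$, so its pairing with $\ch_\bullet(p_F)-\ch_\bullet(p_E)$ extracts only the degree-$n$ component, which with $n=2\ell$ contributes the factor $(-1)^{n/2}\tfrac{n!}{(n/2)!}$ as required. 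Similarly, $B\Tslch_\infty^{n+2}(\diracbdy)$ lives in degree $n+1$, so its pairing with $\Tslch_\bullet(h)$ selects the single homogeneous piece $\Tslch_{n+1}(h)$.

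I would then equate \eqref{eq: local pairing even} with the resulting re-packaged form of \eqref{eq: stretched pairing even} and move the two transgression-type terms to the right-hand side; this rearrangement reproduces the identity in Corollary \ref{t: higher APS} with all signs matching. The minus sign in front of the $B\bTslch_\infty^{n+1}(\dirac)$ term comes from transferring it across the equation, while the sign in front of the $B\Tslch_\infty^{n+2}(\diracbdy)$ term flips because the $\Tslch_\bullet(h)$ pairing in \eqref{eq: stretched pairing even} enters with a minus.

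In fact there is no genuine obstacle here: all analytic difficulties were settled when the $t\nearrow\infty$ limit was computed in Theorem \ref{t:ML20081215} under the assumption $\Ker\diracbdy=\{0\}$ (which is the reason invertibility of $\diracbdy$ appears as a hypothesis), and when the $t\searrow 0$ limit was identified in Theorem \ref{t: CC-character}(4). The one small pitfall to watch is a consistent sign convention in unpacking the relative pairing \eqref{Eq:DualPair}, i.e.\ remembering that a relative cocycle $(\varphi,\psi)$ pairs with $(\alpha,\beta)=\bigl(\ch_\bullet(p_F)-\ch_\bullet(p_E),-\Tslch_\bullet(h)\bigr)$ via $\langle\varphi,\alpha\rangle+\langle\psi,\beta\rangle$, which accounts for the minus sign in the $\Tslch_\bullet(h)$-terms throughout \eqref{eq: McKean--Singer formula}--\eqref{eq: stretched pairing even}. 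Once this is tracked carefully, the corollary follows by inspection.
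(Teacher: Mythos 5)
Your proposal is correct and follows exactly the route the paper takes: the paper's entire justification for Corollary \ref{t: higher APS} is the sentence preceding it, namely that one equates the two limit expressions \eqref{eq: local pairing even} and \eqref{eq: stretched pairing even} and unpacks the pairings degree by degree, which is precisely what you do. Your sign-tracking (the minus from moving $B\bTslch_\infty^{n+1}(\dirac)$ across the equation, the factor $(-1)^{n/2}\,n!/(n/2)!$ from the degree-$n$ component of $\ch_\bullet(p)$, and the sign flip on the $\Tslch_{n+1}(h)$ term) is accurate.
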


The left hand side plays the role of a `higher' relative index, while
the right hand side contains local geometric terms and 
`higher' eta cochains. 

The pairing formula acquires a simpler form if one chooses
special representatives for the class $  [E,F,h] $. For example,
one can always assume that
 $E_\pl = F_\pl$, in which case one obtains
\begin{equation}  \label{eq: stretched pairing even sp}
\begin{split}
\big\langle [\dirac], \, [E,F,h_0] &\big\rangle \, =
\big\langle \sum_{0\leq k \leq \ell} \kappa^{2k} (\dirac)
 + B\tb \Tslch_\infty^{n+1} (\dirac) , 
\,  \ch_\bullet(p_F)  \big\rangle \\
&\qquad- \big\langle \sum_{0\leq k \leq \ell} \kappa^{2k} (\dirac)
 + B\tb \Tslch_\infty^{n+1} (\dirac) , 
\,  \ch_\bullet(p_E)  \big\rangle. 
\end{split}
\end{equation}
Specializing even more, one can assume $F = \C^N$. Then 
the pairing formula becomes
\begin{equation} \label{eq: stretched pairing even xsp}
\begin{split}
\big\langle [\dirac], \, [E,\C^N,h_0] \big\rangle \, = \,
& - \big\langle \sum_{0\leq k \leq \ell} \kappa^{2k} (\dirac)
 + \, B\tb \Tslch_\infty^{n+1} (\dirac) , 
\,  \ch_\bullet(p_E)  \big\rangle \\
&+\,  N ( \dim \Ker \dirac^+ -   \dim \Ker \dirac^-) . 
\end{split}
\end{equation}
On the other hand, applying Theorem \plref{t:RelativePairingKTheory} 
\begin{equation*}
  \big\langle [\dirac], \, [E,\C^N,h_0] \big\rangle \, = 
  \, -\indAPS (p_ED^+ p_E) + N \indAPS D^+ ,
\end{equation*}
one obtains an index formula for the $b$-Dirac operator which is the direct 
analogue of Eq.~(3.4) in \cite{ConMos:TCC}:

%%%%%%%%%%%%%%%%%%%%%%%%%%%%%%%%%%%%%
\begin{corollary} \label{t: b-index formula}
Let $E$ be a vector bundle on $M$
whose restriction to $\pl M$ is trivial and assume $\diracbdy$ to be invertible.
Then for any $n =2\ell \geq m $
\begin{equation} \label{eq: b-index formula}
\begin{split}
\indAPS (p_E D^+ p_E) = \big\langle  \sum_{0\leq k \leq \ell} \kappa^{2k}
(\dirac)
 + \, B\tb \Tslch_\infty^{n+1} (\dirac) ,
\,  \ch_\bullet(p_E)  \big\rangle  .
\end{split}
\end{equation}
\end{corollary}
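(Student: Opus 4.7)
The plan is to specialize the pairing identity \eqref{eq: stretched pairing even xsp}, derived from equating the $t\searrow 0$ and $t\nearrow\infty$ limits of the relative Chern character, to the class $[E,\C^N,h_0]\in K^0(M,\partial M)$, where $h_0$ is the constant homotopy witnessing the triviality $E_\partial\simeq \C^N$, and then match it with the geometric evaluation of the same pairing furnished by Theorem \ref{t:RelativePairingKTheory}.

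More precisely, I would first apply Theorem \ref{t:RelativePairingKTheory} to the triple $[E,\C^N,h_0]$. Since $h_0$ is constant in the homotopy parameter, the spectral flow term $\SF(h_0,\diracbdy)$ vanishes, and Theorem \ref{t:RelativePairingKTheory} (which continues to hold in the \textup{b}-setting via Eq.~\eqref{eq:brelpairing}, with $\indAPS$ understood via the closed-manifold version / $L^2$ index) yields
\[
\big\langle [\dirac],\, [E,\C^N,h_0]\big\rangle \,=\, -\indAPS(p_E\dirac^+ p_E) + N\,\indAPS \dirac^+.
\]
On the other hand, the large-time specialization \eqref{eq: stretched pairing even xsp} expresses the same pairing as
\[
\big\langle [\dirac],\,[E,\C^N,h_0]\big\rangle \,=\, -\big\langle \sum_{0\le k\le \ell}\kappa^{2k}(\dirac)+B\bTslch_\infty^{n+1}(\dirac),\,\ch_\bullet(p_E)\big\rangle + N(\dim\Ker\dirac^+-\dim\Ker\dirac^-).
\]

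The final step is to observe that, under the standing assumption that $\diracbdy$ is invertible, the \textup{b}-Dirac operator $\dirac$ is Fredholm on $L^2$ and
\[
\indAPS \dirac^+ \,=\, \dim\Ker\dirac^+-\dim\Ker\dirac^- \,=\, \Str(H),
\]
(which is also consistent with $\kappa^0(\dirac)(1)=\Str(H)$, while $\kappa^{2k}(\dirac)$ and $B\bTslch_\infty^{n+1}(\dirac)$ vanish on $\ch_\bullet(1_{\C^N})$ for $k\ge 1$ because all relevant commutators $[\dirac,\mathrm{const}]=0$). Therefore the two summands $N\,\indAPS\dirac^+$ on the left and $N(\dim\Ker\dirac^+-\dim\Ker\dirac^-)$ on the right cancel, and subtracting the two expressions for $\langle[\dirac],[E,\C^N,h_0]\rangle$ delivers exactly the claimed identity \eqref{eq: b-index formula}.

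The bookkeeping is routine once the main inputs are in hand; the only genuine subtlety is the identification of $\indAPS\dirac^+$ with the $L^2$ Fredholm index $\dim\Ker\dirac^+-\dim\Ker\dirac^-$ in the \textup{b}-picture, which is a standard consequence of the invertibility of $\diracbdy$ (the essential spectrum of $\dirac^2$ is bounded away from $0$ by Eq.~\eqref{eq:essspecbd}, so no $\eta$- or extended $L^2$-correction enters). Everything else — the vanishing of the spectral flow for the constant path, and the observation that only the $k=0$ term contributes when paired with the Chern character of a trivial bundle — follows directly from the definitions in Sections \ref{s:TheChernCharacter} and \ref{s:retracted-relative-cocycle}.
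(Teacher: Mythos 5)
Your proposal is correct and follows essentially the same route as the paper: the corollary is obtained there precisely by equating the large-time pairing formula \eqref{eq: stretched pairing even xsp} with the geometric evaluation $\big\langle [\dirac],[E,\C^N,h_0]\big\rangle=-\indAPS(p_E\dirac^+p_E)+N\indAPS\dirac^+$ from Theorem \ref{t:RelativePairingKTheory}, and cancelling the $N$-terms via $\indAPS\dirac^+=\dim\Ker\dirac^+-\dim\Ker\dirac^-$ (valid since $\diracbdy$ is invertible). Your explicit remarks on the vanishing of the spectral flow for the constant path and on why only $\kappa^0$ survives against $\ch_\bullet(1_{\C^N})$ are exactly the bookkeeping the paper leaves implicit.
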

The expression $\indAPS (p_E D^+ p_E)$ is to be understood as follows: if 
$p_E^\partial \dirac_\partial p_E^\partial$ is invertible, then it is the Fredholm index of 
$p_E\dirac^+p_E $. If  $p_E\dirac^+p_E $ is not Fredholm, then chose a metric $\tilde g$ 
smooth up to the boundary and construct on the Clifford module of $\dirac$ the Dirac 
operator $\tilde \dirac$ to the riemannian metric $\tilde g$. Then, by 
Theorem.~\ref{t: CC-character} the \CoChch s of $\tilde \dirac$
and $\dirac$ coincide and thus
\begin{displaymath}
  \big\langle [\dirac], [E,\C^N,h_0] \big\rangle = 
  - \indAPS p_E \tilde \dirac^+ p_E + N \indAPS \tilde\dirac^+ .  
\end{displaymath}

%%%%%%%%%%%%%%%%%%%%%%%%%%%%%%%%%%%%

%%%%%%%%%%%%%%%%%%%%%%%%%%%%%%%%
As a by-product of the above considerations, we can now establish
the following generalization of the
Atiyah-Patodi-Singer odd-index theorem for trivialized flat 
bundles (comp.~\cite[Prop.~6.2, Eq.~(6.3)]{APS:SARIII}. 
An analogue for even dimensional manifolds has been 
subsequently established by Z. Xie \cite{Xie:RIP}.

\begin{corollary} \label{t: gen APS flat}
Let $N$ be a closed odd dimensional spin manifold, and let
$E',F'$ be two vector bundles which are equivalent in
$K$-theory via a homotopy $h$. With $\dirac_{g'}$ denoting the Dirac
operator associated to a riemannian metric $g'$ on $N$, one has
\begin{equation} \label{eq: gen APS flat}
\begin{split}
\xi (\dirac_{g'}^{F'}) - \xi (\dirac_{g'}^{E'}) 
\, = \,  \int_{N} \hat{A} (\nabla_{g'}^2) \wedge
%\hat{A} (\nabla_{g'}^2) \wedge
   \Tslch_\bullet (h) \, + \,  \SF (h , \dirac_{g'}) \, ,
\end{split}
\end{equation} 
or equivalently, 
\begin{equation} \label{eq: smooth eta}
\begin{split}
\int_0^1   \frac{1}{2} \frac{d}{dt} \big( \eta (p_{h(t)} \, \dirac_{g'} \, p_{h(t)}) \big) dt \, = \,
 \int_{N} \hat{A} (\nabla_{g'}^2) \wedge
   \Tslch_\bullet (h) \, ,  
\end{split}
\end{equation}
where $p_{h(t)})$ is the path of projections joining $E'$ and $F'$. 
\end{corollary}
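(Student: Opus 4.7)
The plan is to deduce the corollary by realizing $N$ as the $b$-boundary of an even dimensional compact spin $b$-manifold and then comparing the two different expressions for the relative $K$-pairing $\langle [\dirac], [E,F,h] \rangle$ obtained earlier, namely the local form in Eq.~\eqref{eq: pairing lim0} (equivalently \eqref{eq:brelpairing}) against the index-theoretic form in Eqs.~\eqref{Eq:Pairing1}--\eqref{Eq:Pairing2}. The miracle is that the bulk $\hat{A}$-integral over $\tb M$ appears in both expressions and cancels, leaving an identity that depends only on data living on $N$.

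More precisely, first I would pick a compact even dimensional spin $b$-manifold $M$ with $\partial M = N$, equip it with an exact $b$-metric $\bmet$ whose boundary restriction is $g'$, and extend $E'$ and $F'$ to Hermitian vector bundles $E,F$ over $M$ with unitary connections that are pulled back from the boundary in a collar. Together with the given path $h$ of projections on $N$, the triple $[E,F,h]$ defines a class in $K^0(M,\partial M)$. Applying \eqref{eq:brelpairing} gives
\begin{equation*}
\langle [\dirac], [E,F,h]\rangle = \int_{\tb M} \hat{A}(\bnabla_{\bmet}^2) \wedge \big(\ch_\bullet(F) - \ch_\bullet(E)\big) - \int_N \hat{A}(\nabla_{g'}^2) \wedge \Tslch_\bullet(h),
\end{equation*}
while \eqref{Eq:Pairing2} yields
\begin{equation*}
\langle [\dirac], [E,F,h]\rangle = \int_{\tb M} \hat{A}(\bnabla_{\bmet}^2) \wedge \big(\ch_\bullet(F) - \ch_\bullet(E)\big) - \big(\xi(\dirac_{g'}^{F'}) - \xi(\dirac_{g'}^{E'})\big) + \SF(h, \dirac_{g'}).
\end{equation*}
Subtracting the two identities, the bulk term cancels and rearranging yields precisely Eq.~\eqref{eq: gen APS flat}.

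For the equivalence with Eq.~\eqref{eq: smooth eta}, I would use that $\xi = \tfrac12(\eta + \dim\ker)$, so along a generic path $t\mapsto p_{h(t)}\dirac_{g'} p_{h(t)}$ the function $\xi$ varies smoothly between eigenvalue crossings, while at each crossing a ``jump'' of $\pm 1$ occurs that is precisely recorded by the spectral flow $\SF(h,\dirac_{g'})$. Between crossings $\tfrac{d}{dt}\xi = \tfrac{1}{2}\tfrac{d}{dt}\eta$, and integrating gives
\begin{equation*}
\xi(\dirac_{g'}^{F'}) - \xi(\dirac_{g'}^{E'}) - \SF(h,\dirac_{g'}) = \int_0^1 \tfrac12 \tfrac{d}{dt}\eta(p_{h(t)}\dirac_{g'} p_{h(t)})\, dt,
\end{equation*}
which combined with \eqref{eq: gen APS flat} yields \eqref{eq: smooth eta}. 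Approximating arbitrary paths by generic ones and using continuity of both sides takes care of the general case.

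The main obstacle is a cobordism subtlety: the compact even dimensional spin $b$-manifold $M$ with $\partial M = N$ need not exist for every odd dimensional closed spin $N$ (the spin cobordism class of $N$ may be non-trivial, e.g., for $N = S^1$ with the non-bounding spin structure). The standard workaround is to apply the argument to $N \sqcup N$ instead and divide by two, using the fact that $N \sqcup N$ always bounds (via $[0,1] \times N$ with an appropriate $b$-structure, or by doubling), and that all terms in the sought formula are additive under disjoint union. The second potential obstacle is the careful bookkeeping in the spectral flow argument above, which requires a transversality argument to identify jump contributions with $\SF$; this is a standard but technical verification.
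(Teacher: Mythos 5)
Your proposal follows essentially the same route as the paper: both proofs equate the local form of the relative pairing (Eq.~\eqref{eq:brelpairing}) with the index-theoretic form (Eq.~\eqref{Eq:Pairing2}), cancel the common bulk $\hat A$-integral, reduce the general case to a bounding situation by passing to a multiple of $N$ and invoking additivity, and then obtain \eqref{eq: smooth eta} from the standard relation between $\xi$, $\eta$ and spectral flow (which the paper simply cites from Kirk--Lesch rather than re-proving via transversality as you sketch).

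Two slips in your reduction are worth repairing. First, $[0,1]\times N$ does \emph{not} exhibit $N\sqcup N$ as a spin boundary: its boundary is $N\sqcup(-N)$ with the orientation (and spin structure) reversed on one end, and since $\xi$, $\eta$ and the spectral flow all change sign under orientation reversal, the cylinder argument would only yield the vacuous identity $0=0$. The correct statement is that \emph{some} multiple $kN$ bounds a compact spin manifold, because the odd-dimensional spin cobordism groups are finite (rationally trivial); this is exactly what the paper's phrase ``passing to a multiple'' accomplishes, and additivity of both sides of \eqref{eq: gen APS flat} then lets you divide by $k$. Second, even when $N=\partial M$, the bundles $E'$, $F'$ need not extend over $M$, so the relative class $[E,F,h]\in K^0(M,\partial M)$ you want to pair with is not automatically available; the paper handles this by stabilizing, i.e.\ adding a complement $G'$ so that $F'\oplus G'$ is trivial and $E'\oplus G'$ is stably trivial (since $[E']=[F']$ in $K$-theory), after which both extend, and additivity again removes the auxiliary summand. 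With these two repairs your argument coincides with the paper's.
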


\begin{proof}
This follows from equating the local pairing \eqref{eq:RelativePairingKTheory} 
and the relative {\APS}  index formula \eqref{Eq:Pairing2}
after the following modifications. We recall that 
Eq.~\eqref{eq:RelativePairingKTheory} holds in complete generality without invertibility 
hypothesis on $\dirac_\pl$. First by passing to a multiple one can assume
that $N=\pl M$. Then by adding a complement $G'$ we can replace $F'$ by a trivial
bundle. Then both $E'\oplus G'$ and $F'\oplus G'$ extend to $M$.
It remains to notice that both sides of the formula
 \eqref{eq: gen APS flat} are additive.

The alternative formulation \eqref{eq: smooth eta} follows immediately
from the known relation (see e.g. \cite[Lemma 3.4]{KirLes:EIM})
\begin{equation*}% \label{eq: SF}
\xi (\dirac_{g'}^{F'}) - \xi (\dirac_{g'}^{E'})  \, = \, \SF (h , \diracbdy) +
 \int_0^1 \frac{1}{2} \frac{d}{dt} \big( \eta (p_{h(t)} \, \dirac_{g'} \, p_{h(t)}) \big) dt .\qedhere
\end{equation*}
\end{proof}

%%%%%%%%%%%%%%%%%%%%%%%%%%%%%%%%%%
In the odd dimensional case the pairing formul\ae\, are similar, except 
that the contribution from the kernel of $\dirac$ does not occur.
Let $(U,V,h)$ be a representative of an odd relative K-theory
class where $U,V:M\to U(N)$ are unitaries and $h$ is a homotopy
between $U_{\pl M}$ and $V_{\pl M}$.
Then 
\begin{equation}  \label{eq: stretched pairing odd}
\begin{split}
\big\langle [\dirac], \, [U,V,h] &\big\rangle \, =\,
\big\langle B \bTslch_\infty^{n+1} (\dirac) , 
\,  \ch_n (U)  - \ch_n (V)  \big\rangle \\
&\qquad- \big\langle B\Tslch_\infty^{n+2} (\diracbdy) ,   \Tslch_{n+1} (h) \big\rangle.
\end{split}
\end{equation}
Choosing a representative of the class with $U_{\pl M} = V_{\pl M}$, the above
formula simplifies to
\begin{equation}  \label{eq: stretched pairing odd sp}
\begin{split}
\big\langle [\dirac], \, [U,V,h_0] &\big\rangle \, = \,
\big\langle B \bTslch_\infty^{n+1} (\dirac) , 
\,  \ch_n (U)  -  \ch_n (V)  \big\rangle ,
\end{split}
\end{equation}
and if moreover one takes $V=\id$, it  reduces to
\begin{equation}  \label{eq: stretched pairing odd xsp}
\begin{split}
\big\langle [\dirac], \, [U,\id,h_0] &\big\rangle \, = \,
\big\langle B \bTslch_\infty^{n+1} (\dirac) ,
\,  \ch_n (U)   \big\rangle .
\end{split}
\end{equation}

Finally, the equality between the local form of the pairing \eqref{eq: local pairing even}
and the expression \eqref{eq: stretched pairing odd} gives the following odd 
analogue of Corollary \ref{t: higher APS}.

\begin{corollary} \label{t: higher APS odd}
Let $n  \geq m $, both odd and assume that $\diracbdy$ is invertible. Then
\begin{equation*}%  \label{eq: higher APS odd}
\begin{split}
\big\langle B \bTslch_\infty^{n+1} (\dirac)&,\,  \ch_n(U)  \big\rangle -
\big\langle B\bTslch_\infty^{n+1} (\dirac),\,  \ch_n(V)  \big\rangle \, = \\ 
&= \int_{\tb M} \hat{A} (\bnabla^2_g)  \wedge \big( \ch_\bullet (U) - \ch_\bullet (V) \big)
- \int_{\partial M} \hat{A} (\nabla^2_{g'}) \wedge
   \Tslch_\bullet (h) \\
& \qquad 
- \big\langle B\Tslch_\infty^{n+2} (\diracbdy) ,   \Tslch_{n+1} (h) \big\rangle.
\end{split}
\end{equation*}
\end{corollary}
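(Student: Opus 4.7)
The plan is to mimic the derivation of Corollary \ref{t: higher APS} from the even case, only now working with the odd relative K-theory representative $(U,V,h)$. The whole identity is, at its heart, a tautology of the form ``(small-$t$ limit of the pairing) $=$ (large-$t$ limit of the pairing)'', since by Theorem \ref{t: CC-character} the retracted relative cocycle $\bigl(\btch_t^n(\dirac),\ch_t^{n-1}(\diracbdy)\bigr)$ represents a cohomology class independent of $t > 0$.

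First, I would evaluate the pairing $\langle[\dirac],[U,V,h]\rangle$ through the retracted cocycle at arbitrary $t>0$, using the odd Chern character formula \eqref{eq:ChernCharOdd} together with the relative cyclic pairing \eqref{Eq:RelCycPair}. This is the odd analogue of \eqref{eq: McKean--Singer formula}: one gets a sum of two bulk terms pairing $\bch_t^n(\dirac)$ against $\ch_\bullet(V)-\ch_\bullet(U)$ and a boundary term pairing $\ch_t^{n-1}(\diracbdy)$ against $-\Tslch_\bullet(h)$. Second, letting $t\searrow 0$ and invoking part (4) of Theorem \ref{t: CC-character} yields the local form
\begin{equation*}
\langle[\dirac],[U,V,h]\rangle = \int_{\tb M}\hat{A}(\bnabla^2_g)\wedge\bigl(\ch_\bullet(V)-\ch_\bullet(U)\bigr) - \int_{\partial M}\hat{A}(\nabla^2_{g'})\wedge\Tslch_\bullet(h),
\end{equation*}
which is the odd version of \eqref{eq: local pairing even}.

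Third, letting $t\nearrow\infty$ and using Theorem \ref{t:ML20081215} in its odd form \eqref{eq:ML20081215-2} (which is available because $\diracbdy$ is assumed invertible, so $\dirac$ is Fredholm) produces the stretched pairing \eqref{eq: stretched pairing odd}. The key point, which explains the absence of any $\kappa$-terms on the left-hand side of the Corollary, is that in the odd case the large-$t$ limit $\bch_\infty^n(\dirac)$ reduces to $B\bTslch_\infty^{n+1}(\dirac)$: no projection onto $\Ker\dirac$ contributes, in contrast to the even case. Similarly, on the boundary, $\ch_\infty^{n-1}(\diracbdy)=B\Tslch_\infty^{n}(\diracbdy)$ (with the appropriate index shift), so the boundary contribution is purely of eta-type.

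Finally, equating the small-$t$ and large-$t$ expressions for $\langle[\dirac],[U,V,h]\rangle$ and rearranging (moving the local geometric piece and the boundary transgression to the right) gives the stated formula. The main technical point to watch is bookkeeping: the odd Chern character in cyclic homology carries a different normalization than the even one (cf.\ Section \ref{s:TheChernCharacter}), so the index shift between $n$ and $n+1$ in the transgression cochains, and the sign conventions in the pairing \eqref{Eq:DualPair}, must be tracked carefully. The convergence of the small- and large-$t$ limits themselves is not an issue here, having already been established in Proposition \ref{p:ShortTimeEst}, Proposition \ref{p:estimate-tChern-infty} and Theorem \ref{t:ML20081215}; once those are in hand, the corollary is obtained by simple subtraction.
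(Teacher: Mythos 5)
Your proposal matches the paper's own derivation: the corollary is obtained precisely by equating the $t\searrow 0$ limit of the pairing (the local form supplied by Theorem \ref{t: CC-character}\,(4)) with the $t\nearrow\infty$ limit (the odd case \eqref{eq:ML20081215-2} of Theorem \ref{t:ML20081215}, which indeed contains no $\kappa$-terms), both limits having been established beforehand. The only delicate points are the index shifts and sign conventions in the transgression cochains, which you correctly flag as bookkeeping rather than substance.
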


%%%%%%%%%%%%%%%%%%%%%%%%%%%%%%%%%%%%%%%%%%%%%%%%%%%%%%%%%%%%%%%%%%%%%%%%%%%%%%%%%%%%%%%%%
%\chapter{Concluding remarks} 
%\section{Concluding remark on the relationship with Wu and Getzler}
\section{Relation with the generalized \textup{APS}  pairing}
\label{s:conclude}

%\marginpar{
%The fact that the total eta cochain is not entire is
%not an artifact of the method used to estimate its radius of
%convergence. If it was entire, then SF(h, D) = 0 for any h(t),
%which is easy to disprove by a counterexample.
%}

Wu~\cite{Wu:CCC} showed that the full cochain $\eta^\bullet (\diracbdy)$ has a finite radius of
convergence, proportional to the lowest eigenvalue of $\vert \diracbdy \vert$
(assumed to be invertible). 
Both Wu and Getzler~\cite{Get:CHA} proved, by different
methods, the following generalized Atiyah-Patodi-Singer index formula :
\begin{equation} \label{HAPS} 
 \indAPS  \dirac^E \, = \,
   \int_{\tb M} \hat{A} (\bnabla^2_g)  \wedge \ch_\bullet(p_E)  
\, +\,  \big\langle  \eta^\bullet (\diracbdy)\circ i^\ast, \, \ch_\bullet(p_E) \big\rangle ,
\end{equation}
for any vector bundle $E = \im p_E$ over $M$ whose restriction to the boundary
satisfies the {\it almost $\partial$-flatness} condition
$ \Vert [\diracbdy ,  r_\partial (p_E) ] \Vert < \lambda_1 (\vert \diracbdy \vert)$. 
Their result does provide a decoupled index pairing,
but only for those classes in $K^m (M, \partial M)$
which can be represented by pairs of almost $\partial$-flat bundles. 
Furthermore, if $(E, F , h)$ is  such a triple, on applying \eqref{HAPS} and
Theorem \plref{t:RelativePairingKTheory} one obtains  
\begin{equation} \label{eq: GWpair}
 \begin{split}
\langle [\dirac], [E, F, h] \rangle \,= &
\int_{\tb M} \hat{A}(\bnabla^2_g) \wedge \big(\ch_\bullet(p_F) - \ch_\bullet(p_E) \big)\\
&  + \big\langle \eta^\bullet (\diracbdy) , i^\ast\big( \ch_\bullet(p_F) - \ch_\bullet(p_E) \big)\big\rangle 
\,+ \,  \SF (h , \diracbdy) ,
\end{split}
\end{equation}
where $\SF(h,\diracbdy)$ is an abbreviation for $\SF\bigl( h(s) A^+ h(s)\bigr)_{0\le s\le 1},$
\emph{cf.}~also Eq.~\eqref{eq:ML20090218-1}. By \cite[Proof of Thm.~3.1]{Wu:CCC}, 
\begin{equation*} 
   (b+B) \eta^\bullet (\diracbdy) \, =\,-  \int_{\pl M} \hat{A}(\nabla^2_{g'}) \wedge -\, .
\end{equation*}
At the formal level 
\begin{equation} \label{eq: corrected}
 \begin{split}
&\big\langle \eta^\bullet (\diracbdy)\circ i^\ast, 
\ch_\bullet(p_F) - \ch_\bullet(p_E) \big\rangle 
 =  \big\langle \eta^\bullet (\diracbdy), (b+B)  \Tslch_\bullet (h) \big\rangle \\
& =  \big\langle  (b+B) \eta^\bullet (\diracbdy), \Tslch_\bullet (h) \big\rangle 
\, = \,- \int_{\pl M} \hat{A}(\nabla^2_{g'}) \wedge \Tslch_\bullet (h) .
\end{split}
\end{equation}
However, to ensure that the pairing $ \big\langle \eta^\bullet (\diracbdy), (b+B)  \Tslch_\bullet (h) \big\rangle$
makes sense one has to assume that $p_{h(t)}$ satisfy the same almost $\partial$-flatness
condition. Then 
$$
\Ker (p_{h(t)}  \diracbdy p_{h(t)}) = 0 ,
$$
 hence there is no spectral flow along
the path. 

Thus, the total eta cochain disappears and
\eqref{eq: corrected} together with \eqref{eq: GWpair} just lead to the known 
local pairing formula, \emph{cf.}~Eq.~\eqref{eq:RelativePairingKTheory},
\begin{equation*} 
\begin{split}
\langle [\dirac], &[E, F, h] \rangle \\
&= \int_{\tb M} \hat{A}(\bnabla^2_g) \wedge \big(\ch_\bullet(p_F) - \ch_\bullet(p_E) \big)
 \, - \,\int_{\pl M} \hat{A}(\nabla^2_{g'}) \wedge \Tslch_\bullet (h)  \, .
\end{split}
\sind{pairing|)}
%\nind{T@$\Tslch_\bullet(h)$|)}
\end{equation*}
 
The above considerations also show that the total eta cochain
necessarily has a finite radius of
convergence. If it was entire, then $\SF(h, D) = 0$ for any $h(t)$,
which is easy to disprove by a counterexample.

%%%%%%%%%%%%%%%%%%%%%%%%%%%%%%%%%%%%%%%%%%%%%%%%%%%%%%%%%%%%%%%%%%%%%%%%%%%%%%%%%%%%%%%%%%%%%%%%%%%%%%%%%%%%
%\input{bcalculus}

%%% Local Variables:
%%% mode: latex
%%% TeX-master: "LesMosPflRCCCMBEI"
%%% End:

%end of bcalculus.tex
%%%%%%%%%%%%%%%%%%%%%%%%%%%%%%%%%%%%%%%%%%%%%%%%%%%%%%%%%%%%%%%%%%%%%%%%%%%%%%%%%%%%%%%%%%%%%%%%%%%%%%%%%%%
%\input{heatestimates}

% 27.10.08: section Appendix A from Matthias Draft of June 27, 2008
% copied and edited here.

%\input{todo}
%\nocite{xxyy}

%%%%%%%%%%%%%%% END OF MAIN MATTER %%%%%%%%%%%%%%%%%%%%%%%%%%%%%%%%%%%%%%%%%%%
\backmatter
\bibliography{localbib}
\bibliographystyle{amsalpha-lmp}

%Cross References
\sind{finite part|see{partie finie}}
\sind{integral!partie finie|see{partie finie}}
%\indexcomment{Some explanatory text on the Subject Index could go here}

\Printindex{ind}{Subject Index}

\indexcomment{
Except a few standard notations, all symbols are explained at their
first occurence. We recall a few very standard notations and then we 
provide an index to the used symbols.

\par\bigskip\noindent
\begin{tabular}{rl}
$\N, \Z, \R, \C$  & Natural (including $0$), integer, real and complex numbers  \\
$\R_+$      & Nonnegative real numbers $x\ge 0$                                 \\
$\Z_+$      & Synonym for $\N$                                                  \\ 
$\cC(\ldots), \cC^\infty(\ldots)$ & Continuous resp. smooth functions           \\
$\cC_0(\ldots), \cC^\infty_0(\ldots)$ & Ditto, vanishing at infinity (on locally compact space)           \\
$\cC_c(\ldots), \cC^\infty_c(\ldots)$ & Ditto, compactly supported           \\
$\Gamma^\infty(M;E)$   &  Smooth sections of the vector bundle $E$ over $M$,
                          where $\Gamma^\infty_c, \Gamma^\infty_0$\\
                       &  have the analogous meaning as for $\cC^\infty$.\\
$\GL_N(\cA)$           &  Invertible $N\times N$ matrices with entries in $\cA$ \\
$\cH$                  &  Generic name for a Hilbert space \\
$\sL(\cH)$             &  Algebra of bounded linear operators on the Hilbert
space $\cH$\\ 
$L^2(M;E)$             &  Square--integrable sections of the hermitian vector bundle $E$  \\
$\Mat_N(\cA)$          &  $N\times N$--matrix algebra over the algebra $\cA$         \\
$\spec(T)$             &  Spectrum of the linear operator $T$  \\
$\specess(T)$          &  Essential spectrum of the linear operator $T$ \\
$\supp(f)$             &  Support of the distribution(al section) $f$ 
\end{tabular} 
}

\Printindex{not}{Notation Index}

\end{document}